\newtheorem{theorem}{Theorem}[section]
\newtheorem{corol}[theorem]{Corollary}
\newtheorem{definition}[theorem]{Definition}
\newtheorem{example}[theorem]{Example}
\newtheorem{lemma}[theorem]{Lemma}
\newtheorem*{problem}{Problem}
\newtheorem{prop}[theorem]{Proposition}
\theoremstyle{definition}
\newtheorem{remark}[theorem]{Remark}
\numberwithin{equation}{section}
\newcommand{\Ss}{\mathbb S}
\newcommand{\Rr}{\mathbb R}
\newcommand{\Qq}{\mathbb Q}
\newcommand{\Zz}{\mathbb Z}
\newcommand{\Cc}{\mathbb C}
\newcommand{\Tt}{\mathbb T}
\newcommand{\eps}{\varepsilon}
\newcommand{\X}{\ensuremath{\mathfrak{X}}}
\renewcommand{\d}{\mathrm d}
\newcommand{\cG}{\ensuremath{\mathcal{G}}}
\newcommand{\cK}{\ensuremath{\mathcal{K}}}
\newcommand{\cN}{\ensuremath{\mathcal{N}}}
\newcommand{\cM}{\ensuremath{\mathcal{M}}}
\newcommand{\pr}{\ensuremath{\mathrm{pr}}}
\newcommand{\timesst}{\tensor[_s]{\times}{_t}}
 \DeclareMathOperator{\Der}{Der}
\DeclareMathOperator{\gl}{\mathfrak{gl}}
\DeclareMathOperator{\so}{\mathfrak{so}}
\DeclareMathOperator{\uu}{\mathfrak{u}}
\renewcommand{\aa}{\mathfrak{a}}
\renewcommand{\sl}{\mathfrak{sl}}
\renewcommand{\sp}{\mathfrak{sp}}
\DeclareMathOperator{\GL}{GL}
\DeclareMathOperator{\SL}{SL}
\DeclareMathOperator{\SO}{SO}
\DeclareMathOperator{\SU}{SU}
\DeclareMathOperator{\OO}{O}
\DeclareMathOperator{\UU}{U}
\DeclareMathOperator{\Sp}{Sp}
\DeclareMathOperator{\tr}{trace}
\DeclareMathOperator{\Diff}{Diff}
\DeclareMathOperator{\Curv}{Curv}
\DeclareMathOperator{\Tors}{Tors}
\DeclareMathOperator{\rank}{rank}
\renewcommand{\hom}{\mathrm{Hom}}
\newcommand{\reg}{\mathrm{reg}}
\newcommand{\can}{\mathrm{can}}
\newcommand{\G}{\mathit{\Gamma}}            
\newcommand{\s}{\mathbf{s}}             
\renewcommand{\t}{\mathbf{t}}           
\renewcommand{\H}{\mathcal{H}}          
\newcommand{\A}{A}                      
\newcommand{\al}{\alpha}                
\newcommand{\be}{\beta}                 
\newcommand{\Lie}{\mathcal{L}}          
\renewcommand{\gg}{\mathfrak{g}}        
\newcommand{\hh}{\mathfrak{h}}          
\newcommand{\Ker}{\text{\rm Ker}\,}     
\renewcommand{\Im}{\text{\rm Im}\,}     
\newcommand{\Ad}{\text{\rm Ad}\,}       
\newcommand{\tto}{\rightrightarrows}    
\newcommand{\wmc}{\omega_{\textrm{MC}}}   
\DeclareMathOperator{\Inv}{Inv}
\newcommand{\B}{\mathrm{F}}                    
\begin{document}
\title[Cartan's Realization Problem]{The Global Solutions to a Cartan's Realization Problem}
\author{Rui Loja Fernandes}
\address{Department of Mathematics, University of Illinois at Urbana-Champaign, 1409 W. Green Street, Urbana, IL 61801 USA}
\email{ruiloja@illinois.edu}

\author{Ivan Struchiner}
\address{Departamento de Matem\'atica, Universidade de S\~ao Paulo, Rua do Mat\~ao 1010, S\~ao Paulo, SP 05508-090, Brasil}
\email{ivanstru@ime.usp.br}

\thanks{RLF was partially supported by NSF grants DMS-1710884, DMS-2003223, a Simons Fellowship in Mathematics and FCT/Portugal. IS was was partially supported by CNPq grant 307131/2016-5, and 
FAPESP grants 2015/50472-1 and 2015/22059-2,}

\begin{abstract} 
We introduce a systematic method to solve a type of Cartan's realization problem. Our method builds upon a new theory of Lie algebroids and Lie groupoids with structure group and connection. This approach allows to find local as well as complete solutions, their symmetries, and to determine the moduli spaces of local and complete solutions. We illustrate our method with the problem of classification of extremal K\"ahler metrics on surfaces.
\end{abstract}

\date{\today}

\maketitle

\setcounter{tocdepth}{1}
\tableofcontents

\setcounter{section}{-1}
\section{Introduction}
\label{sec:introd}

This paper solves the existence, equivalence and classification problems for \emph{finite dimensional} families of $G$-structures that can be formalized as a \emph{Cartan's Realization Problem}. These include many interesting, non-trivial, geometric classification problems, such us Ricci type symplectic connections, Bochner bi-Lagrangian manifolds, Bochner-K\"ahler metrics, torsionless symplectic connections with exceptional holonomy groups $\mathrm{Spin}(2,10)$, $\mathrm{Spin}(6,6)$, and $\mathrm{Spin}(12,\Cc)$, surface metrics of Hessian type, among others (see \cite{Bryant:notes,Bryant,CaSch2, CaSch, ChiMerkulovSchwachhofer:inventiones, MerkulovSchwachhofer:annals, Sch}).


In order to explain the type of Cartan's realization problem that we are interested in, recall that given a $G$-structure $\B_G(M)\to M$, the tautological 1-form  $\theta\in\Omega^1(\B_G(M), \Rr^n)$, together with a connection 1-form $\omega \in \Omega^1(\B_G(M), \gg)$, give a coframe $(\theta,\omega)$ on $\B_G(M)$ which satisfies the well-known structure equations
\begin{equation}
\label{eq:structure:1}
\left\{
\begin{array}{l}
\d \theta=c ( \theta \wedge \theta) - \omega \wedge \theta\\

\d \omega= R( \theta \wedge \theta) 
- \omega\wedge\omega
\end{array}
\right.
\end{equation}
We are interested in ``finite-dimensional'' families of $G$-structures obtained by imposing restrictions on the curvature $R$ and/or torsion $c$. This can be formulated by requiring that $R$ and $c$ factor through a $G$-equivariant map $h:\B_G(M)\to X$ into a $G$-manifold $X$, so that $R=R(h)$ and $c=c(h)$ for $G$-equivariant maps $R: X \to \hom(\wedge^2\Rr^n, \gg)$ and $c: X \to \hom(\wedge^2\Rr^n, \Rr^n)$.  Loosely speaking, $X$ is the space of possible values of the torsion and curvature.

Since the pair $(\theta,\omega)$ is a coframe in $\B_G(M)$, the map $h:\B_G(M)\to X$ also satisfies structure equations
\begin{equation}
\label{eq:structure:2}
\d h = F(h,\theta) + \psi(h,\omega),
\end{equation}
where $F: X \times \Rr^n \to TX$ is a $G$-equivariant vector bundle map and $\psi: X \times \gg \to TX$ is the infinitesimal action of $\gg$ on $X$.


The $G$-manifold $X$, together with the maps $c: X \to \hom(\wedge^2\Rr^n, \Rr^n)$, $R: X \to \hom(\wedge^2\Rr^n, \gg)$ and $F: X \times \Rr^n \to TX$, is the data that specifies the family of $G$-structures of the classification problem at hand. This data is known \emph{a priori} and it usually arises from a differential analysis of the problem. Once this data is given, one would like to find all $G$-structures that satisfy the associated structure equations \eqref{eq:structure:1} and \eqref{eq:structure:2}. Bryant in the appendix of \cite{Bryant} calls this a \emph{Cartan's realization problem} and it can be formulated precisely as follows

\begin{problem}[{\bf Cartan's Realization Problem}]
\label{prob:Cartan}
One is given \textbf{Cartan Data}:
\begin{itemize}
\item a closed Lie subgroup $G \subset \GL(n,\Rr)$;
\item a $G$-manifold $X$ with infinitesimal action $\psi:X\times\gg\to TX$;
\item equivariant maps $c: X \to \hom(\wedge^2\Rr^n, \Rr^n)$, $R: X \to \hom(\wedge^2\Rr^n, \gg)$ and $F: X \times \Rr^n \to TX$;
\end{itemize}
and asks for the existence of \textbf{solutions}: 
\begin{itemize}
\item an n-dimensional orbifold $M$;
\item a $G$-structure $\B_G(M)\to M$ with tautological form $\theta\in\Omega^1(\B_G(M), \Rr^n)$ and connection 1-form $\omega \in \Omega^1(\B_G(M), \gg)$;
\item an equivariant map $h: \B_G(M) \to X$;
\end{itemize} 
satisfying the structure equations \eqref{eq:structure:1} and \eqref{eq:structure:2}.

\end{problem}

For reasons to be discussed later, we allow for our solutions to be \emph{orbifolds}: while $\B_G(M)$ is a manifold, the $G$-action is proper but only locally free, so $M=\B_G(M)/G$ is an (effective) orbifold. For a concrete example of how such a realization problem arises see Section \ref{sec:example:EK:surfaces:0}, where we discuss the problem of finding extremal K\"ahler metrics on surfaces.


The ultimate goal is to find versal families of solutions to Cartan's Realization Problem. Such a family consists of a submersion $\s:\G\to X$ with a principle $G$-action along its fibers, such that each fiber $\s^{-1}(x)$ is a solution to the problem. The key observation is that such families of $G$-structures \emph{are not} arbitrary families. Rather they are families parameterized by the source map of a Lie groupoid. We formalize this idea precisely in the new notion of a {\bf $G$-structure groupoid $\G\tto X$}, namely a Lie groupoid with a $G$-action compatible with the groupoid multiplication and making the source fibers into $G$-structures. The corresponding infinitesimal notion, called a {\bf $G$-structure algebroid $A\to X$}, is precisely the object that one can associate to Cartan Data.

The idea of using Lie algebroids in connection to Cartan's realization problems is not new. It is goes back to the work of R.~Bryant \cite[Appendix]{Bryant}, who also popularized it in various talks (see also \cite{CrainicYudilevich}). The novelty in our work is twofold: 
\begin{enumerate}[(i)]
\item we include the structure group $G$, leading to the notion of a {\bf $G$-structure algebroid} and its global counterpart a {\bf $G$-structure groupoid}, and
\item we equip $G$-structure algebroids and $G$-structure groupoids with {\bf connections}, since our $G$-structures come equipped with connections.
\end{enumerate}
As we shall see, these two points are crucial and they also allow us to deal with global issues. 

Using $G$-structure Lie groupoid theory we give complete answers to the following classification problems:

\begin{description}
\item[Classification of solutions]
Characterize all solutions of a Cartan's realization problem up to equivalence.
\smallskip
\item[Classification of symmetries]
Determine the group of symmetries and the Lie algebra of infinitesimal symmetries of a given solution.
\smallskip
\item[Moduli space of solutions]
Find, if any, the differential structure of the moduli space of solutions of a Cartan's realization problem. 
\end{description}

In all these problems it is important to distinguish between local and global solutions to Cartan's problem. By a {\bf germ of solution} of Cartan's problem we mean a solution $(\B_G(M),(\theta,\omega),h)$ with $M=U/\Upsilon$, where $U$ is an open neighborhood of $0\in \Rr^n$ acted upon by some finite subgroup $\Upsilon\subset\Diff(\Rr^n)$. Two such solutions with domains $U_1/\Upsilon$ and $U_2/\Upsilon$ are identified if they coincide on some smaller neighborhood of $0$. Note that a germ of solution comes with a marked point, namely $0$. Hence, by restricting a realization to neighborhoods of different points one may obtain inequivalent germs of realizations. This means that the \emph{local versions} of the classification problems, the moduli space of germs of solutions, etc., in general look quite different from their \emph{global} counterpart.

On the global side, we shall introduce the notion of {\bf complete solutions}. These are, in some sense, maximal solutions which do not admit any extensions to  larger domains. For example, when $G\subset O(n)$ is a compact Lie group and $\omega$ is the Levi-Civita connection, we will see that the notion of complete solution is related with the usual notion of completeness of the Riemannian metric on $M$. We shall provide answers to the classification problems above for both local and complete solutions. The method we will present  also allows, in principle, to find {\bf explicit solutions} to a given realization problem. 


After setting up the notions of $G$-structure groupoids and $G$-structure algebroids with connection, our first result extends a remark of Bryant (see \cite[Appendix]{Bryant}).

\begin{theorem}
\label{thm:Cartan:G:alg}
Let $(G, X,c,R,F)$ be a Cartan Data. If for each $x \in X$ there exists a solution $(\B_G(M), (\theta,\omega), h)$ with $x \in \Im h$, then the Cartan Data determines a $G$-structure algebroid with connection $A\to X$. In this case, solutions are in one to one correspondence with bundle maps 
\[
\xymatrix{
TP \ar[r]^--{(\theta,\omega)} \ar[d] & A\ar[d]\\
P \ar[r]_{h} & X}
\]
which are morphisms of $G$-structure algebroids with connection.
\end{theorem}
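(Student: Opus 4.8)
The plan is to build the $G$-structure algebroid with connection $A\to X$ directly out of the Cartan Data, and then to recognize that the structure equations \eqref{eq:structure:1}--\eqref{eq:structure:2} are exactly the conditions for the coframe $(\theta,\omega)$ to be a morphism. First I would take as underlying vector bundle the trivial bundle $A:=X\times(\Rr^n\oplus\gg)\to X$, whose fibre matches the range of the coframe, and let $\theta_A,\omega_A$ denote its tautological $A$-forms, i.e.\ the two projections $A\to\Rr^n$ and $A\to\gg$ viewed as elements of $\Gamma(A^*)\otimes\Rr^n$ and $\Gamma(A^*)\otimes\gg$. I would prescribe the anchor by $\rho(x;v,\xi):=F(x,v)+\psi(x,\xi)$ and define the Chevalley--Eilenberg differential $d_A$ on $\Omega^\bullet(A)=\Gamma(\wedge^\bullet A^*)$ by setting $d_A f=\rho^*\d f$ for $f\in C^\infty(X)$, together with the dual structure equations
\begin{equation*}
d_A\theta_A = c(\theta_A\wedge\theta_A)-\omega_A\wedge\theta_A,\qquad
d_A\omega_A = R(\theta_A\wedge\theta_A)-\omega_A\wedge\omega_A.
\end{equation*}
Since $\theta_A,\omega_A$ together with $C^\infty(X)$ generate $\Omega^\bullet(A)$ as a differential algebra, this determines $d_A$, and hence a candidate anchor and bracket via $d_A\alpha(u_1,u_2)=\rho(u_1)\alpha(u_2)-\rho(u_2)\alpha(u_1)-\alpha([u_1,u_2])$. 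The $\gg$-component of $\theta_A,\omega_A$ and the $G$-equivariance of $c,R,F,\psi$ then supply the $G$-action and the connection upgrading $A$ to a $G$-structure algebroid with connection.

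The heart of the argument is to verify that $A$ is a \emph{genuine} Lie algebroid, equivalently that $d_A^2=0$, equivalently that the candidate bracket satisfies Jacobi and that $\rho$ is a bracket homomorphism. This is exactly where the hypothesis enters, and I expect it to be the main obstacle. I would \emph{not} try to check the resulting Bianchi-type identities on $c,R,F,\psi$ by hand; instead I would argue pointwise. Fix $x\in X$ and, by hypothesis, pick a solution $(\B_G(M),(\theta,\omega),h)$ with $x=h(p)$ for some $p$. Because $(\theta,\omega)$ is a coframe, the bundle map $\Phi:=(\theta,\omega)\colon T_pP\to A_x$ is a linear isomorphism, so every element of $A_x$ is the image of a tangent vector at $p$. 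On $P$ the honest de Rham differential satisfies $\d^2=0$, and $\Phi^*\theta_A=\theta$, $\Phi^*\omega_A=\omega$; thus the structure equations \eqref{eq:structure:1} assert precisely that $\d(\Phi^*\theta_A)$ and $\d(\Phi^*\omega_A)$ agree with the $\Phi$-pullbacks of the right-hand sides of the dual structure equations. Applying $\d$ once more and invoking $\d^2=0$ forces the (tensorial) quantities $d_A^2\theta_A$ and $d_A^2\omega_A$ to vanish when evaluated on $\Phi(T_pP)=A_x$, hence at $x$; the same holds on $C^\infty(X)$. As $x$ is arbitrary, $d_A^2=0$ everywhere and $A$ is the desired $G$-structure algebroid with connection.

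Finally I would set up the bijection by reading the morphism conditions off the structure equations. Given a solution, put $\Phi:=(\theta,\omega)\colon TP\to A$ over $h\colon P\to X$. Anchor compatibility $\rho\circ\Phi=\d h$ is literally \eqref{eq:structure:2}, while compatibility of $\Phi$ with differentials, checked on the generators $\theta_A,\omega_A$ and on $C^\infty(X)$, is literally \eqref{eq:structure:1} together with \eqref{eq:structure:2}; the $G$-equivariance of $h$ and the facts that $\theta$ is tautological and $\omega$ a connection form make $\Phi$ a morphism of $G$-structure algebroids with connection. Conversely, given such a morphism $\Phi$ over $h$ with $P=\B_G(M)$, its components $\theta:=\pr_{\Rr^n}\circ\,\Phi$ and $\omega:=\pr_{\gg}\circ\,\Phi$ satisfy \eqref{eq:structure:1}--\eqref{eq:structure:2} by the same dictionary; the intertwining of tautological forms forces $\theta$ to be the tautological form of the $G$-structure and $\omega$ a principal connection, which automatically makes $(\theta,\omega)$ a coframe, so $(\B_G(M),(\theta,\omega),h)$ is a solution. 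These two assignments are mutually inverse by construction. The delicate point throughout is the middle step: the existence of a solution through each $x$ is used solely to convert the flatness $\d^2=0$ of an actual coframe into the algebraic consistency of the Cartan Data, thereby producing the Jacobi identity for free rather than by brute-force computation.
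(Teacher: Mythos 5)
Your proposal is correct and follows essentially the same route as the paper: the algebroid is defined in canonical form via the dual structure equations, the Jacobi identity is obtained pointwise from $\d^2=0$ on a solution through each $x$ using that the coframe map $\Phi=(\theta,\omega)$ is a fiberwise isomorphism intertwining $\d$ and $\d_A$, and the bijection is read off from the structure equations exactly as in Theorems \ref{thm:Cartan:G:alg:A} and \ref{thm:Cartan:G:alg:B}. No gaps.
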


The special bundle maps that appear in the statement of the theorem are called {\bf $G$-realizations of $A$}. Therefore we have a 1:1 correspondence
\[
 \left\{\txt{Cartan Data\\ admitting solutions}\right\} 
\quad
\stackrel{1-1}{\longleftrightarrow}\
\quad
\left\{\txt{$G$-structure algebroid $A$\\ with connection}\right\}
\]
%
and under this correspondence, we have 
\[
 \left\{\txt{Solutions of Cartan's\\ realization problem}\right\} 
\quad
\stackrel{1-1}{\longleftrightarrow}\
\quad
\left\{\txt{$G$-realizations of the\\ algebroid $A$}\right\}\qquad
\]
%
\medskip

Hence, in order to solve Cartan problem, one needs to find the $G$-rea\-li\-za\-tions of a $G$-structure algebroid with connection. One way (but not the only way!) these can be obtained is as source fibers of a {\bf $G$-integration}, i.e., by producing a $G$-structure groupoid $\G\tto X$ with connection.

\begin{theorem}
\label{thm:integration:realizations}
If $\G\tto X$ is a $G$-structure groupoid integrating a $G$-structure algebroid with connection $A\to X$, and $\wmc$ denotes the Maurer-Cartan form, then $(\s^{-1}(x),\wmc|_{\s^{-1}(x)},\t)$ is a $G$-realization of $A$, for each $x\in X$. Moreover, in this case, any $G$-realization of $A$ has a cover isomorphic to a $G$-invariant open subset of one such $G$-realization.
\end{theorem}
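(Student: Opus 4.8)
The plan is to prove the two assertions in turn. For the first, I would build on the defining property of the Maurer--Cartan form: for $g\in\G$, right translation by $g$ identifies $T_g\s^{-1}(\s(g))$ with the fiber $A_{\t(g)}$, so that the restriction $\wmc|_{\s^{-1}(x)}$ is at each point $g$ a linear isomorphism $T_g\s^{-1}(x)\to A_{\t(g)}$ covering $\t$. This already exhibits $(\s^{-1}(x),\wmc|_{\s^{-1}(x)},\t)$ as a fiberwise invertible bundle map $T\s^{-1}(x)\to A$ over $\t$. That this bundle map is a morphism of Lie algebroids is precisely the Maurer--Cartan (structure) equation satisfied by $\wmc$, which encodes the bracket of $A$; this is the standard fact that $\wmc$ restricts to a flat $A$-valued coframe on each source fiber, and I would cite or reprove it. It then remains to match the two extra decorations. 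Since $\G\tto X$ is a $G$-structure groupoid, the $G$-action is compatible with multiplication and acts along the source fibers, hence restricts to $\s^{-1}(x)$, making it a $G$-structure, and $\wmc$ is $G$-equivariant by naturality; under the splitting of $A$ determined by its connection, the two components of $\wmc|_{\s^{-1}(x)}$ become a tautological form $\theta$ and a connection form $\omega$. Compatibility of $\wmc$ with the connection is built into the definition of a $G$-integration, so the bundle map is a morphism of $G$-structure algebroids with connection, i.e.\ a $G$-realization.

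For the ``moreover'' part I would use the algebroid analogue of Lie's second theorem, developing an arbitrary realization into a source fiber. Fix $p_0\in P$, put $x:=h(p_0)$ and pick a unit $g_0\in\s^{-1}(x)$ over $x$, so that $(p_0,g_0)\in\widehat P$ where $\widehat P:=\{(p,g)\in P\times\s^{-1}(x):h(p)=\t(g)\}$. On $\widehat P$ define a distribution $\mathcal D$ by letting $(v,w)\in\mathcal D$ exactly when $(\theta,\omega)(v)=\wmc(w)$ in $A_{h(p)}=A_{\t(g)}$. Because $\wmc$ is a fiberwise isomorphism, for each $v$ there is a unique such $w$, and the anchor compatibility of the two morphisms forces $\d\t(w)=\d h(v)$, so $\mathcal D$ is the graph of a bundle map and has rank $\dim P$, projecting isomorphically onto $TP$. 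The key point is involutivity of $\mathcal D$: writing $\alpha:=\pr_P^*(\theta,\omega)-\pr_{\s^{-1}(x)}^*\wmc$ as a $1$-form valued in the pullback of $A$, one has $\mathcal D=\ker\alpha$, and since both $(\theta,\omega)$ and $\wmc$ obey the same structure equation dictated by the bracket of $A$, the two-form $\d\alpha$ vanishes on $\mathcal D$, so $\mathcal D$ is involutive. By Frobenius the leaf $L$ through $(p_0,g_0)$ is an integral manifold on which $\pr_{\s^{-1}(x)}^*\wmc=\pr_P^*(\theta,\omega)$.

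It remains to extract the developing map and handle global issues. Both projections restrict to local diffeomorphisms on $L$ (each pulls a coframe back to a coframe); taking $\widetilde P:=L$ with $c:=\pr_P|_L:\widetilde P\to P$ and $F:=\pr_{\s^{-1}(x)}|_L:\widetilde P\to\s^{-1}(x)$, we get $F^*\wmc=c^*(\theta,\omega)$ and $\t\circ F=h\circ c$, so $F$ is a $G$-equivariant \'etale morphism of $G$-realizations onto a $G$-invariant open subset of $\s^{-1}(x)$ (openness because $F$ is a local diffeomorphism, $G$-invariance because the construction is $G$-invariant). The $G$-equivariance is automatic, since $\wmc$, $(\theta,\omega)$, and hence $\mathcal D$, are all $G$-invariant, so $G$ permutes the leaves and $F$ intertwines the actions. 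I expect the main obstacle to be the global step: showing that $c=\pr_P|_L$ is an honest covering map (not merely a surjective local diffeomorphism) and pinning down which cover of $P$ appears. This requires a completeness/path-lifting argument for the leaves of $\mathcal D$ --- controlling that lifts of paths in $P$ do not escape in the $\s^{-1}(x)$ direction --- which is exactly where the completeness of the source fibers of the integrating groupoid, and its interplay with the given realization, must be used.
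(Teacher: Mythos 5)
Your first half is fine and matches the paper: the restriction of $\wmc$ to a source fiber is a fiberwise isomorphism and a Lie algebroid morphism covering $\t$, and the $G$-action, tautological form and connection of the $G$-structure groupoid restrict to the source fiber so as to make this a morphism of $G$-structure algebroids with connection. The problem is the ``moreover'' clause, where your argument stops exactly where the content lies. You set up the graph distribution $\mathcal{D}$ on $P\times_X\s^{-1}(x)$ and prove involutivity correctly --- this is in essence the proof of the \emph{local} universal property (Theorem \ref{thm:MC:local}, via \cite[Thm 4.6]{FernandesStruchiner1}) --- but you then defer the two facts the theorem actually asserts: (i) that $\pr_P|_L:L\to P$ is a covering map, and (ii) that the developing map $F=\pr_{\s^{-1}(x)}|_L$ is \emph{injective}, so that $L$ is isomorphic to, and not merely \'etale onto, a $G$-invariant open subset of $\s^{-1}(x)$. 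Point (i) requires the fact that every $A$-path lifts to a path in a source fiber of an integration (the completeness of the $A_L$-action on $\s^{-1}(x)$ discussed before Theorem \ref{thm:complete:int}); you name this ingredient but do not run the argument. Point (ii) you assert without any justification, and it is not a formal consequence of $F$ being an \'etale morphism of realizations; as written, the ``isomorphic to a $G$-invariant open subset'' part of the statement is unproven.

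The paper takes a different global route that bypasses the Frobenius leaf: it pulls the realization back to the orbifold universal cover of $M=P/G$ (Corollary \ref{cor:integration:realizations}), observes that for a $1$-connected base the canonical $G$-integration of $TP$ is the pair groupoid $P\times P$, and invokes its $G$-equivariant Lie's Second Theorem (Theorem \ref{thm:integration:G:morphisms}) to integrate $(\theta,\omega):TP\to A$ to a groupoid morphism $\Phi:P\times P\to\Sigma_G(A)$; the map into the source fiber is then $\phi(p)=\Phi(p,p_0)$ (Theorem \ref{thm:MC:global}), with uniqueness supplied by uniqueness in Lie~II. This also answers the question you leave open of \emph{which} cover appears, namely the pullback to the orbifold universal cover. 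To salvage your approach you would need to carry out the path-lifting argument for (i) and then upgrade $F$ to the restriction of a groupoid morphism to get (ii), at which point you would essentially have reconstructed the paper's proof.
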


The $G$-realizations of the form $(\s^{-1}(x),\wmc|_{\s^{-1}(x)},\t)$ and the corresponding solutions $M=\s^{-1}(x)/G$ of a Cartan's realization problem arising from $G$-integrations are examples of {\it complete solutions} (see below). However, a Cartan's realization problem does not need to have complete solutions and this result \emph{does not} solve completely Cartan Problem. 

On the one hand, not every Lie algebroid integrates to a Lie groupoid. Even more, we will see that a $G$-structure algebroid $A\to X$ may be integrable to a Lie groupoid, but not to a $G$-structure groupoid. The presence of the structure group $G$ implies that there are additional obstructions to $G$-integrability besides the well-known obstructions to integrability of Lie algebroids. We will identify these obstructions and give a method to compute them.

On the other hand, a $G$-structure algebroid $A\to X$ which is not $G$-integrable still admits $G$-realizations. For a $G$-realization $(P,(\theta,\omega),h)$ of $A\to X$ the image $h(P)\subset X$ is an open $G$-saturated set $U$ in a leaf $L$ of $A$. The restriction $A|_U$ is still a $G$-structure algebroid and if this restriction is $G$-integrable one can apply the previous theorem. In fact, one has local $G$-integrability, as expressed by the following result.

\begin{theorem}
Let $A\to X$ be a $G$-structure algebroid with connection, $x_0\in X$ and denote by $L$ the leaf of $A$ containing $x_0$. Then the $G$-orbit of $x_0$ has a $G$-invariant neighborhood $U\subset L$ such that $A|_U$ is $G$-integrable.
\end{theorem}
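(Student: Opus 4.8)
The plan is to produce a $G$-structure groupoid integrating $A|_U$ for a suitable neighborhood $U$ of the orbit and then invoke Theorem \ref{thm:integration:realizations}. I would proceed in three stages: first reduce to a transitive algebroid over a neighborhood that retracts onto the orbit; then integrate the underlying Lie algebroid using vanishing of the Crainic--Fernandes monodromy; and finally build the principal $G$-action along the source fibers by integrating the infinitesimal action and correcting it to an honest $G$-action.

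First I would restrict to the leaf. Since the $\gg$-directions $\xi\mapsto(0,\xi)$ map under the anchor to the infinitesimal action $\psi(x,\xi)$, they are tangent to the leaves; thus $G^0$ preserves $L$ and, the leaves of a $G$-structure algebroid being $G$-invariant, the orbit $\mathcal{O}=G\cdot x_0$ lies in $L$. The restriction $A|_L$ is then a transitive Lie algebroid carrying the $G$-action. As the action on $X$ is proper and locally free, the stabilizer $H=G_{x_0}$ is finite and $\mathcal{O}\cong G/H$. Using the slice theorem for the proper $G$-action on $L$, I would choose a $G$-invariant tubular neighborhood $U\subset L$ of $\mathcal{O}$ that $G$-equivariantly deformation retracts onto $\mathcal{O}$.

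Next comes ordinary integrability of $A|_U$. As $U\subset L$ is a single leaf, $A|_U$ is transitive, and its integrability is governed by the monodromy groups, whose triviality over $U$ I would deduce from $\pi_2(U)=\pi_2(\mathcal{O})=\pi_2(G/H)=\pi_2(G)=0$, using that $G\to G/H$ is a covering (so an isomorphism on $\pi_2$) and that every Lie group has vanishing second homotopy. Hence $A|_U$ integrates to an honest Lie groupoid, and I would take the source-simply-connected integration $\G$.

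The crux is upgrading $\G$ to a $G$-structure groupoid, that is, producing the principal $G$-action along the source fibers. The Lie algebra morphism $\gg\to\Gamma(A|_U)$, $\xi\mapsto(0,\xi)$, determines left-invariant vector fields on $\G$ tangent to the source fibers; these are complete because $\G$ is a genuine Lie groupoid, so they integrate to a smooth action of the simply connected cover $\tilde G$ along the source fibers, sending each unit $1_x$ to arrows whose $\t$-image sweeps out $\mathcal{O}$. The main obstacle, and the place where the extra $G$-integrability obstruction of the Introduction appears, is that this is a priori only a $\tilde G$-action: the subgroup $\pi_1(G)\subset\tilde G$ lands in the isotropy groups $\G_x$, which are discrete since the stabilizers are finite. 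I would show that $\pi_1(G)$ acts properly discontinuously on $\G$ by groupoid automorphisms and pass to the quotient $\G':=\G/\pi_1(G)$, which is again a Lie groupoid integrating $A|_U$ and on which the genuine group $G$ now acts. Because the stabilizers are finite, this $G$-action is proper and locally free, so each source fiber is a $G$-structure over an orbifold and the Maurer--Cartan form restricts to the tautological-plus-connection coframe dictated by the connection on $A$. This exhibits $\G'\tto U$ as a $G$-structure groupoid integrating $A|_U$, and Theorem \ref{thm:integration:realizations} then yields the desired $G$-realizations. I expect verifying the proper discontinuity of the $\pi_1(G)$-action, and that the quotient retains the full $G$-structure-groupoid compatibilities rather than merely a $G$-action by automorphisms, to be the technically delicate step.
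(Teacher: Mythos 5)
Your stage-one reduction (slice theorem, $G$-invariant tube $U\subset L$ retracting onto the orbit) matches the paper's, but the rest of your argument rests on a false premise: that the $G$-action on $X$, hence on $L$, is proper and \emph{locally free}, so that $G_{x_0}$ is finite. Only the action on the total space of the source fibers (the $G$-structures themselves) is locally free; the induced action on the base $X$ is, as the paper notes explicitly, in general neither free nor effective. In the constant curvature example $\SO(n,\Rr)$ acts trivially on $X=\Rr$, and in the extremal K\"ahler example $\UU(1)$ fixes the whole locus $T=0$. Two of your steps collapse with this premise. First, $\pi_2(U)\cong\pi_2(G/G_{x_0})$ need not vanish when $G_{x_0}$ is positive-dimensional (e.g.\ $\pi_2(\SU(2)/\UU(1))=\Zz$), so you cannot conclude that the Crainic--Fernandes monodromy of $A|_U$ is trivial; the paper instead proves discreteness by observing that $(x\cdot G)\rtimes\gg$ is a wide transitive subalgebroid of $A|_{x\cdot G}$ which is integrable (being an action algebroid), invoking Proposition \ref{prop:integrable:wide:subalg}, and then transporting the monodromy from the orbit to the tube along the retraction. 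Second, and more seriously, the isotropy groups $\G_x$ of an integration are not discrete in general, so the point where you quotient the $\tilde G$-action by $\pi_1(G)$ is exactly where the new obstruction lives: the discreteness of $\tilde{\iota}(x,\pi_1(G))$ inside $\Sigma(A)_x$ \emph{is} the $G$-monodromy obstruction of Theorem \ref{thm:G-integrability:obstructions}, and your argument assumes it rather than proves it.

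The paper's Proposition \ref{prop:G-orbit:int} is devoted precisely to this point over the orbit: using the long exact sequence of $G_x\to G\to x\cdot G$ it identifies $\widetilde{\cN}^G_{x}\cap Z(\Sigma(A)_x)^0$ with $\tilde{\iota}(x,\pi_1(G_x))$, and then shows this is closed by embedding a suitable quotient $\mathcal{A}$ of $\cG(\aa)$, where $\aa=\{\al\in\gg_x: i(x,\al)\in Z(\ker\rho_x)\}$, into $Z(\Sigma(A)_x)^0$. The final step of the paper's proof then compares $\Sigma(A|_{x\cdot G})_x$ with $\Sigma(A|_U)_x$ via the deformation retraction to conclude that the $G$-monodromy of $A|_U$ at $x$ is also discrete. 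To repair your proof you would need to supply an argument of this kind in place of the finiteness of stabilizers.
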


We shall call a $G$-integration of $A|_U$ a {\bf local $G$-integration} of $A$. As a corollary we conclude that a Cartan's realization problem always has solutions.

\begin{corol}
\label{cor:existence}
Given a $G$-structure algebroid $A\to X$ with connection, for any $x_0\in X$ there exists a $G$-realization $(P,(\theta,\omega),h)$ with $x_0\in \Im h$ obtained by local $G$-integration. Every germ of a solution to Cartan Problem arises in this way.
\end{corol}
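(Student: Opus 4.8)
The plan is to deduce the corollary by combining the theorem immediately preceding it with Theorem \ref{thm:integration:realizations}. For the existence statement, I would first invoke the preceding theorem to produce, for the given $x_0\in X$, a $G$-invariant neighborhood $U$ of the $G$-orbit of $x_0$ inside the leaf $L$ through $x_0$, such that the restricted $G$-structure algebroid $A|_U$ is $G$-integrable. Choosing a $G$-structure groupoid $\G_U\tto U$ integrating $A|_U$ --- this being precisely a \emph{local $G$-integration} of $A$ --- Theorem \ref{thm:integration:realizations} then hands us the $G$-realization $(\s^{-1}(x_0),\wmc|_{\s^{-1}(x_0)},\t)$. Since the unit arrow $1_{x_0}$ lies in $\s^{-1}(x_0)$ and $\t(1_{x_0})=x_0$, we have $x_0\in\Im\t$, so this realization satisfies $x_0\in\Im h$, as required.

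For the second statement, I would begin by translating a germ of solution into a germ of $G$-realization. By Theorem \ref{thm:Cartan:G:alg}, a germ of solution $(\B_G(M),(\theta,\omega),h)$ corresponds to a germ of $G$-realization $(P,(\theta,\omega),h)$, with $P=\B_G(M)$, about the frame $p_0$ sitting over the marked point. Set $x_0:=h(p_0)$. Since a $G$-realization is a morphism of $G$-structure algebroids whose source has transitive anchor, its image is $G$-saturated and contained in the leaf $L$ through $x_0$; hence, after shrinking $P$ to a sufficiently small $G$-invariant neighborhood of the $G$-orbit of $p_0$, continuity of $h$ forces the image of $h$ to land inside the $G$-integrable piece $U\subset L$ supplied above. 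Thus the germ is, in fact, a germ of $G$-realization of $A|_U$.

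It then remains to identify this germ with one coming from the local $G$-integration. Here I would apply the second assertion of Theorem \ref{thm:integration:realizations} to $\G_U\tto U$: every $G$-realization of $A|_U$ admits a cover isomorphic to a $G$-invariant open subset of a source-fiber realization $(\s^{-1}(x),\wmc|_{\s^{-1}(x)},\t)$. Passing to germs --- where the ambient neighborhoods may be taken contractible, so that the covering map restricts to a local diffeomorphism near the marked point --- this isomorphism of covers descends to an isomorphism of pointed germs between $(P,(\theta,\omega),h)$ at $p_0$ and $(\s^{-1}(x_0),\wmc|_{\s^{-1}(x_0)},\t)$ at the unit $1_{x_0}$. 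This exhibits the arbitrary germ as arising from a local $G$-integration, completing the argument.

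The step I expect to be the main obstacle is the last one: controlling the \emph{cover} appearing in Theorem \ref{thm:integration:realizations} at the germ level. One must verify that, after restricting to a small enough neighborhood of $p_0$, the covering map can be inverted so that the ``cover isomorphic to an open subset'' statement becomes an honest identification of pointed germs matching $p_0$ with $1_{x_0}$, and that the marked points together with the $G$-equivariance are tracked consistently throughout this reduction. The existence part, by contrast, is essentially immediate once the preceding theorem and Theorem \ref{thm:integration:realizations} are in hand.
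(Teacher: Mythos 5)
Your existence argument coincides with the paper's own proof (which appears in the body as Corollary \ref{cor:local:existence:2}): apply Theorem \ref{thm:local:G:int} to get a $G$-saturated $U\subset L$ with $A|_U$ $G$-integrable, and take the source fiber $(\s^{-1}(x_0),\wmc|_{\s^{-1}(x_0)},\t)$ of a $G$-integration of $A|_U$.

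For the statement that every germ arises this way, however, you take a different route from the paper, and the step you yourself flag as the main obstacle is a genuine gap. The ``cover'' in Theorem \ref{thm:integration:realizations} (Corollary \ref{cor:integration:realizations} in the body) is the pullback $\tilde P=q^*P\to P$ along the \emph{orbifold} universal cover $q:\tilde M\to M$. An isomorphism of germs of $G$-realizations must be a $G$-equivariant map over a $G$-saturated neighborhood of the marked fiber, so to descend the embedding $\tilde P\hookrightarrow\s^{-1}(x_0)$ you need a $G$-equivariant section of $\tilde P\to P$ over such a neighborhood, not merely a pointwise local inverse. When the marked point $m_0$ is an orbifold-singular point (the paper's definition of germ explicitly allows $M=U/\Upsilon$ with $\Upsilon\neq 1$ fixing $0$), this fails for two reasons: first, no matter how much you shrink, $\pi_1^{\mathrm{orb}}$ of a neighborhood of $m_0$ is $\Upsilon\neq 1$, so ``taking the ambient neighborhood contractible'' does not trivialize the cover at the germ level; second, the deck transformations move $\tilde p_0$ \emph{within its $G$-orbit} (the deck group embeds into the $G$-isotropy of the marked frame), so $q$ is not injective on $G$-orbits near the marked fiber and the embedding $\tilde\phi:\tilde P\to\s^{-1}(x_0)$ is only deck-equivariant, not deck-invariant --- it does not descend to $P$. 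For germs based at smooth points your argument goes through (shrink to a $1$-connected $V$, over which the cover trivializes $G$-equivariantly), but it does not cover all germs in the paper's sense.

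The paper avoids the cover entirely: it establishes the germ statement (in the surjectivity part of Theorem \ref{thm:moduli:germs}) by invoking the \emph{Local} Universal Property of the Maurer--Cartan form, Theorem \ref{thm:MC:local}, which constructs the embedding $P|_V\hookrightarrow\s^{-1}(x_0)$ directly on $P$, with $\phi(p_0)=1_{x_0}$, after restricting the classifying map to land in the $G$-integrable neighborhood $U_{x_0}$. If you want to salvage your route, you should either restrict to germs at smooth points or replace the appeal to Theorem \ref{thm:integration:realizations} by a direct appeal to Theorem \ref{thm:MC:local}.
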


On the global side, we will introduce the notions of complete and strongly complete solution of a Cartan's realization problem. A {\bf strongly complete solution} can be defined as a solution that covers a leaf $L$ of the algebroid $A$ and is such that every local symmetry extends to a global symmetry. A {\bf complete solution} can be characterized as a solution which is covered by a strongly complete solution.
%
%
%
The existence of complete solutions is related to the $G$-integrability of the restriction of the $G$-structure algebroid $A$ to a leaf.

\begin{theorem}
Let $A \to X$ be a $G$-structure algebroid. There exists a complete solution covering a leaf $L$ of $A$ if and only if $A|_L$ is $G$-integrable. Every strongly complete solution covering $L$  is of the form $(\s^{-1}(x),\wmc|_{\s^{-1}(x)},\t)$ for some $G$-integration of $A|_L$.
\end{theorem}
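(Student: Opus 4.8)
The plan is to prove the two implications of the equivalence separately, and then read off the structural characterization from the construction used for the harder implication. Throughout I will use Theorem \ref{thm:integration:realizations} freely.

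The implication ``$A|_L$ is $G$-integrable $\Rightarrow$ there is a complete solution covering $L$'' costs almost nothing. Given a $G$-integration $\G\tto L$ of $A|_L$, Theorem \ref{thm:integration:realizations} says each source fiber $(\s^{-1}(x),\wmc|_{\s^{-1}(x)},\t)$ is a $G$-realization, and these are exactly the realizations singled out as complete solutions in the discussion following that theorem. Since $L$ is a single leaf, the orbit of $x$ is all of $L$, so $\t\colon\s^{-1}(x)\to L$ is a surjective submersion and the realization covers $L$. This settles one direction.

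The substantive direction is ``complete solution covering $L$ $\Rightarrow$ $A|_L$ is $G$-integrable,'' and here I would reconstruct the integrating groupoid out of the solution, in the spirit of the gauge-groupoid description of integration theory. Using the definition of completeness, I first replace the given solution by a \emph{strongly complete} solution $(P,(\theta,\omega),h)$ covering $L$; it then suffices to integrate $A|_L$ out of $P$. The coframe $(\theta,\omega)$ is an absolute parallelism on $P$, so any local symmetry of the realization (a local, $G$-equivariant, $h$-preserving diffeomorphism fixing $(\theta,\omega)$) is rigid, i.e.\ determined by its value at one point; strong completeness is precisely the hypothesis that every such local symmetry extends to a global one. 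I would use this to show that the symmetries of $P$ form a Lie group $H$ acting freely and properly on $P$ on the right, preserving $(\theta,\omega)$, commuting with the $G$-action, and preserving $h$. I then define
\[
\G:=(P\times P)/H,
\]
with the diagonal $H$-action, declaring $[(p,q)]$ to be an arrow from $h(p)$ to $h(q)$ and setting $[(p,q)]\cdot[(q',r)]=[(p,r)]$ whenever $[q]=[q']$. For this to be the gauge groupoid of a principal $H$-bundle $h\colon P\to L$ one needs $H$ to act freely on $P$ (rigidity of coframe-preserving maps) and transitively on each fiber $h^{-1}(x)$: any two points of $P$ over the same $x\in L$ carry coframes satisfying the same structure equations of $A|_L$, so Cartan's existence theorem produces a local symmetry relating them, and strong completeness promotes it to an element of $H$. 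Computing the source fiber of $\G$ over a basepoint $x_0=h(p_0)$, each class has a unique representative $(p_0,q)$, giving a canonical identification $\s^{-1}(x_0)\cong P$ under which $\wmc|_{\s^{-1}(x_0)}$ pulls back to $(\theta,\omega)$ and $\t$ to $h$; this simultaneously proves the final assertion that every strongly complete solution covering $L$ is of the form $(\s^{-1}(x),\wmc|_{\s^{-1}(x)},\t)$ for a suitable $G$-integration.

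The main obstacle is not the underlying Lie groupoid, which is classical, but upgrading $\G$ to a \emph{$G$-structure groupoid with connection} integrating $A|_L$ as a $G$-structure algebroid with connection — this is the genuinely new content. I would transport the $G$-action and the connection from $P$ to $P\times P$ and check that they descend along the quotient by $H$ (using that $H$ commutes with the $G$-action and preserves the connection data encoded in $\omega$), that the descended $G$-action is compatible with the multiplication and makes the source fibers into $G$-structures, and that the induced Maurer--Cartan data reproduce the Cartan maps $c,R,F$, so that the Lie algebroid of $\G$ is canonically $A|_L$. The remaining technical point, where strong completeness is used in full force, is establishing that $H$ is a finite-dimensional Lie group acting freely and properly and that $\G$ is smooth and Hausdorff; this is exactly what guarantees that the reconstructed object is a bona fide $G$-integration rather than merely a groupoid in a formal sense.
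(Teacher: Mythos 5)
Your treatment of the second sentence is sound and is essentially the paper's own argument: for a \emph{strongly complete} realization one shows the symmetry group $H$ is a Lie group with Lie algebra $\ker\rho_x$ acting freely, properly and transitively on the fibers of $h$ and commuting with the $G$-action (Proposition \ref{prop:symmetry:Lie:grp}), forms the gauge groupoid $\G=(P\times P)/H\tto L$, and identifies its Atiyah algebroid $TP/H$ with $A_L$ via $(\theta,\omega)$ to conclude $\G$ is a $G$-integration with $P\cong\s^{-1}(x)$ (Theorem \ref{thm:strongly:complete}). The easy implication via source fibers also matches.

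The gap is in the forward implication of the first sentence, at the step ``Using the definition of completeness, I first replace the given solution by a strongly complete solution.'' In the paper, \emph{complete} is not defined as ``covered by a strongly complete solution''; that is a characterization proved only afterwards. The actual definition (Definition \ref{def:complete}) is that the infinitesimal $A_L$-action $\sigma:h^*A_L\to TP$ determined by $(\theta,\omega)$ is a complete Lie algebroid action. From that definition there is no a priori reason the universal cover of $P/G$ should carry a strongly complete realization — establishing this \emph{is} the hard content, and your route is circular: the fact that a complete solution is covered by a strongly complete one is deduced in the paper from Theorem \ref{thm:complete:int} together with Proposition \ref{prop:1-connected:complete}, i.e.\ from the very implication you are trying to prove. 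The paper's actual mechanism is different from anything in your sketch: pass to the $1$-connected cover, observe that $\sigma$ makes $h^*A_L\to P$ an algebroid isomorphic to $TP$, and invoke the Crainic--Fernandes theorem on integrating complete infinitesimal actions to obtain a groupoid isomorphism $P\rtimes\Sigma(A_L)\cong\Pi_1(P)$; discreteness of $\pi_1(P,p)\cong\pi_1(G)$ inside $\Pi_1(P)$ then forces the extended $G$-monodromy $\widetilde{\cN}^G_x$ to be discrete in $\Sigma(A_L)$, which is exactly $G$-integrability of $A_L$. To repair your proof you would either need to reproduce this integration-of-actions argument, or give an independent proof that completeness of the infinitesimal action implies every local symmetry of the $1$-connected cover extends globally — which again amounts to integrating the action.
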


A $G$-structure algebroid $A\to X$ can have many $G$-integrations. We will see that there is a maximal source connected $G$-integration, called the {\bf canonical $G$-integration} and denoted by $\Sigma_G(A)\tto X$. The canonical $G$-integration is characterized by the property that the orbifolds $\s^{-1}(x)/G$ are 1-connected. Moreover, it is the largest source-connected $G$-integration in the sense that every source connected $G$-integration $\G$ of $A$ is covered by $\Sigma_G(A)$.

%

All complete solutions with 1-connected base orbifolds are strongly complete and, in fact, correspond to $G$-realizations that are source fibers of the canonical $G$-integration.

\begin{theorem}\label{thm: complete covering}
Let $A \to X$ be a $G$-structure algebroid. A 1-connected complete solution covering a leaf $L$ of $A$ is isomorphic to $M=\s^{-1}(x_0)/G$, where $\s^{-1}(x_0)$ is a source fiber of the canonical $G$-integration of $A|_L$.
\end{theorem}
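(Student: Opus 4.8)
The plan is to combine the classification of strongly complete solutions with the universal property of the canonical $G$-integration, and then to exploit $1$-connectedness to upgrade a covering into an isomorphism. I would begin by unwinding the definition of completeness. Since $M$ is complete and covers the leaf $L$, there is a strongly complete solution $\tilde M$ together with a covering of orbifolds $\tilde M \to M$. By the structure theorem for strongly complete solutions (the result preceding the discussion of the canonical integration), $\tilde M$ is isomorphic to $\s^{-1}(x)/G$ for some $G$-integration $\G \tto X$ of $A|_L$ and some $x\in\Im h$. Because $M$, and hence $\tilde M$, is connected, the source fiber $\s^{-1}(x)$ is connected; replacing $\G$ by the source-connected component through the identities over the relevant orbit leaves $\s^{-1}(x)$ unchanged, so I may assume without loss of generality that $\G$ is source-connected.

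Next I would invoke the universal property of the canonical $G$-integration $\Sigma_G(A|_L)$: being the largest source-connected $G$-integration, it covers every source-connected $G$-integration, so in particular there is a covering morphism of $G$-structure groupoids $\Sigma_G(A|_L) \to \G$. Such a morphism restricts on source fibers to a $G$-equivariant covering $\s_{\Sigma}^{-1}(x) \to \s_{\G}^{-1}(x)$, which descends to a covering of orbifolds $N := \s_{\Sigma}^{-1}(x)/G \to \s_{\G}^{-1}(x)/G = \tilde M$, compatibly with the Maurer-Cartan data $(\wmc,\t)$. Verifying this descent is where the locally-free (rather than free) $G$-action must be handled with care: I would need to check that a covering morphism of $G$-structure groupoids induces an honest orbifold covering on the source-fiber quotients and that it intertwines the realization data.

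Finally, composing the two coverings yields a covering of orbifolds $N \to \tilde M \to M$. Since $\Sigma_G(A|_L)$ is source-connected, $N=\s_{\Sigma}^{-1}(x)/G$ is connected, and by the characterization of the canonical integration it is moreover $1$-connected. As $M$ is $1$-connected by hypothesis and $N \to M$ is a covering with connected total space, it must be an isomorphism: the connected coverings of $M$ are classified by subgroups of the trivial orbifold fundamental group $\pi_1^{\mathrm{orb}}(M)$. Hence $M \cong N = \s^{-1}(x_0)/G$ with $x_0 = x$, a source fiber quotient of the canonical $G$-integration of $A|_L$, as claimed.

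I expect the main obstacle to be the orbifold covering-space bookkeeping rather than any conceptual gap: precisely, showing that the groupoid covering $\Sigma_G(A|_L) \to \G$ descends to an orbifold covering on the source-fiber quotients, and that a composite of such coverings is again a covering in the orbifold sense, so that the classification of coverings by subgroups of $\pi_1^{\mathrm{orb}}(M)$ applies verbatim. Once this is set up, the $1$-connectedness argument closing the proof is immediate.
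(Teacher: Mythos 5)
Your argument has a genuine circularity at its first step. In the paper, a \emph{complete} solution is defined as a full $G$-realization covering a leaf $L$ for which the induced infinitesimal $A_L$-action $\sigma:h^*A_L\to TP$ is complete, i.e.\ $\sigma(\xi_t)$ is a complete vector field whenever $\rho(\xi_t)$ is complete. The statement that every complete solution is covered by a strongly complete one is not the definition but a \emph{consequence} of the theorem you are trying to prove: the paper derives it by first showing that the $1$-connected universal cover of a complete solution is a source fiber of the canonical $G$-integration (which is exactly the present theorem) and then observing that source fibers are strongly complete. So when you write ``since $M$ is complete \dots\ there is a strongly complete solution $\tilde M$ together with a covering $\tilde M\to M$,'' you are assuming the essential content of the result. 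You would need an independent proof that the universal cover of a complete realization has the extension property for local symmetries, and no such direct argument is offered (nor is one apparent: the only route in the paper passes through the source-fiber identification).

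The missing idea is the integration of the complete infinitesimal action. The paper's proof shows that $\sigma:h^*A_L\to TP$ is an isomorphism of Lie algebroids, so by the Crainic--Fernandes result on complete Lie algebroid actions, $A_L$ is integrable, the action integrates to an action of $\Sigma(A_L)$ on $P$, and one obtains a Lie groupoid isomorphism $\Phi: P\rtimes\Sigma(A_L)\to\Pi_1(P)$. Since $M=P/G$ is $1$-connected, $\pi_1(P,p)\cong\pi_1(G)$, and under $\Phi$ the isotropy $P\times_M\widetilde{\cN}^G$ of the action groupoid is carried onto the isotropy of $\Pi_1(P)$; quotienting gives $P\rtimes\Sigma_G(A_L)\cong P\times P$, and restriction to source fibers yields $P\cong\s^{-1}(x)$. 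This step simultaneously establishes the $G$-integrability of $A_L$ (which you also need before you may even speak of the canonical $G$-integration) and the isomorphism with a source fiber. Your steps (2)--(4) --- classifying the strongly complete cover as a source fiber of some $G$-integration, pulling back along the covering $\Sigma_G(A_L)\to\G$, and using $1$-connectedness of $M$ to collapse the resulting composite covering --- would be a correct way to finish \emph{if} the first step were available, but as written the proposal replaces the analytic heart of the proof with an appeal to its own conclusion.
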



We will not discuss in this introduction details on the symmetries of solutions and the moduli space of solutions. We only state the following theorem which illustrates our results in this direction.

\begin{theorem}
\label{thm:moduli:space}
Let $A\to X$ be an integrable $G$-structure algebroid with connection. Then for the associated Cartan's realization problem:
\begin{enumerate}[(i)]
\item The moduli space of germs of solutions is the geometric stack presented by the action Lie groupoid $G\times X\tto X$. In particular, the group of symmetries of a germ of solution $[x]\in X/G$ is isomorphic to the isotropy group $G_x$;
\item The moduli space of complete, 1-connected, solutions is the geometric stack presented by the Lie groupoid $\Sigma_G(A)\tto X$. In particular, the group of symmetries of a 1-connected, complete, solution $[x]\in X/\Sigma(A)$ is isomorphic to the isotropy group $\Sigma_G(A)_x$.
\end{enumerate}
\end{theorem}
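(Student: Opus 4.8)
The backbone of the argument is to upgrade the set-theoretic bijections recorded after Theorem \ref{thm:Cartan:G:alg} into an \emph{equivalence of groupoids}, and then to invoke it twice—once for germs and once for complete $1$-connected solutions. Concretely, I would organize solutions (of either flavor) together with their isomorphisms into a groupoid, and exhibit an equivalence of this groupoid with the category underlying the relevant presenting Lie groupoid ($G\times X\tto X$ in case (i), $\Sigma_G(A)\tto X$ in case (ii)). Since a geometric stack presented by a Lie groupoid is by definition its quotient stack, identifying the moduli stack amounts to producing such an equivalence carrying the compatible differentiable structure; the latter is inherited once the presenting object is already a Lie groupoid, so the real content is the equivalence of categories.

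For part (i) I would first use Theorem \ref{thm:Cartan:G:alg} to replace each germ of solution by its $G$-realization $(P,(\theta,\omega),h)$. A germ carries a marked point, and choosing a frame $p_0\in P$ over it produces a base value $x=h(p_0)\in X$; replacing $p_0$ by $p_0\cdot g$ replaces $x$ by $g^{-1}\cdot x$, so the germ determines a well-defined orbit $[x]\in X/G$. Essential surjectivity of $x\mapsto[x]$ is exactly Corollary \ref{cor:existence}: every $x$ is realized by a germ obtained from a local $G$-integration, and every germ arises this way. Full faithfulness is where the coframe enters: because $(\theta,\omega)$ is a coframe and $h$ is intertwined, any morphism of $G$-realizations is rigid—determined by its value on a single fiber—so a based isomorphism between the germs attached to $x_0$ and $x_1$ exists iff some $g\in G$ carries $x_0$ to $x_1$, and is then unique. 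Specializing $x_0=x_1=x$ computes the automorphism group of the germ as $\set{g\in G: g\cdot x=x}=G_x$, which is precisely the isotropy group of the action groupoid; this yields the equivalence with $G\times X\tto X$ and the claimed symmetry group.

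Part (ii) follows the same template but with $\Sigma_G(A)\tto X$ in place of the action groupoid. Essential surjectivity is Theorem \ref{thm: complete covering}: every $1$-connected complete solution covering a leaf is isomorphic to $\s^{-1}(x)/G$ for the canonical $G$-integration, integrability of $A$ guaranteeing that $\Sigma_G(A)$ exists. To build the functor I would send an arrow $g\in\Sigma_G(A)$ with $\s(g)=x_0$ and $\t(g)=x_1$ to the left translation $L_g\colon \s^{-1}(x_0)\to\s^{-1}(x_1)$; since $L_g$ is $G$-equivariant, preserves $\wmc$ and intertwines $\t$, it descends to an isomorphism of solutions, and functoriality is the groupoid composition law. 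Conversely, full faithfulness requires that every isomorphism of the two source-fiber solutions lift to a unique such arrow; here one combines the same coframe rigidity with $1$-connectedness of the source fibers via a monodromy/developing argument, so that an isomorphism determined by its value at one point extends globally and is recorded by a well-defined element of $\Sigma_G(A)$. Setting $x_0=x_1=x$ then identifies the symmetry group with the isotropy group $\Sigma_G(A)_x$.

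The main obstacle I anticipate is precisely this full-faithfulness/lifting step: proving that isomorphisms of realizations are rigid (the coframe analog of the fact that a frame-bundle diffeomorphism preserving the tautological and connection forms is determined by its value at one point) and, in case (ii), that every such isomorphism between simply connected source fibers is globally realized by a groupoid arrow rather than merely by a germ. Controlling this passage from local to global—and checking that it is functorial enough to yield an equivalence of \emph{stacks}, not just a bijection on isomorphism classes—is the crux; the existence statements are then furnished by Corollary \ref{cor:existence} and Theorem \ref{thm: complete covering}, and the differentiable/stacky structure is inherited from the presenting Lie groupoids.
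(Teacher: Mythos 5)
Your overall strategy is the same as the paper's: reduce germs to source fibers of local canonical $G$-integrations, reduce $1$-connected complete solutions to source fibers of $\Sigma_G(A_L)$ (Theorem \ref{thm: complete covering}, i.e.\ Proposition \ref{prop:1-connected:complete}), and then identify isomorphisms of such realizations with translations by groupoid arrows. The ``crux'' you flag --- rigidity of morphisms of $G$-realizations and the local-to-global passage on $1$-connected fibers --- is not an open obstacle: it is exactly the uniqueness clause of the local and global Universal Properties of the Maurer--Cartan form (Theorems \ref{thm:MC:local} and \ref{thm:MC:global}), which the paper proves beforehand and then quotes in the proofs of Theorems \ref{thm:moduli:germs} and \ref{thm:moduli:1-connected}. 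Your ``coframe rigidity'' in part (i) is the same uniqueness statement, so that part of the plan is sound and essentially identical to the paper's.

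One step as written would fail: in part (ii) you send an arrow $g$ with $\s(g)=x_0$, $\t(g)=x_1$ to the \emph{left} translation $L_g\colon\s^{-1}(x_0)\to\s^{-1}(x_1)$. With the paper's conventions $L_g(\gamma)=g\cdot\gamma$ is defined on $\t^{-1}(\s(g))$, not on $\s^{-1}(x_0)$, and it changes the target ($\t(g\cdot\gamma)=\t(g)$), so it neither maps source fibers to source fibers nor intertwines the classifying maps; worse, left translations by arrows of the form $\iota(\t(\gamma),h)$ \emph{are} the principal $G$-action, so you would be conflating symmetries of the solution with the structure group. The maps you want are the \emph{right} translations: $R_\tau(\gamma)=\gamma\cdot\tau$ preserves $\t$, preserves the right-invariant form $\wmc$, commutes with the $G$-action $\gamma\,h=\iota(\t(\gamma),h)\cdot\gamma$, and carries $\s^{-1}(\t(\tau))$ to $\s^{-1}(\s(\tau))$; an arrow $g\colon x_0\to x_1$ therefore yields $R_{g^{-1}}\colon\s^{-1}(x_0)\to\s^{-1}(x_1)$, which matches the paper's identification of the set of isomorphisms with $\t^{-1}(y)\cap\s^{-1}(x)$. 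With this correction, full faithfulness follows from Theorem \ref{thm:MC:global} exactly as you indicate: any isomorphism of $G$-realizations agrees with some $R_\gamma$ at a single point, hence coincides with it everywhere.
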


Using these and other results to be discussed later in the paper, we are able to gain a complete understanding not only of existence of single solutions, but also of smooth families of solutions and smooth deformations.

When $G\subset \GL(n,\Rr)$ is a Lie group preserving some tensor $K$ in $\Rr^n$, very much like ordinary $G$-structures, a $G$-structure algebroid $A\to X$ with connection inherits an $A$-tensor $K_A$ of the same type as $K$. Moreover, for every $G$-realization $(P,(\theta,\omega),h)$ of $A$ the corresponding solution $M=P/G$ 
also inherits a tensor $K_M$. 

In particular, when  $G\subset \OO(n,\Rr)$ and the torsion $c$ vanishes, we say that a $G$-structure algebroid $A\to X$ is of {\bf metric type}. In this case, not only every solution is a Riemannian orbifold but also every leaf of $A$ inherits a natural Riemannian metric. Then completeness of solutions, as defined above, and metric completeness are related. This is made precise by the following result, which allows to determine the complete metric solutions of the corresponding Cartan's realization problem.

\begin{theorem}
Let $A\to X$ be a $G$-structure algebroid with connection of metric type and let $(P,(\theta,\omega),h)$ be a $G$-realization mapping into a leaf $L$. The metric $K_M$ on $M=P/G$ is complete if and only if the $G$-realization $(P,(\theta,\omega),h)$ is complete and the metric $K_L$ on $L$ is complete. In particular, if $M=P/G$ is metric complete and 1-connected, then $P$ is isomorphic to a source fiber of the canonical $G$-integration.
\end{theorem}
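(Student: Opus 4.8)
The plan is to reduce every completeness statement to the completeness of explicit vector fields on the total space $P$ and then to prove a splitting: a geodesic of $K_M$ fails to extend exactly when either its shadow in the leaf $L$ escapes (failure of $K_L$-completeness) or its lift runs off the fibres of $h$ in finite time (failure of $G$-completeness of the realization). First I would record two standing simplifications coming from the metric-type hypothesis: since $G\subset\OO(n)$ is closed it is compact, and since $c=0$ the connection $\omega$ is the Levi--Civita connection of $K_M$. Writing $\pi\colon P\to M=P/G$ for the bundle projection and $B(\xi)$, $\xi\in\Rr^n$, for the standard horizontal vector fields (so $\theta(B(\xi))=\xi$, $\omega(B(\xi))=0$), every unit-speed geodesic of $K_M$ is $\pi\circ\gamma$ for an integral curve $\gamma$ of some $B(\xi)$ with $\abs{\xi}=1$; because $G$ is compact the horizontal lift of a complete base geodesic is again complete, so by Hopf--Rinow
\[
K_M \text{ complete} \iff B(\xi)\text{ is a complete vector field on }P\text{ for every }\xi\in\Rr^n .
\]

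Next I would analyse how $h$ transports these curves. Differentiating the structure equation $\d h=F(h,\theta)+\psi(h,\omega)$ along $\gamma$ gives $\tfrac{\d}{\d t}(h\circ\gamma)=F(h\circ\gamma,\xi)$, so $h$ is $\gamma$-related to the vector field $F(\cdot,\xi)$ on $L$, and $h\colon P\to L$ is a $G$-equivariant surjective submersion onto an open $G$-saturated piece of $L$. Two features of this submersion carry the whole argument. On the one hand, the induced metric $K_L$ is arranged so that the $h$-images of horizontal geodesics are geodesics of $(L,K_L)$; hence completeness of all such projected curves is equivalent to geodesic completeness of $(L,K_L)$. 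On the other hand, the fibres of $h$ are spanned by the fundamental vector fields $X_v$, $v\in\gg$, of the $G$-action together with the \emph{symmetry vector fields} $B(\eta)+X_v$ associated to the isotropy of the anchor $\rho$ (the pairs $(\eta,v)$ with $F(x,\eta)+\psi(x,v)=0$); the former are automatically complete, since they generate the proper $G$-action, and completeness of the latter is exactly the condition that every local symmetry of the realization extend to a global one, i.e.\ that $(P,(\theta,\omega),h)$ be a complete $G$-realization.

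The heart of the proof is then the splitting lemma: for fixed $\xi$, the integral curve of $B(\xi)$ through $p_0$ is defined for all time if and only if its projection $h\circ\gamma$ is defined for all time in $L$ and the lift does not leave the fibres of $h$ in finite time. Granting the two identifications above, this yields
\[
B(\xi)\text{ complete for all }\xi \iff (L,K_L)\text{ complete and }(P,(\theta,\omega),h)\text{ complete},
\]
and combined with the Hopf--Rinow equivalence of the first paragraph this is precisely the asserted statement. Concretely, for $(\Leftarrow)$ I would lift a complete unit-speed $K_L$-geodesic to $P$ using completeness of the fibre (symmetry) directions --- the fibre-bundle type fact that a lift over a complete base with complete, symmetry-generated fibres is complete --- to conclude that $B(\xi)$ is complete; for $(\Rightarrow)$ I would project complete $B(\xi)$-curves by $h$ to obtain completeness of $K_L$, and use that completeness of all $B(\xi)$ forces both surjectivity of $h$ onto $L$ and completeness of the symmetry fields $B(\eta)+X_v$, hence $G$-completeness of the realization.

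Finally, the ``in particular'' is immediate: if $M$ is metric complete and $1$-connected then, by the equivalence just proved, $(P,(\theta,\omega),h)$ is a complete $G$-realization covering $L$, and Theorem \ref{thm: complete covering} identifies a $1$-connected complete solution with $\s^{-1}(x_0)/G$ for a source fibre of the canonical $G$-integration of $A|_L$, whence $P\cong\s^{-1}(x_0)$. The main obstacle is the splitting lemma, and within it the clean separation of the two failure modes with no cross terms; I expect the delicate points to be (a) proving that the $h$-images of horizontal geodesics really are $K_L$-geodesics, so that ``shadow escapes'' is literally $K_L$-incompleteness, and (b) the no-escape-in-the-fibre argument, where one must rule out that a $B(\xi)$-curve projecting into a relatively compact part of $L$ nevertheless leaves $P$ in finite time. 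This last point is where the completeness of the symmetry vector fields $B(\eta)+X_v$, i.e.\ the $G$-completeness of the realization, must be leveraged, presumably by developing the curve into a source fibre of a local $G$-integration and invoking Theorem \ref{thm:integration:realizations}.
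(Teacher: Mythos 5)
Your overall strategy --- reduce everything to completeness of the standard horizontal vector fields $B(\xi)$ on $P$ and then split the two failure modes --- is genuinely different from the paper's proof (which passes to the universal cover, uses that $h$ is a surjective Riemannian submersion from the complete manifold $P$, and invokes completeness of infinitesimal isometries to realize $P$ as a source fibre of the gauge groupoid $(P\times P)/I(M)$). But as written it has a gap at its load-bearing step. The curves you project are $\omega$-horizontal, i.e.\ horizontal for $\pi\colon P\to M$, not for $h\colon P\to L$: the $h$-horizontal distribution is $(\theta,\omega)^{-1}\bigl((\ker\rho)^{\perp}\bigr)$, and $(\xi,0)$ need not lie in $(\ker\rho)^{\perp}$. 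So the Riemannian-submersion property of $h$ does not make $h\circ\gamma$ a geodesic of $(L,K_L)$; it is merely an integral curve of $F(\cdot,\xi)$, whose $K_L$-speed satisfies $\abs{F(\cdot,\xi)}_{K_L}\le\abs{\xi}$. That inequality gives you ``$K_L$ complete $\Rightarrow$ projections complete'', but the converse --- the direction you actually need for (a)$\Rightarrow$(b) --- does not follow: completeness of the integral curves of the (possibly degenerate) fields $F(\cdot,\xi)$ does not detect incompleteness of a metric in general. The paper gets this implication from the standard fact that a surjective Riemannian submersion with complete total space has complete base.

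Two further points. First, you identify ``complete $G$-realization'' with ``every local symmetry extends to a global one''; that is the definition of \emph{strongly} complete, whereas complete means completeness of the infinitesimal $A_L$-action $\sigma\colon h^*A_L\to TP$ --- a condition on $\sigma(\xi_t)$ for all time-dependent sections with complete anchor, not only on the isotropy (``symmetry'') fields $B(\eta)+X_v$; the flat torus shows the two notions differ, and the reduction of one to the other is not justified. Second, the ``no cross terms'' claim in your splitting lemma --- that a $B(\xi)$-curve whose shadow survives for all time and whose fibre directions are complete cannot leave $P$ in finite time --- is precisely the hard content; it amounts to showing that $h$ becomes a fibre bundle with a complete Ehresmann-type connection, which is what the paper obtains by integrating the $A_L$-action to a $\Sigma(A_L)$-action in the proof of Theorem \ref{thm:complete:int}. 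Your final paragraph on the ``in particular'' statement is fine and matches Proposition \ref{prop:1-connected:complete}.
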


One can consider also other types of $G$-structure algebroids, such as {\bf symplectic type} ($G\subset \Sp(n,\Rr)$), {\bf complex type} $(G\subset\GL(n,\Cc)$ or {\bf K\"ahler type} ($G\subset \UU(n))$. In these cases, there are additional \emph{integrability conditions} which can be expressed in terms of the algebroid. For example, in the case of symplectic type the algebroid $A$ carries a canonical $A$-form $\Omega_A\in\Omega^2(A)$ and the integrability condition is $\d_A\Omega_A=0$, where $\d_A$ is the Lie algebroid differential. This is yet another illustration of how Lie algebroids and groupoids provide the appropriate language to deal with such problems.

In the first section of the paper, we start by setting up the Cartan Realization Problem associated with the classification of extremal K\"ahler metrics on surfaces. We also take a first look at how Lie algebroids arise in connection with these kinds of problems. We will come back to this problem only at the very last section, to illustrate all the theory. There we will see that, using our techniques, one can give complete answers to all the questions above. Due to length of the paper, we will report elsewhere on applications of our theory to more sophisticated problems (see also \cite{FernandesStruchiner1,FernandesStruchiner2} for the special case of the equivalence problem of a singular $G$-structure).


{\bf Acknowledgments.} 
Our work was originally inspired by Robert Bryant remarks in the appendix of \cite{Bryant} and subsequent discussions with Marius Crainic, to whom we are in debt. We also thank Ori Yudilevich for several conversations and comments at various stages of this work, Gon\c{c}alo Oliveira for pointing out a mistake in Section \ref{sec:complete:Kahler} in a previous version of this paper, and the two anonymous referees for various comments. Finally, we acknowledge the gracious support of IME-USP, the University of Illinois at Urbana-Champagne and IMPA, while this project was being developed.

\section{Prelude: an example}
\label{sec:algebroids}

Before we introduce the language of $G$-structure groupoids and $G$-structure algebroids, we will explain the type of Cartan's realization problems we are interested in and how they arise. In this section, we start by discussing one such problem:  the classification of extremal K\"ahler metrics on surfaces. We will also explain how to associate a Lie algebroid to a Cartan's realization problem. This is a simple (but interesting!) paradigmatic example of the kind of problems to which our theory applies. We will come back to it in the last section of the paper, after we have established all the theory. We will also give an example of a problem with an infinite dimensional space of invariants, to which our theory does not apply.

\subsection{Extremal K\"ahler metrics on surfaces}
\label{sec:example:EK:surfaces:0}

Recall that if $(M,g,\Omega,J)$ is a compact K\"ahler manifold then $g$ is an extremal metric (i.e., is a critical point of the Calabi functional) if and only if the Hamiltonian vector field $X_K$ associated with the scalar curvature $R$ is a Killing vector field (see \cite[Theorem 4.2]{Szekelyhidi14}). In the non-compact case the Calabi functional does not make sense and we take the later property as the definition of extremal metric.

\begin{definition}
We say that a K\"ahler surface $(M^2,g,\Omega,J)$ is an {\bf extremal K\"ahler surface}  if the Hamiltonian vector field $X_K$ associated with the Gaussian curvature $K$ is an infinitesimal isometry. 
\end{definition}

The problem of classification of extremal K\"ahler surfaces leads naturally to a Cartan's realization problem as follows. Given such a surface $(M^2,g,\Omega,J)$ we first pass to the unitary frame bundle $\pi:\B_{U(1)}(M)\to M$, where we have the tautological 1-form $\theta\in \Omega^1(\B_{U(1)}(M),\Cc)$  and the Levi-Civita connection 1-form $\omega\in \Omega^1(\B_{U(1)}(M),\uu(1))$. They satisfy the structure equations
\[
\left\{
\begin{array}{l}
\d \theta=- \omega \wedge \theta\\
\d \omega= \frac{K}{2}\, \theta \wedge \bar{\theta}
\end{array}
\right.
\]
where we use $K$ to denote both the Gaussian curvature and its pullback to $\B_{U(1)}(M)$.

The pullback of the symplectic form $\Omega$ to $\B_{U(1)}(M)$ is given by
\[ \pi^*\Omega=-\frac{i}{2}\, \theta\wedge\bar{\theta}. \]
Hence, if $\widetilde{X}_K$ is the lift of $X_K$ to $\B_{U(1)}(M)$, we then have
\[ \frac{i}{2}\, i_{\widetilde{X}_K} (\theta\wedge\bar{\theta})= -i_{\widetilde{X}_K}\pi^*\Omega=-\d K. \]
Letting $T:\B_{U(1)}(M)\to\Cc$ be the complex-valued function $T:=\frac{i}{2}\theta(\widetilde{X}_K)$, the last equation reads
\[ -\d K=\bar{T}\theta+T\bar{\theta}. \]

Since $\widetilde{X}_K$ is the lift of $X_K$, we have
\[ \Lie_{\widetilde{X}_K}\theta=0, \]
and this, together with the structure equations, yields
\begin{align*} 
\d T&=\frac{i}{2}\,\d\, i_{\widetilde{X}_K}\theta
      =-\frac{i}{2}\, i_{\widetilde{X}_K}\d\theta\\
      &=\frac{i}{2}\, i_{\widetilde{X}_K}(\omega\wedge\theta)=U\theta-T\omega,
\end{align*}
where $U:\B_{U(1)}(M)\to \Rr$ is the real-valued function $U:=\frac{i}{2}\omega(\widetilde{X}_K)$.

The assumption that $X_K$ is an infinitesimal isometry is equivalent to
\[ \Lie_{\widetilde{X}_K}\omega=0. \]
So differentiating $U$ we obtain
\begin{align*} 
\d U&=\frac{i}{2}\,\d\, i_{\widetilde{X}_K}\omega
      =-\frac{i}{2}\, i_{\widetilde{X}_K}\d\omega\\
      &=- \frac{i}{4} K\, i_{\widetilde{X}_K}(\theta\wedge\bar{\theta})
      =\frac{1}{2}K\d K=-\frac{K}{2}(\bar{T}\theta+T\bar{\theta}).
\end{align*}

In other words, finding an extremal K\"ahler surface is the same thing as finding a $U(1)$-structure $P$ with tautological form $\theta\in\Omega^1(P, \Cc)$ and connection 1-form $\omega \in \Omega^1(P, i\Rr)$, together with a map $(K,T,U):P\to \Rr\times \Cc\times \Rr$, satisfying
\begin{equation}
\label{eq:EK}
\left\{
\begin{array}{l}
\d \theta=- \omega \wedge \theta\\
\d \omega= \frac{K}{2}\, \theta \wedge \bar{\theta}\\
\d K=-(\bar{T}\theta+T\bar{\theta})\\
\d T=U\theta-T\omega\\
\d U=-\frac{K}{2}(\bar{T}\theta+T\bar{\theta})
\end{array}
\right.
\end{equation}
The manifold $M=P/U(1)$ will carry the desired extremal K\"ahler metric.

\begin{remark}
\label{rem:EK:metrics}
A Bochner-K\"ahler metric is a K\"ahler metric for which the Bochner tensor vanishes. In (real) dimension 2, it does not make sense to talk about the Bochner tensor. However, Bochner-K\"ahler metrics form a special class of extremal metrics, and Bryant in \cite{Bryant} takes ``extremal'' as the definition of a Bochner-K\"ahler metric in dimension 2.
\end{remark}

\subsection{Cartan's realization problem and Lie algebroids}
\label{sec:Cartan:Realization}

Consider Cartan data $(G, X,c,R,F)$ as defined in the Introduction. Then we consider the trivial vector bundle $A\to X$ with fiber $\Rr^n\oplus\gg$, and we introduce:
\begin{itemize}
\item a bundle map $\rho:A\to TX$:
\[ (u,\al)\stackrel{\rho}{\mapsto} F(u) + \psi(\al) \]
(recall that $\psi:X\times\gg\to TX$ denotes the infinitesimal $G$-action);
\item an $\Rr$-bilinear, skew-symmetric bracket $[\cdot,\cdot]$ on the space of sections $\Gamma(A)$. It is defined first on constant sections $(u,\al), (v, \be) \in \Gamma(A)$ by
\[[(u,\al), (v, \be)] = (\al\cdot v - \be\cdot u - c(u,v), [\al,\be]_\gg - R(u,v)),\]
and then extended to any pair of sections by imposing the Leibniz identity
\[ [s_1,fs_2]=f[s_1,s_2]+(\Lie_{\rho(s_1)} f)\ s_2,\]
for any sections $s_1,s_2\in\Gamma(A)$ and smooth function $f\in C^\infty(X)$.
\end{itemize}
The triple $(A,[\cdot,\cdot],\rho)$ will be a Lie algebroid provided the bracket satisfies the Jacobi identity.  

For an alternative, more natural approach, recall that one can also define a Lie algebroid structure on a vector bundle $A\to M$ as a degree 1 derivation $\d_A:\Omega^\bullet(A)\to \Omega^{\bullet+1}(A)$ satisfying $\d_A^2=0$. Here, $\Omega^k(A):=\Gamma(\wedge^k A^*)$ denotes the space of $A$-forms of degree $k$. Given Cartan Data $(G, X,c,R,F)$ one defines a differential $\d_A$ on the trivial vector bundle $A\to X$ with fiber $\Rr^n\oplus\gg$ as follows:
\begin{itemize}
\item for any smooth function $f\in \Omega^0(A)=C^\infty(X)$ we set
\[ \d_A f:=\langle \d f, F\rangle \theta-\langle \d f,\psi\rangle \omega; \]
\item $\theta=\pr_\Rr^n:A\to \Rr^n$ and $\omega:=\pr_\gg:A\to \gg$ together give a $C^\infty(X)$-basis for $\Omega^1(A)$ and we set
\begin{align*}
\d_A \theta&:=c ( \theta \wedge \theta) - \omega \wedge \theta,\\
\d_A \omega&:= R( \theta \wedge \theta) - \omega\wedge\omega;
\end{align*}
\item we extend $\d_A$ to any $A$-forms by imposing the Leibniz identity
\[ \d_A (\al\wedge \be)=\d_A\al\wedge\be+(-1)^{\deg\al} \al\wedge\d_A\be. \]
\end{itemize}
This will define a Lie algebroid structure on $A$ if and only if $\d_A^2=0$. 
\medskip

In conclusion, the triple $(A,[\cdot,\cdot],\rho)$ allows us to make sense of the structure equations \eqref{eq:structure:1} and \eqref{eq:structure:2} \emph{without knowing} the manifold $\B_G(M)$. We will see later that given Cartan Data $(G, X,c,R,F)$ then $(A,[\cdot,\cdot],\rho)$ is a Lie algebroid if and only if for any $x\in X$ there exists a solution $(\B_G(M), (\theta,\omega), h)$ of the associated Cartan's Realization Problem with $x \in \Im h$ (see Theorem \ref{thm:Cartan:G:alg:A} and Corollary \ref{cor:local:existence:2}).

%
%
%
%

\subsection{$G$-action, connections and tautological form}
\label{sec:example:EK:surfaces:2}
The Lie algebroid $A\to X$ associated with Cartan Data $(G, X,c,R,F)$ is rather special. It comes equipped with:
\begin{itemize}
\item an action of the structure group $G$ on the bundle $A=X\times (\Rr^n\oplus\gg)\to X$ given by
\[ (x,u,\al)g=(xg,g^{-1}u,g^{-1}\al g),\]
where we use the $G$-action on $X$ and the fact that $G \subset \GL(n,\Rr)$.
\item algebroid 1-forms $\theta\in\Omega^1(A;\Rr^n)$ and $\omega\in\Omega^1(A;\gg)$ given by the projections from $A$ to $\Rr^n$ and $\gg$.
\end{itemize}
One should think of $\theta$ and $\omega$ as the tautological and the connection 1-forms on $A$. As we shall see, these make $A\to X$ into a $G$-structure algebroid with connection.
\medskip

In the next sections we will define precisely what are $G$-structure algebroids and groupoids, as well as connections on them, and how one can use  them to solve Cartan's realization problem.

\subsection{The Lie algebroid of extremal K\"ahler surfaces}
\label{sec:example:EK:surfaces:1}

Comparing the structure equations \eqref{eq:EK} with the general structure equations \eqref{eq:structure:1}, \eqref{eq:structure:2}, we can extract the relevant Cartan data $(G, X,c,R,F)$. First, on $X=\Rr\times \Cc\times \Rr$ we denote by $(K,T,U)$ global coordinates and we have the right $U(1)$-action
\[  (K,T,U)g=(K,g^{-1}T,U), \quad g\in U(1), \]
with associated infinitesimal action $\psi: X \times \mathfrak{u}(1) \to TX$
\[ \psi(\al)|_{(K,T,U)}=\left(0,-\al T,0\right), \quad \al\in  \mathfrak{u}(1).\]
The torsion $c: X \to \hom(\wedge^2\Cc, \Cc)$ vanishes identically while the curvature is the $U(1)$-equivariant map $R: X \to \hom(\wedge^2\Cc, \mathfrak{u}(1))$ given by
\[ R(K,T,U)(z\wedge w )=\frac{K}{2}(z\bar{w}-\bar{z}w).\]
On the other hand, the $U(1)$-equivariant map $F: X \times \Cc \to TX$, is given by
\[ F((K,T,U),z)=\left(-T\bar{z}-\bar{T}z,Uz,-\frac{K}{2}T\bar{z}-\frac{K}{2}\bar{T}z\right). \]

Hence, we can write the explicit Lie algebroid associated with the classification problem of extremal K\"ahler surfaces. We have the trivial bundle $A=X\times(\Cc\oplus\uu(1))\to X$, with base $X=\Rr\times \Cc\times \Rr$. The Lie bracket is defined on constant sections by
\[ [(z,\al),(w,\be)]|_{(K,T,U)}:=(\al w-\be z, -\frac{K}{2}(z\bar{w}-\bar{z}w)), \]
while the anchor is given by
\[ \rho(z,\al)|_{(K,T,U)}:=\left(-T\bar{z}-\bar{T}z,Uz-\al T,-\frac{K}{2}T\bar{z}-\frac{K}{2}\bar{T}z\right). \]
One can check easily that the bracket satisfies the Jacobi identity, so this is indeed a Lie algebroid.

\subsection{Surface Metrics with $|\nabla K|=1$}
\label{sec:example:surfaces:gradient}
Following Bryant \cite[Example 5.1]{Bryant:notes}, consider now the problem of classification of oriented Riemannian surfaces $(M,g)$ for which the Gaussian curvature $K$ satisfies
\[ |\nabla K|=1. \]
Passing to the orthogonal frame bundle $\pi:\B_{SO(2)}(M)\to M$, the tautological 1-form $\theta\in \Omega^1(\B_{SO(2)}(M),\Rr^2)$ and the Levi-Civita connection 1-form $\omega\in \Omega^1(\B_{SO(2)}(M),\so(2))$ satisfy the structure equations
\[\d \theta^1=- \omega \wedge \theta^2,\qquad \d \theta^2=\omega \wedge \theta^1,\qquad \d \omega= K\, \theta^1 \wedge \theta^2.\] 
Let us attempt a differential analysis, as in the previous example. Differentiating the Gaussian curvature, we obtain
\[ \d K=K_1\theta^1+K_2\theta^2.\]
Our restriction on the curvature now says that $K_1^2+K_2^2=1$, so we set $K_1=\cos\phi$ and $K_2=\sin\phi$. From $\d^2 K=0$, we see that we must have
\[ \d\phi=\omega+J_1(-\sin\phi\, \theta^1+\cos\phi\, \theta^2),\]
for some new invariant function $J_1:\B_{SO(2)}(M)\to \Rr$. If we next impose $\d^2\phi=0$, we obtain
\[ \d J_1=-(K+J_1^2)(\cos\phi\, \theta^1+\sin\phi\, \theta^2)+J_2(-\sin\phi\, \theta^1+\cos\phi\, \theta^2), \]
for a new invariant function $J_2:\B_{SO(2)}(M)\to \Rr$. We can again impose $\d J_2=0$ and this will lead to a new invariant $J_3$. In fact, this process will produce an infinite sequence of invariant functions $J_k:\B_{SO(2)}(M)\to \Rr$ satisfying
\[ \d J_k=F_k(K,J_1,\dots,J_k)(\cos\phi\, \theta^1+\sin\phi\, \theta^2)+J_{k+1}(-\sin\phi\, \theta^1+\cos\phi\, \theta^2),\]
for some polynomial functions $F_k$.

The theory developed in this paper does not apply to this kind of problem, which has an infinite dimensional space of invariants. This requires allowing for $G$-manifolds $X$ which are profinite dimensional. These pose considerably issues and will be dealt with in future work.


\section{$G$-structure groupoids and connections}
\label{sec:G:grpds}

In this section we will introduce $G$-principal groupoids and $G$-structure groupoids, which allow to parameterize families of $G$-principal bundles and $G$-structures, respectively. We will also introduce connections on them. The main property of these families, compared with arbitrary families of $G$-principal bundles and $G$-structures, is that they have infinitesimal counterparts (to be introduced in the next section).
\medskip

{\bf Convention.} Henceforth we will assume that $G$ is a compact, connected, Lie group. This covers most applications, but not all. This hypothesis is made in order to simplify the exposition and can be removed by including properness assumptions in various definitions and propositions. We will comment on these as we go along.

\subsection{Principal bundles and $G$-structures over orbifolds}

The solutions to Cartan's Realization Problem are naturally $G$-structures over orbifolds. We collect here some basic facts about principal bundles and $G$-structures over an orbifold $M$. We will consider only \emph{effective} orbifolds for which a good general reference is \cite{MoerdijkMrcun}. Although $G$-structures over orbifolds are not treated in \cite{MoerdijkMrcun} the following facts can easily be proved with the techniques developed there, by mimicking the usual proofs for $G$-structures over smooth manifolds.

We will work mainly with right actions of Lie groups. By a {\bf principal action} of a Lie group $G$ on a manifold $P$ we mean a proper, effective and locally free action $P\rtimes G\to P$. According to our convention, $G$ is assumed to be compact so properness is automatic. In this case, $M=P/G$ has a natural structure of an effective orbifold (see  \cite{MoerdijkMrcun}), and we call $\pi:P\to M$ a {\bf $G$-principal bundle} over the orbifold $M$. Principal bundle connections are defined in the usual way by horizontal distributions which are preserved by the the $G$-action. They are determined by a connection 1-form $\omega\in\Omega^1(P;\gg)$ which satisfies the two properties:
\begin{itemize}
\item \emph{vertical}: 
\[ T_p P=\ker\d_p\pi\oplus \ker\omega_p\quad\text{and}\quad \omega(\al_P)=\al,\ \forall\al\in\gg; \]
\item \emph{$G$-equivariance}: 
\[ g^*\omega=\Ad_{g^{-1}}\cdot\omega,\ \forall g\in G. \]
\end{itemize}

For a $n$-dimensional orbifold $M$ we denote by $\pi: \B(M) \to M$ its {\bf frame bundle}. It is a principal $\GL(n,\Rr)$-bundle: $\B(M)$ is a smooth manifold (\cite[Section 2.4]{MoerdijkMrcun}) and it has a proper, effective, locally free action of $\GL(n,\Rr)$, such that $M=\B(M)/\GL(n,\Rr)$. The frame bundle has a canonical $\Rr^n$-valued {\bf tautological 1-form} $\theta \in \Omega^1(\B(M), \Rr^n)$ defined by
\[\theta_p(\xi) = p^{-1}(\d_p\pi(\xi)),\quad (p\in\B(M),\ \xi\in T_p \B(M)).\]
Any local diffeomorphism of orbifolds $\varphi: M \to N$ lifts to a bundle map of the associated frame bundles
\[\tilde{\varphi}: \B(M) \to \B(N), \quad \tilde{\varphi}(p)(v) = \d\varphi (pv),\]
preserving the tautological 1-forms, $\tilde{\varphi}^*\theta_N=\theta_M$.

More generally, if $G \subset \GL(n,\Rr)$ is a closed subgroup, we consider \textbf{$G$-structures} over $M$: they are given by a principal $G$-subbundle $\B_G(M)\subset \B(M)$. We say that two $G$-structures $\B_G(M)$ and $\B_G(N)$ are \textbf{(locally) equivalent} if there exists a (locally defined) diffeomorphism of orbifolds $\varphi: M \to N$ such that 
\[ \B_G(N) = \tilde{\varphi}(\B_G(M)). \]
For a general $G$-structure $\B_G(M)$ its tautological form is obtained by restricting the tautological form of $\B(M)$ and will still be denoted by the same letter $\theta$. It is characterized by the following two properties:
\begin{itemize}
\item \emph{strong horizontality}: 
\[ \theta_p(\xi)=0\text{ if and only if } \xi=\al_P|_p, \text{ for some }\al\in\gg;\]
\item \emph{$G$-equivariance}: 
\[ g^*\theta=g^{-1}\cdot\theta,\ \forall g\in G. \]
\end{itemize}

We will make use of the following well-known characterization of $G$-structures. The proof for the case of smooth manifolds extends easily to the case of orbifolds.

\begin{prop}
\label{prop:G-structures}
Let $G$ be a closed subgroup of $\GL(n, \Rr)$, and let $P\to M$  be a $G$-principal bundle over an orbifold, equipped with a pointwise surjective $\Rr^n$-valued form $\widetilde{\theta}\in\Omega^1(P;\Rr^n)$ which is both strongly horizontal and $G$-equivariant. Then there is a unique isomorphism $P\simeq \B_G(M)$ with a $G$-structure over $M$ which maps $\widetilde{\theta}$ to the tautological form $\theta$.
\end{prop}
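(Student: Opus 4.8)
The plan is to build the isomorphism directly out of $\widetilde\theta$, exhibiting each point of $P$ as a frame of $M$. First I would observe that, since the $G$-action is locally free, the infinitesimal action $\gg\to T_pP$, $\al\mapsto\al_P|_p$, is injective and its image is exactly the vertical space $\ker\d_p\pi$ (where $\pi\colon P\to M$ is the projection), this being the tangent space to the fibre $=$ orbit. Strong horizontality then reads $\ker\widetilde\theta_p=\{\al_P|_p:\al\in\gg\}=\ker\d_p\pi$, and since $\widetilde\theta_p$ is surjective onto $\Rr^n$ this forces $\dim T_pP=n+\dim G$ and identifies the quotient $T_pP/\ker\d_p\pi$ both with $\Rr^n$, via $\widetilde\theta_p$, and with $T_{\pi(p)}M$, via $\d_p\pi$. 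Hence there is a unique linear isomorphism $\phi(p)\colon\Rr^n\to T_{\pi(p)}M$ characterised by $\d_p\pi=\phi(p)\circ\widetilde\theta_p$; that is, $\phi(p)$ is a frame at $\pi(p)$, and I define $\Psi\colon P\to\B(M)$ by $\Psi(p)=\phi(p)$.

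Next I would verify the three defining properties of $\Psi$. Smoothness is local and follows because $\phi(p)$ is obtained by inverting the restriction of $\widetilde\theta_p$ to any smooth horizontal complement, which depends smoothly on $p$. For $G$-equivariance I write an arbitrary vector at $pg$ as $\d R_g\xi$ and use $\pi\circ R_g=\pi$ together with $R_g^*\widetilde\theta=g^{-1}\cdot\widetilde\theta$ to compute $\phi(pg)\,g^{-1}\widetilde\theta_p(\xi)=\phi(pg)\widetilde\theta_{pg}(\d R_g\xi)=\d_{pg}\pi(\d R_g\xi)=\d_p\pi(\xi)=\phi(p)\widetilde\theta_p(\xi)$; since $\widetilde\theta_p$ is onto, this gives $\phi(pg)=\phi(p)\circ g=\Psi(p)\cdot g$ in the frame-bundle action, so $\Psi$ is $G$-equivariant and covers $\mathrm{id}_M$. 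Finally, that $\Psi$ intertwines the forms is immediate from the definition of the tautological form of $\B(M)$: for $\xi\in T_pP$ one has $(\Psi^*\theta)_p(\xi)=\phi(p)^{-1}\big(\d_{\Psi(p)}\pi_{\B}(\d_p\Psi\,\xi)\big)=\phi(p)^{-1}(\d_p\pi(\xi))=\widetilde\theta_p(\xi)$, using $\pi_{\B}\circ\Psi=\pi$ and $\d_p\pi=\phi(p)\circ\widetilde\theta_p$.

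It then remains to promote the equivariant map $\Psi$ to an isomorphism onto a $G$-subbundle. In the manifold case the frame-bundle action of $\GL(n,\Rr)$ on $\B(M)$ is free, so any $g\in G$ fixing $\Psi(p)$ satisfies $\phi(p)\circ g=\phi(p)$, whence $g=e$; equivariance then shows the $G$-action on $P$ is free and that $\Psi$ is injective on fibres, and since $\Psi$ covers $\mathrm{id}_M$ it is a bijection onto the $G$-invariant set $\B_G(M):=\Psi(P)$. A routine check that $\d\Psi$ is injective (it is a linear isomorphism on vertical spaces and respects $\d\pi$) together with properness ($G$ compact) shows $\Psi$ is an embedding, so $\B_G(M)$ is a principal $G$-subbundle, i.e. a $G$-structure, whose tautological form pulls back to $\widetilde\theta$. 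Uniqueness follows by running the form computation backwards: any isomorphism $\Psi'$ covering $\mathrm{id}_M$ with $\Psi'^*\theta=\widetilde\theta$ must send $p$ to a frame $\phi'(p)$ with $\phi'(p)\circ\widetilde\theta_p=\d_p\pi$, which is precisely the relation defining $\phi(p)$, so $\Psi'=\Psi$.

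I expect the only real obstacle to be the passage to orbifolds, where $\B(M)$ and the tangent spaces $T_{\pi(p)}M$ are defined through orbifold charts and the $\GL(n,\Rr)$-action on $\B(M)$ is merely \emph{locally} free. In that setting the construction of $\phi(p)$ is carried out chart-by-chart, and the freeness argument above is replaced by matching the finite isotropy group of $p$ in $P$ with that of $\Psi(p)$ in $\B(M)$, using that $\Psi$ is equivariant and compatible with the chart data. All the pointwise linear-algebra identities are unchanged, which is exactly why the manifold proof extends to orbifolds with only cosmetic modifications.
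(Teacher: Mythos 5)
Your construction is correct and is precisely the standard argument that the paper alludes to but does not write out: the paper simply records this as a well-known fact whose manifold proof ``extends easily'' to orbifolds, so there is no proof in the text to compare against beyond that remark. Your definition of $\Psi(p)=\phi(p)$ via $\d_p\pi=\phi(p)\circ\widetilde\theta_p$, the equivariance and pullback computations, and the uniqueness argument are all the expected steps and are carried out correctly. The only place where genuine (non-cosmetic) care is needed is the one you flag at the end: at an orbifold point the $\GL(n,\Rr)$-action on $\B(M)$ has finite isotropy isomorphic to the local group $\Upsilon_{\pi(p)}$, and injectivity of $\Psi$ requires checking that this isotropy is exactly the image of $G_p$ under the linearized action (which follows from $\widetilde\theta_p(\d R_g\xi)=g^{-1}\widetilde\theta_p(\xi)$ for $g\in G_p$); your sketch identifies the right mechanism, so the proof is complete in substance.
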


\subsection{$G$-principal groupoids}

We denote by $\G\tto X$ a Lie groupoid with space of arrows $\G$ and space of objects $X$. We use the letters $\s$ and $\t$ for the source and target maps, the symbol $1_x$ for the identity arrow at $x\in X$ and $\gamma_1\cdot\gamma_2$ for the product of the composable arrows $(\gamma_1,\gamma_2)\in \G^{(2)}:=\G\timesst\G$. Also, we denote by $T^{\s} \G$ the tangent distribution to the source fibers.

\begin{definition}
\label{def:principal:G:action}
A {\bf $G$-principal action on a Lie groupoid $\G\tto X$ along its s-fibers} is given by a principal action $\G\times G\to \G$, preserving source fibers and satisfying
\begin{equation}
\label{eq:principal:action} 
(\gamma_1\cdot \gamma_2)\, g= (\gamma_1\, g)\cdot \gamma_2,\quad \forall (\gamma_1,\gamma_2)\in\G^{(2)},\ g\in G.
\end{equation}
We will call such an action simply a {\bf $G$-principal action} and we will call $\G$ a {\bf $G$-principal groupoid}. By a {\bf morphism of $G$-principal groupoids} we mean a groupoid morphism $\Phi:\G_1\to\G_2$ which is $G$-equivariant.
\end{definition}

For a $G$-principal groupoid $\G\tto X$ each source fiber $\s^{-1}(x)$ is a $G$-principal bundle over the orbifold $M=\s^{-1}(x)/G$. The main reasons we are interested in such groupoids are (i) they parameterize families of $G$-principal bundles and (ii) they have infinitesimal versions, namely their Lie algebroids.

\begin{remark}
If we write a principal action as a left action $G\times \G\to \G$, then \eqref{eq:principal:action} becomes
\[ g\, (\gamma_1\cdot \gamma_2)=(g\,\gamma_1)\cdot \gamma_2. \]
This looks more natural than \eqref{eq:principal:action}, but since our principal bundles are equipped with right actions it is more efficient to define $G$-principal groupoids using right actions. Another possibility would be to consider right $G$-actions along $\t$-fibers, in which case \eqref{eq:principal:action} becomes
\[ (\gamma_1\cdot \gamma_2)\, g= \gamma_1\cdot (\gamma_2\, g). \]
However, we decided to follow the more common convention of defining algebroids as tangent spaces to $\s$-fibers along identity sections, so we need to consider actions along $\s$-fibers.
\end{remark}

\begin{remark}
It is more common to consider (principal) actions of Lie groups on a Lie groupoid of different nature than Definition \ref{def:principal:G:action} -- see for example \cite{BNZ20} in the context of stacks. Still, actions as in this definition have been considered before in the literature, including in algebraic geometry, and we refer to \cite[Section 2.1]{BIL19} for a historical account.
\end{remark}

There is a slightly different point of view on $G$-principal groupoids, which will be useful when we introduce their infinitesimal versions.

First, given a $G$-principal groupoid $\G\tto X$ we have a $G$-action on $X$ defined by
\begin{equation}
\label{eq:X:action}
X\times G\to X,\quad x\, g:=\t(1_x\, g),
\end{equation}
for which $\s:\G\to X$ is $G$-invariant and $\t:\G\to X$ is $G$-equivariant. 
 
Next, we recall that given a (right) $G$-action on a manifold $X$ we can form the action Lie groupoid $X\rtimes G\tto X$, where an arrow is a pair $(x,g)$ with source $x$ and target $xg$, and composition of arrows is given by
\[ (y,h)\cdot (x,g)=(x,gh),\quad \text{if }y=xg. \]

Finally, we define the groupoid morphism
\begin{equation}
\label{eq:action:morphism}
\iota: X\rtimes G\to \G,\quad (x,g)\mapsto 1_x\, g.
\end{equation}

Given such a groupoid morphism it will be useful to adopt the following notation. We will say that $\iota: X\rtimes G\to \G$ is
\begin{enumerate}[(a)]
\item {\bf effective} if given $e\not=g\in G$ there exists an $x\in X$ with $\iota(x,g)\not=1_x$;
\item {\bf injective} if for any $x\in X$ and $g\in G$, $\iota(x,g)=1_x$ if and only if $g=e$;
\item {\bf locally injective} if for any $x\in X$ there is an open $e\in U_x\subset G$ such that $g\in U_x$ and $\iota(x,g)=1_x$ if and only if $g=e$.
\end{enumerate}

\begin{prop}
\label{prop:principal:G:action}
Let $\G\tto X$ be a Lie groupoid. If $\G\times G\to \G$ is a $G$-principal action, then \eqref{eq:X:action} defines a $G$-action on $X$ and \eqref{eq:action:morphism} defines 
an effective, locally injective, Lie groupoid morphism. Moreover, the $G$-action on $\Gamma$ takes the form
\begin{equation}
\label{eq:principal:action:formula}
\G\times G\to \G,\quad \gamma\, g= \iota(\t(\gamma),g)\cdot \gamma.
\end{equation}

Conversely, given an action $X\times G\to X$ and an effective, locally injective, Lie groupoid morphism $\iota: X\rtimes G\to \G$, \eqref{eq:principal:action:formula} defines a $G$-principal action on $\G$.
\end{prop}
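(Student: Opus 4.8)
The plan is to make formula \eqref{eq:principal:action:formula} the linchpin of both directions, deriving it immediately from the defining identity \eqref{eq:principal:action} and then reading off every remaining assertion from it. For the forward direction, the first step is to note that for any arrow $\gamma$ one may write $\gamma=1_{\t(\gamma)}\cdot\gamma$ and apply \eqref{eq:principal:action} with $\gamma_1=1_{\t(\gamma)}$ and $\gamma_2=\gamma$, obtaining
\[ \gamma\,g=(1_{\t(\gamma)}\,g)\cdot\gamma=\iota(\t(\gamma),g)\cdot\gamma, \]
where $\iota(x,g):=1_x\,g$; this is exactly \eqref{eq:principal:action:formula}, valid before any morphism property of $\iota$ is known. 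Since the principal action preserves source fibers, $\s(1_x\,g)=x$, so $1_x\,g\in\s^{-1}(x)$ and $xg:=\t(1_x\,g)$ is well defined.

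I would then dispatch the three bundled claims. For the action on $X$, unitality $xe=x$ is immediate, and associativity reduces to a single groupoid computation: from the group-action law $1_x(gh)=(1_x\,g)h$ and the formula applied to $\gamma=1_x\,g$ (whose target is $xg$) one gets
\[ 1_x(gh)=(1_{xg}\,h)\cdot(1_x\,g), \]
and applying $\t$, together with the fact that the target of a composition is the target of its left factor, yields $x(gh)=(xg)h$. This same displayed identity is precisely the multiplicativity $\iota(x,gh)=\iota(xg,h)\cdot\iota(x,g)$; combined with $\s(\iota(x,g))=x$, $\t(\iota(x,g))=xg$, and $\iota(x,e)=1_x$, it shows $\iota$ is a Lie groupoid morphism, smoothness being inherited from the action and the unit section. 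Effectiveness of $\iota$ follows by contraposition from \eqref{eq:principal:action:formula}: if $\iota(x,g)=1_x$ for all $x$, then $\gamma\,g=1_{\t(\gamma)}\cdot\gamma=\gamma$ for every $\gamma$, forcing $g=e$ by effectiveness of the action. Local injectivity follows because the isotropy of $1_x$ equals $\{g:\iota(x,g)=1_x\}$, which is discrete since the action is locally free, so a small enough neighborhood $U_x$ of $e$ meets it only at $e$.

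For the converse, I would set $\gamma\,g:=\iota(\t(\gamma),g)\cdot\gamma$ and verify directly the axioms of a $G$-principal action. The product is legal because $\iota$ is a morphism, so $\s(\iota(\t(\gamma),g))=\t(\gamma)$; unitality uses $\iota(x,e)=1_x$; and $(\gamma\,g)h=\gamma(gh)$ follows from $\t(\gamma\,g)=\t(\gamma)g$ together with $\iota(\t(\gamma)g,h)\cdot\iota(\t(\gamma),g)=\iota(\t(\gamma),gh)$, the latter being multiplicativity of $\iota$ over the composable pair $(\t(\gamma)g,h)\cdot(\t(\gamma),g)=(\t(\gamma),gh)$ in $X\rtimes G$. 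Source fibers are preserved since $\s(\gamma\,g)=\s(\gamma)$, and \eqref{eq:principal:action} holds because $\t(\gamma_1\cdot\gamma_2)=\t(\gamma_1)$ reduces both sides, by associativity of the groupoid product, to $\iota(\t(\gamma_1),g)\cdot\gamma_1\cdot\gamma_2$. Finally the action is principal: properness is automatic from the compactness convention on $G$; effectiveness is effectiveness of $\iota$ read through $1_x\,g=\iota(x,g)$; and local freeness holds because the isotropy of $\gamma$ is $\{g:\iota(\t(\gamma),g)=1_{\t(\gamma)}\}$, which local injectivity of $\iota$ forces to be discrete.

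The individual computations are all short; the only delicate point — and the natural place for a slip — is the bookkeeping that translates the two group-theoretic conditions on the action (effective and locally free) into the two combinatorial conditions on $\iota$ (effective and locally injective), and back again. The crux is the observation that the isotropy subgroup of an arrow $\gamma$ depends only on $\t(\gamma)$ and coincides with $\{g:\iota(\t(\gamma),g)=1_{\t(\gamma)}\}$; pinning down this identification, and noting that ``$e$ isolated in a subgroup'' is equivalent to ``discrete'', is what makes local freeness and local injectivity genuinely equivalent rather than merely analogous.
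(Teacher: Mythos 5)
Your proof is correct and follows essentially the same route as the paper's: both hinge on deriving the formula $\gamma\,g=\iota(\t(\gamma),g)\cdot\gamma$ from \eqref{eq:principal:action} applied to $1_{\t(\gamma)}\cdot\gamma$, and on the equivalence between local freeness of the action and local injectivity of $\iota$ via the isotropy identification $\{g:\iota(\t(\gamma),g)=1_{\t(\gamma)}\}$. The only difference is that you spell out the verifications the paper dismisses as "easy to check" and "straightforward," and your bookkeeping there is accurate.
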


\begin{proof}
Let $\G \times G \to \G$ be a G-principal action. To check that \eqref{eq:X:action} defines a $G$-action observe first that that $xe = x$, where $e \in G$ denotes the identity element. We must also verify that $(xg)h = x(gh)$, but this is equivalent to
\[\t(1_{\t(1_xg)}h) = \t((1_xg)h).\]
Condition \eqref{eq:principal:action} now gives
\[\t((1_xg)h) = \t((1_{\t(1_xg)}\cdot1_xg)h) = \t((1_{\t(1_xg)}h)\cdot1_xg) = \t(1_{\t(1_xg)}h),\]
and therefore we have a $G$-action on $X$. Since the $G$-action on $\G$ is effective and locally free, it is now easy to check from \eqref{eq:action:morphism} that the map $\iota: X \rtimes G \to \G$ is an effective, locally injective, Lie groupoid morphism. 


Conversely, assume that $G$ acts on $X$ on the right, $\iota: X \rtimes G \to \G$ is an effective, locally injective, Lie groupoid morphism, and define $\gamma g = \iota(t(\gamma),g)\cdot \gamma$. It is straightforward to verify that this is indeed an effective, locally free, action along the $\s$-fibers of $\G$ satisfying $(\gamma \cdot \gamma')g = (\gamma g)\cdot \gamma'$. 

\end{proof}

\begin{remark}
In general, the $G$-action on $X$ is neither free, nor effective.  Note also that all actions involved are proper since we are assuming that $G$ is compact. When $G$ is not compact, one needs to assume that the action of $G$ on the source fibers is proper. When $\Gamma$ is a Hausdorff groupoid, this is equivalent to the morphism $\iota: X \rtimes G \to \G$ being a proper map.

\end{remark}

The previous proposition shows that we can define the $G$-principal action on $\G$ by specifying first a $G$-action on $X$ and then a groupoid morphism $\iota: X\rtimes G\to \G$. We will often use this alternative point of view and we call $\iota$ the {\bf action morphism} of the $G$-principal groupoid.  For example, a morphism of $G$-principal groupoids can be alternatively characterized as a morphism of groupoids
\[
\xymatrix@R=20pt{
\G_1\ar[r]^{\Phi}\ar@<0.2pc>[d] \ar@<-0.2pc>[d]  & \G_2\ar@<0.2pc>[d] \ar@<-0.2pc>[d]  \\
X_1\ar[r]_{\phi} & X_2
}
\]
which intertwines the actions morphisms,
\[ \Phi\circ \iota_1=\iota_2\circ (\phi\times I). \]

The action morphism $\iota: X\rtimes G\to \G$ also allows us to define a different action of $G$ on $\Gamma$ this time {\bf an action by inner automorphisms}
\begin{equation}
\label{eq:inner:G:action}
\G\times G\to\G, \quad \gamma \odot g:=\iota(\t(\gamma),g)\cdot \gamma \cdot \iota(\s(\gamma), g)^{-1}.
\end{equation}
In general, the inner action of $G$ on $\Gamma$ \emph{does not determine} the original $G$-action on $\Gamma$. One can have, for example, a non-trivial action morphism whose associated inner action is trivial.  

\begin{example}
\label{ex:principal:bundle:grpd}
Any $G$-principal bundle $\pi:P\to M$ can be viewed as a $G$-principal groupoid. Indeed, a $G$-principal action on the manifold $P$ is the same thing as
a $G$-principal action on the pair groupoid $\G=P\times P\tto P$. The two actions are related by
\[  \G\times G\to \G,\quad (p_1,p_2)\, g:=(p_1g,p_2). \]
and condition \eqref{eq:principal:action} holds since we have
\[ ((p_1,p_2)\cdot (p_2,p_3))\, g=(p_1,p_3)\, g=(p_1\, g,p_3)=((p_1,p_2)\, g)\cdot (p_2,p_3). \]
The action morphism of this $G$-principal groupoid is given by
\[ \iota: P\rtimes G\to P\times P,\quad \iota(p,g)=(p\, g,p),  \]
and the associated inner action by groupoid automorphisms is given by
\[ (p_1,p_2)\odot g=(p_1 g,p_2g). \]
In this case all source fibers $\s^{-1}(x)$ are diffeomorphic to $P$ and $\s^{-1}(x)/G=M$, so this groupoid parameterizes a trivial family of $G$-principal bundles with fiber $P$ and base $M$.
\end{example}

\begin{example}
\label{ex:morphism:s:fibers:principal:grpd}
As we observed above, for a $G$-principal groupoid $\G\tto X$, each source fiber is a $G$-principal bundle $\s^{-1}(x)\to \s^{-1}(x)/G$, hence it determines (previous example) a $G$-principal groupoid $\s^{-1}(x)\times \s^{-1}(x)\tto \s^{-1}(x)$. These two $G$-principal groupoids are related by the morphism of $G$-principal groupoids covering the target map
\[
\vcenter{\vbox{ 
\xymatrix@R=20pt{
\s^{-1}(x)\times \s^{-1}(x) \ar[r]\ar@<0.2pc>[d] \ar@<-0.2pc>[d]  & \G\ar@<0.2pc>[d] \ar@<-0.2pc>[d]  \\
\s^{-1}(x)\ar[r]_{t} & X
}}}
\qquad (\gamma_1,\gamma_2)\mapsto \gamma_1\cdot \gamma_2^{-1}.
\]
\end{example}

\begin{example}
\label{ex:action:grpd}
Let $H$ be a Lie group that acts (on the left) on some manifold $X$. Then we have the associated action groupoid 
\[ \G=H\times X\tto X. \]
If $G\subset H$ is a closed subgroup, we can define a $G$-principal action on $\G$ by setting
\[ (h,x)\, g:=(g^{-1}h,x). \]
One checks immediately that the condition \eqref{eq:principal:action} holds. 
The action morphism of this $G$-principal groupoid is given by
\[ \iota: X\rtimes G\to H\times X,\quad \iota(x,g)=(g^{-1}, x),  \]
and has an associated inner action by groupoid automorphhisms
\[ (h,x)\odot g=(g^{-1}hg,x). \]
This groupoid parameterizes a trivial family of $G$-principal bundles with fiber $H$ and base $H/G$.
\end{example}

\begin{example}
\label{ex:bundle:groups}
There exists a (non-Hausdorff) bundle of Lie groups over the real line $\G\tto \Rr$ with fibers
\[
\s^{-1}(x)=\t^{-1}(x)\simeq
\left\{
\begin{array}{lr}
\SO(3,\Rr),\qquad &\text{ if }x>0,\\
\SO(2,\Rr)\ltimes \Rr^2,&\text{ if }x=0,\\
\SL(2,\Rr),\qquad &\text{ if }x<0.
\end{array}
\right.
\]
One can embed $\SO(2,\Rr)$ as a subgroup of each of these groups in such a way that one obtains a smooth groupoid morphism $\iota:\Rr\rtimes \SO(2,\Rr)\to \Gamma$, where
$\SO(2,\Rr)$ acts trivially on the real line. Then $\iota$ is an action morphism so that $\Gamma\tto\Rr$ is a $\SO(2,\Rr)$-principal groupoid. This groupoid parameterizes a non-trivial family of $\SO(2,\Rr)$-principal bundles with base manifolds $\Ss^2$ $(x>0)$, $\Rr^2$ $(x=0)$ or $ \mathbb{H}^2$ $(x<0)$.
\end{example}

\subsection{Connections on $G$-principal groupoids}

Virtually all the constructions that one does with ordinary $G$-principal bundles, such as connections, associated bundles, etc., extend to $G$-principal groupoids. In this paper connections play a crucial role.

\begin{definition}
A {\bf connection} on a $G$-principal groupoid $\G\tto X$ is a right-invariant distribution $\H\subset T^{\s} \G$ 
 \[ \H_{\tau\cdot\gamma}=(R_\gamma)_*\H_\tau,\quad \forall (\tau,\gamma)\in \G\timesst\G, \]
satisfying:
\begin{enumerate}[(i)]
\item $\H$ is horizontal:
\[  T^{\s}_\gamma \G=\H_\gamma\oplus T_\gamma(\gamma G),\quad \forall \gamma\in\G; \]
\item $\H$ is $G$-invariant:
\[ \H_{\gamma\, g}=g_*(\H_\gamma),\quad \forall g\in G. \]
\end{enumerate}
A {\bf morphism} of $G$-principal groupoids with connection is a $G$-principal groupoid morphism $\Phi:\G_1\to\G_2$ such that $\Phi_*(\H_1)\subset\H_2$.
\end{definition}

Hence, a connection $\H$ on $G$-principal groupoid consists of a smooth family of (usual) connections $\{\H_x:x\in X\}$ on the $G$-principal bundle $\s^{-1}(x)\to \s^{-1}(x)/G$.
\medskip

We can also define connections via connections 1-forms. For that, recall that a right-invariant $k$-form on a Lie groupoid $\G\tto X$ with values in a vector space $V$ is an s-foliated, $V$-valued, differential form $\Omega\in\G(\wedge^kT^\s\G,V)$ satisfying
\[ (\d R_\gamma)^*\Omega_\tau=\Omega_{\tau\gamma}, \quad \forall (\tau,\gamma)\in \G\timesst\G. \]
We write $\Omega^k_R(\G,V)$ to denote the set of $V$-valued, right-invariant $k$-forms. Exactly as for ordinary principal bundle connections, one has the following result.

\begin{prop}
Given a connection $\H$ on a $G$-principal groupoid $\G\tto X$ there is a unique right-invariant $\gg$-valued 1-form $\Omega\in\Omega^1_R(\G;\gg)$ with $\Ker\Omega=\H$ and satisfying:
\begin{enumerate}[(i)]
\item $\Omega$ is vertical:
\[ \Omega(\al_\G)=\al,\quad \forall \al\in\gg. \]
\item $\Omega$ is $G$-equivariant:
\[ g^*\Omega=\Ad_{g^{-1}}\cdot \Omega,\quad \forall g\in G. \]
\end{enumerate}
Conversely, given a right-invariant 1-form $\Omega\in\Omega^1_R(\G;\gg)$ satisfying (i) and (ii), its kernel $\H=\Ker\Omega$ defines a connection.
\end{prop}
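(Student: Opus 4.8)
The plan is to imitate the classical correspondence between principal-bundle connections and connection $1$-forms, paying attention only to the two features special to the groupoid setting: right-invariance along $\s$-fibers, and the fact that the $G$-action is merely locally free. Throughout, the decisive structural fact will be that the principal-action axiom \eqref{eq:principal:action} forces right translation to be $G$-equivariant.

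First I would construct $\Omega$ from $\H$. At each $\gamma\in\G$, since the $G$-action along the $\s$-fibers is locally free, the infinitesimal action $\al\mapsto\al_\G|_\gamma$ is a linear isomorphism $\gg\to T_\gamma(\gamma G)$. Using the horizontal splitting $T^\s_\gamma\G=\H_\gamma\oplus T_\gamma(\gamma G)$, I define $\Omega_\gamma$ to vanish on $\H_\gamma$ and to equal the inverse of this isomorphism on the vertical summand. By construction $\Ker\Omega_\gamma=\H_\gamma$ and $\Omega_\gamma(\al_\G|_\gamma)=\al$, which is verticality (i); smoothness of $\Omega$ follows from smoothness of $\H$, of the vertical distribution, and of the action. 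Any $1$-form with kernel $\H$ satisfying verticality must agree with this one on both summands, which gives uniqueness.

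Next I would check the three invariance properties. For $G$-equivariance (ii) I would use the standard identity for fundamental vector fields under a right action, $g_*(\al_\G|_\gamma)=(\Ad_{g^{-1}}\al)_\G|_{\gamma g}$, together with the $G$-invariance $\H_{\gamma g}=g_*\H_\gamma$ of the distribution; evaluating $\Omega$ on the image of a decomposed tangent vector then yields $g^*\Omega=\Ad_{g^{-1}}\cdot\Omega$. The genuinely groupoid-specific step, which I expect to be the main point, is right-invariance $(\d R_\gamma)^*\Omega_\tau=\Omega_{\tau\gamma}$. Here the key observation is that \eqref{eq:principal:action} makes $R_\gamma$ $G$-equivariant: for $g\in G$ one has $R_\gamma(\tau g)=(\tau g)\cdot\gamma=(\tau\cdot\gamma)g=(R_\gamma\tau)g$, so $\d R_\gamma$ carries $\al_\G|_\tau$ to $\al_\G|_{\tau\gamma}$, identifying the vertical summands compatibly with their trivialization by $\gg$. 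Combined with the right-invariance $\H_{\tau\gamma}=(R_\gamma)_*\H_\tau$ of $\H$ on the horizontal summand, this shows $\d R_\gamma$ intertwines the two splittings, which is precisely the claim that $\Omega$ lies in $\Omega^1_R(\G;\gg)$.

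The converse is then a matter of unwinding definitions. Given $\Omega\in\Omega^1_R(\G;\gg)$ satisfying verticality (i) and $G$-equivariance (ii), I set $\H=\Ker\Omega$. Verticality forces $\Omega$ to restrict to an isomorphism $T_\gamma(\gamma G)\to\gg$, so $\H_\gamma\cap T_\gamma(\gamma G)=0$ and a dimension count recovers the horizontal splitting; the $G$-invariance of $\H$ follows from (ii), and its right-invariance from that of $\Omega$. The only subtlety I anticipate throughout is that the action is locally free rather than free, but local freeness already makes $\al\mapsto\al_\G|_\gamma$ injective, hence an isomorphism onto the vertical space, which is all the argument uses; this and the $G$-equivariance of $R_\gamma$ are the two places where the present setting departs from the textbook principal-bundle proof.
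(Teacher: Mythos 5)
Your proof is correct, and it takes exactly the route the paper intends: the paper offers no written proof, stating only that the result holds ``exactly as [for] ordinary principal bundle connections,'' and your argument is precisely that classical construction carried out in the groupoid setting. You correctly isolate the one genuinely new ingredient, namely that the principal-action axiom \eqref{eq:principal:action} makes right translation $R_\gamma$ a $G$-equivariant map, which is what lets the classical splitting argument also deliver right-invariance of $\Omega$; the remaining steps (local freeness giving the isomorphism $\gg\cong T_\gamma(\gamma G)$, uniqueness from verticality, and the converse by dimension count) are all sound.
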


We call  $\Omega\in\Omega^1_R(\G;\gg)$ the {\bf connection 1-form} of $\H$. The restriction of the connection 1-form  $\Omega\in\Omega^1_R(\G;\gg)$ to the source fiber $\s^{-1}(x)$ gives the (usual) connection 1-form $\Omega_x$ of the connection $\H_x$ on the $G$-principal bundle $\s^{-1}(x)\to \s^{-1}(x)/G$.

\begin{example}
\label{ex:principal:bundle:connection:grpd}
Under the correspondence between $G$-principal bundles $P\to M$ and $G$-principal groupoids $P\times P\tto P$ discussed in Example \ref{ex:principal:bundle:grpd},
specifying a connection $H$ on the $G$-principal bundle $\pi:P\to M$ is the same thing as specifying a connection $\H$ on the pair groupoid $P\times P\tto P$. The two notions of connection are related by
\[ \H:=\{(v,0)\in T(P\times P): v\in H\}. \]
If $\omega\in\Omega^1(P;\gg)$ is the connection 1-form of the connection on $P$, then the connection 1-form of the groupoid $\G$ is the $\gg$-valued right-invariant form defined by
\[ \Omega(v,0)=\omega(v). \]
\end{example}

Henceforth, we will prefer the form point of view, so we will specify a connection by giving the connection 1-form $\Omega\in\Omega^1_R(\G;\gg)$. Morphisms of $G$-principal groupoids with connection are also easily characterized in terms of connections 1-forms. Indeed, a morphism of Lie groupoids $\Phi:\G_1\to\G_2$ maps source-fibers to source-fibers, and so induces a pullback map of $V$-valued right-invariant forms 
\[ \Phi^*:\Omega^k_R(\G_2,V)\to \Omega^k_R(\G_1,V). \]
In the case of a morphism of $G$-principal groupoids we find the following result whose (easy) proof is left to the reader.

\begin{prop} 
\label{prop:maps:connect:grpd}
Given two $G$-principal groupoids with connection $\G_1$ and $\G_2$, a morphism of groupoids $\Phi:\G_1\to\G_2$ is a morphism of $G$-principal groupoids with connections if and only if it is $G$-equivariant and preserves the connection 1-forms: $\Phi^*\Omega_2=\Omega_1$.
\end{prop}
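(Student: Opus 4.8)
The plan is to unwind the two definitions involved and reduce everything to a pointwise comparison of two right-invariant, source-foliated $\gg$-valued forms. Since $\Phi$ is a groupoid morphism it satisfies $\s_2\circ\Phi=\phi\circ\s_1$ for its base map $\phi$, so it carries each source fiber $\s_1^{-1}(x)$ into $\s_2^{-1}(\phi(x))$; hence $\d\Phi$ maps $T^\s\G_1$ into $T^\s\G_2$ and, as already noted in the text, the pullback $\Phi^*\Omega_2\in\Omega^1_R(\G_1;\gg)$ is well-defined and again right-invariant. By definition $\Phi$ is a morphism of $G$-principal groupoids with connection exactly when it is a $G$-equivariant groupoid morphism satisfying $\Phi_*\H_1\subset\H_2$, so the whole content is the equivalence, for a fixed $G$-equivariant groupoid morphism $\Phi$, of $\Phi_*\H_1\subset\H_2$ and $\Phi^*\Omega_2=\Omega_1$.

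The implication $\Phi^*\Omega_2=\Omega_1\Rightarrow\Phi_*\H_1\subset\H_2$ is immediate: for $v\in\H_1=\Ker\Omega_1$ one has $\Omega_2(\Phi_* v)=(\Phi^*\Omega_2)(v)=\Omega_1(v)=0$, whence $\Phi_* v\in\Ker\Omega_2=\H_2$.

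For the converse I would argue at each arrow $\gamma$ using the connection splitting $T^\s_\gamma\G_1=\H_{1,\gamma}\oplus T_\gamma(\gamma G)$ and check that $\Phi^*\Omega_2$ and $\Omega_1$ agree on each summand. On the horizontal summand both vanish: $\Omega_1$ because $\H_1=\Ker\Omega_1$, and $\Phi^*\Omega_2$ because $\Phi_*\H_1\subset\H_2=\Ker\Omega_2$ by hypothesis. On the vertical summand I would use that $G$-equivariance, $\Phi(\gamma g)=\Phi(\gamma)g$, upon differentiating at $g=e$, forces $\d\Phi$ to send the fundamental vector field $\al_{\G_1}$ to $\al_{\G_2}$ for every $\al\in\gg$; since the principal action is locally free these span the vertical summand. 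Then the verticality property (i) of both connection forms gives $(\Phi^*\Omega_2)(\al_{\G_1})=\Omega_2(\al_{\G_2})=\al=\Omega_1(\al_{\G_1})$. As the two right-invariant forms agree on a spanning set of $T^\s_\gamma\G_1$ at every $\gamma$, they coincide.

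The only genuinely delicate point — and the place I expect to spend the most care — is the bookkeeping of the source-foliated setting: one must verify that $\d\Phi$ really restricts to a map $T^\s\G_1\to T^\s\G_2$ (so that $\Phi^*\Omega_2$ is a legitimate element of $\Omega^1_R(\G_1;\gg)$) and that the fundamental vector fields $\al_{\G_i}$ are tangent to the source fibers, which holds because the principal action preserves them. Once this is arranged the computation is the classical principal-bundle argument; notably, only the verticality (i) and the kernel description $\Ker\Omega_i=\H_i$ are needed, while the $G$-equivariance property (ii) of the connection forms plays no role in the equivalence itself.
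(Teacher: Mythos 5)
Your proof is correct, and it is the standard argument the authors have in mind: the paper itself gives no proof of Proposition \ref{prop:maps:connect:grpd} (it is explicitly ``left to the reader''), so there is nothing to diverge from. Your reduction to the splitting $T^{\s}_\gamma\G_1=\H_{1,\gamma}\oplus T_\gamma(\gamma G)$, the identity $\d\Phi(\al_{\G_1})=\al_{\G_2}$ from $G$-equivariance, and the observation that only verticality and $\Ker\Omega_i=\H_i$ are needed (with local freeness guaranteeing that the fundamental vector fields span the vertical summand) are exactly the expected ingredients.
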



The {\bf curvature 2-form} of a connection $\Omega$ is the right-invariant, $\gg$-valued, 2-form defined by
\[ \Curv(\Omega)\in \Omega^2_R(\G;\gg),\quad \Curv(v,w):=\d \Omega(h(v),h(w)). \]
Here $h:T^{\s} \G\to \H$ denotes the projection and $\d:\Omega^k_R(\G,V)\to \Omega^{k+1}_R(\G,V)$ denotes the $\s$-foliated differential (de Rham differential along the s-fibers). The curvature form measures the failure of the horizontal distribution $\H$ in being integrable. Of course, the restriction of $\Curv(\Omega)$ to an s-fiber $\s^{-1}(x)$ is the usual curvature 2-form of the induced connection on $\s^{-1}(x)\to \s^{-1}(x)/G$. This leads immediately to

\begin{prop}
A connection $\Omega$ on a $G$-principal groupoid $\G\tto X$ satisfies:
\begin{enumerate}[(i)]
\item {\bf 1st structure equation}:
\[ \d\Omega=-\Omega\wedge \Omega +\Curv(\Omega); \]
\item {\bf 1st Bianchi's identity}:
\[ \d\Curv(\Omega)|_{\H}=0. \]
\end{enumerate}
\end{prop}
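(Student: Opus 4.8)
The plan is to reduce both identities to the classical Cartan structure equation and first Bianchi identity for an ordinary principal bundle connection, verified one source fiber at a time. The crucial point, already recorded just before the statement, is that for each $x\in X$ the restriction $\Omega|_{\s^{-1}(x)}$ is the connection $1$-form of a genuine connection on the $G$-principal bundle $\s^{-1}(x)\to\s^{-1}(x)/G$, and $\Curv(\Omega)|_{\s^{-1}(x)}$ is its ordinary curvature. Since the $\s$-foliated differential $\d$ is by definition the leafwise de Rham differential, it commutes with restriction to a source fiber, i.e. $(\d\Omega)|_{\s^{-1}(x)}=\d\bigl(\Omega|_{\s^{-1}(x)}\bigr)$; and $\Omega\wedge\Omega$, being pointwise, also restricts fiberwise. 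Hence each displayed identity holds on $\G$ if and only if it holds, as an equality of ordinary forms, on every $\s^{-1}(x)$, where both statements are standard.

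For part (i) I would rewrite the equation as $\Curv(\Omega)=\d\Omega+\Omega\wedge\Omega$ and check it on $\s^{-1}(x)$ on the three standard types of pairs of tangent vectors; since all three terms are $C^\infty$-bilinear it suffices to treat a frame built from horizontal and vertical vectors. When both vectors are horizontal, $\Omega$ annihilates them, $\Omega\wedge\Omega$ vanishes and $h$ acts as the identity, so the equation reduces to the definition of $\Curv$. When both are vertical, say $\al_\G,\be_\G$ with $\al,\be\in\gg$, the Koszul formula together with $\Omega(\al_\G)=\al$, $\Omega(\be_\G)=\be$ and $[\al_\G,\be_\G]=([\al,\be])_\G$ gives $\d\Omega(\al_\G,\be_\G)=-[\al,\be]$, which matches $-(\Omega\wedge\Omega)(\al_\G,\be_\G)=-[\al,\be]$ while $\Curv$ vanishes on vertical arguments. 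Finally, for one horizontal and one vertical vector, I would extend the horizontal one to a $G$-invariant horizontal (hence $\s$-foliated) vector field — possible because $\H$ is $G$-invariant and $G$ is connected — so that its bracket with the fundamental vector field vanishes and all three terms are zero.

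For part (ii) I would apply the $\s$-foliated differential to the structure equation just established. Using $\d^2=0$ one obtains $\d\Curv(\Omega)=\d(\Omega\wedge\Omega)=\d\Omega\wedge\Omega-\Omega\wedge\d\Omega$. Evaluating on three horizontal vectors $v_1,v_2,v_3\in\H$, every summand on the right carries a factor $\Omega(v_i)$ with $v_i\in\H=\Ker\Omega$, hence vanishes; this is exactly $\d\Curv(\Omega)|_{\H}=0$.

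I expect no genuine obstacle here: the content is entirely classical and the argument is a fiberwise transcription. The only point demanding a line of care is the compatibility of the $\s$-foliated differential with restriction to source fibers, which is immediate once $\d$ is unravelled as the de Rham differential along the source foliation. Should one prefer to avoid the fiberwise reduction and argue directly on $\G$, the same three-case computation applies verbatim, the only thing to keep in mind being that one works throughout inside the source distribution $T^{\s}\G$; this is automatic, since the fundamental vector fields $\al_\G$ are tangent to the source fibers (the $G$-action preserves them) and $\H\subset T^{\s}\G$ by the definition of a connection.
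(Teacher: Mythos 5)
Your proposal is correct and follows essentially the same route as the paper: the paper's (implicit) proof is precisely the observation that $\Omega$ and $\Curv(\Omega)$ restrict on each source fiber to an ordinary principal connection and its curvature, so that both identities reduce to the classical structure equation and first Bianchi identity fiber by fiber. Your additional verification of the classical facts on each fiber is sound but not needed beyond citing the standard theory.
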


\begin{example}
\label{ex:morphism:s:fibers:principal:grpd:connection}
For an arbitrary $G$-principal groupoid $\G\tto X$ with connection $\Omega$, each source fiber $\s^{-1}(x)\to \s^{-1}(x)/G$ is a $G$-principal bundle with connection. As we saw in Example \ref{ex:principal:bundle:connection:grpd}, the pair groupoid $\s^{-1}(x)\times \s^{-1}(x) \tto \s^{-1}(x)$ is then a $G$-principal groupoid with connection, and we have a morphism of $G$-principal bundles with connections covering the target map (see Example \ref{ex:morphism:s:fibers:principal:grpd})
\[
\vcenter{\vbox{ 
\xymatrix@R=20pt{
\s^{-1}(x)\times \s^{-1}(x) \ar[r]\ar@<0.2pc>[d] \ar@<-0.2pc>[d]  & \G\ar@<0.2pc>[d] \ar@<-0.2pc>[d]  \\
\s^{-1}(x)\ar[r]_{t} & X
}}}
\qquad (\gamma_1,\gamma_2)\mapsto \gamma_1\cdot \gamma_2^{-1}.
\]
\end{example}

\subsection{$G$-structure groupoids}

Recall our standing assumption that $G$ is a compact group. If not, in the next definition we should require $G\subset \GL(n,\Rr)$ to be a closed subgroup.

\begin{definition}
Given $G\subset \GL(n,\Rr)$, a {\bf $G$-structure groupoid} consists of a $G$-principal groupoid $\G\tto X$ equipped with a pointwise surjective, right-invariant, $\Rr^n$-valued 1-form $\Theta\in\Omega^1_R(\G;\Rr^n)$ satisfying:
\begin{enumerate}[(i)]
\item $\Theta$ is strongly horizontal:
\[ \Theta_\gamma(v)=0 \quad\text{iff}\quad v=(\al_\G)|_\gamma,\text{ for some }\al\in\gg. \]
\item $\Theta$ is $G$-equivariant:
\[ g^*\Theta=g^{-1}\cdot\Theta,\quad \forall g\in G. \]
\end{enumerate}
We call $\Theta$ the {\bf tautological form} of the $G$-structure groupoid.

A {\bf morphism} of $G$-structure groupoids $\Phi:\G_1\to\G_2$ is a morphism of $G$-principal groupoids which preserves the tautological forms: $\Phi^*\Theta_2=\Theta_1$.
\end{definition}

The tautological form $\Theta$  restricts on each source fiber of $\G$ to a 1-form 
\[ \theta_x=\Theta|_{\s^{-1}(x)} \in\Omega^1(\s^{-1}(x);\Rr^n). \]
By properties (i) and (ii) in the definition, this 1-form is strongly horizontal and $G$-equivariant for the $G$-principal action on the fiber $\s^{-1}(x)$. Hence, we obtain a $G$-structure $\s^{-1}(x)\to \s^{-1}(x)/G$ over an orbifold (cf.~Proposition \ref{prop:G-structures}). Therefore, the source fibers of a $G$-structure groupoid $\G$ form a smooth family of $G$-structures.

\begin{remark}
\label{rem:condition i}
Since $\Theta$ is assumed to be right-invariant, condition (i) in the definition holds if and only if it holds along the identities. Using the action morphism $\iota:X\rtimes G\to\G$, we conclude that (i) is equivalent to 
\[ \Theta_{1_x}(v)=0 \quad\text{iff}\quad v=\d_{(x,e)} \iota(0,\al),\text{ for some }\al\in\gg. \]
\end{remark}

The definition of a $G$-structure groupoid $\G\tto X$ gives
\[ \dim( \s^{-1}(x))=n+\dim G,\quad \forall x\in X. \]
On the other hand, there is no restriction on the dimension of the base $X$, which can be arbitrary. A morphism of $G$-structure groupoids $\Phi:\G_1\to\G_2$ restricts to an \'etale map between the source-fibers.
\medskip

\begin{example}
\label{ex:morphism:s:fibers:G:grpd}
Similar to Example \ref{ex:principal:bundle:grpd}, a $G$-structure groupoid on the pair groupoid $P\times P\tto P$ is the same thing as an ordinary $G$-structure on $P\to P/G=M$. The tautological 1-forms $\theta\in\Omega^1(P;\Rr^n)$ and $\Theta\in\Omega^1_R(\G;\Rr^n)$ are related by
\[ \Theta_{(p_1,p_2)}(v_1,0)=\theta_{p_1}(v). \]

Moreover, as in Example \ref{ex:morphism:s:fibers:principal:grpd}, for any $G$-structure groupoid $\G\tto X$, each source fiber $\s^{-1}(x)\to \s^{-1}(x)/G$ has a $G$-structure, and we have a morphism of $G$-structure groupoids covering the target map
\[ 
\s^{-1}(x)\times \s^{-1}(x) \to \G, \qquad (\gamma_1,\gamma_2)\mapsto \gamma_1\cdot \gamma_2^{-1}.
\]
%
\end{example}

\begin{example}[The trivial $G$-structure Lie group]
Given $G\subset \GL(n.\Rr)$ we can form the semi-direct product Lie group
\[ \G:=\Rr^n\rtimes G, \]
which we view as a Lie groupoid over a singleton $\{*\}$. The $G$-action 
\[ \G\times G\to \G, \quad (v,h,g)=(v, hg), \]
is a $G$-principal action (in the sense of Definition \ref{def:principal:G:action}) and carries the tautological  form $\Theta\in\Omega^1_R(\G;\Rr^n)$ defined by
\[ \Theta_{(u,g)}(v,\al)=g^{-1}v. \]
The resulting $G$-structure on $\s^{-1}(*)=\Rr^n\rtimes G\to \s^{-1}(*)/G=\Rr^n$ is the trivial $G$-structure over $\Rr^n$. So we call this the trivial $G$-structure Lie group.
\end{example}

Later we will see much less trivial examples.

\subsection{$G$-structure groupoids with connection}
Let $\G\tto X$ be a $G$-structure groupoid with tautological form $\Theta$. If, additionally, $\Gamma$ is equipped with a connection $\Omega$, we define the {\bf torsion of the connection} to be the right-invariant 2-form given by
\[ \Tors(\Omega)\in\Omega^2_R(\G;\Rr^n),\quad \Tors(\Omega)(v,w):=\d\Theta(h(v),h(w)). \]
The restriction of $\Tors(\Omega)$ to the source fiber $\s^{-1}(x)$ is the (usual) torsion 2-form of the induced connection on $\s^{-1}(x)\to \s^{-1}(x)/G$ and we find

\begin{prop}
If $\H$ is a connection on a $G$-structure groupoid $\G\tto X$, its tautological 1-form $\Theta$, connection 1-form $\Omega$ and torsion $\Tors$, satisfy:
\begin{enumerate}[(i)]
\item {\bf 2nd structure equation}:
\[ \d\Theta=-\Omega\wedge\Theta +\Tors(\Omega); \]
\item {\bf 2nd Bianchi's identity}:
\[ \d\Tors(\Omega)|_{\H}=(\Curv(\Omega)\wedge \Theta)|_{\H}. \]
\end{enumerate}
\end{prop}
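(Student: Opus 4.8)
The plan is to verify both identities as $\s$-foliated, right-invariant form identities. Since $\d$ is the de Rham differential along the source fibers and all the forms involved ($\Theta$, $\Omega$, $\Tors(\Omega)$, $\Curv(\Omega)$) are right-invariant, it suffices to check the two equations on each source fiber $\s^{-1}(x)$, where by construction one has an ordinary $G$-structure $\s^{-1}(x)\to\s^{-1}(x)/G$ carrying the tautological form $\theta_x=\Theta|_{\s^{-1}(x)}$, the connection 1-form $\Omega_x$, and the induced torsion. On such a fiber the claimed identities are exactly the classical first structure equation and first Bianchi identity for a $G$-structure with connection; nonetheless I would spell out the argument, since it is short and purely algebraic.

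For (i), I would decompose any $v\in T^\s\G$ as $v=h(v)+(\Omega(v))_\G$, using that the vertical part of $v$ is the fundamental vector field of $\Omega(v)\in\gg$ (by verticality of $\Omega$). Expanding $\d\Theta(v,w)$ bilinearly produces four terms. The purely horizontal term is $\d\Theta(h(v),h(w))=\Tors(\Omega)(v,w)$ by definition. For each term containing a fundamental vector field I would apply Cartan's formula $\Lie_{\al_\G}\Theta=i_{\al_\G}\d\Theta+\d\,i_{\al_\G}\Theta$. Here $i_{\al_\G}\Theta=\Theta(\al_\G)=0$ by strong horizontality, while differentiating the equivariance $g^*\Theta=g^{-1}\cdot\Theta$ at $g=e$ gives $\Lie_{\al_\G}\Theta=-\al\cdot\Theta$; hence $i_{\al_\G}\d\Theta=-\al\cdot\Theta$. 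Using this together with $\Theta\circ h=\Theta$ (again by strong horizontality), the three remaining terms evaluate to $-\Omega(v)\cdot\Theta(w)+\Omega(w)\cdot\Theta(v)$, which is precisely $-(\Omega\wedge\Theta)(v,w)$. Summing gives the second structure equation.

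For (ii), I would differentiate the identity from (i) and use $\d^2\Theta=0$, obtaining $\d\Tors(\Omega)=\d\Omega\wedge\Theta-\Omega\wedge\d\Theta$. Substituting the first structure equation $\d\Omega=-\Omega\wedge\Omega+\Curv(\Omega)$ and the second structure equation $\d\Theta=-\Omega\wedge\Theta+\Tors(\Omega)$ just established, the two $\Omega\wedge\Omega\wedge\Theta$ contributions cancel (by associativity of the $\gg$-action on $\Rr^n$), leaving $\d\Tors(\Omega)=\Curv(\Omega)\wedge\Theta-\Omega\wedge\Tors(\Omega)$. Finally I would restrict to the horizontal distribution $\H=\Ker\Omega$: every term carrying an explicit factor of $\Omega$ vanishes on horizontal arguments, so $\d\Tors(\Omega)|_\H=(\Curv(\Omega)\wedge\Theta)|_\H$.

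The routine bookkeeping lies in the sign and wedge conventions; the only genuinely delicate point is the associativity identity needed to cancel the $\Omega\wedge\Omega\wedge\Theta$ terms in (ii), which is the $\Rr^n$-valued analogue of the bracket identity underlying the ordinary second Bianchi identity. Everything else is the standard Cartan-calculus manipulation transported verbatim to the source fibers.
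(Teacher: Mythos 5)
Your proof is correct and follows the same route as the paper: the paper simply observes that $\Theta$, $\Omega$ and $\Tors(\Omega)$ restrict on each source fiber to the tautological form, connection form and torsion of an ordinary $G$-structure with connection, and invokes the classical structure equation and Bianchi identity there, which is exactly your reduction plus the standard Cartan-calculus computation you spell out. The explicit fiberwise verification you give (including the cancellation of the $\Omega\wedge\Omega\wedge\Theta$ terms via the representation identity) is the content the paper leaves implicit.
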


The following result is also an immediate consequence of Proposition \ref{prop:maps:connect:grpd}.

\begin{prop}
Given two $G$-structure groupoids with connection $\G_1$ and $\G_2$, a morphism of groupoids $\Phi:\G_1\to\G_2$ is a $G$-structure groupoid morphism if and only if it is $G$-equivariant and preserves the tautological and connection 1-forms
\[ \Phi^*\Theta_2=\Theta_1,\quad \Phi^*\Omega_2=\Omega_1. \]
\end{prop}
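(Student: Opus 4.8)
The plan is to reduce everything to Proposition \ref{prop:maps:connect:grpd} by unwinding the definitions, since a $G$-structure groupoid with connection is nothing but a $G$-principal groupoid with connection carrying an additional tautological form $\Theta$. Accordingly, a morphism $\Phi:\G_1\to\G_2$ of such objects should be a morphism of the underlying $G$-principal groupoids that is simultaneously compatible with the connection distributions and preserves the tautological forms, and the content of the proposition is to translate these two compatibilities into the stated identities of $1$-forms.

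First I would observe that, being a groupoid morphism, $\Phi$ automatically sends source fibers to source fibers, so the pullback $\Phi^*\colon\Omega^k_R(\G_2;V)\to\Omega^k_R(\G_1;V)$ on right-invariant forms is well defined; this is exactly what makes the conditions $\Phi^*\Theta_2=\Theta_1$ and $\Phi^*\Omega_2=\Omega_1$ meaningful. Being a morphism of $G$-principal groupoids (Definition \ref{def:principal:G:action}) already incorporates the $G$-equivariance of $\Phi$, so the remaining content splits cleanly into a connection part and a tautological part, with $G$-equivariance shared between them.

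Next I would dispatch the connection part by invoking Proposition \ref{prop:maps:connect:grpd}: a $G$-equivariant groupoid morphism is a morphism of $G$-principal groupoids with connection precisely when $\Phi^*\Omega_2=\Omega_1$, so the compatibility $\Phi_*\H_1\subset\H_2$ of horizontal distributions is equivalent to this single identity of connection $1$-forms. The tautological part is immediate from the definition of a morphism of $G$-structure groupoids, which requires exactly $\Phi^*\Theta_2=\Theta_1$. Combining the two, $\Phi$ is a morphism of $G$-structure groupoids with connection if and only if it is $G$-equivariant and satisfies both $\Phi^*\Theta_2=\Theta_1$ and $\Phi^*\Omega_2=\Omega_1$, which is the claim.

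I do not expect any genuine obstacle, as the argument is purely formal bookkeeping of definitions layered on top of Proposition \ref{prop:maps:connect:grpd}. The only points deserving a moment's care are the well-definedness of $\Phi^*$ on right-invariant forms noted above, and the trivial but essential remark that $G$-equivariance is common to both the $G$-structure and the connection conditions and therefore need only be required once.
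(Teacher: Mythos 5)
Your proof is correct and follows exactly the route the paper intends: the paper states this proposition as an immediate consequence of Proposition \ref{prop:maps:connect:grpd}, handling the connection part via that proposition and the tautological part directly from the definition of a morphism of $G$-structure groupoids, which is precisely your decomposition. Nothing to add.
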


\section{$G$-structure algebroids and connections}
\label{sec:G-structure:algbd}

We now turn to the infinitesimal versions of  $G$-principal groupoids and $G$-structure groupoids, namely, $G$-principal algebroids and $G$-structure algebroids. We will see that that specifying a Cartan's realization problem is equivalent to specifying a $G$-structure algebroid with connection and this will be the starting point for our method to solve the problem.
\medskip

\subsection{$G$-principal algebroids}

We will denote by $A\to X$ a Lie algebroid with anchor $\rho:A\to TX$ and Lie bracket $[\cdot,\cdot]$. Recall that if a Lie group $G$ acts on $A$ by Lie algebroid automorphisms, it has an associated infinitesimal action, namely the Lie algebra morphism $\widehat{\psi}:\gg\to \Der(A)$ into the Lie algebra of derivations of $A$, given by
\[ \widehat{\psi}(\al)(s):=\left.\frac{\d }{\d t}\right|_{t=0} (\exp(t\al))^* s, \quad \al\in\gg,\, s\in\Gamma(A), \]
where the right hand side of the expression above denotes the action of $G$ on $\Gamma(A)$ induced by the action of $G$ on $A$.
The $G$-action on $A$ covers a $G$-action on $X$ which also has an associated infinitesimal action $\psi:\gg\to\X(X)$
\[ \psi(\al)(f)(x):=\left.\frac{\d }{\d t}\right|_{t=0} f(x \exp(t\al)), \quad \ f\in C^\infty(X).\] 
The two infinitesimal actions are related: the derivation $\widehat{\psi}(\al)\in\Der(A)$ has symbol the vector field $\psi(\al)\in\X(X)$, so that
\[ \widehat{\psi}(\al)(f s)=f\,\widehat{\psi}(\al)(s)+\psi(\al)(f) s,\quad \al\in\gg,\ s\in\Gamma(A),\ f\in C^\infty(X).\] 

\begin{definition}
A $G$-action on a Lie algeboid $A$ by Lie algebroid automorphisms with infinitesimal action $\widehat{\psi}:\gg\to\Der(A)$ is called a {\bf $G$-principal action} if there is an injective algebroid morphism $i:X\rtimes \gg\to A$ such that
\[ \widehat{\psi}(\al)=[i(\al),-]. \]
We call $A$ a {\bf $G$-principal algebroid} and $i:X\rtimes \gg \to A$ the {\bf action morphism}.
\smallskip

A {\bf morphism of $G$-principal Lie algebroids} is a Lie algebroid morphism
 \[
 \xymatrix@R=20pt{
 A_1\ar[d]\ar[r]^{\Phi} & A_2\ar[d] \\
 X_1\ar[r]_{\phi} & X_2
 }
 \]
 which is $G$-equivariant and which intertwines the action morphisms,
 \[ \Phi\circ i_1=i_2\circ(\phi\times I). \]

\end{definition}

The action morphism $i:X\rtimes \gg \to A$ is part of the data defining a $G$-principal Lie algebroid: there could be more that one such morphism determining the same  infinitesimal action $\widehat{\psi}:\gg\to\Der(A)$. When $G$ is connected, this morphism determines the $G$-action. 

\begin{prop}
\label{prop:principal:G:action:integrable}
Let $\G\tto X$ be a $G$-principal Lie groupoid with action morphism $\iota:X\rtimes G\to \G$. Then its Lie algebroid $A\to X$ is a $G$-principal algebroid with action morphism $i=\iota_*:X\rtimes \gg \to A$. 

Conversely, given a $G$-principal algebroid $A\to X$, a Lie groupoid $\G\tto X$ integrating $A$ admits a (unique) $G$-principal action inducing the $G$-action on $A$ provided the action morphism $i:X\rtimes \gg\to A$ integrates to an effective Lie groupoid morphism
\[ \iota: X\rtimes G\to \G. \]
\end{prop}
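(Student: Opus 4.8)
The statement splits into a differentiation direction and an integration direction, and in both the bridge between the groupoid and its algebroid is the action morphism.

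For the differentiation direction, the first point to notice is that the principal action $\gamma\mapsto\gamma\,g$ of \eqref{eq:principal:action:formula} does \emph{not} preserve the unit section (indeed $1_x\,g=\iota(x,g)$ is rarely a unit), so it is not by groupoid automorphisms and cannot differentiate directly to an algebroid action. Instead I would differentiate the associated inner action $\gamma\odot g=\iota(\t(\gamma),g)\cdot\gamma\cdot\iota(\s(\gamma),g)^{-1}$ of \eqref{eq:inner:G:action}. A short computation using \eqref{eq:principal:action} shows $\odot$ is an action by Lie groupoid automorphisms covering $x\mapsto xg$ and fixing units, hence it differentiates to a $G$-action on $A=\mathrm{Lie}(\G)$ by Lie algebroid automorphisms, whose infinitesimal generator is $\widehat{\psi}$. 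I would then set $i:=\iota_*$ and check it is an injective algebroid morphism: since the $G$-action on $\G$ is locally free, the orbit map $g\mapsto 1_x\,g=\iota(x,g)$ has injective differential at $e$, so $i=\iota_*$ is injective.

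The crux of this direction is the identity $\widehat{\psi}(\al)=[i(\al),-]$. Here I would use that the section $i(\al)=\iota_*(\al)\in\Gamma(A)$ generates, via its right-invariant extension, the one-parameter family of bisections $t\mapsto\iota(\cdot,\exp(t\al))$, and that $\gamma\odot\exp(t\al)$ is precisely conjugation by this bisection. Differentiating conjugation by a path of bisections issuing from the identity bisection returns the bracket with the generating section --- the groupoid analogue of the classical fact that the adjoint action of $G$ differentiates to the bracket on $\gg$ --- which yields exactly $\widehat{\psi}(\al)=[i(\al),-]$. This identification of the infinitesimal inner action with the bracket is the step I expect to be the main obstacle.

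For the converse, I would reduce everything to Proposition \ref{prop:principal:G:action}: it is enough to produce a $G$-action on $X$ together with an effective, locally injective groupoid morphism $X\rtimes G\to\G$, since then \eqref{eq:principal:action:formula} defines a $G$-principal action. The $G$-action on $X$ is the one covered by the given action on $A$, the morphism $\iota$ and its effectiveness are supplied by hypothesis, and local injectivity follows as before: injectivity of $i=\iota_*$ means $g\mapsto\iota(x,g)$ has injective differential at $e$, hence is locally injective there. It then remains to see that the resulting $G$-principal action induces back the prescribed $G$-action on $A$; by the differentiation direction its generator is $[i(\al),-]=\widehat{\psi}(\al)$, which matches the given one, so the two actions agree because $G$ is connected. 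Finally, uniqueness is automatic: any $G$-principal action inducing the given $A$-action has, by differentiation, action morphism integrating $i$, and since $X\rtimes G$ is source-connected this integration $\iota$ is unique by Lie's second theorem, whence so is the action \eqref{eq:principal:action:formula}.
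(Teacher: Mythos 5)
Your proof follows essentially the same route as the paper's: differentiate the inner action \eqref{eq:inner:G:action} to get the $G$-action on $A$, identify $\widehat{\psi}(\al)=[i(\al),-]$ with $i=\iota_*$, and for the converse deduce local injectivity of $\iota$ from injectivity of $i$ and invoke Proposition \ref{prop:principal:G:action}. You additionally spell out the bisection-conjugation argument behind $\widehat{\psi}(\al)=[i(\al),-]$ and the uniqueness via Lie's second theorem, both of which the paper leaves implicit, but these are elaborations rather than a different approach.
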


\begin{proof}
Assume first that we have a $G$-principal action $\G\times G\to \G$. Differentiating the associated inner $G$-action on $\G$, given by \eqref{eq:inner:G:action}, we obtain a $G$-action on the Lie algebroid $A$ of $\G$ by Lie algebroid automorphisms:
\[ A\times G\to A,\quad \xi \odot g:=\left.\frac{\d}{\d t}\right|_{t=0} \gamma(t)\odot g,\quad (\xi\in A_x, g\in G),\]
where $\gamma(t)$ is a curve in the source fiber $\s^{-1}(x)$ with $\gamma(0)=1_x$ and $\dot\gamma(0)=\xi$. 

The $G$-action on $A$ is proper and effective (but it may fail to be free). It covers the $G$-action on $X$ and it has an associated infinitesimal action $\widehat{\psi}:\gg\to \Der(A)$. The action morphism $\iota:X\rtimes G\to \G$ differentiates to a Lie algebroid morphism $i:X\rtimes\gg\to A$ and we find that
\[ \widehat{\psi}(\al)(s)=[i(\al),s].\]
This proves the first half of the theorem.

Conversely, given a $G$-principal action on $A\to X$ and a Lie groupoid $\G\tto X$ integrating $A$, if the  morphism $i:X\rtimes \gg\to A$ integrates to a Lie groupoid morphism $\iota: X\rtimes G\to \G$, the latter must be locally injective, since the former is injective by definition. Then, as in Proposition \ref{prop:principal:G:action}, formula \eqref{eq:principal:action:formula} defines a $G$-action on $\G$ which is principal provided it is effective.
\end{proof}

\begin{remark}[Effectiveness]
\label{rem:effective action}
Concerning the assumption in the statement of the previous proposition, notice that if the $G$-action on $A$ is effective, the action morphism $i:X\rtimes \gg\to A$ integrates to an effective Lie groupoid morphism $\iota: X\rtimes G\to \G$. Indeed, assume that that this was not the case, so there exists $e\not=g_0\in G$ such that $i(x,g_0)=1_x$ for all $x\in X$. It follows that for the associated inner action of $G$ on $\G$, given by \eqref{eq:inner:G:action}, the element $g_0$ also acts trivially. Hence, the element $g_0$ acts trivially on $A$, contradicting the assumption that $G$ acts effectively on $A$. This covers many applications.
\end{remark}

When it comes to morphisms, observe that a morphisms of $G$-principal groupoids $\Phi:\G_1\to \G_2$ induces a morphism $\Phi_*:A_1\to A_2$ of the associated $G$-principal algebroids. The converse, in general, fails unless $\G_1$ is the canonical $G$-integration of $A_1$ to be discussed later in Section \ref{sec:integrability}.

The proposition above shows that a $G$-principal algebroid parameterizes infinitesimally families of $G$-principal bundles. This is illustrated by the following examples.

\begin{example}
\label{ex:principal:bundle:algbrd}
A $G$-principal action on the  tangent Lie algebroid $A=TP\to P$ covering an effective $G$-action on $P$, is the same thing as a $G$-principal action on $P$. Indeed, the infinitesimal action $\psi:\gg\mapsto \X(P)$ can be viewed as a Lie algebroid morphism
\[ i:P\rtimes \gg\to TP,\quad (x,\al)\mapsto (\al_P)|_p. \]
This map is just the action morphism of the $G$-action on $A=TP$, since one checks easily that its associated infinitesimal action is given by
\[ \widehat{\psi}:\gg\to \Der(TP), \quad \al\mapsto [i(\al),-]. \]
The condition that $i:P\rtimes \gg\to TP$ is injective, together with the assumption that the $G$-action on $P$ is effective, guarantees that we have a $G$-principal action. The $G$-action on $A=TP$ is the tangent lift of the $G$-action on $P$,
\[ \xi\odot g=\left.\frac{\d}{\d t}\right|_{t=0} p(t)\,g, \]
where $p(t)$ is any curve in $P$ with $\dot{p}(0)=\xi$.
\end{example}

\begin{example}
\label{ex:morphism:s:fibers:principal:algbrd}
Given a $G$-principal groupoid $\G\tto X$, it follows from Example \ref{ex:principal:bundle:algbrd} that $T(\s^{-1}(x))\to \s^{-1}(x)$ is a $G$-principal algebroid. If $A\to X$ is the Lie algebroid of $\G$, then we have a morphism of $G$-principal Lie algebroids covering the target
\[
\vcenter{\vbox{ 
 \xymatrix@R=20pt{
 T(\s^{-1}(x))\ar[d]\ar[r]& A\ar[d] \\
 \s^{-1}(x)\ar[r]_{t} & X
 }}}
 \qquad v_\gamma\mapsto \d R_{\gamma^{-1}}(v_\gamma).
 \]
This morphism is just the infinitesimal version of the morphism of $G$-principal groupoids given in Example \ref{ex:morphism:s:fibers:principal:grpd}.
\end{example}

\begin{example}
\label{ex:action:algbrd}
The $G$-principal algebroid corresponding to the $G$-principal action of Example \ref{ex:action:grpd} is the action algebroid $A=X\rtimes \hh\to X$ defined by the infinitesimal action $\psi:\hh\to\X(X)$ associated with the $H$-action on $X$. The corresponding action morphism is just the inclusion
\[ i:X\rtimes \gg\hookrightarrow X\rtimes \hh. \]
The $G$-action on $A$ is 
\[ (x,\al)\odot g=(g^{-1}x,\Ad_{g^{-1}} \al), \quad (x,\al)\in X\rtimes\hh, \]
where $\Ad$ denotes the restriction of the adjoint action of $H$ to $G$.
\end{example}

\begin{example}
\label{ex:bundle:algebras}
The $\SO(2,\Rr)$-principal algebroid corresponding to the $\SO(2,\Rr)$-principal action of Example \ref{ex:bundle:groups} is the bundle of Lie algebras $A=\Rr\times \Rr^3\to\Rr$ with Lie bracket given by
\[ [e_1,e_2]=x e_3, \quad [e_2,e_3]=e_1, \quad [e_3,e_1]=e_2, \]
where we have denoted by $\{e_1,e_2,e_3\}$ the basis of constant sections of $A$ associated with the canonical basis of $\Rr^3$. The action morphism of this Lie algebroid is
\[ i:\Rr\rtimes \mathfrak{so}(2,\Rr)\to A,\quad (x,\lambda)\mapsto \lambda e_3. \]
The $\SO(2,\Rr)$-action on $A$ is given by
\[ (x,v)\odot e^{-i\theta}=(x,R_{\theta}v), \quad (x,v)\in \Rr\times \Rr^3\, \]
where $R_\theta:\Rr^3\to\Rr^3$ denotes a rotation by an angle $\theta$ around the 3rd axis. 
\end{example}


\subsection{Connections on $G$-principal algebroids}

\begin{definition}
A {\bf connection} on a $G$-principal algebroid $A\to X$ with action morphism $i:X\rtimes\gg\to A$, is a subbundle $H\subset A$ satisfying:
\begin{enumerate}[(i)]
\item $H$ is horizontal relative to $i:X\rtimes\gg\to A$:
\[ A=H\oplus \Im i; \]
\item $H$ is $G$-invariant:
\[ H_{x\,g}=g_*(H_x). \]
\end{enumerate}

A {\bf morphism} of $G$-principal algebroids with connection is a $G$-principal algebroid morphism $\Phi:A_1\to\A_2$ such that $\Phi_*(H_1)\subset H_2$.
\end{definition}

A connection $\H$ on a $G$-principal groupoid $\G\tto X$, being a right-invariant distribution, defines a subbundle $H=\H|_X$ of the Lie algebroid $A\to X$ of $\G$. This subbundle is complementary to the image of the morphism $i:X\rtimes\gg\to A$ and it is immediate to check that it satisfies the properties in the definition. This gives a 1:1 correspondence
\begin{equation}
\label{eq:correspondence:connections}
\left\{\txt{connections $\H$ on \\ $\G\tto X$ \\ \,}\right\}
\tilde{\longleftrightarrow}
\left\{\txt{connections $H$ on \\ $A\to X$\\ \,} \right\}.
\end{equation}

At the algebroid level one can also characterize connections using differential $A$-forms, since we have the following easy proposition.

\begin{prop}
Given a connection $H$ on a $G$-principal algebroid $A\to X$ there is a unique $\gg$-valued $A$-form $\omega\in\Omega^1(A;\gg)$ with $\Ker\omega=H$ and satisfying:
\begin{enumerate}[(i)]
\item It is vertical relative to the morphism $i:X\rtimes\gg\to A$:
\[ \omega(i(x,\al))=\al,\quad \forall \al\in\gg,\ x\in M; \]
\item It is $G$-equivariant for the $G$-action on $A$ by automorphisms:
\[ \omega_{xg}(\xi \odot g)=\Ad_{g^{-1}}\cdot \omega_x(\xi),\quad \forall \xi\in A_x,\ g\in G.\]
\end{enumerate}
Conversely, given a $\gg$-valued $A$-form $\omega\in\Omega^1(A;\gg)$ satisfying (i) and (ii), its kernel $H=\Ker\omega$ defines a connection.
\end{prop}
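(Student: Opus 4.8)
The plan is to imitate, at the algebroid level, the classical construction of the connection $1$-form of a principal-bundle connection. The structural facts I would use are that $i\colon X\rtimes\gg\to A$ is a fibrewise injective bundle map — so $\Im i$ is a smooth subbundle of rank $\dim\gg$ and each $i_x\colon\gg\to\Im i_x$ is a linear isomorphism — and that a connection $H$ supplies a smooth splitting $A=H\oplus\Im i$. Given this splitting I define $\omega:=i^{-1}\circ\pr$, where $\pr\colon A\to\Im i$ is the projection along $H$ and $i^{-1}\colon\Im i\to X\times\gg$ inverts $i$ onto its image. Equivalently, writing $\xi\in A_x$ uniquely as $\xi=h+i(x,\al)$ with $h\in H_x$ and $\al\in\gg$, one sets $\omega_x(\xi)=\al$; this is smooth because the decomposition is a smooth bundle splitting, so $\omega\in\Omega^1(A;\gg)$.

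From the construction, $\Ker\omega=H$ and the verticality property (i), $\omega(i(x,\al))=\al$, are immediate. Uniqueness is equally quick and needs only (i) together with $\Ker\omega=H$: two such forms must agree on $H$ (both vanish there) and on $\Im i$ (both return $\al$), hence on all of $A=H\oplus\Im i$.

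The one step with genuine content is the equivariance (ii). I would combine the $G$-invariance of $H$, namely $H_{xg}=g_*(H_x)$, with the transformation law of the action morphism,
\[
 i(x,\al)\odot g = i\bigl(xg,\Ad_{g^{-1}}\al\bigr),
\]
which is the infinitesimal shadow of the fundamental-vector-field identity $(R_g)_*\,\al_P=(\Ad_{g^{-1}}\al)_P$. Granting it, applying $\odot g$ to $\xi=h+i(x,\al)$ carries $h$ into $H_{xg}$ and the vertical part into $i(xg,\Ad_{g^{-1}}\al)$, so reading off the vertical component of $\xi\odot g$ gives $\omega_{xg}(\xi\odot g)=\Ad_{g^{-1}}\al=\Ad_{g^{-1}}\omega_x(\xi)$. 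I expect this transformation law to be the main obstacle: it has to be extracted from the defining relation $\widehat\psi(\al)=[i(\al),-]$ and the fact that $G$ acts by Lie algebroid automorphisms — conjugating the inner derivation $\mathrm{ad}_{i(\al)}$ by $\odot g$ produces $\mathrm{ad}_{(\odot g)_* i(\al)}$, which must coincide with $\mathrm{ad}_{i(\Ad_{g^{-1}}\al)}$ — with the injectivity of $i$ (and connectedness of $G$) pinning down $i(\al)$ itself rather than merely the derivation it induces.

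For the converse I would put $H:=\Ker\omega$. Verticality (i) forces each $\omega_x$ to be surjective (as $\omega\circ i=\mathrm{id}$), so $\omega$ has constant rank $\dim\gg$ and $H$ is a subbundle of rank $\rank A-\dim\gg$; the same property gives $H_x\cap\Im i_x=0$, and a dimension count then yields $A=H\oplus\Im i$, i.e.\ horizontality relative to $i$. Finally, $G$-invariance follows from (ii): if $\omega_x(h)=0$ then $\omega_{xg}(h\odot g)=\Ad_{g^{-1}}\omega_x(h)=0$, so $g_*(H_x)\subseteq H_{xg}$, with equality by dimension. This realizes $H$ as a connection and closes the correspondence.
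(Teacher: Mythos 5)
Your proof is correct; the paper states this proposition without proof (treating it as the routine analogue of the principal-bundle case), and your argument — splitting $A=H\oplus\Im i$, setting $\omega=i^{-1}\circ\mathrm{pr}_{\Im i}$, checking (i), (ii) and uniqueness from the splitting, and recovering the converse by a rank count — is exactly the argument being left to the reader. The one point you rightly isolate as the crux, the identity $i(x,\al)\odot g=i(xg,\Ad_{g^{-1}}\al)$, holds in every situation the paper actually uses: it is the derivative of the groupoid identity $\iota(x,h)\odot g=\iota(xg,g^{-1}hg)$ and is built into the canonical form, so no gap remains.
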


The $A$-form $\omega\in\Omega^1(A;\gg)$  given in the previous proposition ought to be called the {\bf connection 1-form} of the connection $H$. Under the 1:1 correspondence \eqref{eq:correspondence:connections}, the connection 1-form on the groupoid $\Omega\in\Omega^1_R(\G;\gg)$ and the connection 1-form on the algebroid $\omega\in\Omega^1(A;\gg)$ are related via restriction along the identity section
\[ \Omega^1_R(\G;\gg)\ni \Omega \quad \longleftrightarrow\quad \omega\in\Omega^1(A;\gg), \quad \text{with}\quad \omega_x:=\Omega_{1_x}. \]

On the other hand, a morphism of Lie algebroids $\Phi:A_1\to A_2$ induces a pullback of $V$-valued $A$-forms
\[ \Phi:\Omega^k(A_2,V)\to \Omega^k(A_1,V). \]
This leads to the following characterization of morphisms of $G$-principal algebroids with connection. The proof is immediate.

\begin{prop} 
\label{prop:maps:connect:algbrd}
Let $A_1$ and $A_2$ be $G$-principal algebroids with connection. A Lie algebroid morphism $\Phi:A_1\to A_2$ is a morphism of $G$-principal algebroids with connection if and only if $\Phi$ is $G$-equivariant, intertwines the action morphisms and preserves the connection 1-forms, $\Phi^*\omega_2=\omega_1$.
\end{prop}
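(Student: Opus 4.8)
The plan is to isolate the only non-formal content of the statement: since both the property of being a morphism of $G$-principal algebroids with connection and the right-hand condition already include that $\Phi$ is $G$-equivariant and intertwines the action morphisms, the assertion reduces to the equivalence, for such a $\Phi$, between the horizontality condition $\Phi_*(H_1)\subset H_2$ and the condition $\Phi^*\omega_2=\omega_1$ on connection $1$-forms. Everything else is unwinding the two definitions, exactly as in the groupoid case of Proposition \ref{prop:maps:connect:grpd}, so I will simply invoke the kernel and verticality characterizations of $\omega_1,\omega_2$ from the preceding proposition rather than re-derive them.

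For the easy implication I would assume $\Phi^*\omega_2=\omega_1$. Then for any $\xi\in H_1=\Ker\omega_1$ one has $\omega_2(\Phi\xi)=(\Phi^*\omega_2)(\xi)=\omega_1(\xi)=0$, so $\Phi\xi\in\Ker\omega_2=H_2$, giving $\Phi_*(H_1)\subset H_2$. This direction uses neither equivariance nor the intertwining, only the identifications $H_i=\Ker\omega_i$.

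For the converse I would use the splitting $A_1=H_1\oplus\Im i_1$ supplied by the connection. Given $\xi\in (A_1)_x$, write $\xi=\xi^H+i_1(x,\alpha)$ with $\xi^H\in H_1$ and $\alpha=\omega_1(\xi)\in\gg$, the latter equality holding because $\omega_1$ is vertical relative to $i_1$ and vanishes on $\xi^H$. Applying $\Phi$ and using that it intertwines the action morphisms, $\Phi(i_1(x,\alpha))=i_2(\phi(x),\alpha)$, while $\Phi(\xi^H)\in H_2=\Ker\omega_2$ by the hypothesis $\Phi_*(H_1)\subset H_2$. Evaluating $\omega_2$ and using its verticality relative to $i_2$ then yields $\omega_2(\Phi\xi)=\omega_2(i_2(\phi(x),\alpha))=\alpha=\omega_1(\xi)$, that is, $\Phi^*\omega_2=\omega_1$.

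Since both implications are routine kernel and decomposition manipulations, I do not expect a genuine obstacle; the only point to watch is to quote the verticality and kernel properties of the connection $1$-forms from the previous proposition, and to note that the $G$-equivariance of the $\omega_i$ ensures the splitting is compatible along $G$-orbits even though it plays no role in the pointwise computation above.
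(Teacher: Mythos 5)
Your proof is correct and fills in exactly the argument the paper intends when it declares the proof ``immediate'': the equivalence reduces to $\Phi_*(H_1)\subset H_2 \Leftrightarrow \Phi^*\omega_2=\omega_1$, handled via the kernel and verticality characterization of the connection $1$-forms together with the intertwining of action morphisms. Both directions check out as written.
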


Henceforth, we will specify connections on a $G$-principal algebroid by specifying the connection 1-form  $\omega\in\Omega^1(A;\gg)$.

\begin{remark}[Existence of connections]
Under our standing assumption that $G$ is compact, any $G$-principal algebroid admits a connection. In fact, if we consider the short exact sequence
\[0 \longrightarrow X \rtimes \gg \stackrel{i}{\longrightarrow} A \longrightarrow A/\mathrm{Im}i \longrightarrow 0,\]
then a connection is just a $G$-equivariant splitting of the map $i$. To construct such a splitting we may take an arbitrary splitting and use an averaging argument to obtain a connection. We note also that since there is a 1:1 correspondence between connections on $G$-principal groupoids and their $G$-principal algebroids it follows that every $G$-principal groupoid admits a connection. This argument remains valid, more generally, if we replace the compactness assumption on $G$ by the assumption that the $G$-action on $A$ is proper.
\end{remark}

\begin{example}
\label{ex:principal:bundle:connection:algbrd}
Under the 1:1 correspondence between $G$-principal bundles $P\to M$ and $G$-principal algebroids $TP\to P$ (see Example \ref{ex:principal:bundle:algbrd}), a principal connection $H$ on $P\to M$ is the same thing as a connection on the algebroid $TP\to P$. The two notions of connection 1-form, in the principal bundle and principal algebroid senses, coincide. Moreover, the connection $H$ is the infinitesimal version of the connection $\H$ on the pair groupoid $P\times P\tto P$ (see Example \ref{ex:principal:bundle:connection:grpd}).
\end{example}

\begin{example}
\label{ex:morphism:s:fibers:principal:algbrd:connection}
Given a $G$-principal groupoid $\G\tto X$ with connection $\H$, each source fiber $\s^{-1}(x)\to \s^{-1}(x)/G$ is a $G$-principal bundle with connection $\H_x$, and so $T(\s^{-1}(x))\to \s^{-1}(x)$ is a $G$-principal algebroid with connection $\H_x$ (see Example \ref{ex:principal:bundle:connection:algbrd}). The Lie algebroid $A\to X$ of $\G$ is a $G$-principal Lie algebroid with connection $H$ and we have a morphism of $G$-principal Lie algebroids with connection
\[  
T(\s^{-1}(x)) \to A,  \qquad v_\gamma\mapsto \d R_{\gamma^{-1}}(v_\gamma).
 \]
This is, of course, the infinitesimal version of Example \ref{ex:morphism:s:fibers:principal:grpd:connection}.
\end{example}

Naturally, we define the {\bf curvature 2-form} of a connection $\omega$ to be the $A$-form $\Curv(\omega)\in\Omega^2(A;\gg)$ given by
\[ \Curv(\omega)(\xi,\zeta):=\d_A\omega(h(\xi),h(\zeta)), \quad \xi,\zeta\in A, \]
where $h:A\to H$ denotes the projection and $\d_A:\Omega^k(A,V)\to \Omega^{k+1}(A,V)$ the Lie algebroid differential. We leave the proof of the following easy proposition to the reader.

\begin{prop}
The connection $\omega\in\Omega^1(A;\gg)$ on a $G$-principal algebroid $A$ satisfies:
\begin{enumerate}[(i)]
\item {\bf 1st structure equation}:
\[ \d_A\omega=-\omega\wedge\omega +\Curv(\omega); \]
\item {\bf 1st Bianchi's identity}:
\[ \d_A\Curv(\omega)|_H=0. \]
\end{enumerate}
Moreover, $\Curv(\omega)=0$ if and only if $H\subset A$ is a Lie subalgebroid.
\end{prop}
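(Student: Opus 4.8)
The plan is to imitate the classical derivation of the structure equations for a connection on a principal bundle, transcribed to the Lie algebroid setting. Since $\d_A\omega$, $\omega\wedge\omega$ and $\Curv(\omega)$ are all genuine $\gg$-valued $A$-$2$-forms (in particular $C^\infty(X)$-bilinear and skew), to prove (i) it suffices to check the identity on a collection of sections spanning $A$ fibrewise. As $A=H\oplus\Im i$, I would use horizontal sections $\xi,\zeta\in\Gamma(H)$ together with the constant vertical sections $i(\al)$, $\al\in\gg$. Throughout I use the Cartan-type formula $\d_A\omega(\xi,\zeta)=\rho(\xi)(\omega(\zeta))-\rho(\zeta)(\omega(\xi))-\omega([\xi,\zeta])$ and the convention $(\omega\wedge\omega)(\xi,\zeta)=[\omega(\xi),\omega(\zeta)]_\gg$.

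I would then split into three cases. For $\xi,\zeta\in\Gamma(H)$ one has $h(\xi)=\xi$, $h(\zeta)=\zeta$ and $\omega(\xi)=\omega(\zeta)=0$, so $\Curv(\omega)(\xi,\zeta)=\d_A\omega(\xi,\zeta)$ and $(\omega\wedge\omega)(\xi,\zeta)=0$; the identity is then immediate from the definition of $\Curv$. For two vertical sections $i(\al),i(\be)$, since $i\colon X\rtimes\gg\to A$ is a Lie algebroid morphism and the bracket of constant sections of the action algebroid is $[\al,\be]_\gg$, we get $[i(\al),i(\be)]=i([\al,\be]_\gg)$, whence $\omega([i(\al),i(\be)])=[\al,\be]_\gg$, while $h(i(\al))=h(i(\be))=0$; comparing with $(\omega\wedge\omega)(i(\al),i(\be))=[\al,\be]_\gg$ yields (i) in this case. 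The crux is the mixed case $\xi\in\Gamma(H)$ and $i(\al)$: here both $\Curv(\omega)(\xi,i(\al))$ and $(\omega\wedge\omega)(\xi,i(\al))$ vanish, so I must show $\d_A\omega(\xi,i(\al))=-\omega([\xi,i(\al)])=0$. Using $[\xi,i(\al)]=-\widehat{\psi}(\al)(\xi)$, this reduces to the assertion that the derivation $\widehat{\psi}(\al)$ preserves $\Gamma(H)$, which is exactly the infinitesimal form of the $G$-invariance of $H$ (property (ii) of a connection). This is the one step where the defining properties of the connection are genuinely used, and it is the main point to get right.

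For the Bianchi identity I would avoid a direct computation and instead apply $\d_A$ to the first structure equation: since $\d_A^2=0$, the graded Leibniz rule gives $\d_A\Curv(\omega)=\d_A(\omega\wedge\omega)=\d_A\omega\wedge\omega-\omega\wedge\d_A\omega$. Restricting to horizontal arguments, every summand of either term on the right contains a factor $\omega$ evaluated on a horizontal vector, hence vanishes, so $\d_A\Curv(\omega)|_H=0$. Finally, for the last assertion I would note that for horizontal $\xi,\zeta$ the first case gives $\Curv(\omega)(\xi,\zeta)=\d_A\omega(\xi,\zeta)=-\omega([\xi,\zeta])$, and in general $\Curv(\omega)(\xi,\zeta)=-\omega([h(\xi),h(\zeta)])$. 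Thus $\Curv(\omega)=0$ if and only if $\omega([\xi,\zeta])=0$ for all $\xi,\zeta\in\Gamma(H)$, i.e. if and only if $[\xi,\zeta]$ is a section of $\Ker\omega=H$; this is precisely the statement that $\Gamma(H)$ is closed under the bracket, i.e. that $H$ is a Lie subalgebroid. The only genuine subtlety, beyond bookkeeping with signs and wedge conventions, is the equivariance input in the mixed case; the rest is formal.
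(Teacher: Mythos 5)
Your proof is correct. Note that the paper itself gives no argument here (it explicitly leaves this proposition to the reader), so there is nothing to diverge from; your argument is the standard one the authors clearly intend, namely the transcription of the classical principal-bundle computation using tensoriality to reduce to the three cases horizontal/horizontal, vertical/vertical and mixed, with the $G$-invariance of $H$ (equivalently, $\widehat{\psi}(\al)$ preserving $\Gamma(H)$) supplying the one nontrivial step in the mixed case, and $\d_A^2=0$ giving Bianchi.
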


\subsection{$G$-structure algebroids}

We are now ready to discuss the infinitesimal version of a $G$-structure groupoid.



\begin{definition}
If $G\subset \GL(n,\Rr)$, a {\bf $G$-structure algebroid} consists of a $G$-principal algebroid $A\to X$ with action morphism $i:X\rtimes \gg\to A$, equipped with a fiberwise surjective, $\Rr^n$-valued, $A$-form $\theta\in\Omega^1(A;\Rr^n)$ satisfying:
\begin{enumerate}[(i)]
\item strong horizontality:
\[ \theta_x(\xi)=0 \quad\text{iff}\quad \xi=i(x,\al),\text{ for some }\al\in\gg. \]
\item $G$-equivariance:
\[ \theta_{x\cdot g}(\xi\odot g)=g^{-1}\cdot\theta_x(\xi),\quad \forall g\in G. \]
\end{enumerate}
We call $\theta$ the {\bf tautological form} of the $G$-structure algebroid.

A {\bf morphism} of $G$-structure algebroids $\Phi:A_1\to A_2$ is a morphism of $G$-principal algebroids which preserves the tautological forms: $\Phi^*\theta_2=\theta_1$.
\end{definition}

The definition of a $G$-structure algebroid $A\to X$ implies that:
\[ \rank A=n+\dim G. \]
There is no restriction on the dimension of the base $X$, which can be arbitrary. A morphism of $G$-structure algebroids $\Phi:A_1\to A_2$ is necessarily a fiberwise isomorphism.

Since restriction along the unit section establishes a 1:1 correspondence between right-invariant forms on the Lie groupoid $\G\tto X$ and $A$-forms on its Lie algebroid $A\to X$, the following result is an immediate consequence of Proposition \ref{prop:principal:G:action:integrable} and Remark \ref{rem:condition i}

\begin{prop}
\label{prop:G:structures:integrable}
If $\G\tto X$ is a $G$-structure groupoid with tautological form $\Theta$, then its Lie algebroid $A\to X$ is a $G$-structure algebroid with tautological form $\theta=\Theta|_X$. 

Conversely, given a $G$-structure algebroid $A\to M$ with tautological form $\theta$ and a Lie groupoid $\G\tto X$ integrating $A$, if the action morphism $i:X\rtimes \gg\to A$ integrates to an effective Lie groupoid morphism 
\[ \iota: X\rtimes G\to \G,\] 
then there is a unique $G$-structure groupoid structure on $\G$ with tautological 1-form the unique right-invariant, $\Rr^n$-valued, 1-form $\Theta$ such that $\Theta|_X=\theta$.
\end{prop}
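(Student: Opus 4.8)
The plan is to run everything through the dictionary, recalled just before the statement, between right-invariant forms $\Omega^k_R(\G;V)$ and $A$-forms $\Omega^k(A;V)$ given by restriction along the unit section. Since the underlying $G$-principal structure is supplied by Proposition~\ref{prop:principal:G:action:integrable} in both directions, all that remains is to transport the tautological form and verify its two defining properties. In the forward direction I would set $\theta=\Theta|_X$, i.e. $\theta_x=\Theta_{1_x}$; in the converse I would take $\Theta$ to be the unique right-invariant $\Rr^n$-valued $1$-form with $\Theta_{1_x}=\theta_x$, whose existence and uniqueness is exactly this dictionary.

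Strong horizontality and pointwise surjectivity I expect to be routine. Both are conditions that are stable under the isomorphisms $\d R_\gamma$, hence right-invariant, so by Remark~\ref{rem:condition i} it suffices to check them along the units. There, using $i=\iota_*$ so that $i(x,\al)=\d_{(x,e)}\iota(0,\al)$, the strong horizontality of $\theta$ on $A_x=T^{\s}_{1_x}\G$ is word-for-word the strong horizontality of $\Theta$ at $1_x$, and $\theta_x$ is surjective precisely when $\Theta_{1_x}$ is.

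The step I expect to be the real obstacle is $G$-equivariance, because the groupoid and algebroid conditions are phrased through different actions: the groupoid condition $g^*\Theta=g^{-1}\cdot\Theta$ uses the principal action $\rho_g(\gamma)=\iota(\t(\gamma),g)\cdot\gamma$, while the algebroid condition uses the inner action $\sigma_g(\gamma)=\iota(\t(\gamma),g)\cdot\gamma\cdot\iota(\s(\gamma),g)^{-1}$, whose differential is $\xi\mapsto\xi\odot g$. The bridge I would use is the identity
\[
\sigma_g=R_{\iota(x,g)^{-1}}\circ\rho_g\qquad\text{on }\s^{-1}(x),
\]
immediate from the definitions since every arrow in $\s^{-1}(x)$ has source $x$, so that $\iota(\s(\gamma),g)^{-1}=\iota(x,g)^{-1}$ is constant along the fiber. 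Feeding this into the right-invariance of $\Theta$ along $\iota(x,g)^{-1}$ gives, for $\xi\in A_x$,
\[
\theta_{xg}(\xi\odot g)=\Theta_{1_{xg}}(\d\sigma_g(\xi))=\Theta_{\rho_g(1_x)}(\d\rho_g(\xi))=(\rho_g^*\Theta)_{1_x}(\xi).
\]
Hence the algebroid equivariance $\theta_{xg}(\xi\odot g)=g^{-1}\cdot\theta_x(\xi)$ and the restricted groupoid equivariance $(\rho_g^*\Theta)_{1_x}=g^{-1}\cdot\Theta_{1_x}$ are literally the same equation at the units, which closes the forward direction on the spot.

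For the converse I would first note that $\rho_g$ commutes with right translations, $\rho_g(\tau\delta)=\rho_g(\tau)\cdot\delta$, so that $\rho_g^*\Theta$ is again right-invariant, as is $g^{-1}\cdot\Theta$. Two right-invariant forms that agree along the units coincide, so the displayed identity at $1_x$, combined with the assumed algebroid equivariance of $\theta$, upgrades to $\rho_g^*\Theta=g^{-1}\cdot\Theta$ on all of $\G$. Uniqueness of the $G$-structure groupoid structure is then automatic, since any such $\Theta$ satisfies $\Theta|_X=\theta$ and is therefore pinned down by the dictionary.
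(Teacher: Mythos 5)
Your proof is correct and follows exactly the route the paper indicates: the paper states the result as an immediate consequence of Proposition~\ref{prop:principal:G:action:integrable}, Remark~\ref{rem:condition i}, and the restriction-along-units dictionary, without writing out details. The only step the paper leaves entirely implicit is the $G$-equivariance transfer between the principal action and the inner action, and your identity $\sigma_g=R_{\iota(x,g)^{-1}}\circ\rho_g$ on $\s^{-1}(x)$ supplies precisely the missing bridge, correctly.
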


The previous proposition shows that $G$-structure algebroids parameterize, infinitesimally, families of $G$-structures. 

\begin{example}
\label{ex:morphism:s:fibers:G:algbrd}
A $G$-structure algebroid on $A=TP\to P$ is the same thing as a $G$-structure on $P\to P/G=M$. In fact, we saw in Example \ref{ex:principal:bundle:algbrd} that a $G$-principal action on $TP$ is the same thing as a $G$-principal bundle. Moreover, the form $\theta$ in the definition of a $G$-structure algebroid is an ordinary form $\theta\in\Omega^1(P;\Rr^n)$ which is strongly horizontal and equivariant in the usual sense. 

Now, given a $G$-structure groupoid $\G\tto X$, each source fiber $\s^{-1}(x)\to \s^{-1}(x)/G$ has a $G$-structure and so $T(\s^{-1}(x))\to \s^{-1}(x)$ is a $G$-structure algebroid. The Lie algebroid $A$ of $\G$ is also a $G$-structure algebroid and we have a morphism of $G$-structure algebroids:
\[
\vcenter{\vbox{ 
 \xymatrix@R=20pt{
 T(\s^{-1}(x))\ar[d]\ar[r]& A\ar[d] \\
 \s^{-1}(x)\ar[r]_{t} & X
 }}}
 \qquad v_\gamma\mapsto \d R_{\gamma^{-1}}(v_\gamma).
 \]
This is, of course, the infinitesimal version of Example \ref{ex:morphism:s:fibers:G:grpd}.
\end{example}

\subsection{$G$-structure algebroids with connection}
Given a $G$-structure algebroid $A\to X$ with connection form $\omega\in\Omega^1(A;\gg)$ we define the {\bf torsion of the connection} to be the $\Rr^n$-valued $A$-form 
\[ \Tors(\omega)\in\Omega^2(A;\Rr^n),\quad \Tors(\omega)(\xi,\zeta):=\d_A\theta(h(\xi),h(\zeta)), \]
where $\theta\in\Omega^1(A;\Rr^n)$ denotes the tautological 1-form and $h:A\to H$ the bundle projection. 

Under the correspondence between $G$-structure groupoids and $G$-structure algebroids given by Proposition \ref{prop:G:structures:integrable}, if a connection $\Omega$ on $\G$ corresponds to a connection $\omega$ on $A$, then the associated torsions $\Tors(\Omega)$ and $\Tors(\omega)$ correspond to each other:
\[ \Tors(\omega)_x=\Tors(\Omega)_{1_x}, \quad \forall x\in X. \]
Moreover, we have the infinitesimal versions of the 2nd structure equation and Bianchi's identity. We leave the (easy) proof to the reader.

\begin{prop}
Let $A\to X$ be a $G$-structure algebroid with connection $H$.  Then the tautological 1-form $\theta$, the connection 1-form $\omega$ and its torsion satisfy:
\begin{enumerate}[(i)]
\item {\bf 2nd structure equation}:
\[ \d_A\theta=-\omega\wedge\theta +\Tors(\omega); \]
\item {\bf 2nd Bianchi's identity}: 
\[ \d_A\Tors(\omega)|_H=(\Curv(\omega)\wedge \theta)|_H. \]
\end{enumerate}
\end{prop}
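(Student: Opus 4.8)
The plan is to verify both identities by evaluating the relevant $A$-forms on sections and exploiting the pointwise splitting $A=H\oplus\Im i$ supplied by the connection, so that it suffices to test on horizontal sections of $H$ and on the vertical sections $i(\al)$, $\al\in\gg$. Both sides of each identity are skew-symmetric and $C^\infty(X)$-multilinear, so for the second structure equation (i) it is enough to check on pairs of such sections, while for the second Bianchi identity (ii) I will instead work directly at the level of $A$-forms and restrict to $H$ at the end.

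For (i), I would treat three cases. When both arguments $\xi,\zeta$ lie in $H$, one has $\omega(\xi)=\omega(\zeta)=0$, so $(\omega\wedge\theta)(\xi,\zeta)=0$, while $h(\xi)=\xi$ and $h(\zeta)=\zeta$ give $\Tors(\omega)(\xi,\zeta)=\d_A\theta(\xi,\zeta)$; both sides agree. When $\zeta=i(\al)$ is vertical, strong horizontality gives $\theta(i(\al))=0$, so Cartan's magic formula reduces the algebroid Lie derivative to $\Lie_{i(\al)}\theta=\d_A\theta(i(\al),\,\cdot\,)$. The crucial input is the infinitesimal form of the $G$-equivariance of $\theta$, obtained by differentiating $\theta_{xg}(\xi\odot g)=g^{-1}\cdot\theta_x(\xi)$ at $g=e$ along $\al$, namely $\Lie_{i(\al)}\theta=-\al\cdot\theta$. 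Comparing the two expressions gives $\d_A\theta(\xi,i(\al))=\al\cdot\theta(\xi)$; since $\Tors(\omega)(\xi,i(\al))=0$ (as $h(i(\al))=0$) and $(\omega\wedge\theta)(\xi,i(\al))=-\al\cdot\theta(\xi)$ (using $\omega(i(\al))=\al$), the two sides match. Finally, when both arguments are vertical, say $i(\al),i(\be)$, the right-hand side vanishes term by term, and on the left the Koszul formula together with $[i(\al),i(\be)]=i([\al,\be]_\gg)$ (because $i$ is an algebroid morphism) and strong horizontality forces $\d_A\theta(i(\al),i(\be))=0$.

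For (ii), the cleanest route is to differentiate (i). Applying $\d_A$ and using $\d_A^2\theta=0$ together with the graded Leibniz rule $\d_A(\omega\wedge\theta)=\d_A\omega\wedge\theta-\omega\wedge\d_A\theta$ yields $\d_A\Tors(\omega)=\d_A\omega\wedge\theta-\omega\wedge\d_A\theta$. Restricting to $H$ annihilates the last term, since each summand of that wedge carries a factor $\omega$ evaluated on a horizontal vector; meanwhile $\d_A\omega|_H=\Curv(\omega)$ by definition, and $\theta$ is evaluated on horizontal vectors, so the first term restricts to $(\Curv(\omega)\wedge\theta)|_H$, which is exactly the asserted identity.

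I expect the only genuinely delicate point to be the horizontal--vertical case of (i): pinning down the infinitesimal equivariance $\Lie_{i(\al)}\theta=-\al\cdot\theta$ with the correct sign, and reading the algebroid Lie derivative through Cartan's magic formula for $A$-forms. Once this is in place, together with the bookkeeping conventions for the wedge products $\omega\wedge\theta$ and $\Curv(\omega)\wedge\theta$ induced by the $\gg$-action on $\Rr^n$, everything else is routine, and (ii) follows formally from (i). This mirrors, at the infinitesimal level, the corresponding statement for $G$-structure groupoids, restriction along the unit section translating the right-invariant forms $\Theta,\Omega$ into the $A$-forms $\theta,\omega$.
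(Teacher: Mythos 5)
Your argument is correct. Note that the paper does not actually supply a proof of this proposition (it is explicitly ``left to the reader''), so there is nothing to compare line by line; the surrounding text only hints at the alternative route of deducing it from the groupoid-level statement (itself obtained from the classical structure equations on each source fibre) via restriction along the unit section. Your direct verification at the algebroid level is a perfectly good way to fill the gap: the case split along $A=H\oplus\Im i$ handles (i), the key input $\Lie_{i(\al)}\theta=-\al\cdot\theta$ is the correct infinitesimal form of the equivariance axiom (and your signs are consistent with the canonical form, where $\d_A\theta=c(\theta\wedge\theta)-\omega\wedge\theta$ and $\Tors(\omega)=c(\theta\wedge\theta)$), and deriving (ii) by applying $\d_A$ to (i), using $\d_A^2=0$ and the graded Leibniz rule, then restricting to $H$, is exactly the formal argument one wants. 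The one point worth making explicit in a write-up is the tensoriality remark you already rely on: both sides of (i) are $C^\infty(X)$-bilinear, so it suffices to test on horizontal sections and on the constant vertical sections $i(\al)$, which span $A$ pointwise.
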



Also, the following characterization of morphisms of $G$-structure algebroids with connection is an immediate consequence of Proposition \ref{prop:maps:connect:algbrd}.

\begin{prop}
Let $A_1$ and $A_2$ be $G$-structure algebroids with connection. A morphism of algebroids $\Phi:A_1\to A_2$ is a morphism of $G$-structure algebroids with connection if and only if it is $G$-equivariant, intertwines the action morphisms, and preserves the tautological and connection 1-forms:
\[ \Phi^*\theta_2=\theta_1,\quad \Phi^*\omega_2=\omega_1. \]
\end{prop}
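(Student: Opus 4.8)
The plan is to deduce this directly from Proposition~\ref{prop:maps:connect:algbrd}, since a $G$-structure algebroid with connection is nothing but a $G$-principal algebroid with connection carrying the extra datum of a tautological form $\theta$. First I would unwind the relevant definitions. By definition, a morphism of $G$-structure algebroids with connection $\Phi:A_1\to A_2$ is simultaneously (a) a morphism of $G$-structure algebroids, i.e.\ a morphism of $G$-principal algebroids satisfying $\Phi^*\theta_2=\theta_1$, and (b) a morphism of connections, i.e.\ a map carrying horizontal subbundle to horizontal subbundle, $\Phi_*(H_1)\subset H_2$.

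Next I would regroup these requirements. The ``morphism of $G$-principal algebroids'' part of (a) together with the horizontality condition (b) is exactly the definition of a morphism of $G$-principal algebroids with connection. Applying Proposition~\ref{prop:maps:connect:algbrd}, this combination is equivalent to $\Phi$ being $G$-equivariant, intertwining the action morphisms, and satisfying $\Phi^*\omega_2=\omega_1$. The only condition left over from (a) is $\Phi^*\theta_2=\theta_1$. Assembling these conditions produces precisely the characterization in the statement, and since each step is a logical equivalence, both directions follow at once.

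The point I would flag as the potential crux, namely the passage between the subbundle condition $\Phi_*(H_1)\subset H_2$ and the form-level condition $\Phi^*\omega_2=\omega_1$, is not in fact new work here: it is already subsumed in Proposition~\ref{prop:maps:connect:algbrd}. Consequently there is no genuine obstacle, and the proof reduces to bookkeeping with the definitions. If one wished to spell out that equivalence in the present context, the key remark is that every $G$-structure algebroid morphism is a fiberwise isomorphism, because $\theta$ is fiberwise surjective and strongly horizontal while $\omega$ is vertical, so the pair $(\theta,\omega)$ trivializes each fiber as $\Rr^n\oplus\gg$; hence $\Phi^*\omega_2=\omega_1$ together with $\Ker\omega_i=H_i$ forces $H_1=\Phi_*^{-1}(H_2)$ with equality, while $G$-equivariance and compatibility with the action morphisms guarantee that both $\omega_1$ and $\Phi^*\omega_2$ restrict correctly on $\Im i_1$, upgrading equality of kernels to equality of the forms.
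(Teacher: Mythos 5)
Your proof is correct and follows essentially the same route as the paper, which simply declares the proposition an immediate consequence of Proposition~\ref{prop:maps:connect:algbrd}: unwind the definition into the $G$-principal-algebroid-with-connection part plus the condition $\Phi^*\theta_2=\theta_1$, and apply that proposition to the former. Your closing remark spelling out why $\Phi_*(H_1)\subset H_2$ upgrades to $\Phi^*\omega_2=\omega_1$ is a reasonable elaboration of content the paper leaves inside Proposition~\ref{prop:maps:connect:algbrd}, but it is not a new ingredient.
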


\begin{example}
\label{ex:G:algbrd:connect}
As we saw in Example \ref{ex:principal:bundle:connection:algbrd}, a $G$-structure algebroid with connection on $TP\to P$ is the same thing as a $G$-structure with connection on $P\to P/G=M$. Hence, if one is given $G$-structures with connection $(P_i,\theta_i,\omega_i)$, and a local diffeomorphism $\varphi:M_1\to M_2$ the lift 
\[ \tilde{\varphi}:P_1\to P_2 \] 
is a map of $G$-structures with connections if and only if the bundle map
\[ \d \tilde{\varphi}:TP_1\to TP_2 \]
is a morphism of $G$-structure algebroids with connection.

%
\end{example}

\subsection{Canonical form of a $G$-structure algebroid with connection}

We are now in condition to justify why $G$-structure algebroids with connection give the appropriate language to deal with Cartan's realization problem.

\begin{definition}
A $G$-structure algebroid $A\to X$ with connection is said to be in {\bf canonical form} if 
\begin{itemize}
\item $A=X\times(\Rr^n\oplus\gg)$ is the trivial vector bundle;
\item the $G$-action on $A$ takes the form: $(x, u, \al)\, g = (x\, g, g^{-1}\, u, \Ad_{g^{-1}}\cdot \al)$;
\item the action morphism is given by $i:X\rtimes \gg\to A$, $(x,\al))\mapsto (x,0,\al)$;
\item the tautological form is given by the projection $\theta:X\times(\Rr^n\oplus\gg)\to \Rr^n$;
\item the connection is given by $H=X\times(\Rr^n\oplus\{0\})\subset A$ or, equivalently, the connection 1-form is the 
projection $\omega:X\times(\Rr^n\oplus\gg)\to \gg$.
\end{itemize}
\end{definition}

For a $G$-structure algebroid with connection in canonical form we can re-express the torsion and curvature 1-forms as $G$-equivariant maps:
\begin{itemize}
\item {\bf torsion}: $c: X \to \hom(\wedge^2\Rr^n, \Rr^n)$:
\[ c(x)(v,w)=\Tors(\omega)_x(v,w); \]
\item {\bf curvature}: $R : X\to \hom(\wedge^2\Rr^n,\gg)$:
\[ R(x)(v,w)=\Curv(\omega)_x(v,w); \]
\end{itemize}
where we don't distinguish between the vector $v\in\Rr^n$ and $(x,0,v)\in A_x$.  Using these expressions, we can deduce that the Lie bracket and the anchor coincide with the ones deduced in Section \ref{sec:Cartan:Realization} in connection with Cartan's realization problem.

\begin{prop}
\label{prop:canonical:form}
For a $G$-structure algebroid with connection which is in canonical form $A=X\times(\Rr^n\oplus\gg)\to X$  the Lie bracket is given on constant sections by:
\begin{equation}
\label{eq:canonical:Lie:bracket}
[(u,\al), (v, \be)] = (\al\cdot v - \be\cdot u - c(u,v), [\al,\be]_\gg - R(u,v)),
\end{equation}
while the anchor $\rho:A\to TX$ takes the form:
\begin{equation}
\label{eq:canonical:anchor} 
\rho(u,\al)=F(u) + \psi(\al), \quad (u,\al)\in\Rr^n\oplus\gg,
\end{equation}
where $F:X\times\Rr^n\to TX$ is a $G$-equivariant bundle map and $\psi:\gg\to \X(X)$ denotes the infinitesimal $G$-action on $X$.
\end{prop}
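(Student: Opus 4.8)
The plan is to reconstruct the anchor and the bracket separately, exploiting the fact that in canonical form the pair $(\theta,\omega)$ identifies each fiber $A_x$ with $\Rr^n\oplus\gg$, so it suffices to compute the $\Rr^n$- and $\gg$-components of each bracket $[(u,\al),(v,\be)]$. The whole computation rests on combining the Koszul (Cartan) formula for the Lie algebroid differential $\d_A$ with the first and second structure equations established in the previous subsections.

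First I would treat the anchor. Since the action morphism $i\colon X\rtimes\gg\to A$, $(x,\al)\mapsto(x,0,\al)$, is a morphism of Lie algebroids covering the identity, it intertwines anchors; as the anchor of the action algebroid $X\rtimes\gg$ is the infinitesimal action, this gives $\rho(0,\al)=\psi(\al)$. Setting $F(u):=\rho(u,0)$ defines a bundle map $X\times\Rr^n\to TX$, and $\Rr$-linearity of $\rho$ immediately yields \eqref{eq:canonical:anchor}. The $G$-equivariance of $F$ then follows from the equivariance of $\rho$ (the $G$-action is by algebroid automorphisms), restricted to the $G$-invariant horizontal factor $\Rr^n\oplus\{0\}$.

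Next I would compute the bracket on constant sections $s_1=(u,\al)$, $s_2=(v,\be)$. The key simplification is that $\theta(s_i)$ and $\omega(s_i)$ are \emph{constant} $\Rr^n$- and $\gg$-valued functions, so in the Cartan formula the anchor-derivative terms $\rho(s_i)\bigl(\theta(s_j)\bigr)$ and $\rho(s_i)\bigl(\omega(s_j)\bigr)$ vanish, leaving
\[ \d_A\theta(s_1,s_2)=-\theta([s_1,s_2]), \qquad \d_A\omega(s_1,s_2)=-\omega([s_1,s_2]). \]
I then evaluate the same quantities using the structure equations. The second structure equation $\d_A\theta=-\omega\wedge\theta+\Tors(\omega)$ gives, on $(s_1,s_2)$, the wedge term $\al\cdot v-\be\cdot u$ (the wedge uses the $\gg$-action on $\Rr^n$) together with $\Tors(\omega)(s_1,s_2)=c(u,v)$, since the torsion is defined through the horizontal projection $h(u,\al)=(u,0)$. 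Comparing yields $\theta([s_1,s_2])=\al\cdot v-\be\cdot u-c(u,v)$. Likewise the first structure equation $\d_A\omega=-\omega\wedge\omega+\Curv(\omega)$ gives $(\omega\wedge\omega)(s_1,s_2)=[\al,\be]_\gg$ and $\Curv(\omega)(s_1,s_2)=R(u,v)$, whence $\omega([s_1,s_2])=[\al,\be]_\gg-R(u,v)$. As $(\theta,\omega)\colon A_x\to\Rr^n\oplus\gg$ is an isomorphism, these two components determine $[s_1,s_2]$ pointwise, giving \eqref{eq:canonical:Lie:bracket}.

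I expect the only delicate points to be bookkeeping: fixing the normalization conventions for the vector-valued wedge products (the term $\omega\wedge\theta$ via the linear $\gg$-action on $\Rr^n$, and $\omega\wedge\omega$ via the commutator in $\gg$), and verifying that evaluating $\Tors(\omega)$ and $\Curv(\omega)$ on the full sections $s_1,s_2$ already builds in the horizontal projection, so that they reduce exactly to the $G$-equivariant maps $c$ and $R$ as defined. Once these conventions are pinned down and the vanishing of the anchor-derivative terms on constant sections is noted, the remainder is a routine identification.
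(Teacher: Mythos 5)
Your proof is correct and follows essentially the same route as the paper: the bracket is recovered from the Koszul formula on sections with constant $\theta$- and $\omega$-components combined with the two structure equations and the definitions of $c$ and $R$, and the anchor from the fact that $\rho\circ i=\psi$ together with setting $F(u)=\rho(u,0)$. You merely spell out the convention-checking that the paper leaves implicit.
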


\begin{proof}
If $s_1$ and $s_2$  are sections such that $\theta(s_i)$ and $\omega(s_i)$ are constant, then the definition of the differential shows that:
\[ \d_A\theta(s_1,s_2)=-\theta([s_1,s_2]),\quad \d_A\omega(s_1,s_2)=-\omega([s_1,s_2]). \]
So all we have to show is that:
\begin{align*}
-\d_A\theta((u,\al),(v,\be))&=\al\cdot v - \be\cdot u - c(u,v),\\
-\d_A\omega((u,\al),(v,\be))&= [\al,\be]_\gg - R(u,v)).
\end{align*}
This follows easily from the structure equations and the definition of $c$ and $R$.

The form of the anchor follows since the composition of the action morphism $i:X\rtimes \gg\to A$ with the anchor $\rho$ is the infinitesimal action $\psi$.

\end{proof}

For any $G$-structure algebroid with connection, the tautological and connection 1-forms give a coframe $(\theta,\omega)$ for the vector bundle $A\to X$.  This allows to put any $G$-structure Lie algebroid with connection in canonical form and in a natural way, i.e., independent of choices:

\begin{prop}
\label{prop:normal:form}
Any $G$-structure algebroid with connection $A\to X$ is naturally isomorphic to one in canonical form.
\end{prop}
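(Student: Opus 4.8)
The plan is to build the isomorphism explicitly out of the coframe $(\theta,\omega)$. First I would verify the claim made just above the statement, namely that $\theta\oplus\omega:A\to\Rr^n\oplus\gg$ is a fiberwise isomorphism. Indeed, by the definition of a connection we have a fiberwise splitting $A=H\oplus\Im i$, where $H=\Ker\omega$. On $\Im i$ the form $\theta$ vanishes by strong horizontality, while $\omega(i(x,\al))=\al$ by verticality, so $\omega$ restricts to an isomorphism $\Im i\xrightarrow{\sim}\gg$. On $H$ the form $\omega$ vanishes by definition, while $\theta|_H$ has kernel $H\cap\Ker\theta=H\cap\Im i=0$ and hence is an isomorphism $H\xrightarrow{\sim}\Rr^n$ by the dimension count $\dim H=\rank A-\dim G=n$. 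Together these show $\theta\oplus\omega$ is a fiberwise isomorphism.

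Next I would let $\pi:A\to X$ denote the bundle projection and define the vector bundle map
\[
\Phi:A\longrightarrow X\times(\Rr^n\oplus\gg),\qquad \Phi(\xi)=(\pi(\xi),\theta(\xi),\omega(\xi)),
\]
covering the identity $\mathrm{id}_X$; by the previous step this is a vector bundle isomorphism. I would then transport the Lie algebroid structure of $A$ to the trivial bundle $X\times(\Rr^n\oplus\gg)$ via $\Phi$, so that $\Phi$ becomes tautologically an isomorphism of Lie algebroids. It then only remains to check that the transported data coincides with the canonical data listed in the definition.

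The verification is a direct bullet-by-bullet match. The underlying bundle is trivial by construction. From $\Phi(\xi)=(\pi(\xi),\theta(\xi),\omega(\xi))$ one reads off $\Phi^*\pr_{\Rr^n}=\theta$ and $\Phi^*\pr_{\gg}=\omega$, so the tautological form pushes forward to $\pr_{\Rr^n}$ and the connection $1$-form to $\pr_{\gg}$. Strong horizontality and verticality give $\theta(i(x,\al))=0$ and $\omega(i(x,\al))=\al$, whence $\Phi\circ i$ is the inclusion $(x,\al)\mapsto(x,0,\al)$. Finally, the $G$-equivariance of $\theta$ and $\omega$,
\[
\theta_{xg}(\xi\odot g)=g^{-1}\cdot\theta_x(\xi),\qquad \omega_{xg}(\xi\odot g)=\Ad_{g^{-1}}\cdot\omega_x(\xi),
\]
show that $\Phi(\xi\odot g)=\Phi(\xi)g$ for the canonical $G$-action, so $\Phi$ is $G$-equivariant and the transported action is the canonical one. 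Since $\Phi$ simultaneously intertwines the bracket, anchor, $G$-action, action morphism, tautological form and connection form, it is an isomorphism of $G$-structure algebroids with connection onto a structure satisfying all five conditions of the canonical form.

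I do not expect a genuine obstacle here: once the coframe property is established, the argument is a transport of structure followed by the bullet-by-bullet check, and Proposition \ref{prop:canonical:form} then supplies the explicit bracket and anchor for free. The only point deserving a word is naturality: because $\Phi$ is built solely from the bundle projection together with the tautological and connection $1$-forms, with no auxiliary choices, the resulting isomorphism is canonical, which is exactly what \emph{naturally isomorphic} asserts.
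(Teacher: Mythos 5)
Your proposal is correct and follows essentially the same route as the paper: the paper's proof simply declares that the coframe $(\theta,\omega)$ defines the isomorphism $\xi_x\mapsto(x,\theta(\xi),\omega(\xi))$ and that "one checks immediately" it puts $A$ into canonical form, which is exactly the map you construct and the bullet-by-bullet verification you carry out in more detail.
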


\begin{proof}
The tautological and the connection 1-form give a coframe $(\theta,\omega)$ for $A$ defining an isomorphism
\[ 
A\stackrel{\cong}{\longrightarrow} X\times (\Rr^n\oplus\gg), \quad \xi_x\mapsto (x,\theta(\xi),\omega(\xi)). 
\]
One checks immediately that this puts $A$ into canonical form.
\end{proof}

For a Lie algebroid in canonical form, using expression \eqref{eq:canonical:Lie:bracket} for the Lie bracket, we deduce that the Jacobi identity is equivalent to the following set of equations:
\begin{enumerate}
\item the Jacobi identity for the bracket on $\gg$ 
\[ [[\al,\be]_\gg,\gamma]_\gg+[[\be,\gamma]_\gg,\al]_\gg+[[\gamma,\al]_\gg,\be]_\gg=0, \]
(by considering constant sections of the form $(0,\al)$, $(0,\be)$ and $(0,\gamma)$);
\item the  defining representation $\gg\subset \gl(n,\Rr)$:
\[ [\al,\be]_\gg(u)=\al(\be(u))-\be(\al(u)), \]
(by considering constant sections of the form $(0,\al)$, $(0,\be)$ and $(u,0)$);
\item the $\gg$-equivariance of $R$ and $c$: 
\begin{align*}
\psi(\al)(R(u,v))&=R(\al(u),v)+R(u,\al(v)),\\ 
\psi(\al)(c(u,v))&=c(\al(u),v)+c(u,\al(v)),
\end{align*}
(by considering constant sections of the form $(0,\al)$, $(u,0)$ and $(v,0)$);
\item The 1st and 2nd Bianchi identities:
\begin{align*}
\bigodot_{u,v,w} \left\{F(u)(c(v,w))+c(c(u,v),w)\right\}&=\bigodot_{u,v,w} R(u,v)w,\\
\bigodot_{u,v,w} \left\{F(u)(R(v,w))+ R(c(u,v),w)\right\}&=0,
\end{align*}
(by considering constant sections of the form $(u,0)$, $(v,0)$ and $(w,0)$). Here, the symbol $\bigodot$ means sum over the cyclic permutations.
\end{enumerate}
We conclude that:

\begin{corol}
\label{cor:Jacobi:identity}
Let $(G, X,c,R,F)$ be a Cartan Data satisfying additionally:
\begin{align*}
\bigodot_{u,v,w} \left\{F(u)(c(v,w))+c(c(u,v),w)\right\}&=\bigodot_{u,v,w} R(u,v)w,\\
\bigodot_{u,v,w} \left\{F(u)(R(v,w))+ R(c(u,v),w)\right\}&=0,
\end{align*}
then the Lie bracket \eqref{eq:canonical:Lie:bracket} and the anchor \eqref{eq:canonical:anchor} define a $G$-structure algebroid with connection (in canonical form).
\end{corol}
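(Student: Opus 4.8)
The plan is to lean on the equivalence established in the paragraph preceding the statement. For the bracket \eqref{eq:canonical:Lie:bracket} and anchor \eqref{eq:canonical:anchor} on the trivial bundle $A=X\times(\Rr^n\oplus\gg)$, $\Rr$-bilinearity and skew-symmetry are immediate from the defining formula, while the Leibniz rule holds by the very construction of the bracket out of its values on constant sections. Thus the only Lie algebroid axiom left to verify is the Jacobi identity, and the preceding analysis exhibited Jacobi as equivalent to the four families of equations (1)--(4). Once the Jacobi identity is in hand, the compatibility of the anchor with the bracket is automatic for a vector bundle with a global frame, so $(A,[\cdot,\cdot],\rho)$ is a genuine Lie algebroid. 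Hence everything reduces to checking (1)--(4).

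The first observation is that (1) and (2) are not extra hypotheses but are forced by the Cartan Data itself: the requirement $G\subset\GL(n,\Rr)$ makes $\gg$ a Lie subalgebra of $\gl(n,\Rr)$, which is exactly (1), and the action of $\gg$ on $\Rr^n$ entering the bracket is the restriction of the standard representation, so that $[\al,\be]_\gg(u)=\al(\be(u))-\be(\al(u))$ is merely the matrix commutator evaluated on $u$, which is (2). Likewise, (3) is the infinitesimal shadow of equivariance: since $c$ and $R$ are assumed $G$-equivariant in the Cartan Data, differentiating these relations along $\al\in\gg$ yields precisely the stated identities for $c$ and $R$. This leaves only the two Bianchi-type identities (4), which are exactly the two displayed hypotheses of the corollary. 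Therefore all of (1)--(4) hold and $(A,[\cdot,\cdot],\rho)$ is a Lie algebroid.

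It remains to see that this Lie algebroid, together with the canonical-form data, is a $G$-structure algebroid with connection. Most of the required properties are read off directly from the canonical form: the inclusion $i(x,\al)=(x,0,\al)$ is an injective Lie algebroid morphism out of $X\rtimes\gg$ whose bracket $[i(\al),-]$ recovers the infinitesimal action; the projection $\theta=\pr_{\Rr^n}$ is fiberwise surjective, vanishes exactly on $\Im i$, and transforms by $g^{-1}$ under the action, giving strong horizontality and $G$-equivariance; and $H=X\times(\Rr^n\oplus\{0\})=\Ker\omega$ is complementary to $\Im i$ and $G$-invariant, hence a connection. The single point that demands genuine checking, rather than merely unwinding the canonical form, is that the $G$-action $(x,u,\al)g=(xg,g^{-1}u,\Ad_{g^{-1}}\al)$ is by Lie algebroid automorphisms, for this is where the equivariance of $c$, $R$, $F$ and $\psi$ must be fed into the transformation law of the bracket and the anchor. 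Once this is verified, $i$ exhibits $A$ as a $G$-principal algebroid, $\theta$ as a $G$-structure algebroid, and $H$ as a connection, completing the proof. I expect this automorphism check to be the main, though still routine, computation; every other verification is immediate from the canonical form.
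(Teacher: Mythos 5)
Your proposal is correct and follows essentially the same route as the paper, which derives the corollary directly from the immediately preceding equivalence of the Jacobi identity with conditions (1)--(4): conditions (1)--(3) are automatic from the Cartan Data (the Lie algebra structure $\gg\subset\gl(n,\Rr)$ and the differentiated $G$-equivariance of $c$ and $R$), and condition (4) is exactly the stated hypothesis. Your additional remarks on verifying the $G$-principal action, tautological form, and connection from the canonical form are consistent with how the paper treats these as routine consequences of the setup.
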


Let us give two simple examples of $G$-structure algebroids in canonical form.

\begin{example}[$G$-structure Lie algebras]
\label{ex:G:algebras}
Let $A\to \{*\}$ be a $G$-structure algebroid over a singleton. This means that $A=\Rr^n\oplus\gg$ is a Lie algebra with Lie bracket 
\[[(u,\al), (v, \be)] = (\al\cdot v - \be\cdot u - c(u,v), [\al,\be]_\gg - R(u,v)),\]
where $c\in \hom(\wedge^2\Rr^n, \Rr^n)$ and $R\in \hom(\wedge^2\Rr^n,\gg)$ satisfy:
\[ c(g\cdot u,g\cdot v)=g\cdot c(u,v), \quad R(g\cdot u,g\cdot v)=\Ad_g R(u,v),\quad (g\in G). \]
If we assume that $G$ is connected, these are equivalent to:
\[ c(\al\cdot u,v)+c(u,\al\cdot v)=\al\cdot c(u,v), \quad R(\al\cdot u,\cdot v)+R(u,\al\cdot v)=[\al,R(u,v)]_\gg, \quad (\al\in\gg).\]
The bracket satisfies the Jacobi identity iff additionally:
\[ \bigodot_{u,v,w} R(u,v)w-c(c(u,v),w)=0,\qquad \bigodot_{u,v,w} R(c(u,v),w)=0.\]
When $R=c=0$ we call $A=\Rr^n\oplus \gg$ the {\bf trivial $G$-structure Lie algebra}. The corresponding Cartan Problem describes flat spaces. 
\end{example}

\begin{example}[Constant scalar curvature $G$-structure algebroid]
\label{ex:constant:curvature}
Let $G=\SO(n,\Rr)$ act trivially on $X=\Rr$. Let $A=\Rr^n\times(\Rr^n\oplus\so(n,\Rr))\to \Rr$ be the $G$-structure algebroid with zero torsion ($c\equiv 0$), zero anchor ($F\equiv 0$) and curvature given by:
\[ R(x)(u,v) w=x \left(\langle w,v\rangle u-\langle w,u\rangle v\right). \]
One checks that 
\[ \bigodot_{u,v,w} R(u,v)w=0, \]
so the Jacobi identity holds. The corresponding Cartan Problem describes spaces of constant scalar curvature, i.e., space forms. 
\end{example}

\subsection{$G$-realizations of a $G$-structure algebroid with connection}
\label{sec:G-realizations}

The following notion will be relevant to discuss solutions of Cartan's Realization Problem:

\begin{definition}
A {\bf $G$-realization} of a $G$-structure algebroid $A\to X$ (with connection) is a $G$-structure $P\to M$ (with connection) together with a morphism of $G$-structure algebroids (with connection):
\[
\vcenter{\vbox{ 
 \xymatrix@R=20pt{
 TP\ar[d]\ar[r]^{\Phi}& A\ar[d] \\
 P\ar[r]_{\phi} & X
 }}}
 \]
 One calls $\phi:P\to X$ the {\bf classifying map} of the $G$-realization. 
 
A {\bf morphism of $G$-realizations} of a $G$-structure algebroid $A\to X$ (with connection) is a map $\varphi:P_1\to P_2$, commuting with the classifying maps $\phi_i:P_i\to M$ and yielding a commutative diagram of morphisms of $G$-structure algebroids (with connection):
\[ 
\xymatrix{
TP_1\ar[dr]_{\Phi_1}\ar[rr]^{{\d\varphi}} & &TP_2\ar[dl]^{\Phi_2} \\
 & A
}
\]
\end{definition}

It follows from this definition that a morphism $\varphi:P_1\to P_2$ of $G$-realizations is a local equivalence of $G$-structures with connection. There are more general notions of morphisms, but for us this notion will suffice.

In order to see how to construct realizations, let us recall that if $\G\tto X$ is a Lie groupoid with Lie algebroid $A$, one defines its {\bf Maurer-Cartan form} to be the right-invariant, $A$-valued, 1-form $\wmc\in\Omega^1(T^{\s} \G,\t^*A)$ given by
\[ \wmc(X):=\d R_{\gamma^{-1}} X, \quad \text{if }X\in T^{\s}_\gamma\G. \]
Equivalently, we can view $\wmc$ as a bundle map
\[
\xymatrix@C=30pt{
T^{\s} \G\ar[d]\ar[r]^{\wmc} & A\ar[d] \\
\G \ar[r]_{\t} & X
}
\]
The $1$-form $\wmc$ satisfies the Maurer-Cartan equation which can be equivalently stated as saying that this bundle map is a Lie algebroid morphism (see, e.g., \cite{FernandesStruchiner1}).

\begin{example}
Let $A\to X$  be a $G$-structure algebroid with connection $\omega$ and assume that $\G\tto X$ is a $G$-structure groupoid with connection $\Omega$ integrating it. Combining Examples \ref{ex:morphism:s:fibers:principal:algbrd:connection} and \ref{ex:morphism:s:fibers:G:algbrd}, each source fiber $\s^{-1}(x)\to \s^{-1}(x)/G$ is a $G$-structure with connection $\Omega|_{\s^{-1}(x)}$. The restriction of the Maurer-Cartan form gives a morphism of $G$-structure algebroids with connection:
\[
 \xymatrix@R=20pt{
 T(\s^{-1}(x))\ar[d]\ar[r]^---{\wmc} & A\ar[d] \\
 \s^{-1}(x)\ar[r]_{t} & X
 }
 \]
Hence, $(\s^{-1}(x),\wmc|_{\s^{-1}(x)},\t)$ is a $G$-realization of $A$.
\end{example}

There exist $G$-realizations which are not source fibers of groupoids integrating $A$. For example, one can consider open $G$-invariant subsets of the source fibers $\s^{-1}(x)$ of any $G$-structure groupoid $\G\tto X$ integrating $A\to X$. Later we will discuss the existence and classification problems for $G$-realizations, which is intimately related with the existence and classification problem of solutions of Cartan's Realization Problem.

Note that in the case where the $G$-realization consists of a source fiber $\s^{-1}(x)$ of a Lie groupoid integrating $A$, its classifying map (the target map) is a submersion onto the leaf containing $x$. For an arbitrary $G$-realization we have:

\begin{lemma}
\label{lem:classifying:map}
The classifying map $\phi: P \to X$ of a $G$-realization $\Phi:TP\to A$ is a submersion onto an open $G$-saturated subset of a leaf of $A$.
\end{lemma}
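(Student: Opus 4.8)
The plan is to exploit the two structural facts about the morphism $\Phi$. First, being a morphism of $G$-structure algebroids, it is a fiberwise \emph{isomorphism}: indeed $\rank A=n+\dim G$, while $\dim P=\dim M+\dim G=n+\dim G=\rank TP$, and the excerpt already records that $G$-structure algebroid morphisms are fiberwise isomorphisms. Second, being a morphism of Lie algebroids, $\Phi$ is compatible with the anchors; since the anchor of the tangent algebroid $TP$ is the identity, this compatibility reduces to the single identity
\[ \rho_A\circ\Phi=\d\phi, \]
where $\rho_A:A\to TX$ is the anchor of $A$. This identity is the heart of the argument.

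First I would fix $p\in P$ and compute the image of $\d_p\phi$. Because $\Phi|_{T_pP}:T_pP\to A_{\phi(p)}$ is a linear isomorphism, the identity above gives
\[ \Im \d_p\phi=\Im\big(\rho_A\circ\Phi|_{T_pP}\big)=\Im\big(\rho_A|_{A_{\phi(p)}}\big)=T_{\phi(p)}L, \]
where $L$ denotes the leaf of $A$ through $\phi(p)$, since by definition the tangent space to a leaf is exactly the image of the anchor. Hence $\d_p\phi$ surjects onto $T_{\phi(p)}L$ at every point, and its rank equals $\dim L$.

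Next I would conclude that $\phi$ takes values in a single leaf. The inclusion $\Im \d\phi\subset\rho_A(A)$ says precisely that $\phi$ is tangent to the (singular) foliation of $A$; since the leaves are initial (weakly embedded) submanifolds, any such map is smooth into the leaf and, along a connected domain, cannot leave it. Assuming $P$ connected (otherwise one argues on each component), $\phi(P)$ therefore lies in a single leaf $L$, and viewed as a map $\phi:P\to L$ it is smooth with $\d_p\phi$ surjective everywhere, i.e.\ a submersion. As submersions are open maps, $U:=\phi(P)$ is open in $L$.

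Finally I would verify $G$-saturation. The classifying map of a $G$-realization is $G$-equivariant, so $\phi(p\,g)=\phi(p)\,g$ and $U=\phi(P)$ is $G$-invariant. Since the $G$-action on $X$ lifts to an action on $A$ by Lie algebroid automorphisms, it permutes leaves; because $U\subset L$ is $G$-invariant, it follows that $L\,g=L$ for every $g\in G$, so $L$ itself is $G$-invariant and $U$ is an open $G$-saturated subset of it. The one delicate point is the foliation-theoretic step—justifying that $\phi$ is smooth \emph{into} $L$ and confined to a single leaf—which rests on the standard fact that algebroid leaves are initial submanifolds; everything else is a direct consequence of the anchor identity and the fact that $\Phi$ is a fiberwise isomorphism.
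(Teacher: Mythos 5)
Your proof is correct and follows essentially the same route as the paper's: both use that $\Phi$ is a fiberwise isomorphism together with the anchor identity $\d\phi=\rho\circ\Phi$ to see that $\d_p\phi$ surjects onto $T_{\phi(p)}L$, and $G$-equivariance of $\phi$ for saturation. The only difference is that you spell out the single-leaf/openness step (initiality of leaves, connectedness of $P$) which the paper leaves implicit.
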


\begin{proof}
A $G$-realization $\Phi:TP\to A$, being a morphism of $G$-structure algebroids, is a fiberwise isomorphism.  Its base map $\phi:P\to X$ is $G$-equivariant, so its image is a $G$-saturated set. Also, composing $\Phi$ with the anchor, we conclude that for any $p\in P$ the differential of the classifying map:
\[ \d\phi|_p=(\rho\circ\Phi)|_p: T_p P\to T_{\phi(p)}L \]
is surjective, where $L$ is the leaf containing $\Phi(p)$. It follows that the image of $\phi$  must be an open saturated set $U$ contained in $L$ and that $\phi:P\to U$ is a submersion.
\end{proof}

\section{Construction of solutions}
\label{sec:construction}

We have now an appropriate language to deal with Cartan's realization problem. In this section we start by establishing the dictionary that gives the correspondence between Cartan's realization problem and its solutions with $G$-structure algebroids with connection and their $G$-realizations. Then we start exploring it to solve the existence problem. 

\subsection{The dictionary I}

The first piece of the dictionary concerns Cartan Data. The following result is essentially due to Bryant  \cite[Appendix A.4]{Bryant}, but he ignores the presence of the group which, as we shall see, is a crucial aspect: 

\begin{theorem}
\label{thm:Cartan:G:alg:A}
Let $(G, X,c,R,F)$ be a Cartan Data. If for each $x \in X$ the associated Cartan Problem has a solution $(\B_G(M), (\theta,\omega), h)$ with $x \in \Im h$, then the Cartan Data determines a $G$-structure algebroid $A\to X$ with connection. Conversely, every $G$-structure algebroid with connection determines Cartan Data. 
\end{theorem}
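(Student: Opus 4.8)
The plan is to treat the two implications separately, handling the converse first since it is essentially a repackaging of results already established. Given a $G$-structure algebroid with connection $A\to X$, I would first invoke Proposition \ref{prop:normal:form} to put it in canonical form, so that $A=X\times(\Rr^n\oplus\gg)$ with tautological form $\theta=\pr_{\Rr^n}$ and connection 1-form $\omega=\pr_\gg$. Proposition \ref{prop:canonical:form} then exhibits the anchor as $\rho(u,\alpha)=F(u)+\psi(\alpha)$ and the bracket in the form \eqref{eq:canonical:Lie:bracket}, which reads off the torsion $c\colon X\to\hom(\wedge^2\Rr^n,\Rr^n)$ and the curvature $R\colon X\to\hom(\wedge^2\Rr^n,\gg)$ as the components of $\Tors(\omega)$ and $\Curv(\omega)$. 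The $G$-equivariance of $c$, $R$ and $F$ is forced by the $G$-equivariance of $\theta$, $\omega$ and of the action on $A$, so the triple $(c,R,F)$ together with $(G,X,\psi)$ is exactly a Cartan Data.

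For the forward implication the core issue is to verify that the operator $\d_A$ built from $(c,R,F)$ in Section \ref{sec:Cartan:Realization} satisfies $\d_A^2=0$; once this holds, equipping the trivial bundle $A=X\times(\Rr^n\oplus\gg)$ with the canonical $G$-action, action morphism $i(x,\alpha)=(x,0,\alpha)$, tautological form $\pr_{\Rr^n}$ and connection $X\times(\Rr^n\oplus\{0\})$ produces a $G$-structure algebroid with connection, the equivariance axioms again following from the assumed equivariance of the Cartan Data. The idea is to import the identity $\d^2=0$ from an honest manifold. Fixing $x\in X$, I would use the hypothesis to choose a solution $(\B_G(M),(\theta,\omega),h)$ together with a point $p\in P:=\B_G(M)$ satisfying $h(p)=x$. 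Since $(\theta,\omega)$ is a coframe on $P$, the data $(h;(\theta,\omega))$ assemble into a bundle map $\Phi\colon TP\to A$ covering $h$ which is a fiberwise isomorphism, and the key observation is that the structure equations \eqref{eq:structure:1}--\eqref{eq:structure:2} are precisely the statement that $\Phi^*\circ\d_A=\d\circ\Phi^*$ on the algebra generators $C^\infty(X)$, $\theta$ and $\omega$.

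Because $\Phi^*\colon\Omega^\bullet(A)\to\Omega^\bullet(P)$ is an algebra homomorphism while both $\d_A$ and $\d$ are graded derivations, the difference $\Phi^*\d_A-\d\Phi^*$ is a $\Phi^*$-derivation, so its vanishing on generators upgrades to $\Phi^*\circ\d_A=\d\circ\Phi^*$ on all $A$-forms. Applying this twice gives $\Phi^*(\d_A^2\eta)=\d^2(\Phi^*\eta)=0$ for every generator $\eta$, and since $\Phi$ is a fiberwise isomorphism the pullback $\Phi_p^*$ is injective, whence $(\d_A^2\eta)_{x}=0$. As $x$ was arbitrary and a solution passes through every point, $\d_A^2$ vanishes on all generators and therefore, being a derivation of degree two, vanishes identically, so $(A,\d_A)$ is a Lie algebroid. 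This is the crux of the argument, and I expect the main obstacle to be logical bookkeeping rather than a deep difficulty: one must remember that $h$ need not be a submersion or surjective, so no single solution sees all of $X$, and it is exactly the per-point existence hypothesis that promotes the pointwise identity $\d_A^2=0$ to a global one. As a sanity check one can instead unwind $\Phi^*\d_A\theta=\d\Phi^*\theta$ and $\Phi^*\d_A\omega=\d\Phi^*\omega$ directly and separate the coefficients of $\theta\wedge\theta\wedge\theta$ and $\omega\wedge\theta\wedge\theta$; this reproduces verbatim the two Bianchi-type identities and the $\gg$-equivariance of $c$ and $R$ of Corollary \ref{cor:Jacobi:identity}, giving the more computational route to the same conclusion.
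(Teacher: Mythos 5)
Your proposal is correct and follows essentially the same route as the paper: the converse via Propositions \ref{prop:normal:form} and \ref{prop:canonical:form}, and the forward direction by using the bundle map $\Phi\colon T\B_G(M)\to A$ of a solution through each $x$ to intertwine $\d$ and $\d_A$, then deducing $\d_A^2=0$ pointwise from $\d^2=0$ and the fiberwise injectivity of $\Phi^*$. The extra detail you supply (the derivation argument extending the intertwining from generators, and the remark that per-point existence is what globalizes the identity) is exactly what the paper leaves implicit.
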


\begin{proof}
Given Cartan Data $(G, X,c,R,F)$ we construct a $G$-structure algebroid on $A=X\times(\Rr^n\oplus\gg)\to X$, as in Proposition \ref{prop:canonical:form}. The only issue is if the Jacobi identity holds or, equivalently, if  $\d_A:\Omega^\bullet(A)\to\Omega^{\bullet+1}(A)$ satisfies $\d_A^2=0$. Let $(\B_G(M), (\theta,\omega), h)$ be a solution with $x \in \Im h$. Then we have a bundle map 
\[ \Phi:T\B_G(M)\to A, \qquad (p,v)\mapsto (h(p),\theta_p(v),\omega_p(v)). \]
The structure equations for$(\theta,\omega,h)$ and the definition of $\d_A$ shows that:
\[ \Phi^*\d_A=\d\Phi^*. \]
Given that the de Rham differential satisfies $\d^2=0$ and $\Phi$ is fiberwise isomorphism, we must have $\d_A^2=0$ at $x$. Hence, if there is one such solution for each $x\in X$, we conclude that $\d_A^2=0$ so $A$ is a Lie algebroid.


Conversely, by Proposition \ref{prop:normal:form} a Lie algebroid with connection is naturally isomorphic to one in canonical form, and by Proposition \ref{prop:canonical:form} it determines Cartan Data.
\end{proof}

The previous theorem, together with Corollary \ref{cor:local:existence:2} asserting that there always exist local solutions of a Cartan Problem associated with a $G$-structure algebroid with connection, establishes a 1:1 correspondence:
\[
 \left\{\txt{Cartan Data\\ admitting solutions}\right\} 
\quad
\stackrel{1-1}{\longleftrightarrow}\
\quad
\left\{\txt{$G$-structure algebroid $A$\\ with connection}\right\}
\]

\begin{remark}
One half of Theorem \ref{thm:Cartan:G:alg:A} is a special case of the following more general statement whose proof is identical. Let $A \to X$ be an almost Lie algebroid (i.e., $A$ has a bracket and an anchor map which maps brackets of sections of $A$ to brackets of vector fields and for which the Leibniz identity holds, but the Jacobi identity for the bracket may fail). If for every $x \in X$ there exists a fiberwise surjective morphism of almost Lie algebroids
\[\xymatrix{TP \ar[r] \ar[d] & A\ar[d]\\
P \ar[r]_h & X}\]
with $x \in \Im h$, then $A$ is a Lie algebroid.
\end{remark}
\medskip

\subsection{The dictionary II}

The second piece of the dictionary concerns solutions of Cartan Problem. 

We saw that Cartan Data $(G, X,c,R,F)$ determines a  $G$-structure algebroid with connection. One the other hand, by Examples \ref{ex:principal:bundle:connection:algbrd} and \ref{ex:G:algbrd:connect}, a solution $(\B_G(M), (\theta,\omega), h)$ determines also a  $G$-structure algebroid with connection, namely $T\B_G(M)\to \B_G(M)$. Moreover, as in the proof of Theorem \ref{thm:Cartan:G:alg:A}, we have a Lie algebroid morphism 
\[
\xymatrix{
T(\B_G(M)) \ar[r]^--{(\theta,\omega)} \ar[d] & A\ar[d]\\
\B_G(M)\ar[r]_{h} & X}
\]
which is actually a morphisms of  $G$-structure algebroids with connection. In fact, we have:

\begin{theorem}
\label{thm:Cartan:G:alg:B}
Let $(G, X,c,R,F)$ be a Cartan Data defining a $G$-structure algebroid with connection $A\to X$. The solutions of the associated problem are in one to one correspondence with $G$-realizations of $A$:
\[
 \left\{\txt{Solutions of Cartan's\\ realization problem}\right\} 
\quad
\stackrel{1-1}{\longleftrightarrow}\
\quad
\left\{\txt{$G$-realizations of the\\ algebroid $A$}\right\}\qquad
\]
\end{theorem}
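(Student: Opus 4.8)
The plan is to exhibit the correspondence explicitly and verify it is a well-defined bijection. In one direction, a solution $(\B_G(M),(\theta,\omega),h)$ is sent to the $G$-realization whose underlying $G$-structure with connection is $\B_G(M)$ itself, regarded as a $G$-structure algebroid $T\B_G(M)\to\B_G(M)$ via Examples \ref{ex:principal:bundle:connection:algbrd} and \ref{ex:G:algbrd:connect}, whose classifying map is $\phi=h$, and whose bundle map is
\[ \Phi\colon T\B_G(M)\to A,\qquad (p,v)\mapsto\bigl(h(p),\theta_p(v),\omega_p(v)\bigr). \]
In the other direction, a $G$-realization $\Phi\colon TP\to A$ over $\phi\colon P\to X$ is sent to the solution obtained by taking $M=P/G$, identifying $P$ with a $G$-structure $\B_G(M)$ via Proposition \ref{prop:G-structures}, reading off $\theta,\omega$ as the tautological and connection forms of this $G$-structure algebroid, and setting $h=\phi$. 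Since both assignments use the very same formula for $\Phi$, once each is shown to land in the correct class they are manifestly mutually inverse.

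The first task is to verify that the forward assignment produces a genuine morphism of $G$-structure algebroids with connection. By the characterization preceding Example \ref{ex:G:algbrd:connect}, it suffices to check that $\Phi$ is a Lie algebroid morphism, is $G$-equivariant, intertwines the action morphisms, and satisfies $\Phi^*\theta_A=\theta$, $\Phi^*\omega_A=\omega$. The last condition is immediate from the definition of $\Phi$, and the Lie algebroid morphism property is exactly the computation $\Phi^*\d_A=\d\,\Phi^*$ already carried out in the proof of Theorem \ref{thm:Cartan:G:alg:A}. For $G$-equivariance I would use the equivariance of $h$ together with $g^*\theta=g^{-1}\cdot\theta$ and $g^*\omega=\Ad_{g^{-1}}\cdot\omega$, matched against the canonical-form $G$-action $(x,u,\al)g=(xg,g^{-1}u,\Ad_{g^{-1}}\al)$ on $A$. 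For the action morphisms I would evaluate $\Phi$ on a fundamental vector field $\al_P$: strong horizontality gives $\theta(\al_P)=0$ and verticality of the connection gives $\omega(\al_P)=\al$, so $\Phi(i_P(p,\al))=(h(p),0,\al)=i_A(\phi(p),\al)$.

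The second task is the reverse direction, where I would check that a $G$-realization $\Phi\colon TP\to A$ does yield a solution. Writing $\theta=\Phi^*\theta_A$ and $\omega=\Phi^*\omega_A$ for the tautological and connection forms of $P$, and $h=\phi$, the structure equations fall out of the two ways in which $\Phi$ is constrained. On one hand, compatibility of $\Phi$ with the anchors reads $\d\phi=\rho\circ\Phi$; since $\rho(u,\al)=F(u)+\psi(\al)$ in canonical form, this is precisely equation \eqref{eq:structure:2}, namely $\d h=F(h,\theta)+\psi(h,\omega)$. On the other hand, applying $\Phi^*$ to the structure equations for $\d_A\theta_A$ and $\d_A\omega_A$ and using $\Phi^*\d_A=\d\,\Phi^*$ returns equation \eqref{eq:structure:1} with $c$ and $R$ evaluated along $h$.

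The only genuinely delicate point, and the step I expect to require the most care, is this bookkeeping of where the coefficient maps $c$, $R$, $F$ are evaluated: because these are functions of $x\in X$, one must confirm that pulling back $c(\theta_A\wedge\theta_A)$ (and similarly $R$ and the anchor) produces $c(h)(\theta\wedge\theta)$, i.e. that the point-dependence transports correctly along $\phi=h$. Once this is settled, $G$-equivariance of $\phi$ (Lemma \ref{lem:classifying:map}) guarantees that $h$ is equivariant and Proposition \ref{prop:G-structures} guarantees that $P$ really is a $G$-structure, so the output is a bona fide solution; mutual inverseness is then immediate from the shared formula for $\Phi$.
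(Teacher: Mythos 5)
Your proposal is correct and follows essentially the same route as the paper: the same explicit formula $\Phi(p,v)=(h(p),\theta_p(v),\omega_p(v))$ in both directions, with the forward direction checked via $\Phi^*\d_A=\d\,\Phi^*$, $G$-equivariance, and the action-morphism compatibility, and the reverse direction recovering the structure equations from the pullback of $\theta_A,\omega_A$ and the anchor condition. The only cosmetic difference is that the paper outsources the underlying coframe correspondence to a prior result for $G=\{e\}$ whereas you verify it directly.
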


\begin{proof}
It is proved in \cite[Prop. 3.5]{FernandesStruchiner1} that in the case $G=\{e\}$ solutions are in 1:1 correspondence with bundle maps which are Lie algebroid morphisms and fiberwise isomorphisms. Applying this to our case, we observe that:
\begin{enumerate}[(a)]
\item Given a solution $(\B_G(M),(\theta,\omega),h)$, the bundle $T(\B_G(M))\to \B_G(M)$ is a $G$-structure algebroid with connection (Example \ref{ex:G:algbrd:connect}). The bundle map 
 \[ \Phi:T\B_G(M)\to A, \qquad (p,v)\mapsto (h(p),\theta_p(v),\omega_p(v)), \]
is a Lie algebroid morphism. Furthermore, due the canonical form of $A$, this morphism is $G$-equivariant and we have a commutative diagram:
\[
\xymatrix{
\B_G(M)\rtimes \gg\ar[r]^i\ar[d]& T(\B_G(M))\ar[d]\\
X\rtimes \gg\ar[r]_i&  A}
\]
Hence, $(\theta,\omega):T(\B_G(M))\to A$ is a $G$-realization.

\item A $G$-realization of $A$ is a  a $G$-structure algebroid morphism $\Phi:TP\to A$, where $TP\to P$ is a a $G$-structure algebroid with connection covering a map $h:P\to X$. Then $P\to P/G$ is a $G$-structure with connection (Example \ref{ex:G:algbrd:connect}). Recall that $\Phi$ is necessarily a fiberwise isomorphism, and since $A$ is in canonical form, it must take the form $\Phi(v_x)=(x,\theta(v),\omega(v))$, where $\theta$ and $\omega$ are the tautological and connection 1-forms in $P$.  Hence, $\theta$ and $\omega$ are the $h$-pullback of the tautological and connection 1-forms of $A$. Therefore, since the later satisfy \eqref{eq:structure:2}, the former satisfy the structure equations:
\begin{equation*}
\label{eq:structure} \left\{
\begin{array}{l}
\d \theta=c (h)( \theta \wedge \theta) - \omega \wedge \theta\\
\d \omega= R (h)( \theta \wedge \theta) 
- \omega\wedge\omega
\end{array}
\right.
\end{equation*}
Finally, the fact that $\d h:TP\to TX$ permutes the anchors, means that:
\[
\d h = F(h,\theta) + \psi(h,\omega)
\]
This shows that $(P,(\theta,\omega),h)$ is a solution of the Cartan's realization problem determined by the data $(G, X,c,R,F)$.
\end{enumerate}
\end{proof}

Ones say that $(\B_G(M_i),(\theta_i,\omega_i),h_i)$, $i=1,2$, are (locally) equivalent solutions of Cartan's Realization Problem if there exists a (locall defined) diffeomorphism  $\varphi:M_1\to M_2$ whose lift to the frame bundles maps one $G$-structure to the other, preserves the connection 1-forms and commutes with the classifying maps:
\[ 
\xymatrix{
\B_G(M_1)\ar[dr]_{h_1}\ar[rr]^{\tilde{\varphi}} & & \B_G(M_2)\ar[dl]^{h_2} \\
 & X
}
\]
Clearly, under our dictionary, we have:
\[\quad
 \left\{\txt{(locally) equivalent solutions of\\ Cartan's realization problem}\right\} 
\ 
\stackrel{1-1}{\longleftrightarrow}
\ 
\left\{\txt{(locally) isomorphic $G$-realizations\\of the algebroid $A$}\right\}\qquad
\]

\subsection{Existence of solutions I}

\begin{definition}
A $G$-structure algebroid with connection $A\to X$ is {\bf$G$-integrable} if there exists a $G$-structure groupoid with connection $\G\tto X$ integrating it. We  call $\G$ a {\bf $G$-integration} of $A$. 
\end{definition}

As we saw in Section \ref{sec:G-realizations}, when $A\to X$ is $G$-integrable, a source fiber of a $G$-integration $\G$ gives rise to a $G$-realization $(\s^{-1}(x_0),\wmc,\t)$. Hence, as a consequence of Theorem \ref{thm:Cartan:G:alg:B}, we conclude that:

\begin{corol}
\label{cor:local:existence}
Let $(G, X,c,R,F)$ be Cartan Data and assume that the associated Lie algebroid with connection $A\to X$ is $G$-integrable. Then each source fiber $(\s^{-1}(x_0),\wmc,\t)$ of a $G$-integration $\G\tto X$ yields a solution of the corresponding Cartan's Realization Problem.
\end{corol}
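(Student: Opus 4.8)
The plan is to recognize the statement as an immediate consequence of two facts already in place: the dictionary of Theorem \ref{thm:Cartan:G:alg:B}, which identifies solutions of the Cartan Problem with $G$-realizations of $A$, and the observation from Section \ref{sec:G-realizations} that the source fibers of a $G$-integration are themselves $G$-realizations. Thus I would reduce the proof to two steps: first exhibit $(\s^{-1}(x_0),\wmc|_{\s^{-1}(x_0)},\t)$ as a $G$-realization of $A$, and then invoke the one-to-one correspondence to produce the corresponding solution.

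For the first step I would fix, using $G$-integrability, a $G$-structure groupoid with connection $\G\tto X$ integrating $A$, with tautological form $\Theta$ and connection form $\Omega$. By definition of a $G$-structure groupoid, $\s^{-1}(x_0)$ carries a $G$-principal action, its quotient $M=\s^{-1}(x_0)/G$ is an orbifold, and the restrictions $\theta:=\Theta|_{\s^{-1}(x_0)}$ and $\omega:=\Omega|_{\s^{-1}(x_0)}$ turn $\s^{-1}(x_0)\to M$ into a $G$-structure with connection; this is precisely Examples \ref{ex:morphism:s:fibers:principal:algbrd:connection} and \ref{ex:morphism:s:fibers:G:algbrd} specialized to a single source fiber. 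The Maurer–Cartan form $\wmc\in\Omega^1(T^{\s}\G,\t^*A)$ is a Lie algebroid morphism, and its restriction to the source fiber is a fiberwise-isomorphic bundle map $T(\s^{-1}(x_0))\to A$ covering $\t$. That this restriction is in fact a \emph{morphism of $G$-structure algebroids with connection}---$G$-equivariant, intertwining the action morphisms, and pulling the tautological and connection forms of $A$ back to $\theta$ and $\omega$---is exactly the content of the example in Section \ref{sec:G-realizations}. Hence $(\s^{-1}(x_0),\wmc|_{\s^{-1}(x_0)},\t)$ is a $G$-realization, whose classifying map $\t$ is, by Lemma \ref{lem:classifying:map}, a submersion onto the leaf through $x_0$.

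For the second step I would feed this $G$-realization into Theorem \ref{thm:Cartan:G:alg:B}: under the correspondence established there it yields the solution whose underlying $G$-structure is $\s^{-1}(x_0)\to M$, whose coframe is $(\theta,\omega)$, and whose classifying map is $h=\t|_{\s^{-1}(x_0)}$. Because $A$ is in canonical form, the pullbacks of its tautological and connection forms automatically satisfy the structure equations \eqref{eq:structure:1} and \eqref{eq:structure:2} with $c=c(h)$ and $R=R(h)$, exactly as in the proof of that theorem. I do not expect a genuine obstacle here, since both ingredients are prior results; the only point demanding care is the bookkeeping that identifies the restricted Maurer–Cartan form with the coframe $(\theta,\omega)$. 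Concretely, one uses that $\wmc$ relates right-invariant forms on $\G$ to $A$-forms via restriction along the identity section, so the $\wmc$-pullback of the tautological form of $A$ is the right-invariant form $\Theta$, whose restriction to $\s^{-1}(x_0)$ is the tautological form of the induced $G$-structure, and likewise for $\Omega$. This compatibility having already been recorded, the corollary follows.
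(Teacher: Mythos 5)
Your proof is correct and follows exactly the paper's own route: the paper derives this corollary by noting that $(\s^{-1}(x_0),\wmc|_{\s^{-1}(x_0)},\t)$ is a $G$-realization (the example in Section \ref{sec:G-realizations}) and then invoking the dictionary of Theorem \ref{thm:Cartan:G:alg:B}. Your extra bookkeeping identifying the restricted Maurer--Cartan form with the coframe $(\theta,\omega)$ matches the paper's discussion immediately following the corollary.
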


Under the assumptions of this corollary, since $A$ has fiber $\Rr^n\oplus\gg$, the Maurer-Cartan form $\wmc$ gives rise to a pair of 1-forms on the source fiber:
\begin{align*} 
\theta:&=\pr_{\Rr^n}\wmc|_{\s^{-1}(x_0)}\in\Omega^1(\s^{-1}(x_0),\Rr^n),\\ 
\omega:&=\pr_{\gg}\wmc|_{\s^{-1}(x_0)}\in\Omega^1(\s^{-1}(x_0),\gg).
\end{align*}
These are the tautological 1-form and the connection 1-form of the $G$-structure $\s^{-1}(x_0)\to \s^{-1}(x_0)/G$ solving the Cartan's realization problem. For this reason, we will often denote a $G$-realization of $A\to X$ by $(P,(\theta,\omega),h)$.

A Cartan's realization problem, in general, has many other solutions besides the ones arising as source fibers of $G$-integrations. Moreover, solutions can exist even when the $G$-structure algebroid with connection associated with the problem is not $G$-integrable. For example, Lemma \ref{lem:classifying:map} shows that the image of the classifying map of any $G$-realization is an open $G$-saturated set of a leaf. The restriction of a $G$-structure algebroid $A\to X$ to an open $G$-saturated subset $U\subset L$ of a leaf is still a $G$-structure algebroid. Hence, by the previous Corollary, given $x_0\in X$ we can produce a solution $(P,(\theta,\omega),h)$ of Cartan's realization problem with $x_0\in \Im h$ if we can show that $A|_U$ is $G$-integrable, where $U$ is some $G$-saturated open neighborhood $x_0\in U\subset L$ in the leaf $L$ containing $x_0$. We will discuss $G$-integrability in the next sections and we will show that such a neighborhood $U$ always exists even when $A\to X$ is not $G$-integrable (Theorem \ref{thm:local:G:int}).

\subsection{Universality of solutions}

When $G$ is a connected Lie group, solutions arising from source fibers of $G$-integrations, as in the previous paragraph, are universal in the following sense: 

\begin{theorem}[Local Universal Property of $\wmc$]
\label{thm:MC:local}
Let $G\subset \GL(n,\Rr)$ be connected, and let $\G\tto X$ be a $G$-structure groupoid integrating a $G$-structure algebroid with connection $A\to X$. Given a $G$-realization $(P,(\theta,\omega),h)$ of $A$, choose $p_0\in P$ and let $x_0=h(p_0)$. Then there is a neighborhood $U\subset M=P/G$ of $m_0=\pi(p_0)$ and a unique embedding $\phi:P|_U\to \s^{-1}(x_0)$ of $G$-realizations with $\phi(p_0)=1_{h(p_0)}$, making the following diagram commute:
\[
\xymatrix@R=7 pt{ T(P|_U) \ar@{-->}[rr]^>>>>>>>>{\phi_{\ast}} \ar[dd]  \ar[dr]_{(\theta,\omega)} & & T(\s^{-1}(x_0)) \ar[dd] \ar[dl]^{\wmc} \\
& A \ar[dd] & \\
P|_U \ar@{-->}[rr]^>>>>>>>>{\phi} \ar[dr]_h & & \s^{-1}(x_0)\ar[dl]^{\t} \\
& X &}
\]
\end{theorem}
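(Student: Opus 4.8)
The plan is to reduce the statement to the non-equivariant universal property of the Maurer--Cartan form and then upgrade the resulting map to a $G$-equivariant one, connectedness of $G$ being the crucial ingredient in this last step. First I would forget the group and regard $\Phi=(\theta,\omega)\colon TP\to A$ merely as a Lie algebroid morphism covering $h$. Applying the universal property of $\wmc$ in the case $G=\{e\}$ (the integration of Lie algebroid morphisms established in \cite{FernandesStruchiner1}) to $\Phi$ and to the point $p_0$ with prescribed value $1_{x_0}$, I obtain a neighborhood $P_0$ of $p_0$ and a unique smooth map $\phi\colon P_0\to\s^{-1}(x_0)$ with $\phi(p_0)=1_{x_0}$, $\t\circ\phi=h$, and $\phi^*\wmc=\Phi$ (equivalently $\wmc\circ\phi_*=\Phi$). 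Since $\Phi$ is a fiberwise isomorphism, $\phi$ is an immersion, and after shrinking $P_0$ it becomes an embedding. The uniqueness clause of the theorem will then follow from the uniqueness already present here, as any competitor solves the same integration problem with the same initial value.

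The heart of the argument is $G$-equivariance, which I would first establish infinitesimally. Fix $\al\in\gg$ and compare the fundamental vector fields $\al_P$ on $P$ and $\al_\G$ on $\G$. Strong horizontality and verticality of $\theta$ and $\omega$ give $\theta(\al_P)=0$ and $\omega(\al_P)=\al$, so in canonical form $\Phi(\al_P|_p)=i(h(p),\al)$, whence $\wmc(\phi_*\al_P|_p)=i(h(p),\al)$. On the other hand, writing $\al_\G|_\gamma=\frac{\d}{\d t}\big|_{t=0}\,\iota(\t(\gamma),\exp(t\al))\cdot\gamma$ and using right-invariance of $\wmc$ together with $i=\iota_*$, a direct computation gives $\wmc(\al_\G|_{\phi(p)})=i(\t(\phi(p)),\al)=i(h(p),\al)$. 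Now both $\phi_*\al_P$ and $\al_\G|_{\phi(p)}$ are tangent to the source fiber $\s^{-1}(x_0)$, and $\wmc$ is injective on $T^{\s}\G$; therefore $\phi_*\al_P=\al_\G\circ\phi$, i.e. $\al_P$ is $\phi$-related to $\al_\G$ for every $\al\in\gg$. The point of passing to fundamental vector fields is precisely that this $\phi$-relatedness circumvents any need to match base points along the $G$-orbit of $p_0$.

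Because $\al_P$ and $\al_\G$ are $\phi$-related, their flows are intertwined by $\phi$; since these flows are exactly $p\mapsto p\exp(t\al)$ and $\gamma\mapsto\gamma\exp(t\al)$, I obtain $\phi(p\exp(t\al))=\phi(p)\exp(t\al)$ for small $t$ and $p$ near $p_0$. As $G$ is connected it is generated by a neighborhood of the identity in $\exp(\gg)$, and this propagates to $\phi(p\,g)=\phi(p)\,g$ for all $g$ wherever both sides are defined; this is the only place the connectedness hypothesis enters. This equivariance then allows me to extend $\phi$ consistently to the $G$-saturation of a smaller neighborhood of $p_0$, that is, to $P|_U=\pi^{-1}(U)$ for a neighborhood $U$ of $m_0$ in $M=P/G$, as a $G$-equivariant embedding. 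Finally, $\phi^*\wmc=\Phi=(\theta,\omega)$ says exactly that $\phi$ preserves the tautological and connection forms, while $\t\circ\phi=h$ says it commutes with the classifying maps; together with equivariance this makes $\phi$ a morphism of $G$-realizations and renders the diagram commutative.

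I expect the main obstacle to be this equivariance step: correctly handling the transformation of $\wmc$ under the twisted, merely locally free right action $\gamma\mapsto\iota(\t\gamma,g)\cdot\gamma$, and recognizing that the comparison of fundamental vector fields via injectivity of $\wmc$ on $T^{\s}\G$ is the clean way to package it, after which connectedness of $G$ does the rest.
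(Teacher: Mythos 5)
Your proposal is correct and follows essentially the same route as the paper: reduce to the non-equivariant universal property of $\wmc$ from \cite{FernandesStruchiner1}, deduce infinitesimal ($\gg$-) equivariance from $\phi^*\wmc=(\theta,\omega)$, use connectedness of $G$ to promote this to local $G$-equivariance, and then extend $\phi$ to the $G$-saturation of the domain. The only difference is that you spell out the fundamental-vector-field computation behind the $\gg$-equivariance step, which the paper merely asserts.
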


\begin{proof}
We can assume that $A$ is in canonical form. It follows from \cite[Thm 4.6]{FernandesStruchiner1} that  there is a neighborhood $V\subset P$ of $p_0$ and a unique embedding $\phi:V\to \s^{-1}(x_0)$ making the diagram above commutative and satisfying $\phi(p_0)=1_{x_0}$. Since
\[ \phi^*\wmc=(\theta,\omega). \]
it follows that $\phi$ must be $\gg$-equivariant. Since $G$ is connected and acts properly on $P$, we can choose $V$ is such that
\begin{equation}
\label{eq:local:equivariance}
\phi(pg)=\phi(p)g,\quad\text{whenever } g\in G,\ p,pg\in V.
\end{equation}
Hence, the result will follow provided that we show that we can choose $V$ to be $G$-saturated, for then we can set $U:=V/G$.

If $V$ is not saturated let $\bar{V}$ be the saturation of $V$ and extend $\phi$ to an embedding $\bar{\phi}:\bar{V}\to \s^{-1}(x_0)$ as follows: if $\bar{p}\in \bar{V}$ then there exists $p\in V$ and $g\in G$ such that $\bar{p}=pg$ and we set:
\[ \bar{\phi}(\bar{p}):=\phi(p)g. \]
The map $\bar{\phi}$ is well-defined: if $\bar{p}=pg=p'g'$ with $p,p'\in V$ and $g,g'\in G$ then $p=p'g'g^{-1}$, and we have by \eqref{eq:local:equivariance}:
\[ \phi(p)=\phi(p'g'g^{-1})=\phi(p')g'g^{-1} \quad \Rightarrow \quad \phi(p)g=\phi(p')g. \]
Since $\bar{\phi}$ is $G$-equivariant, every orbit in $\bar{V}$ intersects the open $V$, and the restriction $\bar{\phi}|_V=\phi$ is an embedding, it follows that $\bar{\phi}:\bar{V}\to \s^{-1}(x_0)$  is a $G$-equivariant embedding. One checks easily that it still makes the diagram above commute and hence is a morphism of $G$-realizations.
\end{proof}

\subsection{An elementary example: metrics of constant curvature}
\label{sec:constant:curvature}
To illustrate how the dictionary and the integration procedure above produce solutions of a Cartan's realization problem, we consider what is perhaps the simplest example: metrics of constant curvature. 

Given an oriented Riemannian manifold $(M,g)$ with a metric of constant scalar curvature $K$, pass to the bundle of (oriented) orthogonal frames $\B_{\SO(n,\Rr)}(M)$.  The curvature $R$ of the Levi-Civita connection can be re-expressed in terms of the scalar curvature as
\[R(u,v) w=K \left(\langle w,v\rangle u-\langle w,u\rangle v\right).\]
It follows that the Levi-Civita connection gives a connection 1-form $\omega$ which together with the tautological 1-form $\theta$ satisfy the structure equations:
\[
\left\{
\begin{array}{l}
\d \theta=- \omega \wedge \theta\\
\d \omega=  R(K)\, \theta \wedge \theta - \omega\wedge\omega
\end{array}
\right.
\]
and the condition that the scalar curvature is constant reads:
\[
\d K=0.
\]
These equations defines a Cartan's Realization Problem.  Using $x=K$ as a coordinate on $\Rr$, we see that the associated Lie $\SO(n,\Rr)$-algebroid with connection is precisely the constant scalar curvature Lie algebroid of Example \ref{ex:constant:curvature}:
\[ A=\Rr\times(\Rr^n\oplus\so(n,\Rr))\to \Rr. \] 
This is a bundle of Lie $\SO(n,\Rr)$-algebras with fibers:
\begin{itemize}
\item for $x>0$, $A_x\simeq \so(n+1,\Rr)$;
\item for $x=0$, $A_0\simeq \mathfrak{so}(n,\Rr)\ltimes\Rr^n$;
\item for $x<0$, $A_x\simeq \mathfrak{so}(n,1)$.
\end{itemize}
We will see later that $A$ integrates to a Lie $\SO(n,\Rr)$-groupoid $\G\tto \Rr$ which is a bundle of Lie groups  (so $\s=\t$) with fibers:
\begin{itemize}
\item for $x>0$, $\s^{-1}(x)\simeq \SO(n+1,\Rr)$;
\item for $x=0$, $\s^{-1}(0)\simeq \SO(n,\Rr)\ltimes \Rr^n$;
\item for $x<0$, $\s^{-1}(x)\simeq \SO(n,1)^+$.
\end{itemize}

The group $\SO(n,\Rr)$ acts trivially on the base $\Rr$ of the groupoid/algebroid. The action morphism $\iota: \Rr\rtimes \SO(n,\Rr)\to \G$ is, for each $x\in\Rr$, the obvious inclusion of $\SO(n,\Rr)$ in each fiber. In particular, we find that this $G$-integration produces solutions the $\SO(n,\Rr)$-structures $\s^{-1}(x)\to \s^{-1}(x)/\SO(n,\Rr)$:
\begin{itemize}
\item for $x>0$, $\SO(n+1,\Rr)\to \Ss^n$;
\item for $x=0$, $\SO(n,\Rr)\ltimes \Rr^n\to \Rr^n$;
\item for $x<0$, $ \SO(n,1)^+\to \mathbb{H}^n$.
\end{itemize}
These are, of course, the bundle of (oriented) orthogonal frames of the round metric on the sphere, the flat metric on the plane and the hyperbolic metric of constant scalar curvature $x$.

We will see later that in this example there is only one $\SO(n,\Rr)$-integration.  Of course there are many more manifolds with metrics of constant curvature besides these (even complete ones, such as the flat torus). Since the leaves are just the points of $\Rr$, restricting to opens in the leaves does not add solutions. One can consider $\SO(n)$-saturated opens in the $\s$-fibers: these just give open sets of the solutions above. We will explain later how additional solutions can be obtained, after we discuss $G$-integrability.

\section{$G$-integrability}
\label{sec:integrability}

According to the results in the previous section, in order to find solutions to Cartan's realization problem it is important to understand the following problem:
\begin{itemize}
\item When is a $G$-structure algebroid with connection $G$-integrable?
\end{itemize}
In this section we will give a complete answer to this question. In particular, we will see that $G$-integrability is stronger than (ordinary) integrability of an algebroid. From now on, to simplify the discussion, we will assume that $G$ is connected.
 
 A first observation is that the connection and tautological form do not play any role here. On the one hand, the correspondence \eqref{eq:correspondence:connections} shows that if $A\to X$ is a Lie algebroid with connection $H$ and $\G\tto X$ is a $G$-structure groupoid integrating it, then there is a unique connection $\H$ on $\G$ with $\H_x=H_x$, for all $x\in M$. On the other hand, by Proposition \ref{prop:G:structures:integrable}, a similar argument applies to the tautological forms: if $A$  is a $G$-structure algebroid with tautological form $\theta$, and $\G$ is a $G$-principal groupoid integrating $A$, then $\G$ has a unique tautological form $\Theta$ with $\Theta|_M=\theta$.

Therefore, in this section we will assume that $A\to X$ is a $G$-principal algebroid and we will look for a $G$-principal groupoid $\G\tto X$ integrating it. If the latter exists we will say that $A$ is \emph{$G$-integrable} and call $\G$ a \emph{$G$-integration} of $A$. 


\subsection{The canonical $G$-integration}

A Lie algebroid $A\to X$ may fail to be integrable, but when it is integrable it may have many integrations (see \cite{CrainicFernandes,CrainicFernandes:lectures}). Still, there is a unique (up to isomorphism) integration $\Sigma(A)\tto X$ which is characterized by having 1-connected source fibers. Alternatively, one can characterize the groupoid $\Sigma(A)$ as the \emph{maximal integration} among all $\s$-connected integrations of $A$: for any $\s$-connected integration $\G\tto X$ of $A$ there exists a unique \'etale, surjective, groupoid morphism $\Sigma(A)\to\G$. 

For a $G$-principal algebroid $A$ which is $G$-integrable one can also look for a maximal $G$-integration. The following simple examples show that, if such an integration exists, it may be different from $\Sigma(A)$.

\begin{example}
\label{ex:trivial:1}
Consider the trivial $G$-structure Lie algebra $A=\Rr^n\oplus\gg$ (see Example \ref{ex:G:algebras}). It always admit a $G$-integration, namely the group semi-direct product $\G=\Rr^n\rtimes G$: the action morphism $i:\gg\to \Rr^n\oplus\gg$ integrates to the inclusion $\iota: G\to \Rr^n\rtimes G$, $g\mapsto(0,g)$. 

When $G$ is not $1$-connected, the 1-connected integration $\Sigma(A)=\Rr^n\rtimes \tilde{G}$ fails to be a $G$-integration: there is no action morphism $\iota:G\to \Rr^n\rtimes \tilde{G}$. In fact, it is not hard to see that in this case $\Rr^n\rtimes G$ is the only $G$-integration. 
\end{example}

\begin{example}
\label{ex:curvature:1}
Consider the constant positive curvature $\SO(n,\Rr)$-structure Lie algebra $A=\Rr^n\oplus\so(n,\Rr)$ with zero torsion (see Example \ref{ex:constant:curvature}):
\[ R(u,v)\cdot w=\left(\langle w,v\rangle u-\langle w,u\rangle v\right). \]
It is easy to check that $A\simeq \so(n+1,\Rr)$, as Lie algebras. Then we have:
\begin{enumerate}[(i)]
    \item If $n$ is even, there is one $\SO(n,\Rr)$-integration, namely 
    \[ \G=\SO(n+1,\Rr),\] 
    and we find $\s^{-1}(x)/\SO(n,\Rr)\simeq \Ss^n$.
    \item If $n$ is odd, there are two $\SO(n,\Rr)$-integrations, namely 
    \[ \G=\SO(n+1,\Rr)\quad\text{ and }\quad \G'=\mathrm{PSO}(n+1,\Rr).\] 
    In the first case, we find that $\s^{-1}(x)/\SO(n,\Rr)\simeq \Ss^n$ while in the second case we find that $\s^{-1}(x)/\SO(n,\Rr)\simeq \mathbb{RP}^n$.
\end{enumerate}
Note that if $n$ is even then $\mathbb{RP}^n$ is not orientable, so it does not admit a $\SO(n,\Rr)$-structure. 
\end{example}

\begin{theorem}
\label{thm:canonical:G:integration}
Let $A\to X$ be a $G$-integrable, $G$-principal algebroid. Then there exists a unique (up to isomorphism) $G$-principal groupoid $\Sigma_G(A)\tto X$ which is characterized by either of the following:
\begin{enumerate}[(a)]
\item $\Sigma_G(A)$ is an $\s$-connected $G$-integration and the orbifold fundamental groups of $\s^{-1}(x)/G$ are trivial;
\item $\Sigma_G(A)$ is maximal among all $\s$-connected $G$-integrations of $A$: for any $\s$-connected $G$-integration $\G$ there exists a unique \'etale, surjective, morphism of $G$-principal groupoid $\Sigma_G(A)\to\G$. 
\end{enumerate}
\end{theorem}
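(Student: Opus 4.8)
The plan is to build $\Sigma_G(A)$ as an explicit discrete quotient of the ordinary source-simply-connected integration $\Sigma(A)$, and then to check that properties (a) and (b) pin down the same groupoid. Since a $G$-integration is in particular an integration, $A$ is integrable and the canonical source-$1$-connected integration $\Sigma(A)\tto X$ of \cite{CrainicFernandes} exists; it is maximal among $\s$-connected integrations, each of which arises as an \'etale quotient $\Sigma(A)/N$ by a discrete, closed bundle of normal isotropy subgroups $N$, with $\pi_1(\s^{-1}_{\Sigma(A)/N}(x))\cong N_x$. The source-$1$-connected integration of the action algebroid $X\rtimes\gg$ is $X\rtimes\widetilde{G}$, where $\widetilde{G}$ is the universal cover of $G$, so by Lie's second theorem the action morphism $i:X\rtimes\gg\to A$ integrates uniquely to $\widetilde{\iota}:X\rtimes\widetilde{G}\to\Sigma(A)$. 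Writing $\pi_1(G)=\ker(\widetilde{G}\to G)$ for the central discrete subgroup, I set $K:=\widetilde{\iota}(X\times\pi_1(G))$, a bundle of subgroups of the isotropy of $\Sigma(A)$.

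First I would show $K$ is a discrete, normal (in fact central) subgroupoid. For $z\in\pi_1(G)$, the formula $\gamma\odot z:=\widetilde{\iota}(\t(\gamma),z)\,\gamma\,\widetilde{\iota}(\s(\gamma),z)^{-1}$ defines a groupoid automorphism of $\Sigma(A)$ fixing the units and covering the identity on $X$; since $\pi_1(G)$ maps to $e\in G$ and the $G$-action on $A$ is genuine, its differential is the identity on $A$. By uniqueness of morphisms out of a source-$1$-connected groupoid, $\odot z=\mathrm{id}$, which unwinds to $\gamma\,\widetilde{\iota}(\s(\gamma),z)=\widetilde{\iota}(\t(\gamma),z)\,\gamma$; this exhibits $K$ as invariant under conjugation, hence normal. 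Discreteness is inherited from any fixed $G$-integration $\G_0=\Sigma(A)/N_0$, which exists by hypothesis: the descended morphism forces $K\subseteq N_0$, so $K$ lies in a discrete bundle. I then define $\Sigma_G(A):=\Sigma(A)/K$, an $\s$-connected Lie groupoid integrating $A$.

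The criterion of Proposition \ref{prop:principal:G:action:integrable} makes such quotients $G$-integrations: composing $\widetilde{\iota}$ with $q:\Sigma(A)\to\Sigma(A)/N$ gives $q\circ\widetilde{\iota}$ integrating $i$, and by uniqueness this is the only candidate; it descends along $\widetilde{G}\to G$ to an $\iota:X\rtimes G\to\Sigma(A)/N$, effective as in Remark \ref{rem:effective action}, precisely when it kills $\pi_1(G)$, i.e.\ when $K\subseteq N$. Thus $\Sigma(A)/N$ is a $G$-integration iff $N\supseteq K$; taking $N=K$ shows $\Sigma_G(A)$ is a $G$-integration, and for any $\s$-connected $G$-integration $\G=\Sigma(A)/N$ we get $N\supseteq K$ and hence a canonical \'etale surjective morphism $\Sigma_G(A)=\Sigma(A)/K\to\Sigma(A)/N=\G$ intertwining the action morphisms. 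This is exactly property (b), and it forces $K$ to be the minimal admissible $N$, giving uniqueness.

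For (a), fix $x$ and set $P_x=\s^{-1}_{\Sigma_G(A)}(x)$, so $\pi_1(P_x)\cong K_x$. Since $G$ is connected and acts properly and locally freely, $P_x\to P_x/G$ is an orbifold principal bundle, with orbifold homotopy exact sequence $\pi_1(G)\xrightarrow{\partial}\pi_1(P_x)\to\pi_1^{\mathrm{orb}}(P_x/G)\to 1$. The fibre $G\hookrightarrow P_x$ is realized by $g\mapsto\iota(x,g)$, so $\partial$ is induced by $\widetilde{\iota}|_{X\times\pi_1(G)}$ and $\Im\partial$ corresponds to $K_x$ under $\pi_1(P_x)\cong K_x$; hence $\pi_1^{\mathrm{orb}}(P_x/G)$ is trivial for every $x$. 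Running the same computation for a general $G$-integration $\Sigma(A)/N$ yields $\pi_1^{\mathrm{orb}}(P_x/G)\cong N_x/K_x$, trivial for all $x$ iff $N=K$, so (a) also characterizes $\Sigma_G(A)$ and agrees with (b). \emph{The main obstacle} is this orbifold computation: setting up the homotopy exact sequence for the proper, locally free action $P_x\to P_x/G$ over an orbifold and, crucially, identifying the connecting map $\partial$ with the map induced by $\widetilde{\iota}|_{X\times\pi_1(G)}$. The normality argument for $K$ via source-$1$-connected rigidity is the other point requiring care, though it is clean once set up.
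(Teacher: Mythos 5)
Your proposal is correct and follows essentially the same route as the paper: you construct the same central subgroupoid $K=\tilde{\iota}(X\times\pi_1(G))$ (the paper's $\widetilde{\cN}^G$), prove its normality by the same inner-action/uniqueness argument (the paper's Proposition 5.13), obtain discreteness from an existing $G$-integration, define $\Sigma_G(A)=\Sigma(A)/K$, and deduce (a) from the homotopy exact sequence of the fibration $G\to\s^{-1}(x)\to\s^{-1}(x)/G$. The only difference is organizational — you fold the converse direction of (a) into the single formula $\pi_1^{\mathrm{orb}}(P_x/G)\cong N_x/K_x$, where the paper argues it separately via the covering $\s^{-1}(x)\to\s^{-1}_\G(x)$ — but the underlying argument is the same.
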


\begin{proof}
We start by defining $\Sigma_G(A)$ so that (b) holds. Let $\tilde{G}$ be the 1-connected Lie group integrating $\gg$ and let $\G\tto X$ be some $\s$-connected $G$-integration of $A$. Applying Lie's 2nd Theorem (\cite{CrainicFernandes:lectures}) we obtain a commutative diagram of groupoid morphisms
\[
\xymatrix{X\rtimes \tilde{G} \ar[r]^{\tilde{\iota}} \ar[d] & \Sigma(A) \ar[d]\\
X\rtimes G \ar[r]_{\iota}  & \G} 
\]
where $\tilde{\iota}$ and $\iota$ are the morphisms integrating $i:X\rtimes \gg\to A$. Since $\Sigma(A)\to \G$ is a covering of Lie groupoids, it follows that
\[ \widetilde{\cN}^G:=\tilde{\iota}(X\rtimes \pi_1(G))\subset \Sigma(A), \]
is a discrete, bundle of Lie groups in the center of the isotropy. It is also a normal sub-bundle of groups as will be shown in Proposition \ref{prop:normal}. Hence, the quotient
\[
\Sigma_G(A):=\Sigma(A)/\widetilde{\cN}^G
\]
is a $\s$-connected Lie groupoid integrating $A$ and the diagram above factors:
\[
\xymatrix@R=20pt{
X\rtimes \tilde{G} \ar[r]^{\tilde{\iota}} \ar[d] & \Sigma(A) \ar[d]\\
X\rtimes G \ar[r] \ar[d] & \Sigma_G(A) \ar[d]\\
X\rtimes G \ar[r]_{\iota}  & \G} 
\]
This shows that $\Sigma_G(A)\to\G$ is an \'etale, surjective, morphism of $G$-principal groupoids. Hence, $\Sigma_G(A)$ is a $\s$-connected integration which is maximal among all $\s$-connected integrations. Uniqueness follows from the uniqueness in Lie's 2nd Theorem.

Now to prove (a), we first observe that the groupoid $\Sigma_G(A)$ we have just constructed has this property. It is clearly $\s$-connected. Now denote by $\s^{-1}(x)$ and $\tilde{\s}^{-1}(x)$ the source fibers of $\Sigma_G(A)$ and $\Sigma(A)$, respectively. The quotient morphism $\Sigma(A)\to \Sigma_G(A)$ and the groupoid morphisms $\tilde{\iota}:X\rtimes \tilde{G} \to \Sigma(A)$, $\iota: X\rtimes G\to \Sigma_G(A)$ yield the commutative diagram
\[
\xymatrix@R=20pt{
\pi_1(G)\ar[r]\ar[d] & \widetilde{\cN}^G\ar[d] \\
\tilde{G}\ar[r]\ar[d] & \tilde{s}^{-1}(x) \ar[r]\ar[d] &  \tilde{s}^{-1}(x)/\tilde{G}\ar[d] \\
G\ar[r]& \s^{-1}(x) \ar[r] &  \s^{-1}(x)/G
}
\]
Here the two first columns and the two bottom rows are stack fibrations (see \cite[Example 4.5]{Noohi14}) and the top row is a surjective group morphism. Passing to the corresponding long exact sequences in homotopy (\cite[Theorem 5.2]{Noohi14}), using that $\tilde{s}^{-1}(x)$ and $\tilde{G}$ are 1-connected, we obtain the commutative diagram of stacky homotopy groups
\[
\xymatrix@R=20pt{
 & 1\ar@{=}[r] \ar[d] & 1\ar[d] \\
\pi_2(\s^{-1}(x)/G)\ar[r] & \pi_1(G)\ar[r]\ar[d] & \pi_1(\s^{-1}(x)) \ar[r]\ar[d] & \pi_1(\s^{-1}(x)/G)\ar[r] & 1 \\
& \pi_1(G) \ar@{->>}[r]\ar[d] & \widetilde{\cN}^G\ar[d] \\
& 1\ar@{=}[r] & 1
}
\]
We conclude that the map $\pi_1(G)\to \pi_1(\s^{-1}(x))$ is surjective and so we must have $\pi_1(\s^{-1}(x)/G)=1$. 

Finally, let $\G\tto X$ be a $G$-integration of $A$ with $\s$-connected fibers and assume that $\pi_1(\s_\G^{-1}(x)/G)=1$ for all $x\in X$, where $\s_\G: \G \to X$ denotes the source map of $\G$. We claim that the groupoid morphism $\Sigma_G(A)\to \G$ given in part (b) must be an isomorphism.

In order to show that the projection $p: \Sigma_G(A)\to \G$ is an isomorphism, it suffices to show that the restriction of this projection to each source fiber is injective. We denote, as above, the source map of $\Sigma_G(A)$ by $\s: \Sigma_G(A) \to X$. Then for each $x \in X$ we have a map of fibrations
\[\xymatrix{G \ar@{=}[d] \ar[r] & \s^{-1}(x) \ar[d]^p \ar[r] &\s^{-1}(x)/G \ar[d]\\
G  \ar[r] & \s_\G^{-1}(x)  \ar[r] &\s_\G^{-1}(x)/G,}\]
which induces the commutative diagram with exact lines at the level of fundamental groups
\[\xymatrix{\pi_1(G) \ar@{=}[d] \ar[r] & \pi_1(\s^{-1}(x)) \ar[d]^{p_*} \ar[r] &1 \ar@{=}[d]\\
\pi_1(G)  \ar[r] & \pi_1(\s_\G^{-1}(x))  \ar[r] &1.}\]
Hence, $p_*: \pi_1(\s^{-1}(x)) \to\pi_1(\s_\G^{-1}(x))$ is surjective, and $p: \s^{-1}(x) \to \s_\G^{-1}(x)$ being a connected covering, it follows that $p$ is injective. This concludes the proof.
\end{proof}

\begin{example} 
\label{ex:canonical:integ:principal:bundle}
Given a principal bundle $\pi: P \to M$ the canonical integration of the $G$-principal algebroid $TP \to P$ can be obtained as follows. Let $q: \tilde{M} \to M$ be the universal covering space of $M$ and consider the pullback diagram
\[\xymatrix{q^*P \ar[d]_\pi \ar[r]^{\hat{q}} & P\ar[d]^\pi\\
\tilde{M} \ar[r]_q & M.}\]
On the one hand $q^*P$ is a principal $G$-bundle over $\tilde{M}$ and therefore caries a right principal $G$-action. On the other hand, $q^*P$ is a $\pi_1(M)$-covering of $P$. The canonical $G$-integration of $TP$ is the gauge groupoid corresponding to the $\pi_1(M)$-principal bundle $\hat{q}: q^*P \to P$:
\[ \Sigma_G(TP)=(q^*P\times q^*P)/\pi_1(M)\tto P, \]
with the $G$-action $[(p_1,p_2)]\, g:=[(p_1g,p_2)]$.
\end{example}

Theorem \ref{thm:canonical:G:integration} shows that any $\s$-connected $G$-integration $\G$ of a $G$-principal algebroid $A$ is a quotient of the canonical $G$-integration. The kernel of the \'etale, surjective, morphism $\Sigma_G(A)\to\G$ is a discrete bundle of subgroups lying in the the center of the isotropy groups of $\Sigma_G(A)$, whose intersection with the image of the morphism $\iota:X\rtimes G\to \Sigma_G(A)$ is the identity section. 

Conversely, given a discrete bundle of subgroups $\Delta\subset \Sigma_G(A)$, contained in the the center of the isotropy groups and intersecting the image of the morphism $\iota:X\rtimes G\to \Sigma_G(A)$ in the identity section, one has the $\s$-connected $G$-integration 
\[ \G:=\Sigma_G(A)/\Delta\tto X. \]

\begin{example}
\label{ex:curvature:2}
We can now better understand what happens with with the $\SO(n,\Rr)$-structure Lie algebra $A=\Rr^n\oplus\so(n,\Rr)$ with zero torsion and constant positive curvature of Example \ref{ex:curvature:1}. The canonical  $G$-integration is $\Sigma_G(A)=\SO(n+1,\Rr)$. Note that the center of $\SO(n+1,\Rr)$ is
\[
Z(\SO(n+1,\Rr))=
\left\{
\begin{array}{l}
\quad \{ I \}, \text{ if $n$ is even,}\\
\\
\{I,-I\}, \text{ if $n$ is odd.}
\end{array}
\right.
\]
So if $n$ is even the only connected $\SO(n,\Rr)$-integration is $\SO(n+1,\Rr)$, while if $n$ is odd there is another one, namely 
\[ \G=\SO(n+1,\Rr)/\{I,-I\}=\mathrm{PSO}(n+1,\Rr). \]
\end{example}


\subsection{Existence of solutions II}

The $G$-realizations of a $G$-structure algebroid $A$ with connection which have trivial orbifold fundamental group can be embedded into the canonical $G$-integration $\Sigma_G(A)$. This will follow as a consequence of a version of Lie's 2nd Theorem (integration of morphisms) in the context of $G$-principal algebroids. We recall that throughout this section we are assuming that $G$ is connected.

Theorem \ref{thm:canonical:G:integration} can be interpreted as a generalization of Lie's 1st Theorem \cite{CrainicFernandes} to the category of $G$-principal algebroids. Analogously, we have the following version of Lie's 2nd Theorem.

\begin{theorem}\label{thm:integration:G:morphisms} Let $G$ be a connected Lie group, and let $A \to X$ and $B \to Y$ be $G$-integrable $G$-principal algebroids. If $\phi: A \to B$ is a morphism of $G$-principal algebroids, then there exists a unique morphism of $G$-principal groupoids
\[ \Phi: \Sigma_G(A) \to \Sigma_G(B) \] 
which integrates $\phi$, i.e., such that $\Phi_* = \phi$.
\end{theorem}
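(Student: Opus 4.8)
The plan is to reduce the statement to the ordinary Lie Second Theorem (integration of Lie algebroid morphisms) and then to verify that the resulting groupoid morphism is compatible with all the additional $G$-principal structure. Write $\phi_0\colon X\to Y$ for the base map of $\phi$; since $\phi$ is $G$-equivariant, so is $\phi_0$. I will use the notation of the proof of Theorem~\ref{thm:canonical:G:integration}: let $p_A\colon\Sigma(A)\to\Sigma_G(A)$ be the \'etale covering with kernel the normal bundle of groups $\widetilde{\cN}^G_A=\tilde\iota_A(X\rtimes\pi_1(G))$, and let $q\colon X\rtimes\tilde G\to X\rtimes G$ be the quotient induced by $\tilde G\to G$, so that $p_A\circ\tilde\iota_A=\iota_A\circ q$ (and analogously for $B$). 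As a first step, since $\Sigma(A)$ is source-simply-connected and $\Sigma_G(B)$ is an integration of $B$, ordinary Lie's Second Theorem produces a unique Lie groupoid morphism $\Psi\colon\Sigma(A)\to\Sigma_G(B)$ with $\Psi_*=\phi$.

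The crucial step --- and the one I expect to be the main obstacle --- is to show that $\Psi$ descends to the canonical $G$-integrations, i.e.\ that it collapses $\widetilde{\cN}^G_A$ to identities. This is exactly where the hypothesis that $\phi$ intertwines the action morphisms, $\phi\circ i_A=i_B\circ(\phi_0\times I)$, is used. Both $\Psi\circ\tilde\iota_A$ and $\iota_B\circ(\phi_0\times I)\circ q$ are groupoid morphisms $X\rtimes\tilde G\to\Sigma_G(B)$ whose differentials equal $i_B\circ(\phi_0\times I)$; since the source fibers of $X\rtimes\tilde G$ are copies of the $1$-connected group $\tilde G$, the uniqueness half of Lie's Second Theorem forces them to coincide. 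Restricting this equality to $X\rtimes\pi_1(G)=\ker q$ and using that $q(x,k)=1_x$ for $k\in\pi_1(G)$, I obtain
\[ \Psi\big(\widetilde{\cN}^G_A\big)=\iota_B\big((\phi_0\times I)(q(X\rtimes\pi_1(G)))\big)=\{1_y:y\in Y\}. \]
As $\widetilde{\cN}^G_A$ is a normal bundle of groups (Proposition~\ref{prop:normal}), $\Psi$ factors through a unique groupoid morphism $\Phi\colon\Sigma_G(A)\to\Sigma_G(B)$ with $\Phi\circ p_A=\Psi$, and $(p_A)_*=\mathrm{id}_A$ then gives $\Phi_*=\phi$.

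It remains to check that $\Phi$ is a morphism of $G$-principal groupoids and that it is unique. Feeding $p_A\circ\tilde\iota_A=\iota_A\circ q$ into $\Phi\circ p_A=\Psi$ and cancelling the surjection $q$ yields $\Phi\circ\iota_A=\iota_B\circ(\phi_0\times I)$, so $\Phi$ intertwines the action morphisms. For $G$-equivariance I would use that on a $G$-principal groupoid the action is recovered from its action morphism through \eqref{eq:principal:action:formula}: for $\gamma\in\Sigma_G(A)$ and $g\in G$, since $\Phi$ is a groupoid morphism covering the $G$-equivariant map $\phi_0$,
\[ \Phi(\gamma\,g)=\Phi(\iota_A(\t(\gamma),g))\cdot\Phi(\gamma)=\iota_B(\t(\Phi(\gamma)),g)\cdot\Phi(\gamma)=\Phi(\gamma)\,g; \]
the connectedness of $G$ is what guarantees, via Theorem~\ref{thm:canonical:G:integration}, that this determines the entire $G$-action. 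Finally, uniqueness is inherited from the ordinary theorem: any $\Phi'$ with $\Phi'_*=\phi$ gives a morphism $\Phi'\circ p_A\colon\Sigma(A)\to\Sigma_G(B)$ integrating $\phi$ out of the source-simply-connected $\Sigma(A)$, hence $\Phi'\circ p_A=\Psi=\Phi\circ p_A$, and cancelling $p_A$ gives $\Phi'=\Phi$.
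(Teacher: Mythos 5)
Your proposal is correct and follows essentially the same route as the paper: both arguments reduce to the ordinary Lie's Second Theorem and use the integrated compatibility square with the action morphisms $\tilde\iota$ to control the image of the $G$-monodromy bundle $\widetilde{\cN}^G_A$, so that the morphism descends to the canonical $G$-integrations. The only cosmetic difference is that you integrate $\phi$ directly into $\Sigma_G(B)$ and show $\widetilde{\cN}^G_A$ is collapsed to units, whereas the paper integrates into $\Sigma(B)$ and observes that $\widetilde{\cN}^G(A)$ is carried into $\widetilde{\cN}^G(B)$ before quotienting both sides; your explicit verifications of $G$-equivariance and uniqueness, which the paper leaves implicit, are welcome additions.
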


\begin{proof}
Given a morphism of $G$-principal algebroids 
\[\xymatrix{A \ar[d] \ar[r]^\phi & B\ar[d]\\
X \ar[r]_\varphi & Y,}\]
by definition $\phi$ intertwines the action morphisms, so we obtain a commutative diagram of Lie algebroid morphisms
\[
\xymatrix{A \ar[r]^\phi & B\\
X \rtimes \gg \ar[r]_{\varphi \times I} \ar[u]^i& Y \rtimes \gg \ar[u]_i.}
\]

Using (the usual version of) Lie's 2nd Theorem \cite{CrainicFernandes} we obtain a commutative diagram of Lie groupoid morphisms
\[
\xymatrix{
\Sigma(A) \ar[r]^{\tilde{\Phi}} & \Sigma(B)\\
X \rtimes \tilde{G} \ar[r]_{\varphi \times I} \ar[u]^{\tilde{\iota}}& Y \rtimes \tilde{G} \ar[u]_{\tilde{\iota}}.}
\] 
It follows that $\tilde{\Phi}: \Sigma(A) \to \Sigma(B)$ maps the bundle of subgroups $\widetilde{\cN}^G(A)$ of $\Sigma(A)$ defined in Theorem \ref{thm:canonical:G:integration} to the bundle of subgroups $\widetilde{\cN}^G(B)$  of $\Sigma(B)$. Therefore it induces a morphism of $G$-principal groupoid morphism 
\[ \Phi: \Sigma_G(A) \to \Sigma_G(B)\] 
which fits into a commutative diagram: \[\xymatrix{\Sigma(A) \ar[d] \ar[r]^{\tilde{\Phi}} & \Sigma(B) \ar[d]\\
\Sigma_G(A) \ar[r]_\Phi & \Sigma_G(A).}\]
Moreover, since the vertical arrows are \'etale morphisms of Lie groupoids, we must have $\Phi_* = \phi$.
\end{proof}

One can apply the previous theorem to deduce the following global version of Theorem \ref{thm:MC:local}.

\begin{theorem}[Global Universal Property of $\wmc$]
\label{thm:MC:global}
Let $A\to X$ be a $G$-integrable $G$-structure algebroid with connection and let $\Sigma_G(A)\tto X$ be its canonical $G$-integration.  Given a $G$-realization $(P,(\theta,\omega),h)$ of $A$ such that $M=P/G$ has trivial orbifold fundamental group, choose $p_0\in P$ and let $x_0=h(p_0)$. Then there is a unique embedding $\phi$ of $G$-realizations with $\phi(p_0)=1_{x_0}$, making the following diagram commute:
\[
\xymatrix@R=7 pt{ T P \ar@{-->}[rr]^>>>>>>>>{\phi_{\ast}} \ar[dd]  \ar[dr]_{(\theta,\omega)} & & T(\s^{-1}(x_0)) \ar[dd] \ar[dl]^{\wmc} \\
& A \ar[dd] & \\
P \ar@{-->}[rr]^>>>>>>>>{\phi} \ar[dr]_h & & \s^{-1}(x_0)\ar[dl]^{\t} \\
& X &}
\]
\end{theorem}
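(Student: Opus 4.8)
The plan is to deduce the statement from the $G$-equivariant second Lie theorem (Theorem \ref{thm:integration:G:morphisms}), exactly as Theorem \ref{thm:MC:local} is a local consequence of ordinary Lie II. First I would reformulate the data: by Theorem \ref{thm:Cartan:G:alg:B} a $G$-realization $(P,(\theta,\omega),h)$ is the same thing as a morphism of $G$-structure algebroids with connection $\Phi=(\theta,\omega)\colon TP\to A$ covering $h$, and in particular a morphism of $G$-principal algebroids. Both $TP$ and $A$ are $G$-integrable ($A$ by hypothesis, $TP$ always). The crucial reduction is the identification of the canonical $G$-integration of $TP$: by Example \ref{ex:canonical:integ:principal:bundle} one has $\Sigma_G(TP)=(q^*P\times q^*P)/\pi_1(M)$, where $q\colon\tilde M\to M$ is the universal orbifold cover; since $M=P/G$ has trivial orbifold fundamental group, $q$ is an isomorphism and $\Sigma_G(TP)$ is simply the pair groupoid $P\times P\tto P$.

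Applying Theorem \ref{thm:integration:G:morphisms} to $\Phi$ then yields a unique morphism of $G$-principal groupoids $\hat\Phi\colon P\times P\to\Sigma_G(A)$ with $\hat\Phi_*=\Phi$; since $\hat\Phi_*$ preserves $\theta$ and $\omega$ and $\hat\Phi$ covers $h$, it is in fact a morphism of $G$-structure groupoids with connection. I would then define $\phi\colon P\to\s^{-1}(x_0)$ by restricting $\hat\Phi$ to the source fiber of $P\times P$ over $p_0$, identified with $P$ via the target, i.e. $\phi(p):=\hat\Phi(p,p_0)$. Because $\hat\Phi$ preserves sources and covers $h$, one has $\s(\hat\Phi(p,p_0))=h(p_0)=x_0$, so $\phi$ lands in $\s^{-1}(x_0)$; moreover $\phi(p_0)=\hat\Phi(1_{p_0})=1_{x_0}$ and $\t(\hat\Phi(p,p_0))=h(p)$, giving the commutativity $\t\circ\phi=h$ of the lower triangle.

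For the upper triangle I would use the standard compatibility of a groupoid morphism with Maurer--Cartan forms, $\wmc\circ\d\hat\Phi=\hat\Phi_*\circ\wmc_{P\times P}$ on $T^{\s}(P\times P)$. Restricting to the source fiber over $p_0$ and using that the Maurer--Cartan form of the pair groupoid identifies $T(\s^{-1}(p_0))$ with $TP$ through the target (so $\wmc_{P\times P}|_{\s^{-1}(p_0)}$ is the identity on $TP$), this reads $\wmc\circ\d\phi=\Phi=(\theta,\omega)$, which is precisely the asserted commutativity, so $\phi$ is a morphism of $G$-realizations. Uniqueness I would reduce to the uniqueness clause of Theorem \ref{thm:integration:G:morphisms}: any morphism of $G$-realizations $\phi'$ with $\phi'(p_0)=1_{x_0}$ reassembles into a morphism of $G$-principal groupoids $\Psi(p_1,p_2):=\phi'(p_1)\cdot\phi'(p_2)^{-1}$ (well defined because $\phi'$ takes values in $\s^{-1}(x_0)$), satisfying $\Psi_*=\Phi$ and $\Psi|_{\s^{-1}(p_0)}=\phi'$; hence $\Psi=\hat\Phi$ and $\phi'=\phi$.

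The main obstacle is the \emph{embedding} property. Being a morphism of $G$-structure groupoids, $\hat\Phi$ restricts on source fibers to an \'etale map, so $\phi$ is a local diffeomorphism onto an open, $G$-saturated subset $W=\phi(P)\subseteq\s^{-1}(x_0)$ and descends to an \'etale map of orbifolds $\bar\phi\colon M\to W/G$. To upgrade this to injectivity I would run a covering-space/monodromy argument: comparing the homotopy long exact sequences of the two stack fibrations $G\to P\to M$ and $G\to\s^{-1}(x_0)\to\s^{-1}(x_0)/G$ exactly as in the proof of Theorem \ref{thm:canonical:G:integration}, and using $\pi_1^{\mathrm{orb}}(M)=1$ together with $\pi_1^{\mathrm{orb}}(\s^{-1}(x_0)/G)=1$, one finds that $\phi$ realizes $M$ as a connected covering of $W/G$ with $\phi_*$ surjective on $\pi_1^{\mathrm{orb}}$; since the total space $M$ is $1$-connected this covering must have a single sheet, whence $\phi$ is injective. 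The delicate point here, and the step I expect to require the most care, is establishing that $\bar\phi$ is a genuine orbifold covering of its image (not merely \'etale) before invoking $1$-connectedness, since this is exactly where the global hypothesis on $M$ must be converted into global injectivity.
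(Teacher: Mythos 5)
Your construction and uniqueness arguments coincide with the paper's: identify $\Sigma_G(TP)$ with the pair groupoid $P\times P$ via Example \ref{ex:canonical:integ:principal:bundle} and the triviality of the orbifold fundamental group of $M$, integrate $(\theta,\omega)$ to $\hat\Phi\colon P\times P\to\Sigma_G(A)$ by Theorem \ref{thm:integration:G:morphisms}, set $\phi(p)=\hat\Phi(p,p_0)$, and obtain uniqueness by reassembling any competitor $\phi'$ into the groupoid morphism $(p_1,p_2)\mapsto\phi'(p_1)\cdot\phi'(p_2)^{-1}$ and invoking the uniqueness clause of Lie II. This is exactly the paper's route, and those parts are fine.

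The divergence is in the embedding step, and there your argument has a genuine gap. Even granting that $\bar\phi\colon M\to W/G$ (with $W=\phi(P)$) is an orbifold covering of its image --- which is itself not automatic, since promoting an \'etale map to a covering requires a path-lifting or properness input that is not available without some completeness hypothesis --- concluding that the covering has a single sheet requires $\pi_1^{\mathrm{orb}}(W/G)=1$. But $W/G$ is only an \emph{open subset} of $\s^{-1}(x_0)/G$, and $1$-connectedness does not pass to open subsets. Correspondingly, the surjectivity of $\pi_1(G)\to\pi_1(W)$ that drives the exact-sequence comparison in the proof of Theorem \ref{thm:canonical:G:integration} is deduced there from the $1$-connectedness of the $G$-quotient of the \emph{total} space; for $W$ that is precisely what you are trying to prove, so the claimed surjectivity of $\phi_*$ on $\pi_1^{\mathrm{orb}}$ is unsupported and the argument is circular at that point. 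For comparison, the paper does not run a covering-space argument here at all: it simply asserts that $\phi$, being an open immersion, is an open embedding. If you want a derivation of injectivity rather than an assertion, the place to look is the proof of Proposition \ref{prop:1-connected:complete}, where the additional hypothesis of completeness is what allows one to identify $P\rtimes\Sigma_G(A_L)$ with the pair groupoid $P\times P$ and hence $P$ with a full source fiber.
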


\begin{proof}
Since we are assuming that $P/G$ has trivial orbifold fundamental group, it follows from the characterization given in Theorem \ref{thm:canonical:G:integration} that the canonical integration of $TP$ is canonically isomorphic to the pair groupoid $P \times P$ (see Example \ref{ex:canonical:integ:principal:bundle}). Thus,  by Theorem \ref{thm:integration:G:morphisms}, the $G$-principal algebroid morphism $(\theta, \omega): TP \to A$ integrates to a $G$-principal groupoid morphism $\Phi: P \times P \to \Sigma_G(A)$. After fixing $p_0 \in P$, we obtain
\[\phi: P \longrightarrow \s^{-1}(h(p_0)), \quad \phi(p) = \Phi(p, p_0).\]
Clearly, $\phi_*: TP \to T\s^{-1}(h(p_0))$ is a morphism of $G$-structure algebroids with connection which satisfies $\phi(p_0) = 1_{h(p_0)}$. It follows that $\phi^*\wmc = (\theta, \eta)$. Since $\phi$ is an open immersion, it is an open embedding.

To prove uniqueness, observe that if $\phi':P\to \s^{-1}(x_0)$ is an embedding of $G$-realizations with $\phi(p_0)=1_{x_0}$, then 
\[ \Phi':P \times P \to \Sigma_G(A), \quad (p_1,p_2)\mapsto \phi'(p_1)\phi'(p_2)^{-1},\] 
is a groupoid morphism integrating $(\theta, \omega): TP \to A$. By uniqueness in Theorem \ref{thm:integration:G:morphisms}, we must have $\Phi'=\Phi$ and we conclude that $\phi'=\phi$.
\end{proof}

As a corollary, in the integrable case we obtain a description of all $G$-realizations (and hence all solutions of Cartan's realization problem) up to cover:

\begin{corol}
\label{cor:integration:realizations}
Let $A\to X$ be a $G$-structure al\-ge\-broid with connection which is $G$-integrable and let $\Sigma_G(A)\tto X$ be the canonical $G$-integration.  
Any $G$-realization of $A$ has a cover isomorphic to a $G$-invariant, open subset of a $G$-realization of the form $(\s^{-1}(x),\wmc,\t)$.
\end{corol}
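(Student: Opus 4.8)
The plan is to deduce this from the Global Universal Property of $\wmc$ (Theorem \ref{thm:MC:global}), whose hypothesis---that $M=P/G$ have trivial orbifold fundamental group---I would arrange by passing to a suitable cover. Since the classifying map of a $G$-realization lands in a single leaf (Lemma \ref{lem:classifying:map}), I may assume $P$, and hence the effective orbifold $M=P/G$, is connected, treating connected components separately otherwise.

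First I would take the orbifold universal cover $q:\tilde M\to M$, whose deck group is the orbifold fundamental group of $M$, so that $\tilde M$ is $1$-connected. Pulling back the $G$-structure $P\to M$ along $q$---exactly the construction used in Example \ref{ex:canonical:integ:principal:bundle}---produces a $G$-principal bundle $\tilde P:=q^*P\to\tilde M$ together with a $G$-equivariant covering map $\hat q:\tilde P\to P$. Setting $\tilde\theta:=\hat q^*\theta$, $\tilde\omega:=\hat q^*\omega$ and $\tilde h:=h\circ\hat q$, a routine check shows that $(\tilde P,(\tilde\theta,\tilde\omega),\tilde h)$ is again a $G$-realization of $A$ and that $\hat q$ is a cover of $G$-realizations.

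Now $\tilde P/G=\tilde M$ has trivial orbifold fundamental group, so Theorem \ref{thm:MC:global} applies directly: after choosing $\tilde p_0\in\tilde P$ and setting $x_0:=\tilde h(\tilde p_0)$, I obtain a unique embedding of $G$-realizations $\phi:\tilde P\to\s^{-1}(x_0)$ into a source fiber of the canonical $G$-integration $\Sigma_G(A)$, with $\phi^*\wmc=(\tilde\theta,\tilde\omega)$ and $\phi(\tilde p_0)=1_{x_0}$. Because $\phi$ is a $G$-equivariant open embedding, its image is a $G$-invariant open subset of $\s^{-1}(x_0)$ and $\phi$ identifies $\tilde P$ with it as $G$-realizations. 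Combined with the cover $\hat q:\tilde P\to P$, this is exactly the asserted cover of $(P,(\theta,\omega),h)$.

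The one step requiring genuine care is the reduction in the second paragraph: one must verify, within the orbifold formalism, that the pullback along the \emph{orbifold} universal cover is a legitimate $G$-realization, that $\tilde P/G\cong\tilde M$ indeed has trivial orbifold fundamental group, and that $\hat q$ is a covering morphism of $G$-realizations. I expect this to be the main obstacle, though it is essentially the orbifold analogue of the classical pullback-bundle argument and parallels the construction already carried out in Example \ref{ex:canonical:integ:principal:bundle}; everything after it is a formal application of Theorem \ref{thm:MC:global}.
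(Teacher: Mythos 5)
Your proposal is correct and follows essentially the same route as the paper: pass to the orbifold universal cover $q:\tilde M\to M$, note that the pullback $(q^*P,(q^*\theta,q^*\omega),q^*h)$ is a $G$-realization with $1$-connected base, and apply Theorem \ref{thm:MC:global} to embed it as a $G$-invariant open subset of a source fiber. The extra care you flag about the orbifold pullback is reasonable but is exactly what the paper's (terser) proof also relies on.
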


\begin{proof}
Let $(P, (\theta,\omega),h)$ be a $G$-realization of $A$. Then, setting $M = P/G$, we have a $G$-structure $P \to M$. If we denote by $q: \tilde{M} \to M$ the orbifold universal cover of $M$, then $(q^*P, (q^*\theta, q^*\omega), q^*h)$ is a $G$-realization of $A$ with simply connected base manifold. The result then follows by applying Theorem \ref{thm:MC:global}.
\end{proof}

\subsection{Obstructions to $G$-integrability}

Let $A\to X$ be a $G$-principal algebroid. Obviously, for $A$ to be $G$-integrable it must be integrable. So as first obstructions to $G$-integrability we have the well-known integrability obstructions for a Lie algebroid (\cite{CrainicFernandes}). It will be useful to recall them.

First, given a Lie algebroid $A\to X$, for each $x\in X$ there is a group morphism, called the {\bf monodromy morphism} at $x$:
\[ \partial :\pi_2(L,x)\to \cG(\ker\rho_{x}), \]
where $L$ is the leaf containing $x$ and  $\cG(\ker\rho_{x})$(\footnote{It would be more coherent with previous notation to write $\Sigma(\ker\rho_x)$ instead of $\cG(\ker\rho_x)$, but we prefer the later notation to emphasize that it is a Lie group.}) is the 1-connected Lie group integrating the isotropy Lie algebra. The image of this morphism is called the {\bf extended monodromy group} $\widetilde{\cN}_{x}$ and it always lies in the center $Z(\cG(\ker\rho_{x}))$. The relevance of this group comes from the fact that, whenever $A$ is integrable, the connected component of the identity of the isotropy group at $x$ of the canonical integration $\Sigma(A)$ is:
\[ \Sigma(A)_x^0=\cG(\ker\rho_{x})/\widetilde{\cN}_{x}. \]
Hence, the discreteness of $\widetilde{\cN}_{x}$ is a necessary condition for integrability.

One defines also the (restricted) {\bf monodromy group} $\cN_{x}$ to be the additive subgroup of the center $Z(\ker\rho_{x})$ such that:
\[ \exp(\cN_{x})=\widetilde{\cN}_{x}\cap Z(\cG(\ker\rho_{x}))^0. \]
It is proved in \cite{CrainicFernandes} that the following statements are equivalent:
\begin{enumerate}[(i)]
\item The restriction $A|_L$ is integrable;
\item The extended monodromy group $\widetilde{\cN}_{x}$ is a discrete subgroup of $\cG(\ker\rho_{x})$ for some (and hence all) $x\in L$;
\item The monodromy group $\cN_{x}$ is a discrete subgroup of $A_{x}$ for some (and hence all) $x\in L$.
\end{enumerate}
Moreover, the main result of \cite{CrainicFernandes} shows that a Lie algebroid $A\to X$ is integrable iff the monodromy groups are uniformly discrete, i.e., iff there is a open neighborhood $U\subset A$ of the zero section such that:
\[ \cN_x\cap U=\{0_x\}, \quad \forall x\in X. \]

Let us now turn to $G$-integrability. Assuming that $A\to X$ is an integrable $G$-principal algebroid, we consider its 1-connected integration $\Sigma(A)$. Let $\tilde{G}$ be the 1-connected Lie group with Lie algebra $\gg$ so that $\pi_1(G,1)\subset \tilde{G}$ is a discrete subgroup of $Z(\tilde{G})$. The Lie algebroid morphism  $i:X\rtimes \gg \to A$ integrates to a Lie group morphism $\tilde{\iota}: X\rtimes \tilde{G} \to \Sigma(A)$ and we set:

\begin{definition}
Let $A\to X$ be an integrable $G$-principal algebroid. The  {\bf $G$-monodromy morphism} at $x\in X$ is the map
\[ \partial^G_x:\pi_1(G)\to \Sigma(A)_x,\quad g\mapsto \tilde{\iota}(x,g). \]
We call $\widetilde{\cN}^G_{x}:=\Im \partial^G_x$ the \textbf{extended $G$-monodromy group} at $x$.
\end{definition}

Note that these groups already appeared in the proof of Theorem \ref{thm:canonical:G:integration}: there it is shown that under the assumption of $G$-integrability, the extended $G$-monodromy groups form a discrete normal sub-bundle of the center of the isotropy and the canonical integration is precisely:
\[ \Sigma_G(A)=\Sigma(A)/\widetilde{\cN}^G. \]
More generally, without assuming $G$-integrability, we always have:

\begin{prop}\label{prop:normal}
If $A$ be an integrable $G$-principal Lie algebroid, then $\widetilde{\cN}^G\subset\Sigma(A)$ is a normal sub-bundle of groups contained in the center of the isotropy.
\end{prop}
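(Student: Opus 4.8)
The plan is to isolate a single commutation identity from which centrality and normality both follow in one line, and to produce that identity from the rigidity of source-simply-connected groupoids. First I would dispose of the elementary points. Since $\pi_1(G)\subset Z(\tilde G)$ and the $\tilde G$-action on $X$ factors through $G=\tilde G/\pi_1(G)$, every $g\in\pi_1(G)$ fixes $X$ pointwise; hence $(x,g)\in X\rtimes\tilde G$ is an isotropy arrow and $\tilde\iota(x,g)\in\Sigma(A)_x$. As $\tilde\iota$ is a groupoid morphism and $(x,h)\cdot(x,g)=(x,gh)$ in $X\rtimes\tilde G$, the map $\partial^G_x$ is a homomorphism, so each $\widetilde{\cN}^G_x$ is a subgroup of $\Sigma(A)_x$ and $\widetilde{\cN}^G=\tilde\iota(X\rtimes\pi_1(G))$ is a subgroupoid lying in the isotropy, i.e.\ a bundle of groups over $X$. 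What remains is to prove that these groups are central and that the bundle is normal.

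The key step is to show that for each $g\in\pi_1(G)$ the inner automorphism $\odot g$ of $\Sigma(A)$, defined by \eqref{eq:inner:G:action} with $\tilde\iota$ in place of $\iota$, equals the identity. As in Proposition \ref{prop:principal:G:action:integrable}, the inner $\tilde G$-action on $\Sigma(A)$ differentiates to a $\tilde G$-action on $A$ by Lie algebroid automorphisms whose infinitesimal generator is $\widehat\psi(\al)=[i(\al),-]$. The pullback along $\tilde G\to G$ of the given $G$-action on $A$ is another $\tilde G$-action with the same generator $\widehat\psi$; since $\tilde G$ is connected, a group action is determined by its infinitesimal generator, so the two coincide. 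Thus the inner $\tilde G$-action on $A$ factors through $G$, and $g\in\pi_1(G)=\ker(\tilde G\to G)$ acts trivially on $A$. Consequently $\odot g$ is a morphism of Lie groupoids $\Sigma(A)\to\Sigma(A)$ covering $\mathrm{id}_X$ with $(\odot g)_*=\mathrm{id}_A$; as $\Sigma(A)$ is source-simply-connected, the uniqueness half of Lie's second theorem forces $\odot g=\mathrm{id}_{\Sigma(A)}$. Unwinding \eqref{eq:inner:G:action}, this is exactly the identity
\begin{equation*}
\tilde\iota(\t(\gamma),g)\cdot\gamma=\gamma\cdot\tilde\iota(\s(\gamma),g),\qquad \forall\,\gamma\in\Sigma(A),\ g\in\pi_1(G). \tag{$\ast$}
\end{equation*}

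Both remaining claims are immediate from $(\ast)$. Taking $\gamma\in\Sigma(A)_x$ gives $\tilde\iota(x,g)\cdot\gamma=\gamma\cdot\tilde\iota(x,g)$, so $\widetilde{\cN}^G_x\subset Z(\Sigma(A)_x)$; taking an arbitrary arrow with $\s(\gamma)=x$ and $\t(\gamma)=y$ gives $\gamma\cdot\tilde\iota(x,g)\cdot\gamma^{-1}=\tilde\iota(y,g)\in\widetilde{\cN}^G_y$, which is normality. I expect the only genuine obstacle to lie in the key step, and specifically in the upgrade from ``trivial differential'' to ``trivial automorphism'': one must verify carefully that $\odot g$ is an honest morphism of Lie groupoids covering $\mathrm{id}_X$ (so that Lie's second theorem applies) and that its linearization is precisely the $\tilde G$-action on $A$ at $g$, the factoring of that action through $G$ being where the hypothesis $g\in\pi_1(G)$ enters. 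The rest is bookkeeping with the action morphism.
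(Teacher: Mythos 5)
Your proof is correct and follows essentially the same route as the paper: both arguments reduce everything to the commutation identity $\tilde\iota(\t(\gamma),g)\cdot\gamma=\gamma\cdot\tilde\iota(\s(\gamma),g)$ for $g\in\pi_1(G)$, obtained from the fact that the inner action of $\tilde G$ on $\Sigma(A)$ descends to $G$. The only difference is that you justify this descent explicitly (trivial differential on $A$ plus uniqueness in Lie's second theorem for the source-simply-connected $\Sigma(A)$), whereas the paper simply asserts that the $G$-action on $A$ integrates to the inner $G$-action on $\Sigma(A)$; your filling-in of that step is sound.
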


\begin{proof}
The $G$-action on $A$ by algebroid automorphisms integrates to the inner $G$-action on $\Sigma(A)$ by inner automorphisms given by:
\[
\gamma\odot g=\tilde{\iota}(\t(\gamma),\tilde{g})\cdot \gamma\cdot \tilde{\iota}(\s(\gamma),\tilde{g})^{-1},
\]
where $\tilde{g}\in \tilde{G}$ is any element which covers $g\in G$. In particular, if $\tilde{g}\in\pi_1(G)$ then it covers the identity $e\in G$, so we have for each  arrow $\gamma\in\Sigma(A)$:
\[ 
\gamma=\gamma\odot e=\tilde{\iota}(\t(\gamma),\tilde{g})\cdot \gamma\cdot \tilde{\iota}(\s(\gamma),\tilde{g})^{-1},\quad \forall \tilde{g}\in\pi_1(G).
\]
It follows that:
\[ \gamma\cdot  \tilde{\iota}(\s(\gamma),\tilde{g})=\tilde{\iota}(\t(\gamma),\tilde{g})\cdot \gamma,\quad \forall \tilde{g}\in\pi_1(G), \]
so that $\widetilde{\cN}^G\subset \Sigma(A)$ is a normal sub-bundle of groups contained in the center of the isotropy.
\end{proof}

Using this proposition, we define:

\begin{definition}
Let $A\to X$ be an integrable $G$-principal algebroid. The {\bf $G$-monodromy group} at $x$ is the additive subgroup $\cN^G_x\subset Z(\ker\rho|_x)$ such that
\[ \exp(\cN^G_{x})=\widetilde{\cN}^G_{x}\cap Z(\Sigma(A)_x)^0. \]
\end{definition}

Notice that the $G$-monodromy $\cN^G_x$ contains the kernel of the exponential map $\exp:Z(\ker\rho|L)\to Z(\Sigma(A)_x)^0$, i.e., the usual monodromy $\cN_x$.
Our main result concerning $G$-integrability is the following theorem.

\begin{theorem}
\label{thm:G-integrability:obstructions}
Let $A\to X$ be a $G$-principal algebroid, fix a leaf $L\subset X$ and assume that $A|_L$ is integrable. Then the following statements are equivalent:
\begin{enumerate}[(i)]
\item The restriction $A|_L$ is $G$-integrable;
\item The extended $G$-monodromy group $\widetilde{\cN}^G_{x}$ is a discrete subgroup of $\Sigma(A)_x$ for some (and hence all) $x\in L$;
\item The monodromy $G$-group $\cN_{x}$ is a discrete subgroup of $A_{x}$ for some (and hence all) $x\in L$.
\end{enumerate}
\end{theorem}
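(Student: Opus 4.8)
The plan is to mirror, at the level of the $G$-monodromy, the classical chain of equivalences recalled just above (integrability of $A|_L$ $\Leftrightarrow$ discreteness of $\widetilde{\cN}_x$ $\Leftrightarrow$ discreteness of $\cN_x$). I work over the connected leaf $L$, so that $A|_L$ is a transitive Lie algebroid; since $A|_L$ is assumed integrable, the $1$-connected integration $\Sigma(A|_L)$ exists, its isotropy groups are all isomorphic, and Proposition \ref{prop:normal} exhibits $\widetilde{\cN}^G\subset\Sigma(A|_L)$ as a normal sub-bundle of groups contained in the center of the isotropy. I will prove (i)$\Leftrightarrow$(ii) by comparing $G$-integrations of $A|_L$ with quotients of $\Sigma(A|_L)$ by $\widetilde{\cN}^G$, and (ii)$\Leftrightarrow$(iii) by the exponential map, exactly as in the non-equivariant case.

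For (i)$\Rightarrow$(ii), given an $\s$-connected $G$-integration $\G$ of $A|_L$ (pass to the $\s$-connected component if necessary), Lie's second theorem provides the \'etale, surjective morphism $p:\Sigma(A|_L)\to\G$ integrating $\mathrm{id}_A$, whose kernel $\ker p$ is a discrete bundle of groups. Using the commutative square of Theorem \ref{thm:canonical:G:integration} relating $\tilde\iota$ with the action morphism $\iota$ of $\G$, every $\tilde\iota(x,g)$ with $g\in\pi_1(G)$ maps under $p$ to $\iota(x,e)=1_x$, hence lies in $\ker p$; therefore $\widetilde{\cN}^G_x=\tilde\iota(\{x\}\times\pi_1(G))\subset(\ker p)_x$ is discrete. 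Conversely, for (ii)$\Rightarrow$(i) I form $\Sigma_G(A|_L):=\Sigma(A|_L)/\widetilde{\cN}^G$ exactly as in the proof of Theorem \ref{thm:canonical:G:integration}: once $\widetilde{\cN}^G$ is a discrete, normal sub-bundle of the center of the isotropy, this quotient is a Lie groupoid integrating $A|_L$. Since $\tilde\iota$ sends $L\rtimes\pi_1(G)$ into $\widetilde{\cN}^G$ and $\tilde G/\pi_1(G)=G$, the morphism $\tilde\iota:L\rtimes\tilde G\to\Sigma(A|_L)$ descends to $\iota:L\rtimes G\to\Sigma_G(A|_L)$ integrating $i$; this $\iota$ is locally injective because $i$ is injective (cf.\ Proposition \ref{prop:principal:G:action:integrable}) and effective by Remark \ref{rem:effective action}, so Proposition \ref{prop:principal:G:action:integrable} endows $\Sigma_G(A|_L)$ with a $G$-principal structure, exhibiting it as a $G$-integration.

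The equivalence (ii)$\Leftrightarrow$(iii) is local near the identity and runs as in the classical argument. The exponential of the isotropy $\exp:Z(\ker\rho_x)\to Z(\Sigma(A)_x)^0$ is a covering homomorphism of abelian Lie groups with kernel the ordinary monodromy $\cN_x$, which is discrete because $A|_L$ is integrable. Since $\widetilde{\cN}^G_x$ lies in the center of the isotropy (Proposition \ref{prop:normal}) and the center is closed, discreteness of $\widetilde{\cN}^G_x$ in $\Sigma(A)_x$ is equivalent to discreteness of $\widetilde{\cN}^G_x\cap Z(\Sigma(A)_x)^0$; choosing a neighborhood of the identity on which $\exp$ restricts to a diffeomorphism, and recalling $\cN^G_x=\exp^{-1}(\widetilde{\cN}^G_x\cap Z(\Sigma(A)_x)^0)$, shows the latter is in turn equivalent to discreteness of $\cN^G_x$ in $A_x$. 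The ``for some, hence for all $x\in L$'' clauses follow from the variation-of-monodromy argument of \cite{CrainicFernandes}, applied to the sub-bundle of groups $\widetilde{\cN}^G$ over the connected leaf $L$.

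The delicate step is the converse (ii)$\Rightarrow$(i): one must promote the pointwise discreteness of $\widetilde{\cN}^G_x$ to an honest embedded, uniformly discrete sub-bundle, so that $\Sigma(A|_L)/\widetilde{\cN}^G$ is a smooth Hausdorff Lie groupoid, and then confirm that the descended action morphism is effective, so the quotient is a genuine $G$-principal groupoid rather than merely a Lie groupoid integrating $A$. This is precisely where the transitivity of $A|_L$ over the connected base $L$ and the centrality and normality furnished by Proposition \ref{prop:normal} are essential, and where the present statement goes beyond the classical integrability criterion.
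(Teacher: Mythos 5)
Your proof is correct and follows essentially the same route as the paper's: (i)$\Rightarrow$(ii) via the \'etale covering $\Sigma(A|_L)\to\G$ coming from Lie's second theorem (which is exactly how the discreteness of $\widetilde{\cN}^G$ is established in the proof of Theorem \ref{thm:canonical:G:integration}, cited by the paper), (ii)$\Rightarrow$(i) by forming the quotient $\Sigma(A|_L)/\widetilde{\cN}^G$ and descending $\tilde\iota$ to an action morphism $\iota$, and (ii)$\Leftrightarrow$(iii) via the exponential map being a local diffeomorphism near the origin. The extra care you take about uniform discreteness of the sub-bundle and effectiveness of the descended morphism only makes explicit what the paper leaves implicit.
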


\begin{proof}
The equivalence between (ii) and (iii) is clear, since the exponential map is a local diffeomorphism around the origin and $\widetilde{\cN}^G_{x}\subset Z(\Sigma(A)_x)$ is a discrete subgroup of $\Sigma(A)_x$ if and only if $\widetilde{\cN}^G_{x}\cap Z(\Sigma(A)_x)^0$ is a discrete subgroup of $\Sigma(A)_x$. 

If the restriction $A|_L$ is $G$-integrable then the proof of Theorem \ref{thm:canonical:G:integration} shows that the extended $G$-monodromy groups $\widetilde{\cN}^G_{x}$ are discrete subgroups of $\Sigma(A)_x$, for all $x\in L$, so (i) implies (ii). 

Conversely, if (ii) holds, then we can define
\[ \Sigma_G(A)|_L:=\Sigma(A)|_L/\widetilde{\cN}^G.\]
This is a Lie groupoid integrating the Lie algebroid $A|_L$ and the groupoid morphism
 \[ \tilde{\iota}: L\rtimes \tilde{G}  \to \Sigma(A)|_L \]
descends to a groupoid morphism
 \[ \iota: L\rtimes G \to \Sigma_G(A)|_L, \]
which still integrates $i:L\rtimes \gg \to A|_L$. Hence, $\Sigma_G(A)|_L$  is a smooth $G$-principal groupoid integrating the $G$-principal algebroid $A|_L$, so (i) holds.
\end{proof}

For the purpose of finding solutions to Cartan's realization problem, the previous result is enough, since we only need $G$-integrations of $A|_L$. However, to understand e.g. the smoothness of the moduli space of solutions it is useful to know if $A$ is $G$-integrable. The theorem has the following easy corollary

\begin{corol}
\label{cor:G:integrability}
Let $A\to X$ be an integrable $G$-principal algebroid. Then $A$ is $G$-integrable iff the $G$-monodromy groups are uniformly discrete, i.e., there is a open neighborhood $U\subset A$ of the zero section such that
\[ \cN^G_x\cap U=\{0_x\}, \quad \forall x\in X. \]
\end{corol}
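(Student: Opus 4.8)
The plan is to reduce the global statement to the smoothness of the canonical quotient groupoid $\Sigma(A)/\widetilde{\cN}^G$ and then to characterize that smoothness by uniform discreteness, in exact analogy with the Crainic--Fernandes integrability criterion \cite{CrainicFernandes}. Since $A$ is assumed integrable, $\Sigma(A)\tto X$ is a smooth Lie groupoid and, by Proposition~\ref{prop:normal}, $\widetilde{\cN}^G\subset\Sigma(A)$ is a normal sub-bundle of groups contained in the center of the isotropy. By Theorem~\ref{thm:canonical:G:integration}, if $A$ is $G$-integrable then its canonical $G$-integration is $\Sigma_G(A)=\Sigma(A)/\widetilde{\cN}^G$; conversely, whenever this quotient is a smooth Lie groupoid the descended morphism $\iota:X\rtimes G\to\Sigma(A)/\widetilde{\cN}^G$ exhibits it as a $G$-integration. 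Hence $A$ is $G$-integrable if and only if $\Sigma(A)/\widetilde{\cN}^G$ is smooth, and this in turn holds exactly when $\widetilde{\cN}^G$ is an embedded, closed sub-bundle of groups which is \'etale over $X$. The whole corollary therefore comes down to translating this \'etale condition, through the fibrewise exponential $\exp:Z(\ker\rho_x)\to Z(\Sigma(A)_x)^0$, into the uniform discreteness of $\cN^G$.

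For the direct implication, suppose $A$ is $G$-integrable. Then the quotient map $\Sigma(A)\to\Sigma_G(A)$ is an \'etale, surjective morphism of Lie groupoids with kernel $\widetilde{\cN}^G$; being the kernel of an \'etale map, $\widetilde{\cN}^G$ meets a neighborhood $V$ of the unit section in $\Sigma(A)$ only along the units. Choosing a neighborhood $U$ of the zero section of $A$ with $\exp(U)\subset V$ and small enough that $\exp$ is fibrewise injective on $U$, any $\xi\in\cN^G_x\cap U$ satisfies $\exp(\xi)\in\widetilde{\cN}^G_x\cap V=\{1_x\}$, whence $\xi=0_x$. This gives $\cN^G_x\cap U=\{0_x\}$ for all $x$, i.e. uniform discreteness.

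For the converse, assume uniform discreteness: $\cN^G_x\cap U=\{0_x\}$ for all $x$, for some neighborhood $U$ of the zero section. Transporting through $\exp$ shows that $\widetilde{\cN}^G$ meets a neighborhood of the unit section in $\Sigma(A)$ only along the units; since $\widetilde{\cN}^G$ is a normal sub-bundle of groups in the center of the isotropy (Proposition~\ref{prop:normal}) with fibres the finitely generated group $\partial^G_x(\pi_1(G))$, right translation by the groupoid multiplication spreads this local \'etaleness to all of $\widetilde{\cN}^G$, so that $\widetilde{\cN}^G\to X$ is an embedded, closed sub-bundle which is \'etale. Consequently the (central) action of $\widetilde{\cN}^G$ on $\Sigma(A)$ is free and proper, the quotient $\Sigma_G(A)=\Sigma(A)/\widetilde{\cN}^G$ is a smooth Lie groupoid integrating $A$, and the morphism $\iota:X\rtimes G\to\Sigma_G(A)$ descending from $\tilde\iota$ turns it into a $G$-principal groupoid. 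Thus $A$ is $G$-integrable.

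The main obstacle, exactly as in the ungrouped case \cite{CrainicFernandes}, is the construction of a neighborhood of the zero section of $A$ on which the fibrewise exponential is a diffeomorphism onto a neighborhood of the unit section in $\Sigma(A)$ \emph{uniformly in} $x$, including across leaves where the rank of $\ker\rho$ jumps; this is what legitimizes passing back and forth between discreteness in $A$ and in $\Sigma(A)$. The same local model used to prove the Crainic--Fernandes criterion supplies such a uniform chart and simultaneously yields the equivalence between uniform discreteness of the bundle of groups $\widetilde{\cN}^G$ and the smoothness (in particular, the Hausdorffness) of the quotient; once this is in place the argument above is routine.
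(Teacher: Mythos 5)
Your proposal is correct and follows essentially the same route as the paper: both directions reduce to the quotient $\Sigma(A)/\widetilde{\cN}^G$, with uniform discreteness of $\cN^G$ translated via the fibrewise exponential into $\widetilde{\cN}^G$ being an embedded discrete bundle of central subgroups, so that the quotient is a smooth $G$-integration. You merely spell out more explicitly the smoothness-of-quotient step that the paper delegates to the proof of Theorem~\ref{thm:canonical:G:integration} and to the leafwise criterion of Theorem~\ref{thm:G-integrability:obstructions}.
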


\begin{proof}
If $A\to X$ is $G$-integrable, then the proof of Theorem \ref{thm:canonical:G:integration} shows that the extended $G$-monodromy groups $\widetilde{\cN}^G$ form an embedded bundle of discrete subgroups of $\Sigma(A)$, so they are uniformly discrete. Conversely, assuming that $\widetilde{\cN}^G$ are uniformly discrete, we can set
\[ \Sigma_G(A):=\Sigma(A)/\widetilde{\cN}^G.\]
This is a $G$-integration, since the theorem shows that its restriction to every leaf is a $G$-integration.
\end{proof}

\begin{remark}
\label{rmk:smooth:vs:orbifold}
As we pointed out before, if $A$ is $G$-integrable and we let $\Sigma_G(A)$ be the canonical $G$-integration, in general, the quotients $\s^{-1}(x)/G$ are orbifolds. The previous discussion shows that this quotient is a smooth manifold if and only if we have $\tilde{\iota}^{-1}(\widetilde{\cN}^G_x)=\pi_1(G)$. Indeed, this is exactly the condition that the induced morphism $\iota: L\rtimes G \to \Sigma_G(A)|_L$ is injective, so the $G$-action on $\s^{-1}(x)$ is free.
\end{remark}

\begin{remark}
It is shown in  \cite{CrainicFernandes} that the canonical integration $\Sigma(A)$ always exists as a topological groupoid, even when $A$ is non-integrable. In this case, the morphism $\tilde{\iota}:X\rtimes\tilde{G}\to \Sigma(A)$ still exists, so one can define the extended and restricted $G$-monodromy groups, even when $A$ is not an integrable Lie algebroid. Since one has always $\cN_x\subset \cN_x^G$, the integrability assumption in Corollary \ref{cor:G:integrability} can be dropped.
\end{remark}

\subsection{Computing the obstructions to $G$-integrability}

For concrete applications it is useful to have some tool to compute the $G$-monodromy groups from the infinitesimal data, i.e., the $G$-principal algebroid. We discuss now such a tool, which is similar to the method given in \cite{CrainicFernandes} to compute the usual monodromy groups. We start by recalling it. 

Given a Lie algebroid $A\to X$, consider the short exact sequence of Lie algebroids corresponding to a leaf $L$ of $A$
\begin{equation}
\label{eq:short:seq}
\xymatrix{
0 \ar[r] & \mathrm{Ker}\rho|_L \ar[r] & A|_L \ar[r]^{\rho}& TL \ar[r] & 0.}
\end{equation}
Given a splitting $\sigma: TL \to A|_L$ of this sequence, its curvature is the $2$-form $\Omega_\sigma\in\Omega^2(L,\ker\rho|_L)$ defined by
\[\Omega_{\sigma}(X,Y) = \sigma([X,Y]_L) - [\sigma(X), \sigma(Y)]. \]
The splitting also defines a connection $\nabla^\sigma$ on the the bundle $\ker\rho|_L\to L$ by
\[ \nabla^\sigma_X \xi=[\sigma(X),\xi],\]
and its curvature is related to the curvature 2-form of the splitting by
\[ R_{\nabla^\sigma}(X,Y)\xi=[\Omega(X,Y),\xi]. \]
Hence, if the curvature of the splitting takes values in the center $Z(\ker\rho|_x)$, this connection is flat, and we can integrate forms with values in $\ker\rho|_x$.

\begin{prop}[Lemma 3.6 in \cite{CrainicFernandes}]
\label{prop:monodromy:comput}
Let $A\to X$ be a Lie algebroid and fix a leaf $L\subset X$. If there exists a splitting $\sigma: TL \to A|_L$ whose curvature $2$-form $\Omega_\sigma$ takes values in the center $Z(\ker\rho|_L)$, then $\exp(\cN_x)=\widetilde{\cN}_x$ and
\[ \cN_x=\left\{\int_\gamma \Omega_\sigma:[\gamma]\in\pi_2(L,x)\right\}\subset Z(\ker\rho|_x), \]
\end{prop}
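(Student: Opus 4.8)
The plan is to reduce everything to the single identity
\[ \partial([\gamma]) = \exp\Big(\int_\gamma \Omega_\sigma\Big), \qquad [\gamma]\in\pi_2(L,x), \]
where $\partial:\pi_2(L,x)\to \cG(\ker\rho_x)$ is the monodromy morphism whose image is $\widetilde{\cN}_x$. Granting this, the two assertions follow formally: the right-hand side exhibits $\widetilde{\cN}_x$ as the exponential of the additive subgroup $\{\int_\gamma\Omega_\sigma\}\subset Z(\ker\rho_x)$, and since this subgroup already lies in the Lie-algebra center, its exponential lies in $Z(\cG(\ker\rho_x))^0$. Comparing with the defining relation $\exp(\cN_x)=\widetilde{\cN}_x\cap Z(\cG(\ker\rho_x))^0$ then yields both $\exp(\cN_x)=\widetilde{\cN}_x$ and $\cN_x=\{\int_\gamma\Omega_\sigma\}$, the latter because the construction below realizes $\int_\gamma\Omega_\sigma$ as the canonical lift of $\partial([\gamma])$ to $\ker\rho_x$ rather than merely its exponential.

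First I would make sense of the right-hand side. By hypothesis $\Omega_\sigma$ takes values in $Z(\ker\rho|_L)$, so by the formula $R_{\nabla^\sigma}(X,Y)\xi=[\Omega_\sigma(X,Y),\xi]$ recalled above the connection $\nabla^\sigma$ is flat on $K:=\ker\rho|_L$. Parallel transport for $\nabla^\sigma$ then identifies the fibers of $K$ over the (universal cover of the) leaf, turning $\Omega_\sigma$ into an ordinary $Z(\ker\rho_x)$-valued $2$-form; the second Bianchi identity $\nabla^\sigma\Omega_\sigma=0$ says it is closed. Hence $\int_\gamma\Omega_\sigma$ depends only on the class $[\gamma]\in\pi_2(L,x)$ and defines a homomorphism into $Z(\ker\rho_x)$.

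The core of the argument is the construction of $\partial$ and its computation. Represent $[\gamma]$ by a map $g:I^2\to L$ collapsing $\partial I^2$ to $x$, and use the splitting to lift the $t$-direction, setting $a_\epsilon(t)=\sigma(\partial_t g(t,\epsilon))$; since $\rho\circ\sigma=\mathrm{id}$ this is an $A$-path over the loop $g(\cdot,\epsilon)$, and for $\epsilon\in\{0,1\}$ the loop is constant so $a_\epsilon$ is trivial. Thus $\epsilon\mapsto a_\epsilon$ is a variation of $A$-paths between trivial ones. Solving the $A$-homotopy equation for this family with a correction term $b(\epsilon,t)$ normalized by $b(\epsilon,0)=0$, a computation with the structure equations and the decomposition $A|_L=\sigma(TL)\oplus K$ expresses $\tfrac{d}{d\epsilon}$ of the $A$-homotopy class, right-translated to $\ker\rho_x$, as $\int_0^1 \Omega_\sigma(\partial_\epsilon g,\partial_t g)\,dt$ transported to $x$ by $\nabla^\sigma$. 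Because these infinitesimal generators take central, hence mutually commuting, values, the time-ordered exponential solving this ODE collapses to the ordinary exponential, giving $\partial([\gamma])=\exp\big(\int_{I^2}\Omega_\sigma\big)=\exp\big(\int_\gamma\Omega_\sigma\big)$.

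The main obstacle is exactly this last computation: deriving the differential equation for the $A$-homotopy class of the $\sigma$-lifts and recognizing its generator as the fiber-integrated curvature. This is where one must unwind the definition of $A$-homotopy from \cite{CrainicFernandes}, and the centrality hypothesis is used twice, first so that $\nabla^\sigma$ is flat and the transport and integral are well-defined, and second to linearize the holonomy so that the nonabelian time-ordering disappears. The remaining steps (closedness of $\Omega_\sigma$, independence of the chosen splitting up to the stated normalization, and the passage between $\widetilde{\cN}_x$ and $\cN_x$ through $\exp$) are routine.
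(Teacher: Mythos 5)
Your proof is correct and follows essentially the same route as the source: the paper itself only cites this result from \cite{CrainicFernandes} (Lemma 3.6) and reproduces the argument's pattern in its proof of Proposition \ref{prop:G:monodromy:comput}, namely lifting the sphere via the splitting to a family of $A$-paths and using the $A$-homotopy equation, with centrality making $\nabla^\sigma$ flat and the holonomy abelian. The only cosmetic difference is that the reference writes down the explicit homotopy ($\eta=0$, $\phi=-\int_0^\eps\Omega_\sigma(\cdot,\cdot)$) connecting $a_0$ to the central $A$-loop $t\mapsto t\int_\gamma\Omega_\sigma$, whereas you derive and integrate the equivalent ODE for the homotopy class; both yield $\partial([\gamma])=\exp\bigl(\int_\gamma\Omega_\sigma\bigr)$ and the rest follows as you say.
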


The assumption in the proposition is verified in many cases and it is generic since it holds for regular leaves of $A$.

Now, let $A$ be a $G$-principal algebroid, fix $x\in X$ and consider again the short exact sequence of Lie algebroids \eqref{eq:short:seq} associated with the leaf $L$ through $x$.

\begin{definition}
\label{def:G:splitting}
A splitting $\sigma: TL \to A|_L$ of \eqref{eq:short:seq} is called a {\bf $G$-splitting} if:
\begin{enumerate}[(a)]
\item its curvature $2$-form is center-valued
\[\Omega_{\sigma}(X,Y) \in Z(\ker\rho|_L). \]
\item along the $G$-orbit through $x$, the splitting restricts to a splitting of the action algebroid
\[\sigma \circ \psi (y,\al) = i(y,\al),\quad \al\in\gg,\, y \in x \cdot G.\]
\end{enumerate}
\end{definition}

Again, existence of a $G$-splitting is a generic condition since the regular leaves of a  $G$-principal algebroid always admit $G$-splittings.  


\begin{prop}
\label{prop:G:monodromy:comput}
Let $A\to X$ be a $G$-principal algebroid and fix $x\in X$. If the action is locally free at $x$ and the leaf $L\subset X$ admits a $G$-splitting $\sigma: TL \to A|_L$ then
\[ \cN^G_x=\left\{\int_\gamma \Omega_\sigma~|~ \gamma:D^2\to L\text{ with } \gamma|_{\partial D^2}\subset  x\cdot G\right\}\subset Z(\Ker\rho|_x). \]
\end{prop}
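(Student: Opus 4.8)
The plan is to adapt the computation of the ordinary monodromy group (Proposition \ref{prop:monodromy:comput}) to the relative setting in which the boundary of the integration domain is allowed to sweep the orbit $\mathcal{O}:=x\cdot G$, the extra flexibility accounting precisely for the $\pi_1(G)$-contribution. Write $\ker\rho|_L$ for the isotropy bundle along the leaf $L$ through $x$. Since $\sigma$ is a $G$-splitting its curvature $\Omega_\sigma$ is central, so the connection $\nabla^\sigma_X\xi=[\sigma(X),\xi]$ on $\ker\rho|_L$ is flat and $\Omega_\sigma$ is $\nabla^\sigma$-closed by the Bianchi identity. Using $\nabla^\sigma$-parallel transport to trivialize $\ker\rho|_L$ into $Z(\ker\rho|_x)$, the integral $\int_\gamma\Omega_\sigma$ makes sense for any $\gamma\colon D^2\to L$ and lands in $Z(\ker\rho|_x)$. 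The first step is to record that condition (b) of Definition \ref{def:G:splitting} forces the pullback of $\Omega_\sigma$ to $\mathcal{O}$ to vanish: there $\sigma\circ\psi=i$ and $i$ is an algebroid morphism, so $[\sigma\psi(\alpha),\sigma\psi(\beta)]=i([\alpha,\beta]_\gg)=\sigma[\psi(\alpha),\psi(\beta)]$, whence $\Omega_\sigma(\psi(\alpha),\psi(\beta))=0$ along $\mathcal{O}$. Together with $\nabla^\sigma$-closedness this shows $\int_\gamma\Omega_\sigma$ depends only on the class $[\gamma]\in\pi_2(L,\mathcal{O},x)$ and defines a homomorphism $I\colon\pi_2(L,\mathcal{O},x)\to Z(\ker\rho|_x)$ whose image is exactly the right-hand side of the statement.

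The heart of the argument is to identify $\exp\circ I$ with the $G$-monodromy data inside $\Sigma(A)_x$. Given $\gamma$ with boundary loop $c=\gamma|_{\partial D^2}$ in $\mathcal{O}$, local freeness at $x$ makes $G\to\mathcal{O}$, $g\mapsto x\cdot g$, a covering (the isotropy $G_x$ is discrete), so $c$ lifts to a path $\lambda$ in $G$ from $e$ to some $g_0\in G_x$. By condition (b) the $\sigma$-horizontal $A$-lift of $c$ coincides with the action-morphism $A$-path $t\mapsto i(x\cdot\lambda(t),\delta\lambda(t))$ (where $\delta\lambda$ is the logarithmic derivative), whose $A$-homotopy class is $\tilde{\iota}(x,\tilde{g}_0)\in\Sigma(A)_x$ with $\tilde{g}_0\in\tilde{G}$ the endpoint of the lift of $\lambda$. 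Sweeping $\gamma$ by the family of loops obtained from concentric circles and lifting $\sigma$-horizontally, exactly as in the proof of Proposition \ref{prop:monodromy:comput}, the defect of the resulting $A$-homotopy is measured by the first structure equation and equals $\int_\gamma\Omega_\sigma$; this yields the key identity $\tilde{\iota}(x,\tilde{g}_0)=\exp\bigl(I([\gamma])\bigr)$ in $\Sigma(A)_x$.

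From this identity the matching is forced. The right-hand side is the exponential of a central Lie algebra element, hence lies in the identity component $Z(\Sigma(A)_x)^0$; since local freeness implies that $\tilde{\iota}(x,\tilde{g}_0)$ projects to the nontrivial component $\iota(x,g_0)$ of the isotropy whenever $g_0\neq e$, we must have $g_0=e$. Thus every contributing $\lambda$ is a genuine loop, $[\lambda]\in\pi_1(G)$, and $\tilde{g}_0=\partial^G_x([\lambda])\in\widetilde{\cN}^G_x$; consequently $\exp(I([\gamma]))\in\widetilde{\cN}^G_x\cap Z(\Sigma(A)_x)^0=\exp(\cN^G_x)$. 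Conversely, given $g\in\pi_1(G)$ with $\tilde{\iota}(x,g)\in Z(\Sigma(A)_x)^0$, the corresponding orbit loop is null-homotopic in $L$ — here the exactness of the homotopy sequence of the pair $(L,\mathcal{O})$ enters, the class of $c$ dying in $\pi_1(L)$ — so it bounds a disk $\gamma$ realizing $\tilde{\iota}(x,g)=\exp(I([\gamma]))$. Since $\exp$ is injective near $0$ on the center and the subclasses with constant boundary already account for $\cN_x\subseteq\cN^G_x$, we conclude that the image of $I$ is precisely $\cN^G_x$.

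The main obstacle is the relative $A$-homotopy computation of the second paragraph: one must carry out the gauge argument of Proposition \ref{prop:monodromy:comput} with a moving boundary and verify that condition (b) makes the boundary term integrate to exactly the action-morphism path (so its class is $\tilde{\iota}(x,\tilde{g}_0)$) while the interior term is exactly $\int_\gamma\Omega_\sigma$. The secondary delicate point is the bookkeeping of connected components in the third paragraph — using local freeness to force $g_0=e$ and the homotopy sequence of $(L,\mathcal{O})$ to produce the filling disks — which is what pins the image of $I$ down to the $\pi_1(G)$-part, rather than to a larger group coming from arbitrary orbit loops.
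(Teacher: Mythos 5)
Your first two paragraphs follow essentially the same route as the paper: the same explicit $A$-homotopy over the disk (with vanishing $\d\epsilon$-component and the $\d t$-component integrating the curvature) produces the key identity $\tilde{\iota}(x,\tilde{g}_0)=\exp\bigl(\int_\gamma\Omega_\sigma\bigr)$, and your converse direction uses, just as the paper does, that the elements of $\Sigma(A)_x^0$ are exactly the classes of $A$-loops whose base loop is contractible in $L$. The preliminary observations that $\Omega_\sigma$ pulls back to zero on the orbit and that the integral depends only on the relative homotopy class are extra but harmless.

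The genuine gap is in your third paragraph, in the step that forces $g_0=e$. You claim that $\tilde{\iota}(x,\tilde{g}_0)$ lies outside the identity component whenever $g_0\neq e$, but the connected component of $\Sigma(A)_x$ containing $\tilde{\iota}(x,\tilde{g}_0)$ is determined by the class of its base loop $c$ in $\pi_1(L,x)$, and $c$ bounds the disk $\gamma$ by hypothesis; hence $\tilde{\iota}(x,\tilde{g}_0)\in\Sigma(A)_x^0$ for \emph{every} contributing disk, whatever $g_0\in G_x$ is. (Local freeness only makes $G_x$ discrete; it does not push $\iota(x,g_0)$ out of the identity component.) There is therefore no contradiction and no way to conclude that the boundary lifts to a closed loop in $G$. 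Consequently, for a disk whose boundary lifts to a path from $e$ to a nontrivial $g_0\in G_x$, your argument does not show that $\exp(I([\gamma]))$ belongs to $\widetilde{\cN}^G_{x}=\tilde{\iota}(x,\pi_1(G))$, so the inclusion of the image of $I$ into $\cN^G_x$ is not established for such disks. The paper's proof works only with boundary loops of the form $t\mapsto x\tilde{g}(t)$ with $\tilde{g}$ a loop in $G$ based at $e$; if you restrict your disks to have such boundaries, the problematic step becomes unnecessary and the rest of your argument coincides with the paper's.
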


\begin{proof}
The proof follows the same pattern as the proof of Proposition \ref{prop:monodromy:comput} given in \cite{CrainicFernandes}. The splitting allows to identify the vector bundle $A|_L$ with $TL\oplus \mathrm{Ker} \rho|_L$, so that the anchor $\rho$ becomes projection in $TL$ and the Lie bracket is given by
\[[(X,\xi),(Y, \zeta)] = ([X,Y], [\xi,\zeta] +\nabla^\sigma_X\zeta - \nabla^\sigma_Y\xi-\Omega_\sigma(X,Y)).\]
Moreover, if we choose a connection $\nabla^L$ on $TL$ we obtain a connection $\nabla = (\nabla^L, \nabla^\sigma)$ on $A|_L$ whose torsion is
\[T_\nabla((X,\xi), (Y,\zeta)) = (T_{\nabla^L}(X,Y), \Omega_{\sigma}((X,Y)) - [\xi, \zeta]).\]
Then any $A$-homotopy in $L$ takes the form
\[a(\epsilon, t)\d t + b(\epsilon, t)\d\epsilon \text{ with } a = (\frac{\d\gamma}{\d t}, \phi), b = (\frac{\d\gamma}{\d\epsilon}, \eta),\]
where $\phi$ and $\eta$ are variations of paths in $\mathrm{Ker}\rho|_L$ satisfying
\[\partial_t\eta - \partial_\epsilon\phi = \Omega_\sigma\left(\frac{\d\gamma}{\d t},\frac{\d\gamma}{\d\epsilon}\right) - [\phi, \eta]. \]

Now denote by $\tilde{\iota}:X\rtimes\tilde{G}\to \Sigma(A)$ be the integration of $i:X\rtimes\gg\to A$. Let $\tilde{g}:I\to G$ be a loop defining an element $[\tilde{g}]\in\pi_1(G)$. We claim that if the loop $\gamma_0(t)=x\tilde{g}(t)$ bounds a disk $\gamma:D^2\to L$ then
\[ \exp\left(\int_\gamma \Omega_\sigma\right)=\tilde{\iota}(x,[\tilde{g}]). \]

To prove this note that, since the action is locally free at $x$, condition (b) in Definition \ref{def:G:splitting}, shows that the $A$-loop $a_0(t):=(\frac{\d }{\d t}\gamma_0,0)$ represents the element 
\[ [a_0]=\tilde{\iota}(x,[\tilde{g}])\in\Sigma(A)_x.\]

On the the other hand, we can define an $A$-homotopy $a \d t + b\d\epsilon$ over $\gamma$ by setting $\eta=0$ and defining
\[ \phi=-\int_0^\eps \Omega_\sigma\left(\frac{\d\gamma}{\d t},\frac{\d\gamma}{\d\epsilon}\right). \]
This gives an $A$-homotopy starting at $a_0$ and ending at the $A$-loop
\[ a_1(t)=\left(0,t \int_\gamma \Omega_\sigma\right). \]
so the claim follows.

To finish the proof of the proposition we observe that if $[\tilde{g}]\in\pi_1(G)$ is such that $\tilde{\iota}(x,[\tilde{g}])\in Z(\Sigma(A)_x)^0$ then the loop $\gamma_0(t)=x\tilde{g}(t)$ bounds some disk $\gamma:D^2\to L$. This follows because $\tilde{\iota}(x,[\tilde{g}])$ is represented by the $A$-loop $a_0(t):=(\frac{\d }{\d t}\gamma_0,0)$ with base path $\gamma_0$, and the elements in $\Sigma(A)_x^0$ are precisely the $A$-loops whose base path is a loop based at $x$ contractible in $L$.
\end{proof}

\begin{remark}
Under the conditions of the proposition, if one considers two disks $\gamma_i:D^2\to L$, $i=1,2$, with the same boundary $\partial \gamma_1=\partial \gamma_2\subset x\cdot G$, it follows from Proposition \ref{prop:monodromy:comput} that
\[ \int_{\gamma_2} \Omega_\sigma-\int_{\gamma_1} \Omega_\sigma\in \cN_x. \]
\end{remark}

\begin{remark}
There are more general situations, where the action is not locally free at $x$ and one can still compute the $G$-monodromy as in the proposition. For example, if $G$ is compact then $\gg=[\gg,\gg]\oplus Z(\gg)$ and $\pi_1(G)/\pi_1(Z(G)^0)$ is finite. Hence, the $G$-monodromy at $x$ will be discrete if and only if the subgroup
\[ \tilde{\iota}(x,\pi_1(Z(G)^0))\subset \Sigma(A)_x \] 
is discrete. Applying the proposition to the restriction of the $G$-action to the $Z(G)^0$-action, we conclude that if the restriction $\psi|_{\{x\}\times Z(\gg)}$ is injective then the $G$-monodromy is discrete if and only if the subgroup
\[ \left\{\int_\gamma \Omega_\sigma~|~ \gamma:D^2\to L\text{ with } \gamma|_{\partial D^2}\subset  Z(G)^0\cdot x\right\}\subset Z(\Ker\rho|_x), \]
is a discrete subgroup.
\end{remark}

\section{Solutions to Cartan's Realization Problem}
\label{sec:global:solutions}

We will now give a full account of the problem of existence of solutions of a Cartan's realization problem. We first discuss the existence of local solutions and then complete solutions, in a sense that will be made precise.

\subsection{Local solutions}
\label{sec:local:solutions}

We have seen that $G$-integrations of a $G$-structure algebroid give rise to solutions of Cartan's realization problem. However, not every $G$-structure algebroid is $G$-integrable so, a priori, it is not even clear if local solutions of a Cartan's realization problem exist. In this section, we will prove the following result.

\begin{theorem}\label{thm:local:G:int}
Let $A \to X$ be a transitive $G$-principal algebroid and let $x \in X$. Then there exists a $G$-saturated open neighborhood $U \subset X$ of $x$ such that $A|_U$ is $G$-integrable.
\end{theorem}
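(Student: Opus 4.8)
The plan is to use compactness of $G$ to replace $X$ by a tubular neighborhood of the orbit $\mathcal{O}=x\cdot G$ and then read off $G$-integrability from the monodromy criterion of Theorem~\ref{thm:G-integrability:obstructions}. Since $G$ is compact, the slice theorem gives a $G$-invariant tubular neighborhood $U\cong G\times_H V$ of $\mathcal{O}$, where $H=G_x$ is the (compact) stabilizer and $V$ is a slice, which I take to be a $G_x$-invariant ball centered at $x$. Then $U$ is $G$-saturated and $G$-equivariantly deformation retracts onto $\mathcal{O}$, so $\pi_1(\mathcal{O})\cong\pi_1(U)$ and $\pi_2(\mathcal{O})\cong\pi_2(U)$. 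As $\rho$ remains surjective on $U$, the restriction $A|_U$ is transitive, hence $U$ is a single leaf and Theorem~\ref{thm:G-integrability:obstructions} applies with $L=U$. It therefore suffices to establish (1) that $A|_U$ is integrable as a Lie algebroid, and (2) that the extended $G$-monodromy group $\widetilde{\cN}^G_x\subset\Sigma(A|_U)_x$ is discrete.

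For (1) I would pick a $G$-splitting $\sigma\colon TU\to A|_U$ as in Definition~\ref{def:G:splitting}, which exists since $U$ is a regular leaf and $G$ is compact (average an arbitrary splitting and correct it along $\mathcal{O}$). The key point is that condition (b) of Definition~\ref{def:G:splitting} forces the center-valued curvature $\Omega_\sigma$ to vanish on vectors tangent to $\mathcal{O}$: for $\al,\be\in\gg$ one computes $\Omega_\sigma(\psi(\al),\psi(\be))=i([\al,\be]_\gg)-[i(\al),i(\be)]=0$, because $i$ is a Lie algebroid morphism. Since $\pi_2(U)\cong\pi_2(\mathcal{O})$, every class in $\pi_2(U,x)$ is represented by a sphere into $\mathcal{O}$, where the integrand of the monodromy vanishes pointwise; hence $\cN_x=\{0\}$, and by the Crainic--Fernandes criterion recalled before Theorem~\ref{thm:G-integrability:obstructions}, $A|_U$ is integrable.

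For (2), compactness of $G$ lets me reduce to the central torus $T=Z(G)^0$: since $\pi_1(G)/\pi_1(T)$ is finite, $\widetilde{\cN}^G_x$ is discrete if and only if its finite-index subgroup $\tilde\iota(x,\pi_1(T))$ is. Because $\cN_x=\{0\}$, the identity component $\Sigma(A|_U)_x^0$ equals the simply connected isotropy group $\cG(\ker\rho_x)$, and by Proposition~\ref{prop:normal} the image $\tilde\iota(x,\pi_1(T))$ is central. Its intersection with $Z(\cG(\ker\rho_x))^0$ — a vector group, since a simply connected Lie group has no central torus — is controlled by two mechanisms: the directions of $\pi_1(T)$ that fix $x$ contribute the image of a lattice under the injective linear map $i(x,-)$, hence a discrete set; the orbit-moving directions either leave $Z(\cG(\ker\rho_x))^0$ or bound disks which, after being pushed into $\mathcal{O}$ where $\Omega_\sigma$ vanishes, contribute $\int_\gamma\Omega_\sigma=0$ (as in the proof of Proposition~\ref{prop:G:monodromy:comput}). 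Thus $\widetilde{\cN}^G_x\cap Z(\cG(\ker\rho_x))^0$ is discrete; as this vector group is open in the center and $\cG(\ker\rho_x)$ is open in $\Sigma(A|_U)_x$, it follows that $\widetilde{\cN}^G_x$ is discrete, and Theorem~\ref{thm:G-integrability:obstructions} yields that $A|_U$ is $G$-integrable.

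The hard part is precisely the final discreteness step when the $G$-action fails to be locally free at $x$, i.e. when $H$ has positive dimension: Proposition~\ref{prop:G:monodromy:comput} no longer applies verbatim, and one must control simultaneously the orbit-transverse monodromy (through $\Omega_\sigma$) and the purely isotropic directions coming from the subtorus $T\cap H$ fixing $x$ (through the linear-algebra argument above). I expect the cleanest route is to reduce $G$-integrability of $A|_U$ over $G\times_H V$ to $H$-integrability of $A|_V$ over the slice, so that one only ever argues at a fixed point of a compact group acting on a contractible ball; alternatively, one tracks the exact sequence relating $\pi_1(T\cap H)$, $\pi_1(T)$ and $\pi_1(T\cdot x)$ together with the orbifold fundamental group of $\s^{-1}(x)/G$. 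Once this interaction between the two sources of monodromy is pinned down, the remaining verifications are routine.
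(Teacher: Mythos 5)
Your reduction to a $G$-invariant tubular neighborhood $U\cong G\times_{G_x}V$ retracting onto $\mathcal{O}=x\cdot G$ is the same opening move as the paper's, but both of your subsequent steps break down precisely in the case the theorem is really about, namely when the $G$-action is not locally free at $x$. In step (1) you invoke a $G$-splitting as in Definition~\ref{def:G:splitting}; but condition (b) there reads $\sigma\circ\psi(y,\al)=i(y,\al)$, and when $\dim G_x>0$ this forces $i(y,\al)=\sigma(0)=0$ for $0\neq\al\in\gg_y$, contradicting injectivity of $i$. So no $G$-splitting exists, $\Omega_\sigma$ need not vanish on $T\mathcal{O}$, and your conclusion $\cN_x=\{0\}$ is false in general. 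Concretely, take $G=\SU(2)$ acting on $X=\mathcal{O}=\Ss^2$ and $A=X\rtimes\su(2)$ with $i=\mathrm{id}$: this is a transitive $G$-principal algebroid, yet $\Sigma(A)=\Ss^2\rtimes\SU(2)$ has $\Sigma(A)_x^0\cong\UU(1)$ while $\cG(\ker\rho_x)\cong\Rr$, so $\cN_x=2\pi\Zz\neq\{0\}$. The correct statement, which the paper proves, is that the monodromy of $A|_U$ is isomorphic to that of $A|_{\mathcal{O}}$, which in turn is the image under $i$ of the (generally nonzero) monodromy of the action algebroid $\mathcal{O}\rtimes\gg$; the paper obtains integrability of $A|_{\mathcal{O}}$ with no splitting hypothesis at all, from Proposition~\ref{prop:integrable:wide:subalg} applied to the wide transitive subalgebroid $i:\mathcal{O}\rtimes\gg\to A|_{\mathcal{O}}$, and only then transfers it to $U$ through the retraction.

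Step (2) inherits the same problem — since $\cN_x\neq 0$ in general, $\Sigma(A|_U)_x^0$ is \emph{not} the simply connected group $\cG(\ker\rho_x)$, so your reduction to a vector group starts from a false premise — and, more importantly, you explicitly leave the actual discreteness argument as an expectation. That argument is where the paper does its real work (Proposition~\ref{prop:G-orbit:int}): the homotopy sequence of $G_x\to G\to\mathcal{O}$ identifies $\widetilde{\cN}^G_x\cap Z(\Sigma(A)_x)^0$ with $\tilde{\iota}(x,\pi_1(G_x))$, and closedness of the latter is proved by passing to the subalgebra $\aa=\{\al\in\gg_x:i(x,\al)\in Z(\ker\rho_x)\}$ and embedding $\cG(\aa)/\exp(\cN_x(\mathcal{O}\rtimes\gg))$ into $Z(\Sigma(A)_x)^0$. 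Your lattice heuristic does not address the two genuine difficulties there: $i(x,\cdot)$ need not carry $Z(\gg_x)$ into $Z(\ker\rho_x)$, and one must first quotient by the nonzero monodromy of the action algebroid before discreteness can be read off. In the locally free case everything you write is essentially vacuous ($\pi_2(\mathcal{O})=0$ already kills both monodromy groups), so the case you defer is the entire content of the theorem, and the proposal is not a proof.
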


As a corollary, we obtain that local solutions always exists, i.e., we have the following converse to Theorem \ref{thm:Cartan:G:alg}.

\begin{corol}\label{cor:local:existence:2}
Let $A \to X$ be a $G$-structure algebroid with connection and $x \in X$. Then there exists a $G$-realization $(P, (\theta, \omega), h)$ of $A$ such that $x$ belongs to the image of $h$.
\end{corol}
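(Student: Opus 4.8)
The plan is to reduce to the transitive case, invoke Theorem~\ref{thm:local:G:int}, and then produce the desired $G$-realization as a source fiber of a local $G$-integration.

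First I would pass to the leaf. Let $L\subset X$ be the leaf of $A$ through $x$. Since the action morphism $i:X\rtimes\gg\to A$ satisfies $\rho\circ i=\psi$, the infinitesimal $G$-action lies in the image of the anchor, so the vector fields generating the $G$-action are tangent to the leaves; as $G$ is connected (our standing assumption in this section), every $G$-orbit is contained in a single leaf, and in particular $L$ is $G$-saturated. As noted after Lemma~\ref{lem:classifying:map}, the restriction $A|_L$ is then again a $G$-structure algebroid with connection---one simply restricts the tautological form $\theta$, the connection form $\omega$, and the action morphism $i$---and by construction it is transitive.

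Next I would apply the local integrability result. Since the tautological form and the connection play no role in $G$-integrability, Theorem~\ref{thm:local:G:int}, applied to the transitive $G$-principal algebroid underlying $A|_L$ at the point $x$, yields a $G$-saturated open neighborhood $U\subset L$ of $x$ such that $A|_U$ is $G$-integrable. Restricting $\theta$ and $\omega$ once more, $A|_U$ is a $G$-structure algebroid with connection; by the correspondence~\eqref{eq:correspondence:connections} together with Proposition~\ref{prop:G:structures:integrable}, any $G$-integration $\G\tto U$ carries a unique connection $\Omega$ and tautological form $\Theta$ restricting to $\omega$ and $\theta$ along the units, so that $\G$ is in fact a $G$-structure groupoid with connection.

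Finally I would extract the realization. As in the example of Section~\ref{sec:G-realizations}, the source fiber $(\s^{-1}(x),\wmc|_{\s^{-1}(x)},\t)$ is a $G$-realization of $A|_U$. Since the tautological and connection forms of $A|_U$ are restrictions of those of $A$, and $U$ is open in $X$, this is simultaneously a $G$-realization of $A$. Setting $P=\s^{-1}(x)$, $(\theta,\omega)=\wmc|_{\s^{-1}(x)}$ and $h=\t$, the identity arrow $1_x\in\s^{-1}(x)$ satisfies $h(1_x)=\t(1_x)=x$, whence $x\in\Im h$, as required. The routine verifications---that $L$ is $G$-saturated and that the restriction of a $G$-structure algebroid to a $G$-saturated leaf is again one---are immediate from $\rho\circ i=\psi$, so the genuine content lies entirely in Theorem~\ref{thm:local:G:int}, which furnishes the local $G$-integration even though $A$ itself may fail to be $G$-integrable; the only other point requiring (mild) care is the observation that a $G$-realization of the restriction $A|_U$ is automatically a $G$-realization of $A$.
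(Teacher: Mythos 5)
Your proof is correct and follows essentially the same route as the paper: apply Theorem~\ref{thm:local:G:int} to obtain a $G$-saturated neighborhood $U\subset L$ of $x$ on which $A$ is $G$-integrable, then take the source fiber $(\s^{-1}(x),\wmc|_{\s^{-1}(x)},\t)$ of a $G$-integration as the desired $G$-realization, with $x=\t(1_x)\in\Im h$. The extra details you supply (the $G$-saturation of the leaf via $\rho\circ i=\psi$, and the transfer of $\theta$ and $\omega$ to the integration) are exactly what the paper delegates to Corollary~\ref{cor:local:existence}.
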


\begin{proof}
The theorem shows that we can choose a $G$-saturated open neighborhood $x\in U \subset L$ in the leaf of $A$ through $x$, for which $A|_U$ is $G$-integrable. If $\G \tto U$ is a $G$-integration of $A|_U$ then, by Corollary \ref{cor:local:existence}, $(\s^{-1}(x), \wmc|_{\s^{-1}(x)}, \t)$ is a $G$-realization of $A|_U$, and therefore also of $A$, with $x =\t(1_x)$.
\end{proof}

The rest of this section is dedicated to the proof of Theorem \ref{thm:local:G:int}. For that reason, we will assume that $A\to X$ is a \emph{transitive} Lie algebroid.

Recall that a Lie subalgebroid $B\to Y$ of a Lie algebroid $A\to X$ is called \emph{wide} if $Y=X$. The following result should be well-known, but we could not find a proof in the literature, so we give one such proof here.

\begin{prop}
\label{prop:integrable:wide:subalg} 
Let $A \to X$ be a transitive Lie algebroid and let $B \subset A$ be a wide transitive Lie subalgebroid. Then $A$ is integrable if and only if $B$ is integrable.
\end{prop}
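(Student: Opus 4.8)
The plan is to reduce the statement to the discreteness of monodromy groups, using the equivalences recorded earlier for a single leaf, and then to compare the monodromy of $A$ with that of $B$ through the inclusion. Since $A$ and $B$ are both transitive over the same (connected) base, each has a single leaf, namely $X$ itself, and $\ker\rho_B|_x=:\hh_x$ is a Lie subalgebra of $\ker\rho_A|_x=:\gg_x$, with the same $TX$ appearing as the image of both anchors. The inclusion $j\colon B\hookrightarrow A$ is a morphism of transitive Lie algebroids over $\mathrm{id}_X$, so it induces a morphism of the (a priori topological) monodromy groupoids $\Sigma(B)\to\Sigma(A)$ over the identity, and on the simply connected isotropy groups the homomorphism $f\colon\cG(\hh_x)\to\cG(\gg_x)$ integrating $\hh_x\hookrightarrow\gg_x$. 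As the two monodromy maps $\partial_B\colon\pi_2(X,x)\to\cG(\hh_x)$ and $\partial_A\colon\pi_2(X,x)\to\cG(\gg_x)$ share the domain $\pi_2(X,x)$ and are natural, the first step is to establish the key identity $\partial_A=f\circ\partial_B$, whence $\widetilde{\cN}^A_x=f\bigl(\widetilde{\cN}^B_x\bigr)$.

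Granting this identity, the implication ``$A$ integrable $\Rightarrow$ $B$ integrable'' is the easy half. Since $\hh_x\hookrightarrow\gg_x$ is injective, $f$ has discrete kernel and is a covering onto its image $H:=f(\cG(\hh_x))$, a connected immersed Lie subgroup. If $A$ is integrable then $\widetilde{\cN}^A_x$ is a discrete subgroup of $\cG(\gg_x)$; as it lies in the embedded closed centre $Z(\cG(\gg_x))$, the intersection $\widetilde{\cN}^A_x\cap H$ is discrete in $H$, and the preimage of a discrete subgroup under the covering $f$ is discrete. Hence $\widetilde{\cN}^B_x\subset f^{-1}(\widetilde{\cN}^A_x)$ is discrete, and $B$ is integrable.

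The converse, ``$B$ integrable $\Rightarrow$ $A$ integrable,'' is where the real work lies, and is the step I expect to be the main obstacle. Here I would compute the restricted monodromies from a common splitting. A choice of $\sigma\colon TX\to B\subset A$ simultaneously splits the Atiyah sequence \eqref{eq:short:seq} of $B$ and of $A$, with one and the same curvature $2$-form $\Omega_\sigma\in\Omega^2(X;\hh)$, viewed with values in $\hh$ or in $\gg$. The associated isotropy connection preserves $\hh\subset\gg$, and since the induced connection on the centre bundles is flat one may integrate the centre-projected curvature. As $\iota$ carries $[\hh,\hh]$ into $[\gg,\gg]$, the composite $\pr^{\gg}_Z\circ\iota$ factors through $\pr^{\hh}_Z$ as a parallel linear map $\bar P\colon Z(\hh_x)\to Z(\gg_x)$, yielding $\cN^A_x=\bar P\bigl(\cN^B_x\bigr)$.

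The crux then becomes a statement about lattices: one must show that the image under $\bar P$ of the discrete group $\cN^B_x$ is again discrete. This is the delicate point, because the image of a lattice under a linear map need not be discrete in general; the argument has to exploit that $\cN^A_x$ and $\cN^B_x$ are periods of the \emph{same} form $\Omega_\sigma$, together with the a priori fact that $\widetilde{\cN}^A_x$ is forced to lie in $Z(\cG(\gg_x))$, which constrains the periods of the non-central part to be commensurable with the integral lattice of the isotropy. I would close the proof by showing, using transitivity (which secures a splitting with centre-valued curvature along the relevant $2$-cycles) together with this commensurability, that $\bar P(\cN^B_x)$ is uniformly discrete, so that $A$ is integrable; isolating and proving this discreteness transfer is the heart of the matter.
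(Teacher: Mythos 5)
Your first step is the same as the paper's: a splitting of $\rho^B$ gives one of $\rho^A$, the monodromy maps satisfy $\partial^A_x=i_*\circ\partial^B_x$, and hence $\widetilde{\cN}^A_x=i_*(\widetilde{\cN}^B_x)$; your covering-space argument for ``$A$ integrable $\Rightarrow$ $B$ integrable'' is also sound. The genuine gap is in the converse, and it is twofold. First, the map $\bar P$ you introduce is not available in general: factoring $\pr^{\gg}_Z\circ i$ through $\pr^{\hh}_Z$ tacitly assumes a splitting of each isotropy algebra as derived algebra plus centre, which holds in the reductive case but not for arbitrary isotropy Lie algebras; and the asserted identity $\cN^A_x=\bar P(\cN^B_x)$ is never derived from the definition of the restricted monodromy (which is obtained by intersecting the extended monodromy with the connected centre of the isotropy \emph{group} and pulling back through $\exp$, not by projecting the isotropy \emph{algebra} onto its centre). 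Second, even granting $\bar P$, you explicitly leave unproved that $\bar P(\cN^B_x)$ is discrete; the commensurability argument you gesture at is not supplied. So the implication ``$B$ integrable $\Rightarrow$ $A$ integrable'' is not established.

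The paper sidesteps the whole issue by never projecting onto centres. It notes that the exponential maps intertwine the Lie algebra inclusion $i:\ker\rho^B_x\hookrightarrow\ker\rho^A_x$ with $i_*$ on the simply connected groups, and deduces that in some neighbourhood $U$ of $0$ in $\ker\rho^A_x$ one has $\cN^A_x\cap U=i(\cN^B_x)\cap U$, with $i$ the honest (injective, linear) inclusion of isotropy algebras. An injective linear map of finite-dimensional vector spaces is a homeomorphism onto its closed image, so $0$ is isolated in $\cN^A_x$ iff it is isolated in $\cN^B_x$; since both algebroids are transitive, discreteness at a single point decides integrability, and both implications drop out at once. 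Replacing your $\bar P$ step by this comparison of restricted monodromies through the inclusion near the origin is what is needed to close your argument.
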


\begin{proof}
Let $i:B\hookrightarrow A$ be a wide transitive Lie subalgebroid. Then a splitting of $\rho^B:B\to TM$ determines a splitting of $\rho^A:A\to TM$, so it follows from the definition of the monodromy homomorphism (see \cite{CrainicFernandes}) that the monodromy maps of $A$ and $B$ fit into the commutative diagram
\[\xymatrix{& \mathcal{G}(\ker \rho^B_x) \ar[dd]^{i_*}\\
\pi_2(X,x)\ar[ur]^{\partial^B_x} \ar[dr]_{\partial^A_x}&\\
&\mathcal{G}(\ker \rho^A_x).}\]
Since the (extended) monodromy groups are defined precisely as the images of the maps $\partial_x$ we see that $\tilde{\mathcal{N}}^A_x =i_*(\tilde{\mathcal{N}}^B_x)$. In general, the morphism $i_*$ does not map centers to centers and it does not have closed image. Still, observing that the exponential maps fit into the commutative diagram
\[\xymatrix{
\mathcal{G}(\ker \rho^B_x) \ar[r]^{i_*} & \mathcal{G}(\ker \rho^A_x) \\
\ker \rho^B_x\ar[u]^\exp \ar[r]_{i} & \ker \rho^A_x\ar[u]_\exp} \]
we obtain that the (restricted) monodromy groups are also related via the inclusion in some neighborhood $U\subset \ker \rho^A_x$ of zero
\[ \mathcal{N}^A_x\cap U =i(\mathcal{N}^B_x)\cap U.\] 
This shows that one is discrete iff the other is, therefore $A$ is integrable if and only if $B$ is integrable.

\end{proof}

For the statement of the next proposition, observe that a transitive Lie algebroid $A\to X$ can always be restricted to a submanifold $Y\subset X$.

\begin{prop}\label{prop:G-orbit:int}
Let $A \to X$ be a $G$-principal algebroid and let $x\in X$. Then the restriction $A|_{x\cdot G} \to x \cdot G$ of $A$ to the $G$-orbit through $x$ is $G$-integrable. 
\end{prop}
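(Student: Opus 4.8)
Throughout, write $Y=x\cdot G\cong G/G_x$, where $G_x\subset G$ is the (closed) isotropy subgroup; since $G$ is compact, $Y$ is a compact embedded submanifold. Because we are in the situation of Theorem \ref{thm:local:G:int}, the ambient $A\to X$ is transitive, so the restriction $A|_Y:=\rho^{-1}(TY)\to Y$ is itself a transitive Lie algebroid, with the same isotropy $\ker\rho_x$. The plan is to first establish ordinary integrability of $A|_Y$, and then to upgrade it to $G$-integrability by controlling the extended $G$-monodromy over the orbit.

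For integrability I would exploit the action morphism. Restricting it gives an injective algebroid map $i\colon Y\rtimes\gg\to A|_Y$ (its image lands in $\rho^{-1}(TY)$ since $\rho\circ i=\psi$ is the infinitesimal $G$-action, which is tangent to the orbit). Thus $B:=i(Y\rtimes\gg)$ is a \emph{wide transitive} Lie subalgebroid of $A|_Y$, identified by $i$ with the action algebroid $Y\rtimes\gg$. The latter is integrable, being integrated by the action groupoid $Y\rtimes G\tto Y$, whose source fibers are copies of the compact group $G$. Applying Proposition \ref{prop:integrable:wide:subalg} to the wide transitive subalgebroid $B\subset A|_Y$, I conclude that $A|_Y$ is integrable; let $\Sigma(A|_Y)$ denote its $\s$-simply connected integration, and $\tilde\iota\colon Y\rtimes\tilde G\to\Sigma(A|_Y)$ the integration of $i$.

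To promote this to $G$-integrability, Theorem \ref{thm:G-integrability:obstructions} reduces everything to showing that the extended $G$-monodromy group $\widetilde{\cN}^G_x=\tilde\iota(\{x\}\times\pi_1(G))\subset\Sigma(A|_Y)_x$ is discrete. By Proposition \ref{prop:normal} this subgroup lies in the center of the isotropy, so by the equivalence (ii)$\Leftrightarrow$(iii) of Theorem \ref{thm:G-integrability:obstructions} it is equivalent to show that the restricted $G$-monodromy $\cN^G_x\subset Z(\ker\rho_x)$ is discrete; note $\cN^G_x$ already contains the ordinary monodromy $\cN_x$, which is discrete since $A|_Y$ is integrable. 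Here I would use compactness of $G$: writing $\gg=[\gg,\gg]\oplus Z(\gg)$, the quotient $\pi_1(G)/\pi_1(Z(G)^0)$ is finite, so only the contribution of the central torus $Z(G)^0$ matters. Choosing a $G$-splitting $\sigma\colon TY\to A|_Y$ (which exists because $Y$ is the single, regular, leaf of $A|_Y$), Proposition \ref{prop:G:monodromy:comput} and its concluding remark express this contribution through the periods $\int_\gamma\Omega_\sigma$ of the center-valued curvature, taken over disks whose boundary traces a loop in the torus orbit $Z(G)^0\cdot x\subset Y$. Since $Y$ is a single compact orbit and these boundary loops vary over a finitely generated lattice, the set of periods is finitely generated and, modulo $\cN_x$, discrete; hence $\cN^G_x$ is discrete.

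With $\widetilde{\cN}^G_x$ discrete, the construction in Theorem \ref{thm:G-integrability:obstructions} produces the $G$-integration $\Sigma_G(A|_Y)=\Sigma(A|_Y)/\widetilde{\cN}^G$, so $A|_Y$ is $G$-integrable. I expect the genuine obstacle to be precisely this last step: ordinary integrability of $A|_Y$ is essentially automatic from the wide-subalgebroid argument, but $G$-integrability is strictly stronger (compare Examples \ref{ex:trivial:1} and \ref{ex:curvature:1}), and it is the compactness of $G$ — equivalently the properness that keeps $Y$ and the lattice $\pi_1(G)$ well behaved — that forces the extended $G$-monodromy over the orbit to remain discrete.
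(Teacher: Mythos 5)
Your first half coincides with the paper's: both reduce ordinary integrability of $A|_{x\cdot G}$ to Proposition \ref{prop:integrable:wide:subalg} applied to the wide transitive subalgebroid $i\colon (x\cdot G)\rtimes\gg\to A|_{x\cdot G}$. The divergence, and the problem, is in how you establish discreteness of the $G$-monodromy.

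Your argument for discreteness of $\cN^G_x$ rests on two claims that do not hold in the required generality. First, Proposition \ref{prop:G:monodromy:comput} is only stated under the hypothesis that the $G$-action is locally free at $x$, and the variant in the remark following it requires $\psi|_{\{x\}\times Z(\gg)}$ to be injective; neither need hold on an orbit (the extreme case being a fixed point $x\cdot G=\{x\}$ with $\dim G>0$, as in the constant-curvature example where $\SO(n,\Rr)$ acts trivially on $\Rr$ --- there the period group over disks is vacuously trivial, yet $\widetilde{\cN}^G_x=\tilde{\iota}(x,\pi_1(G))$ is a nontrivial subgroup whose discreteness still has to be proved). Second, and more seriously, ``finitely generated'' does not imply ``discrete'': a finitely generated subgroup of $Z(\ker\rho_x)$ can be dense, and the paper's own extremal K\"ahler computation produces exactly such a period group, $\bigl\{8\pi(n_1/(r_3^2-4c_1)+n_2/(4c_1-r_2^2))s_0\bigr\}$, which is discrete only when a certain ratio is rational. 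So the sentence ``the set of periods is finitely generated and, modulo $\cN_x$, discrete'' asserts the conclusion rather than proving it, and the mechanism you invoke cannot distinguish the orbit case (where discreteness does hold) from a general compact leaf (where it can fail).

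What actually makes the orbit case work is different. The paper uses the long exact homotopy sequence of the fibration $G_x\to G\to x\cdot G$ to identify
\[
\exp(\cN^G_x)=\widetilde{\cN}^G_x\cap Z(\Sigma(A)_x)^0=\tilde{\iota}(x,\pi_1(G_x)),
\]
i.e.\ only the loops in $G$ coming from the isotropy $G_x$ contribute to the restricted $G$-monodromy. It then shows this image is closed by exhibiting a subalgebra $\aa=\{\al\in\gg_x: i(x,\al)\in Z(\ker\rho_x)\}$ containing the monodromy of the action algebroid, so that the quotient group $\mathcal{A}=\cG(\aa)/\exp(\cN_x((x\cdot G)\rtimes\gg))$ embeds into $Z(\Sigma(A)_x)^0$ and contains $\pi_1(G_x)$ as a discrete subgroup. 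If you want to keep your strategy, you would need to replace the ``finitely generated, hence discrete'' step with an argument of this kind that exploits the fact that the leaf \emph{is} the orbit, not merely that it is compact.
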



\begin{proof}
To simplify the notation, throughout this proof we assume that $X = x \cdot G$ and denote $A|_{x\cdot G}$ simply by $A$. Applying Proposition \ref{prop:integrable:wide:subalg} to the wide subalgebroid 
\[i: (x\cdot G)\rtimes \gg \to A\]
we obtain that $A$ is integrable. All that is left to prove is that the $G$-monodromy group $\mathcal{N}^G_x(A)$ is discrete so that $A$ is $G$-integrable.

First, recall that the extended $G$-monodromy group is $\tilde{\mathcal{N}}^G_x(A)=\tilde{\iota}(x,\pi_1(G))$, the image of $\pi_1(G)$ under the morphisms $\tilde{\iota}:(x\cdot G)\rtimes \tilde{G}\to\Sigma(A)$ that integrates $i$. Moreover, the restricted monodromy group at $x$ is defined via the exponential map
\[
\exp(\cN^G_{x}(A))=\widetilde{\cN}^G_{x}(A)\cap Z(\Sigma(A)_x)^0,
\]
where $Z(\Sigma(A)_x)^0\subset \Sigma(A)^0_x$, the identity component of the isotropy group of $\Sigma(A)$ at $x$. Since $\Sigma(A)_x^0$ consists of the $A$-homotopy classes of $A$-loops whose base loop is contractible, we have
\[\tilde{\mathcal{N}}^G_x(A) \cap \Sigma(A)_x^0 = \{\tilde{\iota}(x,[\gamma]): [\gamma] \in \pi_1(G) \text{ with } x\cdot \gamma \text{ contractible}\}.\]
Hence, the long exact sequence of the fibration 
\[ \xymatrix{G_x \ar[r] & G \ar[r]& x\cdot G} \] 
yields
\[
\exp(\cN^G_{x}(A))=\widetilde{\cN}^G_{x}(A)\cap Z(\Sigma(A)_x)^0 = \tilde{\iota}(x,\pi_1(G_x)).
\]
Therefore, to prove that $\mathcal{N}^G_x(A)$ is discrete it is enough to prove that $\tilde{\iota}(x,\pi_1(G_x))\subset Z(\Sigma(A)_x)^0$ is a closed subgroup.

Recall now that the restricted monodromy group is precisely the subgroup of the center $\cN_x(A)\subset Z(\ker\rho_x)$ such that
\[ Z(\Sigma(A)_x)^0=Z(\cG(\ker\rho_x))^0/\exp(\cN_x(A)), \]
and similary for the action groupoid
\[ Z(\tilde{G}_x)^0=Z(\cG(\gg_x))^0/\exp(\cN_x((x\cdot G)\rtimes\gg)), \]
The morphism $i:(x\cdot G)\rtimes\gg\to A$ maps monodromy isomorphically to monodromy. However, in general, it does not map $Z(\gg_x)$ to $Z(\ker\rho_x)$ so its integration $\tilde{\iota}$ does not map $Z(\tilde{G}_x)^0$ to  $Z(\Sigma(A)_x)^0$. Still, the subalgebra
\[ \aa= \{\al\in\gg_x:i(x,\al)\in Z(\ker\rho_x)\}\subset Z(\gg_x). \]
contains the monodromy group $\cN_x(\gg\ltimes (x\cdot G))$ (since $i$ is injective and this group is mapped to $Z(\ker\rho_x)$). Therefore, setting
\[ \mathcal{A}:=\cG(\aa)/\exp(\cN_x((x\cdot G)\rtimes\gg)), \]
we can restrict $\tilde{\iota}$ to a group morphism
\[ 
\xymatrix{
Z(\tilde{G}_x)^0 \ar[r]^{\tilde{\iota}} & \Sigma(A)_x^0 \\
\mathcal{A}\ \ar@{^{(}->}[u] \ar[r]_---{\tilde{\iota}}  & Z(\Sigma(A)_x)^0\ar@{^{(}->}[u] } 
\]
which is an embedding $\mathcal{A}\hookrightarrow Z(\Sigma(A)_x)^0$. Now, observe that $\pi_1(G_x)\subset \mathcal{A}$, since its image under $\tilde{\iota}$ lies in the center $Z(\Sigma(A)_x^0)$. Hence, we conclude that $\tilde{\iota}(x,\pi_1(G_x))\subset Z(\Sigma(A)_x)^0$ is a closed subgroup.
\end{proof}
 
 We can now use the previous proposition and the slice theorem for proper group actions to obtain $G$-integrability on a $G$-saturated open neighbourhood of $x \cdot G$.

\begin{proof}[Proof of Theorem \ref{thm:local:G:int}]
By the slice theorem for proper actions (\cite[Theorem 2.3.3]{DuistKolk}) there exists a $G$-saturated open neighbourhood $U$ of $x \cdot G$ in $X$ such that $x \cdot G$ is a deformation retract of $U$. We note that the isotropy Lie algebras of $A$ coincide with those of $A|_U$ and $A|_{x \cdot G}$. The (usual) restricted monodromy of  $A|_{x\cdot G}$ at $x$ is the image of
\[\partial_x: \pi_2(x\cdot G,x) \to Z(\ker \rho_x),\]
and the restricted monodromy of $A|_U$ is given by
\[\partial_x: \pi_2(U,x) \to Z(\ker \rho_x).\]
If we denote by $r: U \to x \cdot G$ the deformation retraction, then
\[\xymatrix{\pi_2(U_x) \ar[dd]_{r_*} \ar[dr]^{\partial_x} &\\
& Z(\ker \rho_x)\\
\pi_2(x \cdot G)  \ar[ur]^{\partial_x} &}\]
commutes, and since $r_*$ is an isomorphism we conclude that the restricted monodromy groups of $A|_U$ and $A|_{g \cdot x}$ coincide. By Proposition \ref{prop:G-orbit:int}, it follows that $A|_U$ is integrable.

We proceed to show that $A|_U$ is $G$-integrable. We first note that at the level of Lie algebroids we have the commutative diagram of Lie algebroid morphisms
\[\xymatrix{(x\cdot G)\rtimes\gg \ar[d] \ar[r]^i & A|_{x \cdot G}\ar[d]\\
U\rtimes\gg \ar[r]_i & A|_U}\]
Therefore, integrating to the source $1$-connected Lie groupoids we get 
\[\xymatrix{(x\cdot G)\rtimes \tilde{G} \ar[d] \ar[r]^{\tilde{\iota}} & \Sigma(A|_{x \cdot G})\ar[d]\\
U\rtimes \tilde{G} \ar[r]_{\tilde{\iota}} & \Sigma(A|_U)}\]
It follows that at $x$ we have
\[\xymatrix{& \Sigma(A|_{x \cdot G})_x\ar[dd]\\
 \{x\} \times \pi_1(G)\ar[ur]^{\tilde{\iota}} \ar[dr]_{\tilde{\iota}} &\\
& \Sigma(A|_{U})_x.}\]
Note that the (usual) monodromy groups of $A|_{x \cdot G}$ and $A|_U$ coincide at $x$, so the map $\Sigma(A|_{x \cdot G}) \to \Sigma(A|_{U})$ is an isomorphism. Since $A|_{x \cdot G}$ is $G$-integrable, we conclude that the $G$-monodromy of $A|_U$ at $x$ is discrete, showing that $A|_U$ is $G$-integrable. 
\end{proof}

\begin{remark}
The compactness of $G$ was used in the previous proof only to guarantee the existence of $G$-invariant neighborhoods of an orbit retracting to the orbit. For more general groups one needs an assumption on the action of $G$ on $X$ (e.g., properness) that guarantees the existence of such neighborhood.
\end{remark}

\subsection{Complete solutions and complete $G$-Realizations}
\label{sec:complete:solutions}

We have established that local solutions to Cartan's realization problem always exist. We would like now to understand the existence of global solutions. In this paragraph we will make sense of the notion of \emph{complete solution}. 

Let $A \to X$ be a $G$-structure algebroid with connection. Recall that the classifying map of a $G$-realization $(P, (\theta, \omega), h)$ of $A$ is a submersion $h:P\to U$ where $U$ is an open subset of a leaf $L$ of $A$ (see Lemma \ref{lem:classifying:map}).

\begin{definition}
\label{def:full}
A $G$-realization $(P,(\theta, \omega),h)$ of a $G$-structure algebroid with connection $A\to X$
is called a \textbf{full $G$-realization} if the classifying map is a surjective submersion onto a leaf $h: P \to L$. 
\end{definition}

A full $G$-realization $(P, (\theta, \omega), h)$ covering a leaf $L$ comes equipped with an infinitesimal action of $A_L$ on $h:P\to L$ defined by
\[\sigma: h^*A_L \to TP, \quad (\theta,\omega)_p\sigma(p, \xi) = \xi,\quad \forall \xi \in A_{h(p)}.\]
Recall that such an action is called {\bf complete} if the vector field $\sigma(\xi_t)\in\X(P)$ is complete whenever $\xi_t \in \Gamma(A_L)$ is a time-dependent section for which $\rho(\xi_t)\in\X(L)$ is a complete vector field.

\begin{definition}
\label{def:complete}
A $G$-realization $(P, (\theta, \omega), h)$ of a $G$-structure algebroid with connection $A\to X$
is called a \textbf{complete $G$-realization} if:
\begin{enumerate}[(i)]
    \item $(P, (\theta, \omega), h)$ is a full $G$-realization covering a leaf $L\subset X$, and
    \item the infinitesimal $A_L$-action on $P$ is complete.
\end{enumerate}
The corresponding solution of the associated Cartan's realization problem is called a \textbf{complete solution}.
\end{definition}

Notice that this definition of complete does not appeal to any metric notion. However, in Section \ref{sec:geom:structures} we will see that when $G \subset \OO(n,\Rr)$ is a closed subgroup, then any $G$-realization induces a metric on $M = P/G$. If this metric is complete then $(P, (\theta,\omega), h)$ is a complete $G$-realization in the sense of Definition \ref{def:complete}. So complete metric solutions are complete solutions in our sense and the converse also holds when $G$ is compact.

One important motivation to define complete $G$-realizations is that source fibers of $G$-integrations are complete $G$-realizations.

\begin{example}
Let $A$ be a $G$-integrable $G$-structure algebroid with connection and let $\G \tto L$ be a $G$-integration of $A_L$. Then the $A_L$-action on $(\s^{-1}(x), \wmc|_{\s^{-1}(x)}, \t)$ is given by
\[\sigma(\gamma, \xi) = \d_{\t(\gamma)}R_\gamma(\xi).\] 
This action is complete: if $\xi=\xi_t \in \Gamma(A_L)$ is a time-dependent section, then $\sigma(\xi)\in\X(\s^{-1}(x))$ has flow
\[ \phi^t_{\sigma(\xi)}(\gamma)=\exp(t\xi)(\t(\gamma))\cdot \gamma, \]
where $\exp$ denotes the exponential map taking sections of $\Gamma(A)$ to bisections of $\G$ (see \cite[Section 4.4]{CrainicFernandes:lectures}). This of course just says that the Lie algebroid action of $A_L$ on $(\s^{-1}(x), \wmc|_{\s^{-1}(x)}, \t)$ integrates to the $\G$-action on $(\s^{-1}(x), \wmc|_{\s^{-1}(x)}, \t)$ by left translations. Therefore $(\s^{-1}(x), \wmc|_{\s^{-1}(x)}, \t)$ is a complete $G$-realization. 
\end{example}

However, there exist complete $G$-realizations which do not arise as source fibers of a $G$-integration. For example, any $G$-realization whose total space is compact is complete. The following example shows that a compact $G$-realization may fail to be isomorphic to a source fiber.

\begin{example}
\label{ex:flat:torus}
Consider the Lie $SO(n,\Rr)$-algebra $A=\Rr^n\oplus\so(n,\Rr)\to \{*\}$. discussed in Example \ref{ex:trivial:1}. It admits only one $\SO(n,\Rr)$-integration, namely the group semi-direct product $\G=\Rr^n\rtimes \SO(n,\Rr)\tto \{*\}$. This integration gives rise to a complete solution with $M=P/G=\Rr^n$. Another complete solution arises from taking the (oriented) orthogonal frame bundle of the flat torus $M=\Tt^n$, which is not a source fiber of a $\SO(n,\Rr)$-integration.
\end{example}

In the next section we will give a characterization of complete solutions arising as source fibers. For now, we prove that existence of complete solutions already requires a Lie algebroid to be $G$-integrable.

\begin{theorem}\label{thm:complete:int}
Let $A \to X$ be a $G$-structure algebroid with connection and let $(P, (\theta,\omega), h)$ be a complete $G$-realization of $A$ covering a leaf $L \subset X$. Then $A_L$ is $G$-integrable. 
\end{theorem}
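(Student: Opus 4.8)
The plan is to reduce the statement to the monodromy criterion of Theorem \ref{thm:G-integrability:obstructions} and then to extract the required discreteness from the complete action supplied by the realization. First I would record the structure the data provides. Since $(\theta,\omega)\colon TP\to A_L$ is a fiberwise isomorphism covering the surjective submersion $h\colon P\to L$ (Lemma \ref{lem:classifying:map}), its fiberwise inverse defines a transitive infinitesimal $A_L$-action
\[ \sigma\colon h^*A_L\to TP,\qquad \sigma(p,\xi)=(\theta,\omega)_p^{-1}(\xi), \]
which is complete by hypothesis (Definition \ref{def:complete}). Because the realization intertwines the action morphisms, one has $\sigma(p,i(h(p),\al))=\al_P|_p$, the fundamental vector field of the $G$-action on $P$. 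This identity is the bridge that will let the \emph{honest} $G$-action on $P$ control the $G$-monodromy.

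I would first show that $A_L$ is integrable. Fix $x\in L$ and set $F=h^{-1}(x)$, a closed submanifold with $\dim F=\dim\ker\rho_x$. For $\xi\in\ker\rho_x$ the vector field $\sigma(\cdot,\xi)|_F$ depends only on $\xi$ (since $h\equiv x$ on $F$), is tangent to $F$, and is complete: choosing any $s\in\Gamma(A_L)$ with $s(x)=\xi$ and $\rho(s)$ complete, the point $x$ is a zero of $\rho(s)$, so the complete flow of $\sigma(s)$ preserves $F$ and restricts there to $\sigma(\cdot,\xi)|_F$. Thus $\ker\rho_x$ acts on $F$ transitively, locally freely and completely, so the identity component $F_0$ is a homogeneous space $\cG(\ker\rho_x)/\Delta$ for a discrete stabilizer $\Delta$. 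Since the extended monodromy $\widetilde{\cN}_x=\partial(\pi_2(L,x))$ lifts to $A$-loops over the constant path at $x$ that develop back to their starting point, it fixes the base point, giving $\widetilde{\cN}_x\subset\Delta$; hence $\widetilde{\cN}_x$ is discrete and $A_L$ is integrable by \cite{CrainicFernandes}.

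With $A_L$ integrable, $\Sigma(A_L)$ is smooth and the complete action $\sigma$ integrates to an action of $\Sigma(A_L)$ on $h\colon P\to L$. Restricting along $\tilde\iota\colon X\rtimes\tilde G\to\Sigma(A_L)$ yields a $\tilde G$-action on $P$, whose infinitesimal action is $\al\mapsto\al_P$. The honest $G$-action, precomposed with $\tilde G\to G$, has the same infinitesimal action and is complete, so by uniqueness of integration for the simply connected group $\tilde G$ the two coincide; in particular every element of $\pi_1(G)=\ker(\tilde G\to G)$ acts trivially on $P$. Therefore $\widetilde{\cN}^G_x=\tilde\iota(x,\pi_1(G))$ lies in the kernel of the $\Sigma(A_L)_x$-action on $F$. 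Restricting to $\widetilde{\cN}^G_x\cap Z(\Sigma(A_L)_x)^0\subset\Sigma(A_L)_x^0\cong\cG(\ker\rho_x)/\widetilde{\cN}_x$, this kernel is contained in the stabilizer $\Delta/\widetilde{\cN}_x$ of the base point of $F_0$, which is discrete; hence $\widetilde{\cN}^G_x\cap Z(\Sigma(A_L)_x)^0$ is discrete, so $\cN^G_x$ is discrete and $A_L$ is $G$-integrable by Theorem \ref{thm:G-integrability:obstructions}.

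The genuinely new difficulty, beyond the known passage from a complete action to ordinary integrability, is the $G$-theoretic final step: making precise that the $\tilde G$-action obtained from the $\Sigma(A_L)$-action must factor through $G$, and that this forces $\widetilde{\cN}^G_x$ into a discrete kernel. The delicate points there are the compatibility of the groupoid action with the right $\tilde G$-action (orientation of left/right translations) and the uniqueness-of-integration argument, which relies on completeness of both actions and on the properness of the $G$-action on $P$; I expect this to be where the compactness (properness) hypothesis on $G$ is actually used.
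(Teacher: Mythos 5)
Your argument is correct, but it reaches the discreteness of the $G$-monodromy by a genuinely different route than the paper. The paper first pulls $P$ back to the orbifold universal cover of $M=P/G$ so that it may assume $M$ is $1$-connected, then invokes \cite[Corollary 7]{CrainicFernandes:Poisson} in full strength to obtain the isomorphism $P\rtimes\Sigma(A_L)\cong\Pi_1(P)$, and finally uses the long exact homotopy sequence of $G\to P\to M$ to identify $\widetilde{\cN}^G_x$ with the discrete subgroup $\pi_1(P,p)\cong\pi_1(G)$ of $\Pi_1(P)$. You instead stay on $P$ and argue fiberwise: completeness makes $F=h^{-1}(x)$ a locally free homogeneous space of $\cG(\ker\rho_x)$ with discrete stabilizer $\Delta$, which already contains $\widetilde{\cN}_x$ (re-deriving integrability rather than citing it), and the coincidence of the integrated $\tilde{G}$-action with the honest $G$-action forces $\widetilde{\cN}^G_x$ to act trivially on $F$, hence $\widetilde{\cN}^G_x\cap Z(\Sigma(A_L)_x)^0$ lands in the discrete stabilizer $\Delta/\widetilde{\cN}_x$, and Theorem \ref{thm:G-integrability:obstructions} applies. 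What your route buys: no passage to the orbifold universal cover (so no appeal to orbifold covering theory or to preservation of completeness under pullback), and only the weaker inputs ``complete actions integrate, and $A$-homotopic $A$-paths have the same holonomy on the fibers of $h$'' rather than the full $\Pi_1(P)$ isomorphism. What the paper's route buys: the sharper identification of $\widetilde{\cN}^G_x$ with $\pi_1(P,p)$ in the $1$-connected case, which is immediately recycled in Proposition \ref{prop:1-connected:complete} to show that such realizations are source fibers of $\Sigma_G(A_L)$. One small correction to your closing remark: properness/compactness of $G$ plays no essential role in your version (uniqueness of the two $\tilde{G}$-actions needs only connectedness of $\tilde{G}$ and equality of the infinitesimal generators $\al_P$); it is the paper's reduction to the $1$-connected quotient $M$ that actually uses properness.
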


\begin{proof}
Given a $G$-realization $(P,(\theta,\omega),h)$, denoting by $\tilde{M}$ the universal (orbifold) cover of $M=P/G$, we have a pullback diagram
\[
\xymatrix{
\tilde{P}\ar[d]\ar[r]^q& P\ar[d]\\
\tilde{M}\ar[r] & M
}
\]
This yields a $G$-realization $(\tilde{P},(q^*\theta,q^*\omega),q^*h)$ with $\tilde{M}=\tilde{P}/G$ 1-connected. Moreover, if $(P,(\theta,\omega),h)$ covers a leaf $L$ so does  $(\tilde{P},(q^*\theta,q^*\omega),q^*h)$, and if $(P,(\theta,\omega),h)$ is complete so is $(\tilde{P},(q^*\theta,q^*\omega),q^*h)$. So we can assume we are given a complete $G$-realization $(P, (\theta,\omega), h)$ with $M=P/G$ a 1-connected orbifold.

The infinitesimal action $\sigma: h^*A_L \to TP$ determines a unique Lie algebroid structure on $h^*A_L\to P$: its anchor is $\sigma$ and its Lie bracket is uniquely determined by $[h^*\xi_1, h^*\xi_2] = h^*[\xi_1,\xi_2]$ so that Leibniz holds. Note that $\sigma: h^*A_L \to TP$ is actually a Lie algebroid isomorphism with inverse $(\theta,\omega):TP\to h^*A_L$. It follows from \cite[Corollary 7]{CrainicFernandes:Poisson} that
\begin{enumerate}[(a)]
\item $A_L$ is an integrable Lie algebroid;
\item the infinitesimal action of $A_L$ on $P$ integrates to an action of $\Sigma(A_L)$ on $P$ with moment map $h: P \to L$;
\item there is a Lie groupoid isomorphism
\[\Phi:P\rtimes \Sigma(A_L) \simeq\Sigma(h^*A_L)  \to \Pi_1(P).\]
\end{enumerate}

By the definition of a $G$-realization, we have the commutative diagram of morphism of algebroids
\[ 
\xymatrix{
P\rtimes\gg\ar[r] \ar[d]_{h\times \text{id}} & TP\ar[d]^{(\theta,\omega)} \\
L\rtimes\gg\ar[r] & A_L}
\]
inducing another commutative diagram of morphism of algebroids
\[ 
\xymatrix{
& P\rtimes\gg \ar[dl]\ar[dr] &  \\
h^*A_L \ar[rr]_{\sigma} & & TP}
\]
covering $L\rtimes\gg\to A_L$. This integrates to a commutative diagram of groupoid morphims
\[\xymatrix{&P\rtimes\tilde{G} \ar[dr] \ar[dl] &\\
P\rtimes \Sigma(A_L) \ar[rr]_\Phi& & \Pi_1(P).}\]
covering the morphism $\tilde{\iota}:L\rtimes\tilde{G}\to \Sigma(A_L)$.

Since $M=P/G$ is 1-connected, the long exact sequence of the (stack) fundamental groups of the fibration $G \to P \to M$, shows that $\pi_1(P,p) \simeq \pi_1(G)$ for all $p\in P$. Therefore, the previous triangular diagram restricts to a commutative diagram of group isomorphisms
\[
\xymatrix{& \{p \} \times \pi_1(G)\ar[dr] \ar[dl] &\\
 \{p \} \times \tilde{\cN}_x^G \ar[rr]_\Phi & & \pi_1(P,p)}
\]
Since $\Phi:P\rtimes \Sigma(A_L)\to \Pi_1(P)$ is an isomorphism and $\pi_1(P,p)$ is discrete in $\Pi_1(P)$ we conclude that $\{p\}\times \tilde{\cN}_x^G$ is discrete in $P\rtimes \Sigma(A_L)$, and therefore $\tilde{\cN}_x^G$ is discrete in $\Sigma(A_L)$, showing that $A_L$ is $G$-integrable.
\end{proof}

\subsection{Strongly complete realizations}
We saw in the previous paragraph that source fibers of $G$-integrations give complete solutions/$G$-realizations, but not every complete solution arises as a source fiber. So we need now to address the following question.
\begin{itemize}
\item Which complete $G$-realizations are source fibers of a $G$-integration? 
\end{itemize}
Examining the proof of Theorem \ref{thm:complete:int} one finds the following result (compare with Corollary \ref{cor:integration:realizations}).

\begin{prop}
\label{prop:1-connected:complete}
Let $(P,(\theta,\omega),h)$ be complete $G$-realization  of a $G$-structure algebroid with connection $A$, for which $M=P/G$ simply connected. Then $(P,(\theta,\omega),h)$ is isomorphic to a source fiber of the canonical $G$-integration $\Sigma_G(A_L)$, where $L$ is the leaf covered by the realization.
\end{prop}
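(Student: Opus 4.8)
The plan is to combine Theorem~\ref{thm:complete:int} with the global universal property of the Maurer--Cartan form (Theorem~\ref{thm:MC:global}), and then to upgrade the resulting open embedding into an isomorphism by using completeness. First, since $(P,(\theta,\omega),h)$ is complete and covers the leaf $L$, Theorem~\ref{thm:complete:int} guarantees that $A_L$ is $G$-integrable, so the canonical $G$-integration $\Sigma_G(A_L)\tto L$ exists and is $\s$-connected. Restricting everything to the leaf, I would from now on regard $(P,(\theta,\omega),h)$ as a full, complete $G$-realization of $A_L$ whose base orbifold $M=P/G$ is simply connected.

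Next, fix $p_0\in P$, set $x_0=h(p_0)\in L$, and apply Theorem~\ref{thm:MC:global} to $A_L$. This produces a unique embedding of $G$-realizations
\[
\phi\colon P\longrightarrow \s^{-1}(x_0),\qquad \phi(p_0)=1_{x_0},
\]
onto the source fiber $\s^{-1}(x_0)$ of $\Sigma_G(A_L)$, and $\phi$ is moreover an \emph{open} embedding. Hence $\phi(P)$ is a nonempty, $G$-invariant, open subset of $\s^{-1}(x_0)$; since $\Sigma_G(A_L)$ is $\s$-connected, the fiber $\s^{-1}(x_0)$ is connected. It then remains only to prove that $\phi$ is onto, for then $\phi$ is an isomorphism of $G$-realizations and the proposition follows.

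To prove surjectivity I would exploit that $\phi$ intertwines the two infinitesimal $A_L$-actions, namely the inverse $\sigma_P$ of $(\theta,\omega)$ on $P$ and the inverse $\sigma_\s$ of $\wmc$ on $\s^{-1}(x_0)$, both of which are complete: the former by hypothesis, the latter because source fibers of $G$-integrations are complete $G$-realizations (the example preceding the statement). Concretely, for a time-dependent section $\xi_t\in\Gamma(A_L)$ with $\rho(\xi_t)\in\X(L)$ complete, the vector field $\sigma_P(\xi_t)$ is complete on $P$, its flow is $\phi$-related to the (complete) flow of $\sigma_\s(\xi_t)$ on $\s^{-1}(x_0)$, and on the source fiber this flow is left translation by the exponential bisection, $\gamma\mapsto \exp(t\xi)(\t(\gamma))\cdot\gamma$. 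Because the flow on $P$ is defined for all time, the image $\phi(P)$ is invariant under every such flow on $\s^{-1}(x_0)$.

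Finally, I would argue that these flows act transitively on the connected fiber $\s^{-1}(x_0)$ starting from $1_{x_0}$: any $\gamma\in\s^{-1}(x_0)$ is represented by an $A_L$-path, which I can extend to a compactly supported time-dependent section $\xi_t$, so that $\rho(\xi_t)$ is automatically complete on $L$; the time-$1$ flow of $\sigma_\s(\xi_t)$ applied to $1_{x_0}$ then equals $\gamma$. Since $1_{x_0}=\phi(p_0)\in\phi(P)$ and $\phi(P)$ is invariant under all such flows, we conclude $\gamma\in\phi(P)$, whence $\phi(P)=\s^{-1}(x_0)$. The main obstacle is precisely this last step --- converting completeness of the action into genuine transitivity of the exponential flows on the source fiber --- which requires care in choosing a section $\xi_t$ that represents a prescribed groupoid element while keeping $\rho(\xi_t)$ complete; all remaining steps are direct applications of the theorems already established.
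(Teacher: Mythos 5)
Your argument is correct, but it reaches the conclusion by a different mechanism than the paper. The paper's proof is a direct continuation of the proof of Theorem~\ref{thm:complete:int}: there, completeness of the $A_L$-action was used (via the integration of complete infinitesimal actions) to produce a Lie groupoid isomorphism $\Phi\colon P\rtimes\Sigma(A_L)\to\Pi_1(P)$; since $M=P/G$ is $1$-connected, the isotropy bundle of $P\rtimes\Sigma(A_L)$ is identified with $P\times_M\widetilde{\cN}^G$ and is carried by $\Phi$ onto $\bigcup_p\pi_1(P,p)$, so $\Phi$ descends to $P\rtimes\Sigma_G(A_L)\simeq P\times P$, and restricting to source fibers gives $\s^{-1}(x_0)\simeq P$ at once, with Theorem~\ref{thm:MC:global} only invoked to see that this identification is an isomorphism of $G$-realizations. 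You instead start from the open embedding $\phi\colon P\to\s^{-1}(x_0)$ supplied by Theorem~\ref{thm:MC:global} and prove surjectivity by hand: $\phi$ intertwines the two complete infinitesimal $A_L$-actions, so $\phi(P)$ is invariant under the flows of the right-invariant vector fields $\sigma_\s(\xi_t)$ for all compactly supported time-dependent sections, and since every element of the $\s$-connected groupoid $\Sigma_G(A_L)$ is the class of an $A$-path --- hence reachable from $1_{x_0}$ by the time-one flow of such a section --- these flows act transitively on $\s^{-1}(x_0)$. The two arguments are two faces of the same fact (the integrated $\Sigma(A_L)$-action on $P$ is exactly ``flow along $A$-paths''), but yours has the virtue of not re-entering the groupoid-isomorphism machinery of the earlier proof, at the cost of having to justify the transitivity of exponential flows on a source fiber, which you do correctly; the paper's version is shorter because that heavy lifting was already packaged into Theorem~\ref{thm:complete:int}.
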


\begin{proof}
The morphism $\Phi:\Sigma(A_L)\ltimes P\to \Pi_1(P)$ in the proof of Theorem \ref{thm:complete:int} is an isomorphism. The proof shows that the isotropy bundle of  $P\rtimes \Sigma(A_L)$ is $P\times_M \tilde{\cN}(A)$, which is mapped by $\Phi$ to $\bigcup_{p\in P}\pi_1(P,p)$, the isotropy bundle of $\Pi_1(P)$. Hence, $\Phi$ descends to a Lie groupoid isomorphism
\[ P\rtimes  \Sigma_G(A)\simeq P\times P. \]
Restricting both sides to source fibers gives the isomorphism $\s^{-1}(x)\simeq P$. The rest follows from the Universal Property of the Maurer-Cartan form (Theorem \ref{thm:MC:global}).
\end{proof}

As shown by Example \ref{ex:flat:torus} this proposition is false if we remove the assumption of 1-connectedness. In order to see what happens in the more general case, let $\G \tto X$ be \emph{some} $G$-integration of a $G$-structure algebroid $A$, fix $x\in M$ and consider the  $G$-realization $(\s^{-1}(x), \wmc|_{\s^{-1}(x)}, \t|_{\s^{-1}(x)})$. Then $\pi: \s^{-1}(x) \to \s^{-1}(x)/G$ is a $G$-structure. Given any local equivalence $\phi: U \to V$ between two open sets of $M=\s^{-1}(x)/G$, its lift $\tilde{\phi}: \pi^{-1}(U) \to \pi^{-1}(V)$ is a diffeomorphism satisfying $\tilde{\phi}^*\wmc = \wmc$. It follows from the Global Universal Property of the Maurer-Cartan form (Theorem \ref{thm:MC:global}) that $\tilde{\phi} = R_{\gamma}|_{\pi^{-1}(U)}$, for some arrow $\gamma$ in the isotropy Lie group $\G_x$. Thus the local symmetry $\phi$ extends to a \emph{global symmetry}.

%

This shows that $G$-realizations arising as source fibers of $G$-integrations are examples of \emph{strongly complete} $G$-realizations in the sense of the following definition.

\begin{definition}
A full $G$-realization $(P, (\theta, \omega), h)$ of a $G$-structure algebroid $A$ with connection is called \textbf{strongly complete} if any local symmetry of $(P, (\theta, \omega), h)$ extends to a global symmetry. The corresponding solution of the associated Cartan's realization problem is said to be a \textbf{strongly complete solution}.
\end{definition}

We will show that the property of being strongly complete indeed characterizes the $G$-realizations arising from source fibers of $G$-integrations. First we observe that such $G$-realizations have symmetry groups which are Lie groups.

\begin{prop}\label{prop:symmetry:Lie:grp}
Let $(P,(\theta,\omega),h)$ be a strongly complete $G$-realization of a $G$-structure algebroid $A\to X$ with connection and assume that $G$ is connected. Then the symmetry group of $(P,(\theta,\omega),h)$ is a Lie group $H$ with Lie algebra isomorphic to $\ker \rho_x$, where $x \in X$ is any point in the image of $h:P\to X$. Moreover, the $H$-action on $P$ is proper, free and transitive on the fibers of $h$, and commutes with the $G$-action.
\end{prop}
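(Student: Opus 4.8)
The plan is to analyze the group $H$ of symmetries of $(P,(\theta,\omega),h)$, i.e.\ the diffeomorphisms $\varphi\colon P\to P$ that are $G$-equivariant, satisfy $h\circ\varphi=h$, and preserve the coframe, $\varphi^*\theta=\theta$ and $\varphi^*\omega=\omega$. The starting observation is that, since $A$ may be put in canonical form, the pair $(\theta,\omega)$ is an absolute parallelism on $P$, so the theory of automorphisms of manifolds with an absolute parallelism applies. First I would record the rigidity statement: a symmetry is completely determined by its value at a single point. This is the uniqueness half of the Local Universal Property of the Maurer--Cartan form (Theorem \ref{thm:MC:local}), equivalently the classical fact that a coframe-preserving diffeomorphism agreeing with another at one point agrees with it on the whole connected component. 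Assuming $P$ connected, this immediately gives that the $H$-action is free and that the evaluation map $\mathrm{ev}_{p_0}\colon H\to h^{-1}(x_0)$, $\varphi\mapsto\varphi(p_0)$, is injective, where $x_0=h(p_0)$.

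Next I would prove transitivity on the fibers of $h$, which is exactly where strong completeness enters. Given $p_0,p_1$ with $h(p_0)=h(p_1)=x_0$, I produce a germ of symmetry carrying $p_0$ to $p_1$ as follows. By Theorem \ref{thm:local:G:int} there is a $G$-saturated neighbourhood $U$ of $x_0$ in the leaf $L$ with $A|_U$ $G$-integrable; fix a $G$-integration $\G\tto U$. By Theorem \ref{thm:MC:local} the germs of $G$-realization at $p_0$ and at $p_1$ each embed into the source fibre $\s^{-1}(x_0)$ of $\G$, both sending the marked point to the identity $1_{x_0}$; composing one embedding with the germ-inverse of the other yields a germ of local symmetry of $(P,(\theta,\omega),h)$ taking $p_0$ to $p_1$. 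By strong completeness this local symmetry extends to a global symmetry $\varphi\in H$ with $\varphi(p_0)=p_1$. Hence $H$ acts transitively on each fibre of $h$, so $\mathrm{ev}_{p_0}$ is a bijection onto $h^{-1}(x_0)$, a manifold of dimension $n+\dim G-\dim L=\dim\ker\rho_{x_0}$.

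To endow $H$ with a Lie group structure I would exhibit it as a closed subgroup of the automorphism group $\Aut(P,(\theta,\omega))$ of the absolute parallelism, which is a Lie group acting freely and properly on $P$; $H$ is carved out by the closed conditions of $G$-equivariance and $h\circ\varphi=h$, hence is an embedded Lie subgroup. Its Lie algebra consists of the infinitesimal symmetries, namely the $G$-invariant vector fields $\xi\in\X(P)$ with $\Lie_\xi\theta=\Lie_\xi\omega=0$ and $\d h(\xi)=0$; the map $\xi\mapsto(\theta,\omega)(\xi_{p_0})$ is injective by rigidity and, since any realization satisfies $\d h=\rho\circ(\theta,\omega)$, lands in $\ker\rho_{x_0}$. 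A dimension count against $\dim H=\dim\ker\rho_{x_0}$ forces this to be an isomorphism of vector spaces, and the identification respects brackets by reduction to the case $G=\{e\}$ treated in \cite{FernandesStruchiner1}. Finally, the $G$-equivariance built into the definition of a symmetry says precisely that every $\varphi\in H$ commutes with the $G$-action, and the transitivity and freeness above show that $h\colon P\to L$ is a principal $H$-bundle.

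The main obstacle I anticipate is not the set-theoretic structure of $H$ but the analytic regularity: proving that the bijection $\mathrm{ev}_{p_0}\colon H\to h^{-1}(x_0)$ is a diffeomorphism and that the resulting action $H\times P\to P$ is smooth and proper. The cleanest route is to show that the division map $\beta\colon P\times_L P\to H$, assigning to $(p,p')$ the unique symmetry sending $p$ to $p'$, is smooth; this follows from the smooth dependence on base points of the embeddings furnished by Theorem \ref{thm:MC:local}. Smoothness of $\beta$ identifies $H\times P$ with the closed submanifold $P\times_L P\subset P\times P$, which yields both the smoothness of the action and its properness, the latter because $P\times_L P$ is closed.
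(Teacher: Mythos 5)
Your proposal is correct, and it reaches the same conclusion by essentially the same underlying mechanism, but the execution is genuinely different from the paper's. The paper's proof is very short: it observes that, because $G$ is connected, symmetries of the $G$-realization correspond bijectively to symmetries of the \emph{coframe} $(\theta,\omega)$ preserving $h$, and then delegates everything else (Lie group structure, Lie algebra $\ker\rho_x$, and the principal $H$-bundle structure of $h:P\to L$) to \cite[Proposition 6.7]{FernandesStruchiner1}, finishing only with the observation that lifts of diffeomorphisms of $M$ are $G$-equivariant, so $H$ commutes with $G$. You instead open that black box and rebuild it from the machinery internal to this paper: rigidity of coframe symmetries for freeness and injectivity of $\mathrm{ev}_{p_0}$; Theorem \ref{thm:local:G:int} plus the Local Universal Property (Theorem \ref{thm:MC:local}) to manufacture a germ of symmetry joining any two points of a fiber of $h$, with strong completeness promoting it to a global symmetry (this is exactly where the hypothesis enters, and your use of it is correct); Kobayashi's theorem on automorphism groups of absolute parallelisms for the Lie group structure and properness; and a dimension count against $\dim h^{-1}(x_0)=\operatorname{rank} A-\dim L=\dim\ker\rho_{x_0}$ for the Lie algebra. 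What the paper's route buys is brevity and consistency with the companion paper's framework of complete coframes; what your route buys is a self-contained argument that makes visible precisely how strong completeness is consumed (only in the transitivity step) and that does not require the reader to unwind the coframe-level completeness notion of \cite{FernandesStruchiner1}.

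Two soft spots, neither fatal. First, you implicitly assume $P$ connected when invoking rigidity; the paper makes the same tacit assumption, but it is worth stating, since a symmetry could otherwise permute components. Second, your closing paragraph proposes smoothness of the division map $\beta:P\times_L P\to H$ as "the cleanest route" to smoothness and properness, justified by "smooth dependence on base points" in Theorem \ref{thm:MC:local} --- but that theorem is stated for a fixed base point and does not assert parametrized smoothness, so as written this step is under-justified. It is also redundant: once you realize $H$ as a closed subgroup of $\operatorname{Aut}(P,(\theta,\omega))$, Kobayashi's theorem already gives you the Lie group structure, the closed embedding of the orbit through $p_0$, and properness of the action, so you should lean on that and drop the division-map argument or supply the parametrized version of the universal property.
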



\begin{proof} 
As usual, we will write $M = P/G$ and denote by $\pi: P \to M$ the quotient map. By assumption, any local equivalence $\phi: U \to V$ lifts to a bundle map $\tilde{\phi}: \pi^{-1}(U) \to \pi^{-1}(V)$ preserving both the coframe and the classifying map
\[ \tilde{\phi}^*(\theta, \omega) = (\theta, \omega),\quad h \circ \tilde{\phi} = h.\] 
In particular, forgetting about the structure group $G$, $\tilde{\phi}$ is a local equivalence of the \emph{coframe}. Since $G$ is assumed to be connected this defines a one-to-one correspondence between (local) symmetries of the $G$-realization and (local) symmetries of the coframe which preserve the classifying map $h$. Choosing $x \in X$ any point in the image of $h:P\to X$,  by \cite[Proposition 6.7]{FernandesStruchiner1}, the symmetry group of $(P,(\theta,\omega),h)$ is a Lie group $H$ with Lie algebra $\ker \rho_x$, and $h: P \to L$ becomes a principal $H$-bundle over the leaf containing $x$. 

To conclude the proof of the proposition, note that the $H$ action on $P$ is given by the lifts of diffeomorphisms of $M$. Each such lift, being an automorphism of the $G$-structure, is a $G$-equivariant map. It follows that the $H$-action and the $G$-action on $P$ commute. 

\end{proof}

We can now prove that the strongly complete $G$-realizations are the ones arising from source fibers of $G$-integrations. In particular, they are also complete.

\begin{theorem}\label{thm:strongly:complete}
Let $A \to X$ be a $G$-structure algebroid with connection. A full $G$-realization $(P,(\theta,\omega),h)$ of $A$ covering a leaf $L\subset X$ is strongly complete if and only if it is isomorphic to a $G$-realization
\[ (\s^{-1}(x), \wmc|_{\s^{-1}(X)}, \t|_{\s^{-1}(x)}) \] 
associated with some $G$-integration $\G \tto L$ of $A_L$.
\end{theorem}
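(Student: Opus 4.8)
The statement is a biconditional, and one direction is already in hand: the discussion preceding Proposition~\ref{prop:symmetry:Lie:grp} shows that every source fiber of a $G$-integration is strongly complete, since a local equivalence of $M=\s^{-1}(x)/G$ lifts to a local symmetry of the coframe and the Global Universal Property of the Maurer--Cartan form (Theorem~\ref{thm:MC:global}) forces that lift to be right translation by an arrow of the isotropy group $\G_x$, hence a global symmetry. So the work is in the converse, and the plan is to manufacture the required $G$-integration as a gauge groupoid built from the symmetry group.

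Concretely, given a strongly complete full $G$-realization $(P,(\theta,\omega),h)$ covering $L$, I would first invoke Proposition~\ref{prop:symmetry:Lie:grp} (which uses that $G$ is connected): the symmetry group $H$ is a Lie group with Lie algebra $\ker\rho_x$ acting on $P$ freely, properly, transitively along the fibers of $h$ and commuting with the $G$-action, so that $h\colon P\to L$ is a principal $H$-bundle. I would then form the gauge groupoid
\[
\G:=(P\times P)/H\tto L,\qquad \s[(p_1,p_2)]=h(p_2),\quad \t[(p_1,p_2)]=h(p_1).
\]
Since the $G$-action commutes with $H$, it descends to $\G$ via $[(p_1,p_2)]\cdot g:=[(p_1g,p_2)]$, and a one-line check gives $(\gamma_1\cdot\gamma_2)g=(\gamma_1g)\cdot\gamma_2$, so condition~\eqref{eq:principal:action} holds. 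As $H$ acts freely and transitively on each fiber of $h$, fixing $p_0$ with $x_0=h(p_0)$ the map $\Psi\colon P\to\s^{-1}(x_0)$, $p\mapsto[(p,p_0)]$, is a $G$-equivariant diffeomorphism with $\t\circ\Psi=h$; restricted to source fibers the descended action is exactly the original principal $G$-action on $P$, so the action on $\G$ is a $G$-principal action and $\G$ is a $G$-principal groupoid.

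Next I would identify the Lie algebroid of $\G$, together with all its extra structure, with $A_L$. The Lie algebroid of a gauge groupoid is the Atiyah algebroid $TP/H$; because symmetries preserve the coframe, the fibrewise isomorphism $(\theta,\omega)\colon TP\to h^*A_L$ is $H$-equivariant and descends to an isomorphism $TP/H\cong A_L$ matching anchors and action morphisms. Thus $\G$ is a $G$-principal groupoid integrating $A_L$ (in particular reproving that $A_L$ is $G$-integrable, cf.\ Theorem~\ref{thm:complete:int}). Since the induced action morphism $\iota\colon L\rtimes G\to\G$ is effective and integrates $i\colon L\rtimes\gg\to A_L$, Proposition~\ref{prop:G:structures:integrable} equips $\G$ with a tautological form $\Theta$ restricting to $\theta$ along $L$, while the correspondence~\eqref{eq:correspondence:connections} equips $\G$ with the connection restricting to $\omega$; hence $\G$ is a genuine $G$-integration of $A_L$. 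Under $\Psi$ the form $\wmc|_{\s^{-1}(x_0)}$ is the Atiyah projection $TP\to TP/H\cong A_L$, which by construction is $(\theta,\omega)$, so $\Psi$ is an isomorphism of $G$-realizations $(P,(\theta,\omega),h)\cong(\s^{-1}(x_0),\wmc|_{\s^{-1}(x_0)},\t)$.

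The step I expect to require the most care is the last one: checking that the descent of $(\theta,\omega)$ reproduces both the tautological form and the connection of $A_L$, and that $\wmc|_{\s^{-1}(x_0)}$ really coincides with $(\theta,\omega)$ under $\Psi$. The $H$-equivariance of $(\theta,\omega)$, which is exactly the content that symmetries preserve the coframe in Proposition~\ref{prop:symmetry:Lie:grp}, is what makes the descent well defined, and the coincidence of $\wmc$ with the Atiyah projection is the standard description of the Maurer--Cartan form of a gauge groupoid; alternatively, I would deduce $\Psi^*\wmc=(\theta,\omega)$ from the uniqueness of morphisms of $G$-realizations in Theorem~\ref{thm:MC:local}.
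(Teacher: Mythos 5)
Your proposal is correct and follows essentially the same route as the paper: both directions rest on Proposition~\ref{prop:symmetry:Lie:grp}, the gauge groupoid $(P\times P)/H\tto L$, and the identification of its Atiyah algebroid with $A_L$ via the $H$-equivariant coframe $(\theta,\omega)$. The only difference is that you spell out the explicit isomorphism $\Psi\colon p\mapsto[(p,p_0)]$ and the verification $\Psi^*\wmc=(\theta,\omega)$, which the paper leaves implicit.
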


\begin{proof}
We have observed already that a $G$-realization $(\s^{-1}(x), \wmc|_{\s^{-1}(X)}$, $\t|_{\s^{-1}(x)})$ arising from some $G$-integration is strongly complete.

Conversely, assume that $(P,(\theta,\omega),h)$ is a strongly complete $G$-realization of $A$ covering a leaf $L$. By Proposition \ref{prop:symmetry:Lie:grp}, its symmetry group $H$ is a Lie group acting in a proper and free fashion on $P$. Therefore, we can form the gauge groupoid 
\[ \G = (P \times P)/H \tto L.\] 
This is a Lie groupoid whose Lie algebroid is isomorphic to $A_L$. Moreover, since the pair groupoid $P \times P \tto P$ is a $G$-structure groupoid integrating $TP$, and the $H$-action on $P$ commutes with the $G$-action, it follows that $\G$ is a $G$-structure groupoid integrating the Atiyah $G$-structure algebroid 
\[ A(\G): = TP/H\to L .\] 
To finish the proof we show that the identification of $A(\G)$ with $A_L$ is an isomorphism of $G$-structure algebroids. This follows since this isomorphism is induced from the vector bundle map
\[\xymatrix{TP \ar[d] \ar[r]^-{(\theta,\omega)} & A_L\ar[d]\\
P \ar[r]_h & L}\]
which is a morphism of $G$-structure algebroids (by the definition of a $G$-realization). Therefore, $\G \tto L$ is a $G$-integration of $A_L$ and $(P,(\theta,\omega),h)$ is isomorphic to $(\s^{-1}(x), \wmc|_{\s^{-1}(X)}, \t|_{\s^{-1}(x)})$ for some (and hence any) $x \in L$.

\end{proof}

\begin{example}
We saw in Example \ref{ex:flat:torus} that the flat torus $\Tt^n$ gives a complete $G$-realization of the trivial $\mathrm{SO}(n,\Rr)$-Lie algebra $A = \Rr^n\rtimes \mathfrak{so}(n,\Rr)$ which does not arise from a source fiber of some $\SO(n,\Rr)$-integration of $A$. Hence, according to Theorem \ref{thm:strongly:complete}, this $G$-realization is not strongly complete. 

One can also see directly that this $G$-realization is not strongly complete: thinking of $\Tt^n$ as the quotient of $\Rr^n$ by the lattice $\mathbb{Z}^n$, one sees that the global symmetries of $\Tt^n$ are induced by isometries of $\Rr^n$ preserving the lattice $\mathbb{Z}^n$. However, there are many more local symmetries -- any isometry of $\Rr^n$ after restriction to a small enough open subset, yields a local isometry of $\Rr^n$ which descends to a local isometry of the flat torus.
\end{example}

We can summarize the previous results on global solutions of Cartan's realization problem as follows:
\begin{enumerate}[(i)]
\item A solution is strongly complete if and only if it covers a leaf $L$ and every local symmetry extends to a global symmetry;
\item A solution is complete if and only it is covered by a strongly complete solution;
\item Complete solutions covering a leaf $L$ exist if and only if $A|_L$ is $G$-integrable;
\item Strongly complete solutions arise as the source fibers of $G$-integrations.
\end{enumerate}
Theorem \ref{thm:strongly:complete} reduces the classification of strongly complete $G$-realizations to the classification of all $G$-integrations, but this may not be an easy task. 


\section{Symmetries and moduli space of solutions}
\label{sec:moduli}

We turn to the problem of determining the symmetries of solutions and, more generally, describing the moduli space of solutions to a Cartan's realization problem. For both of these problems it is important to distinguish between germs of solutions, which are local solution with marked points, and complete solutions. The answers will differ drastically in each case. 

\subsection{Moduli space of germs of solutions}

Let $(G, X, c, R, F)$ be the Cartan Data of a Realization Problem. We have seen that for every $x \in X$ there exists a solution $(F_G(M), (\theta, \omega),h)$ with $x \in \Im h$ if and only if the Cartan Data determines a $G$-structure algebroid with connection $A \to X$. Assuming this is the case, we consider \textbf{marked $G$-realizations} of $A$, i.e, quadruples $(P,(\theta,\omega), h, m_0)$ where $(P,(\theta,\omega), h)$ is a $G$-realization of $A$, and $m_0 \in M = P/G$ is a marked point.

\begin{definition}
Let $A\to X$ be a $G$-structure algebroid with connection. Two marked $G$-realizations $(P_1,(\theta_1,\omega_1), h_1, m_1)$ and $(P_2,(\theta_2,\omega_2), h_2, m_2)$  of $A$ are said to \textbf{represent the same germ of $G$-realizations} if there exist open subsets $m_1 \in V_1 \subset M_1$, $m_2 \in V_2 \subset M_2$ and an isomorphism of $G$-realizations
\[\xymatrix{(P_1)|_{V_1} \ar[r]^{\tilde{\phi}} \ar[d] & (P_2)|_{V_2}\ar[d]\\
V_1 \ar[r]_\phi & V_2}\]
such that $\phi(m_1) = m_2$.
\end{definition}

The condition of representing the same germ determines an equivalence relation on the set of marked $G$-realizations of $A$ and an equivalence class is called a {\bf germ of $G$-realization}. Let us denote the set of such germs by
\[\mathfrak{G}(A) = \{\text{germs of $G$-realizations of $A$}\} \] 

The {\bf symmetry group of a germ} is defined as follows. One chooses a marked $G$-realization $(P,(\theta,\omega), h, m_0)$ representing the germ and considers all germs of diffeomorphisms $\phi:M\to M$ fixing $m_0$, and which are local symmetries of the $G$-realization. The set of such germs of diffeomorphisms furnished with composition of diffeomorphisms forms a group and, clearly, its isomorphism class does not depend on the choice of representative of the germ.

The following result describes the moduli stack of germs of $G$-realizations of a $G$-structure algebroid with connection as an orbit space, and in particular allows one to speak of nearby germs, families of germs, etc.

\begin{theorem}\label{thm:moduli:germs}
Let $A \to X$ be a $G$-structure algebroid with connection. Then:
\begin{enumerate}[(i)]
\item For each $x \in X$ there corresponds a germ of $G$-realization of $A$;
\item This correspondence induces a bijection between $X/G$ and $\mathfrak{G}(A)$;
\item The symmetry group of the germ corresponding to $[x] \in X/G$  is isomorphic to the isotropy group $G_x$.
\end{enumerate}
In particular, the action groupoid $X\rtimes G\tto X$ presents the moduli stack of germs of solutions.
\end{theorem}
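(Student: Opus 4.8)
The plan is to establish the three assertions in turn, then read off the stack statement as an immediate consequence. The essential tool is the Local Universal Property of the Maurer--Cartan form (Theorem \ref{thm:MC:local}), together with the local $G$-integrability result (Theorem \ref{thm:local:G:int}) which guarantees that every point of $X$ is in the image of the classifying map of some $G$-realization obtained by local $G$-integration.

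For assertion (i), given $x \in X$ I would invoke Corollary \ref{cor:local:existence:2} to produce a $G$-realization $(P,(\theta,\omega),h)$ with $x \in \Im h$. Choosing $p_0 \in P$ with $h(p_0) = x$ and setting $m_0 = \pi(p_0) \in M = P/G$ yields a marked $G$-realization $(P,(\theta,\omega),h,m_0)$, hence a germ. The content of assertion (ii) is that this germ depends only on the $G$-orbit of $x$ and that the resulting map $X/G \to \mathfrak{G}(A)$ is a bijection. Well-definedness is easy: if $x' = x\cdot g$, then acting by $g$ on $P$ relates the two marked realizations, since the classifying map is $G$-equivariant and the $G$-action preserves $(\theta,\omega)$ by the definition of a morphism of $G$-structure algebroids. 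Surjectivity is immediate from (i). The crux is \emph{injectivity}: if two marked germs correspond to $x_1$ and $x_2$ and represent the same germ of $G$-realization, I must show $x_2 \in x_1 \cdot G$. An isomorphism of germs $\tilde\phi$ satisfies $h_2 \circ \tilde\phi = h_1$ on a neighborhood of the marked point and $\phi(m_1) = m_2$, so evaluating at a lift of the marked point forces $h_2(\tilde\phi(p_1)) = h_1(p_1) = x_1$; since $\tilde\phi(p_1)$ lies over $m_2$, it differs from a chosen lift $p_2$ over $m_2$ by the $G$-action, and equivariance of $h_2$ then gives $x_1 \in x_2 \cdot G$, as required.

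For assertion (iii), I would fix a representative $(P,(\theta,\omega),h,m_0)$ with $h(p_0)=x$ and analyze germs of local symmetries $\phi$ fixing $m_0$. Their lifts $\tilde\phi$ to $P$ preserve $(\theta,\omega)$ and commute with $h$, so by the Local Universal Property (Theorem \ref{thm:MC:local}), on a $G$-saturated neighborhood $\tilde\phi$ is determined by its value at $p_0$; since $\tilde\phi$ must fix the fiber $h^{-1}(x)$ setwise and be $G$-equivariant, it sends $p_0$ to $p_0 \cdot g$ for a unique $g \in G$ with $x \cdot g = x$, i.e., $g \in G_x$. The assignment $\phi \mapsto g$ is a group homomorphism to $G_x$; it is injective because $\tilde\phi$ is pinned down by $\tilde\phi(p_0)$, and surjective because each $g \in G_x$ produces, via right translation $R_g$ on $P$ followed by descent to $M$ (using that $R_g$ preserves $\theta,\omega$ and fixes $h$ when $g \in G_x$), a local symmetry fixing $m_0$. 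This yields the isomorphism with $G_x$.

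Finally, the stack statement follows formally: assertions (ii) and (iii) identify the groupoid of germs of $G$-realizations together with their germs of symmetries with the action groupoid $X \rtimes G \tto X$, whose orbit space is $X/G$ and whose isotropy at $x$ is exactly $G_x$, so this action groupoid presents the moduli stack of germs of solutions. The step I expect to be the main obstacle is the injectivity in (ii) combined with the rigidity used in (iii): both rest on the uniqueness clause of the Local Universal Property, and care is needed to ensure that the neighborhoods on which $\tilde\phi$ is determined can be taken $G$-saturated and small enough to be compatible with the marked points. I would handle this exactly as in the proof of Theorem \ref{thm:MC:local}, where saturating a neighborhood and extending equivariantly is carried out explicitly, so no genuinely new difficulty arises beyond bookkeeping with the marked points and the isotropy $G_x$.
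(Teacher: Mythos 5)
Your architecture for (i) and (ii) matches the paper's: local $G$-integrability (Theorem~\ref{thm:local:G:int}) to get realizations through every point, and the Local Universal Property of $\wmc$ (Theorem~\ref{thm:MC:local}) to compare them. Your injectivity argument in (ii) is in fact a little more direct than the paper's, which first normalizes both germs to source fibers of local $G$-integrations before comparing; evaluating the isomorphism of germs at a lift of the marked point and using $G$-equivariance of the classifying maps works just as well. One point you gloss over: the germ you attach to $x$ in (i) must be shown independent of the choice of $(P,(\theta,\omega),h)$ and of the lift $p_0$. This is exactly what the paper's use of the canonical local integration plus Theorem~\ref{thm:MC:local} buys (any marked realization over $x$ is locally equivalent to $(\s^{-1}(x),\wmc|_{\s^{-1}(x)},\t,[1_x])$), so it is fixable with the tools you already cite, but it is not ``immediate.''

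The genuine gap is the surjectivity onto $G_x$ in (iii). Your proposed construction --- right translation $R_g$ on $P$ by $g \in G_x$, followed by descent to $M$ --- fails on all three counts claimed in your parenthetical. First, $R_g$ does \emph{not} preserve the coframe: by definition of a $G$-structure with connection, $R_g^*\theta = g^{-1}\cdot\theta$ and $R_g^*\omega = \Ad_{g^{-1}}\cdot\omega$. Second, $h \circ R_g = R_g \circ h$ by equivariance, and $g$ fixes $x$ but in general no neighborhood of $x$ in the leaf, so $h$ is not preserved. Third, and decisively, $R_g$ maps each $G$-orbit in $P$ to itself, so it descends to the \emph{identity} on $M = P/G$ and cannot produce a nontrivial symmetry. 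The correct construction, which is the paper's, requires the groupoid: embed $P$ near $p_0$ into the source fiber $\s^{-1}(x)$ of a local $G$-integration via Theorem~\ref{thm:MC:local}, and realize the symmetry attached to $g \in G_x$ as right translation $R_{\iota(x,g^{-1})}$ by the isotropy \emph{arrow} $\iota(x,g^{-1}) \in \Sigma_G(A_{U_x})_x$. This map does preserve $\wmc$ (right-invariance), commutes with $\t$ and with the principal $G$-action (which is by left multiplication $\tau\, g' = \iota(\t(\tau),g')\cdot\tau$), and sends $1_x$ to $1_x\cdot g$, hence fixes the marked point $[1_x]$ while acting nontrivially on $\s^{-1}(x)/G$. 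Without passing through the (local) $G$-integration there is no ``right translation by an arrow'' available on a bare $G$-realization, so this step cannot be carried out on $P$ alone.
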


\begin{proof}
Let $x\in X$ and denote by $L\subset X$ the leaf of $A$ containing $x$. By Theorem \ref{thm:local:G:int} we can choose a $G$-saturated open neighborhood $U_x \subset L$ of $x$ such that $A_{U_x}$ is $G$-integrable. The corresponding canonical $G$-integration $\Sigma_G(A_{U_x})$ gives us a marked $G$-realization 
\[  (\s^{-1}(x), \wmc|_{\s^{-1}(x)}, \t|_{\s^{-1}(x)}, [1_x]) \]
where $[1_x]\in \s^{-1}(x)/G$ is the projection of the identity arrow $1_x$. This gives a well-defined germ of $G$-realization at $x$, establishing a map
\[ \varphi:X\to \mathfrak{G}(A). \]
This proves (i). This map seems to depend on the choice of $G$-saturated open neighborhood $U_x$, but the rest of the proof will show that this is not the case.

Let $(P, (\theta, \omega), h, m_0)$ be a marked $G$-realization and choose $p_0 \in P$ such that $[p_0]  = m_0$. Also, let $x_0=h(p_0) $. By choosing a small enough open neighborhood $V \subset M = P/G$ of $m_0$ we may assume that $h|{P_V}$ takes values in $U_{x_0}$. Then $(P_V, \theta|_{P_V}, \omega|_{P_V}, h|_{P_V})$ is a $G$-realization of $A_{U_{x_0}}$ and by the Universal Property of the Maurer-Cartan form we obtain (possibly after restricting to even smaller open sets) an equivalence  
\[\phi: (P_V, \theta|_{P_V}, \omega|_{P_V}, h|_{P_V}) \to (\s^{-1}(x_0), \wmc|_{\s^{-1}(x_0)}, \t|_{\s^{-1}(x_0)})\]
which maps $p_0$ to $1_{x_0}$. This shows that $\varphi(x_0) = [(P, (\theta, \omega), h, m_0)]$,
so the map $\varphi$ is surjective. Incidentally, it also show that $\varphi(x)$ is independent of the choice of $G$-saturated open neighborhood $U_x$.

Next, assume that $\varphi(x) = \varphi(y)$. This means that after restricting to $G$-saturated open neighborhoods of $1_x \in \s^{-1}(x)$ and $1_y \in \s^{-1}(y)$ we obtain an equivalence of marked $G$-realizations
\[\phi: (\s^{-1}(x), \wmc|_{\s^{-1}(x)}, \t|_{\s^{-1}(x)}, [1_x]) \to (\s^{-1}(y), \wmc|_{\s^{-1}(y)}, \t|_{\s^{-1}(y)}, [1_y]).\]
Since $\phi$ is an equivalence which respects the marked points we have that $\phi(1_x) = 1_y \cdot g$, for a unique $g \in G$, and that 
$\t|_{\s^{-1}(y)} \circ \phi =  \t|_{\s^{-1}(x)}$. Thus,
\[x = \t(1_x) = \t(1_y \cdot g) = \t(1_y)g = yg.\]
proving that $x$ and $y$ are in the same orbit. 

To complete the proof of (ii), we need to show that $\varphi(xg) = \varphi(x)$. Since $xg\in U_x$, the germs $\varphi(xg)$ and $\varphi(x)$ are both represented by source fibers of the same $G$-integration $\Sigma_G(A_{U_x})$.  If $\iota: U_x\rtimes G \to \Sigma_G(A_{U_x})$ is the groupoid morphism integrating $i:U_x\rtimes \gg\to A_{U_x}$,  then right translation $R_{\iota(x,g)}$ determines an equivalence of the marked $G$-realizations representing $\varphi(x)$ and $\varphi(xg)$. Thus $\varphi(xg) = \varphi(x)$. 

Finally, to prove (iii), it is enough to observe that any symmetry of the $G$-realization $(\s^{-1}(x), \wmc|_{\s^{-1}(x)}, \t|_{\s^{-1}(x)}, [1_x])$ is given by a right translation $R_{\gamma}$, where the arrow belongs to the isotropy group $\gamma \in \Sigma_G(A_{U_x})_x$ and is such that $R_\gamma(1_x) = 1_x \cdot g$. This means that $g \in G_x$ and $\gamma = \iota(x,g^{-1})$. Hence, the symmetries of $[x]\in \mathfrak{G}(A)$ are in bijection with the elements $g\in G_x$.
\end{proof}

Notice that the leaves of $A$ in $X$ do not play any role in the description of the moduli space of germs of $G$-realizations. However, they provide information on the \emph{global nature} of solutions of the realization problem. This will be thoroughly explored in the next section when we describe the moduli space of complete $G$-realizations for which $P/G$ is $1$-connected. Before we do so, let us explain how the leaves of $A$ are related to the `gluing' of germs of $G$-realizations. This is intimately related to the notion of \textit{analytically connected germs} described in \cite{Bryant}.

\begin{theorem}\label{thm:globalization} 
Let $A \to X$ be a $G$-structure algebroid with connection, and $x_1,x_2 \in X$. If there exists a connected $G$-realization $(P, (\theta, \omega), h)$ of $A$ and $m_1, m_2 \in P/G$ such that $[x_1] \in \mathfrak{G}(A)$ is represented by the germ of $(P, (\theta, \omega), h,m_1)$ and $[x_2] \in \mathfrak{G}(A)$ is represented by the germ of $(P, (\theta, \omega), h, m_2)$, then $x_1$ and $x_2$ must belong to the same leaf $L \subset X$ of $A$.

Moreover, when $A|_L$ is $G$-integrable the converse also holds.
\end{theorem}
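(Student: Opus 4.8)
The plan is to prove the two implications separately, leaning on the description of the moduli space of germs in Theorem \ref{thm:moduli:germs} together with Lemma \ref{lem:classifying:map}. In both directions the key dictionary is that the germ at a marked point $m_0$ of a $G$-realization $(P,(\theta,\omega),h)$, with $p_0$ a lift of $m_0$, coincides with $\varphi(h(p_0))$, where $\varphi\colon X/G\to\mathfrak{G}(A)$ is the bijection of Theorem \ref{thm:moduli:germs}; this identification is exactly what the proof of that theorem establishes via the Universal Property of $\wmc$.

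For the forward implication, I would first observe that since $P$ is connected, Lemma \ref{lem:classifying:map} guarantees that $\Im h$ is a connected, open, $G$-saturated subset of a \emph{single} leaf $L$ of $A$; in particular every point of $\Im h$ lies in $L$. Next, choose lifts $p_i\in P$ of the marked points $m_i$ and set $y_i:=h(p_i)\in\Im h\subset L$. The germ of the marked $G$-realization $(P,(\theta,\omega),h,m_i)$ equals $\varphi(y_i)$, while by hypothesis it also equals $[x_i]=\varphi(x_i)$. Injectivity of $\varphi$ then forces $[x_i]=[y_i]$ in $X/G$, i.e. $x_i\in y_i\cdot G$. Since $\Im h$ is $G$-saturated, $x_i\in\Im h\subset L$ for $i=1,2$, which is precisely the assertion.

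For the converse, assume $A|_L$ is $G$-integrable and take the canonical $G$-integration $\Sigma_G(A|_L)\tto L$, which is $\s$-connected by Theorem \ref{thm:canonical:G:integration}. Put $P:=\s^{-1}(x_1)$; this fiber is connected, and $(P,\wmc|_P,\t|_P)$ is a $G$-realization of $A|_L$ (hence of $A$) whose classifying map $\t|_P\colon P\to L$ is a surjective submersion onto $L$ (as noted for source fibers just before Lemma \ref{lem:classifying:map}). Set $m_1:=[1_{x_1}]$; by Theorem \ref{thm:moduli:germs} its germ is $\varphi(\t(1_{x_1}))=\varphi(x_1)=[x_1]$. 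Since $\t|_P$ is onto $L$ and $x_2\in L$, I would choose $\gamma\in P$ with $\t(\gamma)=x_2$ and set $m_2:=[\gamma]$, whose germ is $\varphi(x_2)=[x_2]$. Thus the connected $G$-realization $P$, marked at $m_1$ and $m_2$, represents both germs, completing the converse.

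Neither direction poses a genuine obstacle once Theorem \ref{thm:moduli:germs} is available; the only points needing care are, in the forward direction, the use of connectedness of $P$ to confine $\Im h$ to a single leaf together with the identification of the germ of an arbitrary marked realization with $\varphi(h(p_i))$, and, in the converse, the fact that $\s$-connectedness of $\Sigma_G(A|_L)$ is what simultaneously ensures that $P=\s^{-1}(x_1)$ is connected and that $\t|_P$ surjects onto the whole leaf $L$. Both ingredients are already in hand, so the argument is essentially an assembly of the established machinery.
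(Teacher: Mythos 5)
Your proof is correct and follows essentially the same route as the paper's: the forward direction confines $\Im h$ to a single leaf via Lemma \ref{lem:classifying:map} and identifies the germs with $\varphi(h(p_i))$, and the converse marks the source fiber $\s^{-1}(x_1)$ of the canonical $G$-integration at $[1_{x_1}]$ and at $[\gamma]$ with $\t(\gamma)=x_2$. Your version is merely a little more explicit in invoking the bijection of Theorem \ref{thm:moduli:germs}, which the paper uses implicitly.
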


\begin{remark}
The example of extremal K\"ahler metrics to be discussed in Section \ref{sec:example:EK:surfaces} will show that, in general, the converse does not hold without the $G$-integrability assumption.
\end{remark}

\begin{proof}
Let $(P, (\theta, \omega), h)$ be a connected $G$-realization of $A$. We have seen already that the image of $h$ lies inside a leaf $L \subset X$ of $A$. If $x_1,x_2 \in X$ are such that there exists $m_1,m_2 \in P/G$ for which $[x_1] \in \mathfrak{G}(A)$ is isomorphic to the germ of the marked $G$-realization $(P, (\theta, \omega), h, m_1)$, and $[x_2] \in \mathfrak{G}(A)$ is isomorphic to the germ of the marked $G$-realization $(P, (\theta, \omega), h, m_2)$, then there exists $p_1,p_2 \in P$, with $[p_1] = m_1$, $[p_2] = m_2$ and $h(p_1) = x \cdot G_1, h(p_2) = x \cdot G_2$. It then follows that $h(p_1)$ and $h(p_2)$ belong to the common leaf $L$ of $A$ in $X$, and therefore also $x_1, x_2 \in L$.

For the converse, assume that $A|_L$ is $G$-integrable and consider the $G$-realization 
\[(\s^{-1}(x), \wmc|_{\s^{-1}(x)}, \t|_{\s^{-1}}(x))\] 
of $A$, where $\s, \t$ denote the source and target maps of the canonical $G$-integration $\Sigma_G(A|_L)$. If $y \in L$ is another point in the same leaf as $x$, and $\gamma \in \s^{-1}(x)$ is such that $\t(\gamma) = y$, then germs of $G$-realizations corresponding to $x$ and $y$ can be identified with the germs of the marked realizations
\[(\s^{-1}(x), \wmc|_{\s^{-1}(x)}, \t|_{\s^{-1}}(x), [1_x])\]
and
\[(\s^{-1}(x), \wmc|_{\s^{-1}(x)}, \t|_{\s^{-1}}(x), [\gamma])\]
respectively. This concludes the proof.
\end{proof}

\subsection{Moduli space of 1-connected solutions}

We now consider complete $G$-realizations $(P,(\theta,\omega), h)$ of a $G$-structure algebroid $A \to X$ with connection for which $M=P/G$ is 1-connected, which we call a {\bf 1-connected solution}. The associated moduli space is
\[  \mathfrak{C}(A):=\{\text{isomorphism classes of 1-connected solutions}\}. \]

Recall that if a complete $G$-realization of $A$ covers a leaf $L\subset X$ then $A_L$ is $G$-integrable (cf.~Theorem \ref{thm:complete:int}). Hence, if denote by $X_\reg\subset X$ the saturated subset formed by the leaves $L\subset X$ such that $A_L$ is $G$-integrable, we can define a groupoid over $X_\reg$ by
\[ \Sigma_G^\reg(A):=\bigcup_{L\subset X_\reg} \Sigma_G(A_L). \]
In general, $X_\reg$ is a laminated set and $\Sigma_G^\reg(A) \tto X_\reg$ is a laminated groupoid -- it is smooth along the leaves, but may have a very complicated behavior in directions transverse to the leaves. It can happen, for example, that $A$ is $G$-integrable over a leaf $L_0$, but any saturated neighborhood of $L_0$ contains leaves $L$ such that $A_L$ is not $G$-integrable. The best possible scenario occurs when $A\to X$ is $G$-integrable for then $X_\reg=X$ and $\Sigma_G(A)$ is a Lie groupoid. 

The following result describes the moduli space of 1-connected solutions.

\begin{theorem}
\label{thm:moduli:1-connected}
Let $A\to X$ be a $G$-structure algebroid with connection. The groupoid $\Sigma^\reg_G(A) \tto X_\reg$ represents the moduli space  $\mathfrak{C}(A)$ of $1$-connected solutions to the associated Cartan problem. 
\end{theorem}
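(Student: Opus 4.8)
The plan is to exhibit an equivalence between the laminated groupoid $\Sigma^\reg_G(A)\tto X_\reg$ and the ``solution groupoid'' whose objects are the $1$-connected solutions and whose arrows are their isomorphisms, thereby identifying the stack presented by $\Sigma^\reg_G(A)$ with $\mathfrak{C}(A)$. This runs in close parallel to the proof of Theorem \ref{thm:moduli:germs}, the difference being that the local data and the action groupoid $X\rtimes G$ are now replaced by the global canonical $G$-integrations assembled leaf by leaf. Concretely, I would define the assignment on objects by sending $x\in X_\reg$ to the source fiber realization $(\s^{-1}(x),\wmc|_{\s^{-1}(x)},\t)$ of the canonical $G$-integration $\Sigma_G(A_L)$ of the leaf $L$ through $x$, and on arrows by sending an arrow $\gamma$ of $\Sigma^\reg_G(A)$ to the right translation $R_\gamma$, an isomorphism between the source fibers over the two endpoints of $\gamma$.

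First I would check this is well defined: for $x\in X_\reg$ the leaf $L$ through $x$ has $A_L$ that is $G$-integrable by the very definition of $X_\reg$, the source fiber of the canonical integration is a complete $G$-realization, and its quotient $\s^{-1}(x)/G$ is $1$-connected by the characterization in Theorem \ref{thm:canonical:G:integration}(a); hence it is a genuine $1$-connected solution. For \emph{essential surjectivity}, let $(P,(\theta,\omega),h)$ be any $1$-connected solution covering a leaf $L$. By Theorem \ref{thm:complete:int} completeness forces $A_L$ to be $G$-integrable, so $L\subset X_\reg$, and Proposition \ref{prop:1-connected:complete} then identifies $P$ with a source fiber $\s^{-1}(x)$ of $\Sigma_G(A_L)$ for any $x\in L$. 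Thus every isomorphism class in $\mathfrak{C}(A)$ lies in the image.

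The core of the argument is \emph{full faithfulness}: $\gamma\mapsto R_\gamma$ should be a bijection between the arrow set $\Sigma^\reg_G(A)(x,y)$ and the set $\mathrm{Iso}$ of isomorphisms of $G$-realizations between the fibers over $x$ and $y$ (equivalently, isomorphisms of the corresponding solutions, via the dictionary of Theorem \ref{thm:Cartan:G:alg:B}). That each $R_\gamma$ is such an isomorphism follows from right-invariance of $\wmc$ (so $R_\gamma^*\wmc=\wmc$), from $\t\circ R_\gamma=\t$, and from the computation $R_\gamma(\tau\cdot g)=(R_\gamma\tau)\cdot g$, which uses the $G$-principal action formula \eqref{eq:principal:action:formula} together with $\t(\tau\gamma)=\t(\tau)$; these three facts say exactly that $R_\gamma$ is a morphism of $G$-realizations. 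In particular, since $\Sigma_G(A_L)$ is transitive over $L$, any two points in the same leaf yield isomorphic solutions. Conversely, any isomorphism $\psi$ of $G$-realizations must preserve the classifying (target) maps, forcing the two leaves to coincide, and then the Global Universal Property of $\wmc$ (Theorem \ref{thm:MC:global}), applied after marking base points, shows $\psi$ is a unique right translation $R_\gamma$; uniqueness of $\gamma$ follows because right translations by distinct arrows differ. Specializing to $x=y$ gives that the isotropy group $\Sigma^\reg_G(A)_x$ is precisely the symmetry group of the corresponding solution, recovering the second item of Theorem \ref{thm:moduli:space}.

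I expect the main obstacle to be exactly this full-faithfulness step, and within it the careful bookkeeping around marked points when invoking Theorem \ref{thm:MC:global}: one must verify that the embedding produced there is globally defined (which is where $1$-connectedness of $M=P/G$ is essential) and that different choices of base point produce right translations differing by an isotropy element, so that the correspondence descends to a well-defined bijection on arrows rather than merely on pointed objects. A secondary, more structural point to flag is that $X_\reg$ is only a laminated set and $\Sigma^\reg_G(A)$ a laminated groupoid; accordingly the equivalence should be understood at the level of the presented (topological) stack, and it upgrades to a geometric stack precisely in the $G$-integrable case $X_\reg=X$, where $\Sigma_G(A)$ is a genuine Lie groupoid and one recovers Theorem \ref{thm:moduli:space}(ii).
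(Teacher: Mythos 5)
Your proposal is correct and follows essentially the same route as the paper: essential surjectivity via Theorem \ref{thm:complete:int} and Proposition \ref{prop:1-connected:complete}, and the identification of isomorphisms with arrows (full faithfulness) via the Global Universal Property of $\wmc$ (Theorem \ref{thm:MC:global}). You merely organize the argument more explicitly as an equivalence of groupoids and spell out the verification that right translations are morphisms of $G$-realizations, which the paper leaves implicit.
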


\begin{proof}
We have seen in Proposition \ref{prop:1-connected:complete} that  that every $1$-connected complete $G$-realization $(P, (\theta, \omega), h)$ of $A$ covering a leaf $L$  is isomorphic to the $G$-realization determined by a source fiber of the canonical $\Sigma_G(A_L)$. By the Universal Property of the Maurer-Cartan form, it follows that the group of symmetries of a $1$-connected complete $G$-realization corresponding to a source fiber $\s^{-1}(x)$ is isomorphic to the isotropy group $\Sigma_G(A)_x$. More generally, the set of isomorphisms between two $1$-connected complete $G$-realizations corresponding to $s$-fibers over two points $x,y\in L$, is in bijection with the set of arrows from $x$ to $y$
\[ \Sigma_G(A)(x,y)=\t^{-1}(y)\cap \s^{-1}(x),\]
so the result follows.
\end{proof}

In particular, when $A$ is $G$-integrable the moduli space  $\mathfrak{C}(A)$ is a geometric stack presented by the Lie groupoid $\Sigma_G(A)$. So it makes sense to talk about smooth families of 1-connected solutions, smooth deformations of 1-connected solutions, etc.

\subsection{Comparing the moduli spaces  $\mathfrak{C}(A)$ and $\mathfrak{G}(A)$}\label{sec:morita} 

Let us compare the two moduli spaces above when
$A$ is assumed to be a $G$-integrable $G$-structure algebroid. 

From the very definition of a $G$-structure groupoid, we have a morphism of Lie groupoids
\[
\xymatrix{
X\rtimes G\ar[r]^{\iota}  \ar@<0.25pc>[d] \ar@<-0.25pc>[d]  & \Sigma_G(A)\ar@<0.25pc>[d] \ar@<-0.25pc>[d]  \\
X\ar@{=}[r] & X
}
\]
giving a map between the associated stacks
\[  \mathfrak{G}(A)\to  \mathfrak{C}(A). \]

This map has a clear geometric meaning: 
\begin{itemize}
\item when $A$ is $G$-integrable every (isomorphism class of a) germ of solution extends to a unique (isomorphism class of a) 1-connected solution, and
\item every equivalence of germs of solutions is the restriction of an equivalence between the corresponding 1-connected solutions.
\end{itemize}
This map is surjective, but it is not injective; it is possible for two non-equivalent germs to extend to isomorphic 1-connected solutions. The reason is, of course, that if we start with a 1-connected solution and we consider the germs obtained by choosing two different points, in general, there is no self-equivalence taking one germ to the other (compare with Theorem \ref{thm:globalization}).

When the $G$-action on $X$ is free we can describe ``smaller" models for the moduli spaces that we have introduced before. On the one hand, for  the moduli space of germs of solutions $\mathfrak{G}(A)$ we have the following proposition.

\begin{prop}
Let $A$ be a $G$-structure algebroid with connection. If the action of $G$ on $X$ is free then the moduli space $\mathfrak{G}(A)$ of germs of $G$-realizations is smooth, namely the manifold $X/G$.
\end{prop}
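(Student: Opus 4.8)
The plan is to deduce the statement directly from Theorem~\ref{thm:moduli:germs}, which identifies the moduli stack $\mathfrak{G}(A)$ with the stack presented by the action groupoid $X\rtimes G\tto X$, the symmetry group of the germ at $[x]$ being the isotropy group $G_x$. The freeness hypothesis forces every isotropy group to be trivial, so the presenting groupoid has no nontrivial arrows over each object, and one expects the stack to be representable by an honest manifold. The whole argument is therefore a matter of upgrading the set-theoretic bijection $X/G\cong\mathfrak{G}(A)$ of Theorem~\ref{thm:moduli:germs}(ii) to an isomorphism of smooth stacks.

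First I would record the two consequences of freeness. On the one hand, by Theorem~\ref{thm:moduli:germs}(iii) each germ has trivial symmetry group $G_x=\{e\}$, so $\mathfrak{G}(A)$ carries no nontrivial automorphisms. On the other hand, under our standing convention $G$ is compact, so the action $X\times G\to X$ is automatically proper; a free and proper action of a Lie group yields a smooth quotient manifold $X/G$ for which the projection $\pi:X\to X/G$ is a principal $G$-bundle.

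The key step is then a Morita equivalence. For a free and proper action the action groupoid $X\rtimes G\tto X$ is isomorphic to the gauge (fibre-product) groupoid $X\times_{X/G}X\tto X$, the arrow $(x,g)$ corresponding to the pair $(xg,x)$ of points lying in the same $\pi$-fibre. Since $\pi:X\to X/G$ is a surjective submersion, this gauge groupoid is Morita equivalent to the unit groupoid on the base, namely $X/G\tto X/G$. Consequently the geometric stack presented by $X\rtimes G$ agrees with the stack presented by the manifold $X/G$, i.e. it is representable by the smooth manifold $X/G$. Combined with the bijection of Theorem~\ref{thm:moduli:germs}(ii), this identifies $\mathfrak{G}(A)$ with $X/G$ as claimed.

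The only delicate point is the stack-theoretic conclusion: one must verify that trivial isotropy together with a smooth orbit space genuinely forces representability, and this is precisely what the Morita equivalence with the unit groupoid on $X/G$ supplies. Everything else reduces to the standard differential topology of free proper actions, so I do not anticipate any serious obstacle.
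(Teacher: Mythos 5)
Your proposal is correct and follows essentially the same route as the paper: both arguments reduce the statement to the Morita equivalence between the action groupoid $X\rtimes G\tto X$ and the unit groupoid on $X/G$, the paper phrasing this via the quotient morphism being a Morita fibration and you phrasing it via the identification with the gauge groupoid $X\times_{X/G}X$. The extra intermediate step and the explicit appeal to compactness for properness are harmless elaborations of the same argument.
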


\begin{proof}
In terms of stacks, this is the well-known statement that when the action is free the action groupoid $X\rtimes G\tto X$ and the identity groupoid $X/G\tto X/G$ are Morita equivalent. Namely, the quotient groupoid morphism
\[ X\rtimes G\to X/G, \quad (x,g)\mapsto [x], \]
is a Morita fibration.
\end{proof}


On the other hand, the moduli space of 1-connected solutions also has a small model, but this time it is not a manifold in general, it is represented by a Lie groupoid  over $X/G$. To describe it we need the following lemma.

\begin{lemma}
If the $G$-action on $X$ is free then the morphism $\iota: X\rtimes G\to \Sigma_G(A)$ is injective and there is a free and proper action
\[ \Sigma_G(A) \times (G\times G)\to \Sigma_G(A), \quad \gamma\cdot (g,h) = \iota(\t(\gamma),g)\, \gamma\, \iota(\s(\gamma),h)^{-1}.\]
\end{lemma}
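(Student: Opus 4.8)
The plan is to treat the two assertions --- injectivity of $\iota$ and the existence of a free, proper $G\times G$-action --- separately, deducing both from the single hypothesis that $G$ acts freely on $X$, together with the standing compactness of $G$. The whole argument works for any $G$-principal groupoid, so I would carry it out for $\Sigma_G(A)$ without using any special feature of the canonical integration.

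First I would pin down the source and target of $\iota(x,g)$. Since $\iota$ covers the identity on $X$ one has $\s(\iota(x,g))=x$, while by the very definition \eqref{eq:X:action} of the induced $G$-action on $X$ one has $\t(\iota(x,g))=\t(1_x\,g)=x\,g$; thus $\iota(x,g)$ is an arrow from $x$ to $x g$. Injectivity is then immediate: if $\iota(x,g)=\iota(x',g')$, comparing sources gives $x=x'$ and comparing targets gives $x g=x g'$, so freeness of $G$ on $X$ forces $g=g'$. (Equivalently, $\iota(x,g)=1_x$ forces $x g=x$, hence $g=e$.)

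Next I would check that $\gamma\cdot(g,h)=\iota(\t(\gamma),g)\,\gamma\,\iota(\s(\gamma),h)^{-1}$ is a well-defined right action. Composability is bookkeeping: $\iota(\s(\gamma),h)^{-1}$ runs from $\s(\gamma)h$ to $\s(\gamma)$, then $\gamma$ to $\t(\gamma)$, then $\iota(\t(\gamma),g)$ to $\t(\gamma)g$, so the product is defined and $\s(\gamma\cdot(g,h))=\s(\gamma)h$, $\t(\gamma\cdot(g,h))=\t(\gamma)g$. The identity axiom is clear from $\iota(x,e)=1_x$. For the composition law I would use that $\iota$ is a morphism out of the action groupoid, which yields $\iota(\t(\gamma)g,g')\cdot\iota(\t(\gamma),g)=\iota(\t(\gamma),gg')$ and, after inverting, $\iota(\s(\gamma),h)^{-1}\cdot\iota(\s(\gamma)h,h')^{-1}=\iota(\s(\gamma),hh')^{-1}$; substituting these into $(\gamma\cdot(g,h))\cdot(g',h')$ and inserting the source/target values above collapses the expression to $\gamma\cdot(gg',hh')$.

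Finally I would treat freeness and properness. Freeness follows exactly as injectivity did: $\gamma\cdot(g,h)=\gamma$ forces $\t(\gamma)g=\t(\gamma)$ and $\s(\gamma)h=\s(\gamma)$, whence $g=h=e$ by freeness of $G$ on $X$. For properness the quick route is the standing hypothesis: $G$ compact makes $G\times G$ compact, and a smooth action of a compact Lie group is proper. The one genuinely delicate point --- and the step I expect to be the main obstacle --- is that $\Sigma_G(A)$ may be non-Hausdorff, so ``compact $\Rightarrow$ proper'' should be argued rather than merely quoted. Here I would observe that the right factor $\gamma\mapsto\gamma\,\iota(\s(\gamma),h)^{-1}$ is conjugate, via the inversion diffeomorphism of $\Sigma_G(A)$, to the principal action $\gamma\mapsto\iota(\t(\gamma),h)\,\gamma$ of \eqref{eq:principal:action:formula}, which is proper by hypothesis; the left factor is that principal action itself; the two commute, and their combination is the required proper $G\times G$-action. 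This reformulation has the added benefit of covering the non-compact generalization alluded to in the paper's standing convention.
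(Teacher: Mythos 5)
Your injectivity and freeness arguments are correct and coincide in substance with the paper's: comparing sources and targets of $\iota(x,g)$ is exactly the observation that the kernel of $\iota$ sits in the isotropy of the action groupoid $X\rtimes G$, which is trivial when the action on $X$ is free. The verification that the formula defines an action, and the observation that the right factor $\gamma\mapsto\gamma\,\iota(\s(\gamma),h)^{-1}$ is conjugate via inversion to the principal action, are both fine.

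The gap is in the final step of your properness argument. The principle you invoke --- that two commuting proper actions combine to a proper action of the product group --- is false in general. Take $G=\Rr$ acting on $M=\Rr$ by translation through each factor, so $x\cdot(s,t)=x+s+t$: each factor action is proper and they commute, yet the stabilizer of any point under the combined action is $\{(s,-s)\}\cong\Rr$, which is non-compact, so the combined action is not proper. So after correctly flagging that ``compact $\Rightarrow$ proper'' deserves an argument on a possibly non-Hausdorff arrow space, you close the issue with a step that does not actually close it. What makes the combined action proper here is not that the two factors commute but that they are seen on different legs of the anchor: the map $(\t,\s):\Sigma_G(A)\to X\times X$ is $G\times G$-equivariant, and on the Hausdorff manifold $X\times X$ the $G\times G$-action is free (by freeness of $G$ on $X$) and proper (by compactness of $G$). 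Freeness and properness then pull back along the equivariant anchor, which is precisely the paper's one-line proof of the second assertion and is also the argument that survives the non-compact generalization, where one assumes the $G$-action on $X$ is proper rather than $G$ compact.
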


\begin{proof}
Since $\iota$ is a groupoid morphism covering the identity, its kernel is contained in the isotropy bundle of $G\ltimes X$. But if the $G$-action on $X$ is free, this isotropy bundle is trivial, so $\iota$ is injective. 

For the second part, observe that the anchor map 
\[ (\t,\s):\Sigma_G(A)\to X\times X\] 
is $G\times G$-equivariant. Since the $G\times G$-action on $X\times X$ is free and proper, so is the $G\times G$-action on $\Sigma_G(A)$.
\end{proof}

The promised small model is the content of the following proposition.

\begin{prop}
If the $G$-action on $X$ is free then 
\[ \G_G(A):=\Sigma_G(A)/(G\times G) \]
has a natural structure of a Lie groupoid over $X/G$.
\end{prop}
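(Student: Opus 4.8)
The plan is to realize $\G_G(A)$ as the double coset groupoid of $\Sigma_G(A)$ by the wide subgroupoid $\cK:=\iota(X\rtimes G)$, with $G\times G$ acting by left and right translation by $\cK$. By the preceding lemma this action is free and proper, so $\G_G(A)=\Sigma_G(A)/(G\times G)$ is a smooth manifold and the quotient map $q:\Sigma_G(A)\to\G_G(A)$ is a surjective submersion. The first step is to descend the anchor. The computations $\t(\gamma\cdot(g,h))=\t(\gamma)\,g$ and $\s(\gamma\cdot(g,h))=\s(\gamma)\,h$ show that $(\t,\s):\Sigma_G(A)\to X\times X$ is equivariant for the product action $(x,y)\cdot(g,h)=(xg,yh)$, which is free since the $G$-action on $X$ is. Hence $(\t,\s)$ descends to a smooth map $\G_G(A)\to (X/G)\times(X/G)$, whose two components I take as the target $\bar{\t}$ and source $\bar{\s}$ over $X/G$. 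The unit at $[x]\in X/G$ is $[1_x]$, which is well defined because $1_x\cdot(g,g)=\iota(x,g)\,\iota(x,g)^{-1}=1_{xg}$, and inversion is $[\gamma]\mapsto[\gamma^{-1}]$, consistently since one checks $(\gamma\cdot(g,h))^{-1}=\gamma^{-1}\cdot(h,g)$.

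The multiplication is defined by lifting to composable representatives. Given classes with $\bar{\s}[\gamma_1]=\bar{\t}[\gamma_2]$, freeness of the $G$-action on $X$ yields a unique $k\in G$ with $\t(\gamma_2)\,k=\s(\gamma_1)$, so $\gamma_1$ and $\gamma_2\cdot(k,e)$ are composable in $\Sigma_G(A)$, and I set $[\gamma_1]\,[\gamma_2]:=[\gamma_1\cdot(\gamma_2\cdot(k,e))]$.

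The main obstacle is the well-definedness of this multiplication, and this is precisely where both conclusions of the lemma are used. Suppose $\gamma_1\gamma_2$ and $\gamma_1'\gamma_2'$ are both composable products with $[\gamma_i]=[\gamma_i']$, say $\gamma_1'=a\,\gamma_1\,b^{-1}$ and $\gamma_2'=c\,\gamma_2\,d^{-1}$ with $a,b,c,d\in\cK$. Composability of $\gamma_1\gamma_2$ gives $\s(b)=\s(\gamma_1)=\t(\gamma_2)=\s(c)$, while composability of $\gamma_1'\gamma_2'$ gives $\t(b)=\s(\gamma_1')=\t(\gamma_2')=\t(c)$; thus $b$ and $c$ are arrows of $\cK$ with the same source and the same target. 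Since $\iota:X\rtimes G\to\Sigma_G(A)$ is injective and, by freeness, $X\rtimes G$ has at most one arrow between any two objects, I conclude $b=c$, hence $b^{-1}c=1$. Therefore $\gamma_1'\gamma_2'=a\,(\gamma_1\gamma_2)\,d^{-1}$ lies in the same $G\times G$-orbit as $\gamma_1\gamma_2$, so $[\gamma_1'\gamma_2']=[\gamma_1\gamma_2]$. Associativity, and the unit and inverse axioms, then descend immediately from the corresponding identities in $\Sigma_G(A)$.

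It remains to check smoothness of the structure maps. The maps $\bar{\s},\bar{\t}$ are submersions because $\bar q\circ\s=\bar{\s}\circ q$ (and similarly for $\bar{\t}$), where $\bar q:X\to X/G$ is the quotient map: the left-hand side is a composition of submersions and $q$ is a surjective submersion, forcing $\bar{\s}$ to be a submersion. For the multiplication I would argue locally: choosing local sections of $q$ to lift a pair of classes, moving the lifts into composable position by the smooth correcting element $k$, which depends smoothly on the base points by freeness together with properness, then multiplying in $\Sigma_G(A)$ and projecting by $q$. This exhibits the multiplication as a composition of smooth maps, and it is independent of the choices by the well-definedness established above. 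Hence $\G_G(A)\tto X/G$ is a Lie groupoid.
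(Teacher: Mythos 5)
Your proof is correct and follows essentially the same route as the paper: the structure maps (source, target, unit, inverse) are the obvious ones induced from $\Sigma_G(A)$, and the multiplication is defined by correcting one representative by the unique element of $\iota(X\rtimes G)$ that makes the pair composable — exactly the paper's formula, with your $k$ playing the role of the paper's $g^{-1}$. The paper leaves well-definedness and smoothness "to the reader"; you supply precisely those verifications (the $b=c$ argument via injectivity of $\iota$ and freeness is the right key point), so there is nothing substantively different to compare.
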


\begin{proof}
The structure of the Lie groupoid $\G_G(M) \tto X/G$ is the obvious one induced from $\Sigma_G(A) \tto X$. Its source, target, identity and inverse maps are defined by
\[\bar{\s}([\gamma]) = [\s(\gamma)], \quad \bar{\t}([\gamma]) = [\t(\gamma)], \quad 1_{[x]} = [1_x],\quad [\gamma]^{-1} = [\gamma^{-1}].\]
Perhaps less obvious is the definition of the multiplication. A pair $([\gamma_1],[\gamma_2])$ of arrows of $\G_G(A)$ is composable, by definition, if $\bar{s}([\gamma_1]) = \bar{t}([\gamma_2])$. In this case there exists a unique $g \in G$ such that $\s(\gamma_1) \cdot g= \t(\gamma_2)$. We define
\[[\gamma_1][\gamma_2] = [\gamma_1 \iota(\t(\gamma_2),g)\gamma_2].\]
We leave the verification that these maps are all well defined and smooth, so that  $\G_G(M) \tto X/G$ is a Lie groupoid.
\end{proof}

%

Finally we observe that the groupoids $\Sigma_G(A)$ and $\G_G(A)$ represent indeed the same stack $\mathfrak{C}(A)$:

\begin{theorem}
Let $A$ be a $G$-integrable $G$-structure algebroid and assume that the action of $G$ on $X$ is free.Then the quotient map $\Phi:\Sigma_G(A) \to  \G_G(A)$ is a Morita fibration so the Lie groupoids $\Sigma_G(A)$ and $\G_G(A)$  are Morita equivalent and both present the stack $\mathfrak{C}(A)$ of complete 1-connected solutions.
\end{theorem}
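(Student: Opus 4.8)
The plan is to show that $\Phi\colon\Sigma_G(A)\to\G_G(A)$, which covers the quotient map $p\colon X\to X/G$ on objects, is simultaneously an essential equivalence and a Lie groupoid fibration, i.e.\ a Morita fibration. Since an essential equivalence induces a Morita equivalence of Lie groupoids, and Morita equivalent groupoids present the same differentiable stack, combining this with Theorem~\ref{thm:moduli:1-connected} (which asserts that $\Sigma_G(A)$ presents $\mathfrak{C}(A)$) will immediately yield that $\G_G(A)$ also presents $\mathfrak{C}(A)$. Thus everything reduces to verifying the two defining conditions of an essential equivalence, together with the fibration property, and all three will be consequences of the freeness and properness of the $G\times G$-action established above.

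The ``easy'' conditions come almost for free. Indeed, $\Phi$ is by construction the quotient map of the free and proper $G\times G$-action, hence a principal $(G\times G)$-bundle and in particular a surjective submersion; likewise $p\colon X\to X/G$ is a surjective submersion because the $G$-action on $X$ is free and proper. For essential surjectivity I would observe that every object $[x]\in X/G$ is $p(x)$ and that the identity $1_x$ supplies the required arrow, so that the map $\G_G(A)\times_{X/G}X\to X/G$, $([\delta],x)\mapsto\bar\t([\delta])$, is a surjective submersion. For the fibration property I would check that $\gamma\mapsto(\t(\gamma),\Phi(\gamma))$ is a surjective submersion onto $X\times_{X/G}\G_G(A)$: given $(x,[\delta])$ with $[x]=[\t(\delta)]$, freeness of the $G$-action on $X$ produces a unique $g\in G$ with $\t(\delta)g=x$, and then $\delta\cdot(g,h)$ maps to $(x,[\delta])$ for every $h\in G$, exhibiting the map as a principal $G$-bundle.

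The technical heart is full faithfulness: I must show that the square formed by $(\s,\t)\colon\Sigma_G(A)\to X\times X$ and $(\bar\s,\bar\t)\colon\G_G(A)\to X/G\times X/G$ over $p\times p$ is cartesian, i.e.\ that $\gamma\mapsto(\s(\gamma),\t(\gamma),[\gamma])$ is a diffeomorphism onto $(X\times X)\times_{X/G\times X/G}\G_G(A)$. Injectivity is immediate: if two arrows share source, target, and $(G\times G)$-orbit, then the element $(g,h)$ relating them satisfies $\s(\gamma)h=\s(\gamma)$ and $\t(\gamma)g=\t(\gamma)$, forcing $g=h=e$ by freeness. For surjectivity, given $(x,y,[\gamma])$ with $[\s(\gamma)]=[x]$ and $[\t(\gamma)]=[y]$, freeness yields unique $h,g\in G$ with $\s(\gamma)h=x$ and $\t(\gamma)g=y$, and then $\gamma\cdot(g,h)$ is the unique preimage. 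The main obstacle is to argue that this inverse is smooth, and this is exactly where properness is essential: it guarantees that the ``division maps'' sending $(\s(\gamma),x)\mapsto h$ and $(\t(\gamma),y)\mapsto g$, defined on the relevant fibered products, are smooth. Once smoothness of the inverse is secured, the square is cartesian and $\Phi$ is fully faithful; together with the previous paragraph, $\Phi$ is then an essential equivalence and a fibration, hence a Morita fibration, and the identification of $\G_G(A)$ as a presentation of $\mathfrak{C}(A)$ follows.
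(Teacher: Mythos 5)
Your proof is correct and follows essentially the same route as the paper: both reduce the Morita fibration property to the freeness and properness of the $(G\times G)$-action, with the identical key computation producing the unique group element $g$ with $y=xg$ and invoking properness for smoothness of the resulting division maps. The only cosmetic difference is that you verify the equivalent characterization ``fibration plus essential equivalence (cartesian anchor square)'', whereas the paper checks ``fibration plus kernel isomorphic to the submersion groupoid $X\times_\phi X$'' --- your cartesian-square argument, restricted over the units, is exactly the paper's identification of the kernel.
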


\begin{proof}
In order to show that the morphism
\[
\xymatrix{
\Sigma_G(A) \ar[r]^{\Phi}  \ar@<0.25pc>[d] \ar@<-0.25pc>[d]  &\G_G(A)\ar@<0.25pc>[d] \ar@<-0.25pc>[d]  \\
X\ar[r]_{\phi} & X/G
}
\]
is a Morita fibration we need to verify that:
\begin{enumerate}[(i)]
\item the map 
\[ \hat{\Phi}:\Sigma_G(A) \to \G_G(A)  \tensor[_\s]{\times}{_\phi} X,\quad \gamma\mapsto  (\Phi(\gamma),\s(\gamma)), \]
is a surjective submersion;
\item the kernel of $\Phi$:
\[ \cK:=\{\gamma\in\Sigma_G(A):\Phi(\gamma)=1_x\}, \]
is isomorphic to the submersion groupoid $X\times_\phi X\tto X$.
\end{enumerate}
Item (i) is more or less straightforward. For item (ii) we observe that the groupoid morphism 
\[ \cK\to X\times_\phi X, \quad \gamma\mapsto (\t(\gamma),\s(\gamma)), \]
has inverse the groupoid morphism
\[ X\times_\phi X \mapsto \cK, \quad (y,x)\mapsto \iota(x,g), \]
where $g\in G$ is the unique element such that $y=xg$.
\end{proof}

When the $G$-action is free, the map between the moduli stacks
\[  \mathfrak{G}(A)\to  \mathfrak{C}(A), \]
has another representation in the smaller models above, namely by the obvious groupoid morphism given by the identity section
\[ X/G\to \G_G(A), \quad [x]\mapsto [1_x]. \]

\begin{remark}
When the $G$-action on X is free, the proof above shows that the kernel of the Morita fibration $\Phi:\Sigma_G(A) \to  \G_G(A)$ is the submersion groupoid $X\times_\phi X\tto X$, which in turn is isomorphic to the action groupoid $X\rtimes G\tto X$. Moreover, it is nor hard to see that $\Phi$ yields a Morita bi-bundle
\[\xymatrix{
  \Sigma_G(A) \ar@<0.25pc>[d] \ar@<-0.25pc>[d]  & \ar@(dl, ul) & 
\mathcal{M}\ar[dll]^-{\mu}\ar[drr]_-{\nu} & \ar@(dr, ur) & \G_G(A) 
\ar@<0.25pc>[d] \ar@<-0.25pc>[d]\\
X & & & & X/G}\]
where $\cM$ can be identified with the orbit space of $\Sigma_G(M)$,
\[ \mathcal{M}=\Sigma_G(A)/G, \] 
and $\mu$ (resp., $\nu$) is the map induced by the source (resp., target) of $\Sigma_G(A)$.

When the $G$-action on X is not free most of this fails: $\G_G(A)$ is no longer a groupoid (not even a set theoretic one!), the action groupoid has isotropy so it is not a submersion groupoid, and $\mathcal{M}$ is not a manifold. However, the action of $\Sigma_G(A)$ on $\mathcal{M}$ still makes sense, it is transitive along the fibers of the (continuous) map $\nu: \mathcal{M} \to X/G$, but it is no longer free. Therefore, the action of $\Sigma_G(A)$ on $\mathcal{M}$  fails to be principal, so 
the moduli space $\mathfrak{C}(A)$ cannot be represented by a groupoid over $X/G$, not even a topological groupoid.
\end{remark}

\section{$G$-structure algebroids and geometric structures}
\label{sec:geom:structures}

Let $K$ be an element of the tensor algebra over $\Rr^n$. If $G\subset\GL(n,\Rr)$ is a subgroup of linear transformations leaving the tensor $K$ invariant, then it is a basic fact that a $G$-structure $\B_G(M)\to M$ determines a tensor field $K_M$ on $M$, which one views as the geometric structure determined by the $G$-structure. Properties of the $G$-structure are reflected on properties of the tensor field $K_M$. For example, the $G$-structure is locally flat if and only if there exist local coordinates on $M$ where the components of $K_M$ are constant; connections on the $G$-structure $\B_G(M)\to M$ are in bijection with linear connections $\nabla$ on $TM$ preserving $K_M$, i.e, such that $\nabla K_M=0$.

\begin{definition}
Given a tensor $K$ in $\Rr^n$, a $G$-structure algebroid is said to be of {\bf type $K$} if $G\subset\GL(n,\Rr)$ is a subgroup of linear transformations leaving the tensor $K$ invariant.
\end{definition}

When $A\to X$ is a $G$-structure algebroid with connection in canonical form, the tensor $K$ in $\Rr^n$ determines an $A$-tensor field via the canonical embeddings
\[ \Rr^n\hookrightarrow A_x=\Rr^n\oplus\gg. \]
We will denote the resulting tensor field by $K_A$.  Then, for any $G$-realization $(P,(\theta,\omega),h)$ of the associated Cartan's realization problem, the corresponding solution $M=P/G$ inherits a tensor $K_M$ of the same type and the linear connections $\nabla$ on $TM$ preserves $K_M$: $\nabla K_M=0$.

For the reminder of this section we will study some important classes of $G$-structure algebroids, and see how the extra geometric structure interacts with the Lie algebroid structure. We will focus on the following important cases:
\begin{enumerate}[1)]
\item metric type: $G\subset O(n,\Rr)$;
\item (almost) symplectic type: $G\subset \Sp(n,\Rr)$;
\item (almost) complex type: $G\subset \GL(n,\Cc)$.
\end{enumerate}
There are of course many other interesting types which can be dealt with similarly. Also, one may have combinations of more that one type, e.g. the (almost)-Hermitian type where $G\subset U(n)$ which combines all 3 types above.

For the reminder of this section we will assume that $A\to X$ is a $G$-structure algebroid with connection in canonical form.

\subsection{Metric type}

\begin{definition}
A  $G$-structure algebroid with connection is said to be of {\bf metric type} if $G\subset \OO(n,\Rr)$ and $c=0$.
\end{definition}

For a $G$-structure algebroid $A\to X$ with connection of metric type we have $\gg\subset\so(n,\Rr)$ and so the curvature $R$ satisfies
\begin{equation}
\label{eq:symmetry:Riemannian}
 \langle R(u,v)w,z\rangle_{\Rr^n}+\langle w,R(u,v)z\rangle_{\Rr^n}=0,
\end{equation}
where $\langle ~,~\rangle_{\Rr^n}$ is the usual euclidean metric. As explained before, we have a positive \emph{semi-definite} symmetric 2-tensor $K_A$ on $A$ given by 
\[
K_A((u,\al),(v,\beta))=\langle u,v\rangle_{\Rr^n},\quad (u,\al),(v,\be)\in A,
\]
Solutions of the Cartan's realization problem are Riemannian manifolds and the connection $\omega$ coincides with the Levi-Civita connection.

We can add to $K_A$ a term to obtain a positive definite symmetric 2-tensor $\tilde{K}_A$ on the Lie algebroid $A$
\begin{equation}
\label{eq:A:metric}
\tilde K_A((u,\al),(v,\beta)):=\langle u,v\rangle_{\Rr^n}+ \langle \al,\be\rangle_\gg\quad (u,\al),(v,\be)\in A,
\end{equation}
where $\langle~,~\rangle_\gg$ is the restriction of the Killing form of $\so(n,\Rr)$ to $\gg$. The advantage of this is that we can use this metric to induce a Riemannian metric on the leaves of $A$: for each $x\in X$ there is a unique metric on $T_xL$ that makes the map
\[ \rho: (\ker\rho|_x)^\perp \to T_xL, \]
an isometry. One can say that the Riemannian metric on $L$ is the unique one that makes $\rho:A\to TL$ an algebroid Riemannian submersion.  We will denote by $\tilde K_L$ the resulting metric on the  leaf $L$ and we will say that the leaf $L$ (respectively, the algebroid $A$) is {\bf metric complete} if the metric $\tilde K_L$ is complete (respectively, is complete for all leaves $L$).

From standard facts about $\OO(n,\Rr)$-structures, we have:

\begin{prop}
\label{prop:metric:Lie:G:grpd}.
Let $A\to X$ be a $G$-structure algebroid with connection  of metric type. Any $G$-realization $(P,(\theta,\omega),h)$ has a unique $G$-invariant metric $\tilde K_P$, which restricts to the Killing metric on the fibers of $P\to P/G=M$, and has the following properties:
\begin{enumerate}[(i)]
\item the induced metric $K_M$ on $M$ has Levi-Civita connection $\omega$ and curvature $R$;
\item the map $h:P\to L$ is a Riemannian submersion onto its image $h(P)\subset L$.
\item there is  a commutative diagram of Riemannian submersions
\[
\xymatrix{
P \ar[d]\ar[r]^h & L\ar[d]\\
M \ar[r] & L/G
}
\]
\end{enumerate}
\end{prop}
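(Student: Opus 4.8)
The plan is to produce $\tilde K_P$ by pulling back the fibre metric $\tilde K_A$ along the coframe $(\theta,\omega)$, and then to read off properties (i)--(iii) from the single fact that $(\theta,\omega)\colon TP\to A$ is, by this very definition, a fibrewise isometry. Concretely, since any $G$-realization is a fibrewise isomorphism, I would set
\[
\tilde K_P(V,W):=\langle\theta(V),\theta(W)\rangle_{\Rr^n}+\langle\omega(V),\omega(W)\rangle_\gg,\qquad V,W\in T_pP,
\]
which is positive definite because $(\theta,\omega)$ is a coframe. First I would verify the two defining properties. $G$-invariance follows from the equivariances $g^*\theta=g^{-1}\cdot\theta$ and $g^*\omega=\Ad_{g^{-1}}\cdot\omega$ together with $G\subset\OO(n,\Rr)$ and the $\Ad$-invariance of the Killing form of $\so(n,\Rr)$; the restriction to a fibre of $\pi\colon P\to M$ is the Killing metric because a vertical vector has the form $\al_P$, for which $\theta(\al_P)=0$ and $\omega(\al_P)=\al$. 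For uniqueness I would note that $\tilde K_P$ makes the splitting $T_pP=\Ker\omega_p\oplus\mathcal V_p$ orthogonal (if $\omega(V)=0$ and $W=\al_P$ then $\theta(W)=0$, so $\tilde K_P(V,W)=0$), that it restricts to the Killing metric on $\mathcal V$ and to $\pi^*K_M$ on $\Ker\omega$; any $G$-invariant metric satisfying (i) and (iii) must likewise restrict to the Killing metric on the fibres, make $\pi$ a Riemannian submersion onto $(M,K_M)$ with horizontal distribution $\Ker\omega$, and hence coincide with $\tilde K_P$.

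For (i) I would invoke standard $\OO(n,\Rr)$-structure theory. The tautological form $\theta$ reduces the frame bundle to the orthonormal frames of the induced metric $K_M$, characterized by $K_M(\d\pi\,V,\d\pi\,W)=\langle\theta(V),\theta(W)\rangle_{\Rr^n}$. Since the algebroid is of metric type we have $\gg\subset\so(n,\Rr)$, so $\omega$ is a metric connection, and $c=0$ forces $\Tors(\omega)=0$ via the canonical-form identity $c(x)=\Tors(\omega)_x$. Thus $\omega$ is the torsion-free metric connection, i.e.\ the Levi-Civita connection of $K_M$, and the identity $R(x)=\Curv(\omega)_x$ identifies $R$ with the Riemann curvature tensor.

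The geometric heart of the argument is (ii). Here I would use the formula $\d h=\rho\circ(\theta,\omega)$ from the proof of Lemma~\ref{lem:classifying:map}, together with two isometry statements: $(\theta,\omega)\colon(TP,\tilde K_P)\to(A,\tilde K_A)$ is an isometry by construction, and $\rho$ restricts, by the very definition of $\tilde K_L$, to an isometry $(\Ker\rho)^\perp\to(TL,\tilde K_L)$. Consequently $\ker\d h=(\theta,\omega)^{-1}(\Ker\rho)$, its $\tilde K_P$-orthogonal complement is $(\theta,\omega)^{-1}\big((\Ker\rho)^\perp\big)$, and on this complement $\d h$ is the composite of two isometries, hence an isometry onto $T_{h(p)}L$. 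This is exactly the infinitesimal condition that $h\colon P\to h(P)$ be a Riemannian submersion.

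Finally, for (iii) I would first check that $G$ acts on $(L,\tilde K_L)$ by isometries: the $G$-action on $A$ preserves $\tilde K_A$ (again by $G\subset\OO(n,\Rr)$ and $\Ad$-invariance of the Killing form) and $\rho$ is $G$-equivariant, so $\tilde K_L$ is $G$-invariant and the quotient $q\colon L\to L/G$ is a Riemannian (orbifold) submersion. The diagram commutes because $h$ is $G$-equivariant, inducing $\bar h\colon M\to L/G$ with $q\circ h=\bar h\circ\pi$. Since $\pi$ is a Riemannian submersion by construction and $h,q$ are Riemannian submersions, the composite $q\circ h$ is one, and cancelling the Riemannian submersion $\pi$ shows $\bar h$ is one as well. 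The points needing genuine care are the well-definedness and $G$-invariance of $\tilde K_L$ and the submersion-cancellation step for $\bar h$; everything else is a direct consequence of $(\theta,\omega)$ being a fibrewise isometry and of standard $\OO(n,\Rr)$-structure theory, which is precisely why the statement is phrased as following from standard facts.
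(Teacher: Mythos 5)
Your construction is correct and is exactly what the paper intends: the paper gives no proof of this proposition (it is stated after the remark ``From standard facts about $\OO(n,\Rr)$-structures''), and your argument — pulling back $\tilde K_A$ along the fibrewise isomorphism $(\theta,\omega)$ and reading (i)--(iii) off the two isometry statements for $(\theta,\omega)$ and for $\rho|_{(\Ker\rho)^\perp}$ — supplies the missing details in the natural way. The only soft spot is the uniqueness step: properties (i)--(iii) together with $G$-invariance and the Killing metric on the fibres do \emph{not} by themselves force the $\tilde K_P$-orthogonal complement of the vertical distribution to be $\Ker\omega$ (already for $A$ a $G$-structure Lie algebra over a point, any principal connection on $P$ yields a different metric with the same fibre and base metrics satisfying (i)--(iii)), so you should either add ``$\Ker\omega\perp\Ker\d\pi$'' to the list of characterizing properties or, more cleanly, phrase uniqueness as ``the unique metric making $(\theta,\omega)\colon TP\to(A,\tilde K_A)$ a fibrewise isometry'' — which is precisely what your construction delivers.
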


\begin{remark}
\label{rem:metric:L/G}
In general, the action of $G$ on $L$ is proper but not free (not even locally free!). So $L/G$ is only a smooth stratified space. However, it still has a metric distance induced from the metric in $L$, and it makes sense to talk about metric completeness. Also, it makes sense to talk about a Riemannian submersion $q:M\to L/G$, where by this one means that $q$ is an open map and that its fibers form a singular Riemannian foliation. 
\end{remark}

If $\G\tto X$ is a $G$-structure groupoid integrating a metric type $G$-structure algebroid $A\to X$, we can use right translations to propagate the fiberwise metric $\tilde K_A$ to a right-invariant metric on the source fibers of $\G\tto X$, which we will denote by $\tilde K_\G$. If $x\in X$ and $L$ is the leaf containing $L$, then we have a commutative diagram of Riemannian submersions
\[
\xymatrix{
\s^{-1}(x) \ar[d]\ar[r] & \s^{-1}(x)/\G_x=L\ar[d]\\
M=\s^{-1}(x)/G \ar[r] & M/\G_x=L/G
}
\]
Note that this is just the diagram of Proposition \ref{prop:metric:Lie:G:grpd} for the special case of the $G$-realization $(\s^{-1}(x),\wmc,\t)$. If the $G$-action on $\s^{-1}(x)$ is free, we can improve on Remark \ref{rem:metric:L/G}: $M/\G_x$ and $L/G$ are isomorphic as (Riemannian) stacks, i.e., the action groupoids $\G_x\ltimes M\tto M$ and $G\ltimes L\tto L$ are Morita equivalent. The Morita equivalence is given by the Riemannian manifold $\s^{-1}(x)$ where both groups $G$ and $\G_x$ act by isometries.


The main result concerning metric type $G$-structure algebroids is the following theorem concerning metric completeness of solutions to Cartan's realization problem, stating that, up to cover, metric complete solutions arise from source fibers of $G$-integrations over metric complete leaves. This shows that metric complete solutions are much more constrained than general solutions and can always be found by $G$-integration.

\begin{theorem}
\label{thm:metric:complete}
Let $A\to X$ be a $G$-structure algebroid with connection of metric type and let $(P,(\theta,\omega),h)$ be a $G$-realization mapping into a leaf $L$. The following are equivalent:
\begin{enumerate}[(a)]
\item The metric $K_M$ on $M=P/G$ is complete;
\item The $G$-realization $(P,(\theta,\omega),h)$ is complete and the metric $\tilde K_L$ on $L$ is complete.
\end{enumerate}
In particular, if $M=P/G$ is metric complete and 1-connected, then $P$ is isomorphic to a source fiber of the canonical $G$-integration.
\end{theorem}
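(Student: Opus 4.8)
The plan is to transport everything to the $G$-invariant metric $\tilde K_P$ on the total space $P$ furnished by Proposition~\ref{prop:metric:Lie:G:grpd}, exploiting that the coframe is by construction a fibrewise isometry $(\theta,\omega)\colon (TP,\tilde K_P)\to (A,\tilde K_A)$. Since $G$ is compact and acts on $(P,\tilde K_P)$ by isometries with quotient $M=P/G$, metric completeness passes freely between total space and quotient: $(M,K_M)$ is complete if and only if $(P,\tilde K_P)$ is complete, because the orbits are compact, so a Cauchy sequence downstairs lifts to a Cauchy sequence upstairs and conversely. The theorem therefore reduces to the claim that $(P,\tilde K_P)$ is complete if and only if the $G$-realization is complete in the sense of Definition~\ref{def:complete} \emph{and} $(L,\tilde K_L)$ is complete. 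Throughout I use that, by Lemma~\ref{lem:classifying:map} and Proposition~\ref{prop:metric:Lie:G:grpd}, $h\colon P\to L$ is a Riemannian submersion onto an open $G$-saturated set $U\subseteq L$.

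For the implication (a)$\Rightarrow$(b), assume $(P,\tilde K_P)$ complete. By Hermann's theorem a Riemannian submersion with complete total space is a locally trivial fibre bundle with complete base, so $U$ is metrically complete; being open in the connected leaf $L$ it is also closed, hence $U=L$. Thus the realization is full and $(L,\tilde K_L)$ is complete. For completeness of the $A_L$-action, take a time-dependent section $\xi_t\in\Gamma(A_L)$ with $\rho(\xi_t)$ complete and a maximal integral curve $p\colon[0,T)\to P$ of $\sigma(\xi_t)$. Its $\tilde K_P$-speed equals $\norm{\xi_t(h(p(t)))}_{\tilde K_A}$ by the isometry $(\theta,\omega)$, and this is bounded since $h(p(t))$ remains in the compact image of the complete flow of $\rho(\xi_t)$. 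A finite-length curve in a complete space converges, so the escape lemma forces $T=\infty$ and the action is complete.

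For the converse (b)$\Rightarrow$(a), assume the realization complete and $L$ complete. Passing to the orbifold universal cover of $M$ exactly as in the proof of Theorem~\ref{thm:complete:int} yields a complete $G$-realization with $1$-connected quotient carrying the pulled-back metric, and the covering map is a Riemannian covering; since completeness descends along Riemannian coverings it suffices to treat the $1$-connected case. There Proposition~\ref{prop:1-connected:complete} identifies the realization with a source fibre $(\s^{-1}(x),\wmc,\t)$ of $\Sigma_G(A_L)$ with its right-invariant metric, so I am reduced to proving that such a source fibre is metrically complete when $L$ is. Now $\t\colon\s^{-1}(x)\to L$ is a Riemannian submersion whose fibres are $\G_x$-homogeneous (right translation acts on them transitively and isometrically), hence complete, and whose horizontal lifts of curves exist for all parameter values, since they are integral curves of right-invariant vector fields and the flow formula $\exp(t\xi)(\t(\gamma))\cdot\gamma$ shows these are complete over any curve in $L$. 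I then invoke the fact that a Riemannian submersion with complete base, complete fibres, and the Ehresmann horizontal-lift property has complete total space. I expect this last point to be the main obstacle: it is precisely the step that controls the escape of geodesics in the fibre directions, which the naive ``project the geodesic down, lift it back up'' argument misses because the fibres of $h$ need not be compact, and it is only the completeness of the $A_L$-action (packaged here as right-invariance on the source fibre) that rules such escape out.

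Finally, the displayed ``in particular'' statement is immediate: if $K_M$ is complete and $M$ is $1$-connected, the equivalence just established shows the $G$-realization is complete, whereupon Proposition~\ref{prop:1-connected:complete} identifies $P$ with a source fibre of the canonical $G$-integration $\Sigma_G(A_L)$.
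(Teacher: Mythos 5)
Your reduction to the $G$-invariant metric $\tilde K_P$ on $P$ and your proof of (a)$\Rightarrow$(b) are correct, and the latter takes a genuinely different route from the paper. Where you bound the speed of a maximal integral curve of $\sigma(\xi_t)$ by $\norm{\xi_t}$ over the compact image of the projected trajectory and then apply the escape lemma in the complete manifold $P$, the paper instead invokes the theorem of Kobayashi--Nomizu that infinitesimal isometries of a complete manifold are complete, lets the isometry group $I(M)$ act freely and properly on $P$, and exhibits $P$ as a source fibre of the gauge groupoid $(P\times P)/I(M)$. Your version is more elementary, avoids the isometry group entirely, and also makes explicit (Hermann's theorem plus the open-and-closed argument) why $h$ is onto $L$ and why $\tilde K_L$ is complete, points the paper leaves implicit.

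The gap is exactly where you feared. The ``fact'' that a Riemannian submersion with complete base, complete fibres and globally defined horizontal lifts has complete total space is false. On $\Rr^2$ take $g=\d x^2+\varphi(x,y)^2\,\d y^2$ with projection $(x,y)\mapsto x$: the horizontal distribution is $\partial_x$, so horizontal lifts are constant in $y$ and exist for all time, and the base is complete. Now let $\varphi=1/y$ for $y\ge 2$ except on a thin strip $|xy-1|<\delta$ around the curve $x=1/y$, where it dips to $1/y^2$. Each vertical line meets the strip in a bounded set, so every fibre still has $\int\varphi(x,y)\,\d y=\infty$ and is complete; yet the curve $s\mapsto(1/s,s)$ has length $\int_2^\infty\sqrt{2}\,s^{-2}\,\d s<\infty$ and leaves every compact set, so the total space is incomplete. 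The mechanism is that the holonomy between fibres need not be an isometry, and in your situation it genuinely is not: the holonomy of the horizontal distribution on $\s^{-1}(x)$ between two $\t$-fibres is a left translation, which acts on $\Ker\rho$ by an adjoint-type map that does not preserve $\tilde K_A$. What closes the argument is the ingredient you name but never use, namely completeness of the $A_L$-action on $\s^{-1}(x)$: given a finite-length curve $\delta:[0,T)\to\s^{-1}(x)$, its projection $c=\t\circ\delta$ has finite length and converges in the complete leaf $L$; choose finitely many compactly supported sections $\xi_1,\dots,\xi_N\in\Gamma(A_L)$ spanning $A$ over a neighbourhood of the compact set $\overline{c([0,T))}$ and write $\wmc(\delta'(t))=\sum_i f_i(t)\,\xi_i(c(t))$ with bounded coefficients; then $\delta$ is an integral curve of $\sigma(\xi_t)$ for $\xi_t=\sum_i f_i(t)\xi_i$, whose anchor is compactly supported and hence complete, so the completeness of the action extends $\delta$ past $T$. (The paper simply asserts this step; the above is presumably the intended justification.) With that substitution the rest of your (b)$\Rightarrow$(a) — the reduction to the $1$-connected case and Proposition \ref{prop:1-connected:complete} — goes through as written.
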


\begin{proof}
Let us start by observing that it is enough to prove the equivalence between (a) and (b) when $M$ is 1-connected. Indeed, given a $G$-realization $(P,(\theta,\omega),h)$, if $\tilde{M}$ denotes the universal cover of $M=P/G$, then we have a pullback diagram
\[
\xymatrix{
\tilde{P}\ar[d]\ar[r]^q& P\ar[d]\\
\tilde{M}\ar[r] & M
}
\]
For the $G$-realization $(\tilde{P},(q^*\theta,q^*\omega),q^*h)$, $\tilde{M}=\tilde{P}/G$ is 1-connected, and we have:
\begin{enumerate}[(i)]
\item $M$ is a complete Riemannian manifold if and only if $M$ is complete;
\item $(\tilde{P},(q^*\theta,q^*\omega),q^*h)$ and $(P,(\theta,\omega),h)$ cover the same leaf $L$;
\item $(\tilde{P},(q^*\theta,q^*\omega),q^*h)$ is a complete $G$-realization if and only if $(P,(\theta,\omega),h)$ is complete.
\end{enumerate}
The last item follows because the infinitesimal action $\tilde{\sigma}:A|_L\to \X(\tilde{P})$ covers the infinitesimal action $\sigma:A|_L\to \X(P)$. 
\medskip

Hence, in order to show that (a) $\Rightarrow$ (b), assume that $M$ is metric complete and 1-connected. Since $G$ is compact, $P$ is also a complete Riemannian manifold and we see that since $h:P\to L$ is a Riemannian submersion, it must be onto.  Now, completeness implies that every infinitesimal isometry of $M$ is complete (see \cite[Theorem VI.3.4]{KN1}), so the Lie group of isometries $I(M)$ acts on $P$ in a proper and free fashion (by lifts) with Lie algebra $\Ker\rho_{h(u)}$, and the action commutes with the $G$-action. Moreover, $h$ induces a covering $h:P/I(M)\to L$ and a fiberwise isomorphism $(h,\theta,\omega):TP/I(M)\to A|_L$. Set $\tilde L:=P/I(M)$ so that $TP/I(M)\to \tilde L$ is a Lie $G$-algebroid isomorphic to the pullback algebroid $h^*(A|_L)$. It follows that the gauge groupoid $(P\times P)/I(M)\tto \tilde L$ is a $G$-integration of $h^*(A|_L)$. This implies that $P$, being a source fiber of a $G$-integration, is a complete $G$-realization of $h^*(A|_L)$. Now we observe that the infinitesimal actions of $A|_L$ and $h^*(A|_L)$ on $P$ fit into a commutative diagram
\[
\xymatrix{
h^*(A|_L)\ar[dr]^{\sigma}\ar[d]\\
A|_L\ar[r]_{\sigma} & TP}
\]
From this it follows that the infinitesimal action $\sigma:A|_L\to \X(P)$ must also be complete, so that $P$ is a complete $G$-realization of $A|_L$. Since $P/G$ is 1-connected, it follows that $P$ must be isomorphic to a source fiber of the canonical $G$-integration, proving the last statement of the theorem.

Conversely, to show that (b) $\Rightarrow$ (a) note that if the $G$-realization $(P,(\theta,\omega),h)$ is complete and $M=P/G$ is 1-connected, then Proposition \ref{prop:1-connected:complete}, shows that $P\simeq \s^{-1}(x)$ for the canonical $G$-integration $\Sigma_G(A)\tto X$, and $h=\t:\s^{-1}(x)\to L$. Under the isomorphism $P\simeq \s^{-1}(x)$, the metric becomes the right-invariant metric $\tilde K_{\Sigma_G(A)}$. This metric is complete if and only if the metric on $L$ is complete. Therefore if (b) holds, the metric on $P$ is complete, and so is the metric on the quotient  $\s^{-1}(x)\to \s^{-1}(x)/G=M$.
\end{proof}

\begin{corol}
Let $A\to X$ be a $G$-structure algebroid with connection of metric type and assume that a leaf $L$ is metric complete (e.g., a compact leaf). Then every complete solution covering $L$ is  metric complete. In particular, every solution $(\s^{-1}(x),\wmc,\t)$ obtained from a $G$-integration $\G_L \tto L$ of $A_L$ is metric complete.
\end{corol}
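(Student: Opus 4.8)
The plan is to read the statement as a direct specialization of Theorem \ref{thm:metric:complete}, whose equivalence between metric completeness of $M=P/G$ and the conjunction ``the $G$-realization is complete \emph{and} $\tilde K_L$ is complete'' is exactly tailored to this situation. First I would unwind the terminology. By Definition \ref{def:complete}, a \emph{complete solution covering $L$} is the quotient $M=P/G$ associated with a complete $G$-realization $(P,(\theta,\omega),h)$ whose classifying map is a surjective submersion $h:P\to L$ onto the leaf. The hypothesis that $L$ is metric complete says precisely that the leaf metric $\tilde K_L$ is complete; and the parenthetical compact case is subsumed, since a compact Riemannian manifold is automatically metrically complete, so $\tilde K_L$ is complete whenever $L$ is compact.

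With both clauses of hypothesis (b) of Theorem \ref{thm:metric:complete} thus verified --- completeness of the realization from the definition of a complete solution, completeness of $\tilde K_L$ from metric completeness of the leaf --- the implication (b) $\Rightarrow$ (a) applies verbatim and forces the induced metric $K_M$ on $M=P/G$ to be complete. This gives the first assertion: every complete solution covering a metric complete leaf is metric complete.

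For the final statement I would invoke the Example preceding Theorem \ref{thm:complete:int}, where it is shown that for any $G$-integration $\G_L \tto L$ of $A_L$ the source fiber $(\s^{-1}(x),\wmc|_{\s^{-1}(x)},\t)$ is a complete $G$-realization, its classifying map being the target $\t$, which is a surjective submersion onto $L$. Hence $(\s^{-1}(x),\wmc,\t)$ is a complete solution covering $L$, and the first part of the corollary applies to yield metric completeness. Because the result is a specialization of Theorem \ref{thm:metric:complete}, there is no genuine mathematical obstacle here; the only point demanding care is bookkeeping, namely confirming that the two conditions feeding into hypothesis (b) are each satisfied before the theorem is applied, and noting that the source fiber indeed covers the \emph{entire} leaf $L$ so that the completeness hypothesis on $\tilde K_L$ is the relevant one.
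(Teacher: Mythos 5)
Your proposal is correct and matches the paper's intent exactly: the corollary is stated without proof precisely because it is the immediate specialization of the implication (b) $\Rightarrow$ (a) in Theorem \ref{thm:metric:complete}, combined with the observation (from the example before Theorem \ref{thm:complete:int}) that source fibers of $G$-integrations are complete $G$-realizations covering $L$. Your bookkeeping of the two hypotheses is exactly the argument the authors leave implicit.
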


\begin{example}
If  $A=\Rr^n\times(\Rr^n\oplus\so(n,\Rr))\to \Rr$ is the constant scalar curvature Lie $\SO(n,\Rr)$-algebroid (Example \ref{ex:constant:curvature}) then the metrics
$\tilde K_L$ are complete for all $L$, since these are just the points of $\Rr$. Hence, its canonical $\SO(n,\Rr)$-integration gives all the 1-connected, constant curvature, complete Riemannian manifolds, and one recovers the usual classification: $\Ss^n$ (positve curvature), $\Rr^n$ (zero curvature) and $\mathbb{H}^n$ (negative curvature).

The flat $n$-torus $\Tt^n$ is also a complete Riemannian manifold with zero curvature, but its oriented orthogonal frame bundle is not a strongly complete $G$-realization. So $\Tt^n$ cannot be realized as $\s^{-1}(0)/\SO(n,\Rr)$ for some $\SO(n,\Rr)$-integration.

When $n$ is odd, the real projective space $\mathbb{RP}^n$ has constant positive scalar curvature metric, which is obviously complete. Its oriented orthogonal frame bundle is the group $P\SO(n+1,\Rr)$, which gives a strongly complete $\SO(n,\Rr)$-realization. Hence, $\mathbb{RP}^n=\s^{-1}(x)/\SO(n,\Rr)$ for a $\SO(n,\Rr)$-integration.
\end{example}


\subsection{Symplectic type}

In the symplectic case, we will need to pay attention to the integrability condition. We start without any integrability condition:

\begin{definition}
A  $G$-structure algebroid with connection and fiber $\Rr^{2n}\oplus\gg$ is said to be of {\bf almost-symplectic type} if $G\subset \Sp(n,\Rr)$.
\end{definition}

For a $G$-structure algebroid $A\to X$ of almost-symplectic type $\gg\subset\sp(n,\Rr)$ so the curvature $R$ satisfies
\begin{equation}
\label{eq:symmetry:symplectic}
\Omega_{\can}(R(u,v)w,z)+\Omega_{\can}(w,R(u,v)z)=0, 
\end{equation}
where $\omega_{\can}\in\Omega^{2}(\Rr^{2n})$ denotes the canonical symplectic form on $\Rr^{2n}$. We can define a fiberwise 2-form $\Omega_A$ on $A$ by
\begin{equation}
\label{eq:A:form} 
\Omega_A((u,\al),(v,\beta)):=\Omega_{\can}(u,v),\quad (u,\al),(v,\be)\in A,
\end{equation}
Then, as we saw above, for any $G$-realization $(P,(\theta,\omega),h)$ of the associated Cartan's realization problem, the corresponding solution $M=P/G$ inherits a non-degenerate 2-form $\Omega_M$ and for the linear connection $\nabla$ on $TM$ we have $\nabla \Omega_M=0$. Notice that $\Omega_M$ in general will not be closed.

For a Lie algebroid $A\to X$ we will denote by $\d_A:\Omega^\bullet(A)\to \Omega^{\bullet+1}(A)$ the Lie algebroid differential. 

\begin{definition}
An almost-symplectic $G$-structure algebroid $A\to X$ is said to be of {\bf symplectic type} if $\d_A\Omega_A=0$.
\end{definition}

A straightforward computation shows that:

\begin{lemma}
Let $A\to X$ be a  $G$-structure algebroid of almost-symplectic type. Then
\[ \d_A \Omega_A((u,\al),(v,\be),(w,\gamma))=\Omega_{\can}(c(u,v),w)+\Omega_{\can}(c(v,w),u)+\Omega_{\can}(c(w,u),v). \]
\end{lemma}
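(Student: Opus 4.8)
The plan is to compute $\d_A\Omega_A$ directly on the global frame of constant sections of the canonical-form bundle $A=X\times(\Rr^{2n}\oplus\gg)$. Since $\d_A\Omega_A\in\Omega^3(A)$ is $C^\infty(X)$-multilinear, it is enough to evaluate it on three constant sections $s_1=(u,\al)$, $s_2=(v,\be)$, $s_3=(w,\gamma)$ with $u,v,w\in\Rr^{2n}$ and $\al,\be,\gamma\in\gg$ fixed. For these the Chevalley--Eilenberg formula reads
\begin{align*}
\d_A\Omega_A(s_1,s_2,s_3)
&=\rho(s_1)\Omega_A(s_2,s_3)-\rho(s_2)\Omega_A(s_1,s_3)+\rho(s_3)\Omega_A(s_1,s_2)\\
&\quad-\Omega_A([s_1,s_2],s_3)+\Omega_A([s_1,s_3],s_2)-\Omega_A([s_2,s_3],s_1).
\end{align*}
By the definition $\Omega_A((u,\al),(v,\be))=\Omega_{\can}(u,v)$ in \eqref{eq:A:form}, each value $\Omega_A(s_i,s_j)=\Omega_{\can}(u_i,u_j)$ is a constant function on $X$, so the three anchor terms vanish and only the bracket terms survive.

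Next I would substitute the canonical-form bracket \eqref{eq:canonical:Lie:bracket}, whose $\Rr^{2n}$-component is $\al\cdot v-\be\cdot u-c(u,v)$ for $[s_1,s_2]$, and the analogous expressions for $[s_1,s_3]$ and $[s_2,s_3]$. As $\Omega_A$ only reads off the $\Rr^{2n}$-components of its arguments, each of the three surviving terms splits into a $\gg$-action contribution and a torsion contribution. Collecting the torsion contributions gives $\Omega_{\can}(c(u,v),w)-\Omega_{\can}(c(u,w),v)+\Omega_{\can}(c(v,w),u)$, and using the skew-symmetry $c(u,w)=-c(w,u)$ this becomes exactly the claimed right-hand side $\Omega_{\can}(c(u,v),w)+\Omega_{\can}(c(v,w),u)+\Omega_{\can}(c(w,u),v)$.

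It then remains to check that the six $\gg$-action contributions cancel. They group into three pairs, the first being $-\Omega_{\can}(\al\cdot v,w)+\Omega_{\can}(\al\cdot w,v)$ and the other two its cyclic analogues in $(u,v,w)$ and $(\al,\be,\gamma)$. I would annihilate each pair using the hypothesis $\gg\subset\sp(n,\Rr)$: every $\al\in\gg$ is infinitesimally symplectic, so $\Omega_{\can}(\al\cdot a,b)+\Omega_{\can}(a,\al\cdot b)=0$, which combined with the skew-symmetry of $\Omega_{\can}$ yields the symmetry $\Omega_{\can}(\al\cdot a,b)=\Omega_{\can}(\al\cdot b,a)$ and forces each pair to vanish. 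The whole argument is a short elementary computation with no conceptual obstacle; the only point requiring genuine care is the sign bookkeeping across the six bracket contributions together with the correct matching of the three $\gg$-pairs, so that threefold cancellation is the step I would verify most carefully.
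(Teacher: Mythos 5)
Your computation is correct, and it is exactly the "straightforward computation" the paper alludes to without writing out: evaluate the Chevalley--Eilenberg formula on constant sections, note the anchor terms vanish since $\Omega_A(s_i,s_j)$ is constant, and use the canonical-form bracket plus the infinitesimal symplectic property of $\gg\subset\sp(n,\Rr)$ to cancel the action terms and collect the torsion terms. Nothing to add.
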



In particular, every $G$-structure algebroid $A\to X$ of almost-symplectic type which is torsionless is of symplectic type. 

\begin{prop}
\label{prop:symp:Lie:G:grpd}
Let $(P,(\theta,\omega),h)$ be a $G$-realization of a $G$-structure algebroid $A\to X$ with connection of symplectic type. Then the induced form $\Omega_M$ on $M=P/G$ is symplectic and $\omega$ is a symplectic connection.
\end{prop}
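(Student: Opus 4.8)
The plan is to prove closedness of $\Omega_M$ by pulling everything back to $P$ and exploiting that a $G$-realization is, by definition, a morphism of Lie algebroids. Write $\pi\colon P\to M$ for the quotient map and $\Phi=(\theta,\omega)\colon TP\to A$ for the bundle map underlying the $G$-realization. First I would record that the form on $M$ is characterised by $\pi^*\Omega_M=\tilde\Omega$, where $\tilde\Omega\in\Omega^2(P)$ is given by $\tilde\Omega(X,Y)=\Omega_{\can}(\theta(X),\theta(Y))$. This $\tilde\Omega$ is basic: it is $G$-invariant because $\Omega_{\can}$ is $G$-invariant ($G\subset\Sp(n,\Rr)$) and $\theta$ is $G$-equivariant, and it is horizontal because $\theta$ vanishes on vertical vectors by strong horizontality. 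Hence $\tilde\Omega=\pi^*\Omega_M$ for a unique $\Omega_M$ on $M$. Non-degeneracy of $\Omega_M$ is immediate, since $\theta$ restricts to an isomorphism from any horizontal complement of the vertical distribution onto $\Rr^{2n}$ and $\Omega_{\can}$ is non-degenerate there.

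The key observation is that $\tilde\Omega$ is nothing but the pullback of the canonical $A$-form: for $X,Y\in T_pP$ one has $\Phi(X)=(\theta(X),\omega(X))$, so $(\Phi^*\Omega_A)(X,Y)=\Omega_A(\Phi(X),\Phi(Y))=\Omega_{\can}(\theta(X),\theta(Y))=\tilde\Omega(X,Y)$. Thus $\pi^*\Omega_M=\Phi^*\Omega_A$. Since $\Phi$ is a morphism of Lie algebroids, it intertwines the Lie algebroid differential with the de Rham differential, $\d\circ\Phi^*=\Phi^*\circ\d_A$ (this is exactly the relation used in the proof of Theorem~\ref{thm:Cartan:G:alg:A}). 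Therefore \[ \pi^*\d\Omega_M=\d\,\pi^*\Omega_M=\d\,\Phi^*\Omega_A=\Phi^*\d_A\Omega_A=0, \] where the last equality is the defining condition $\d_A\Omega_A=0$ of symplectic type. As $\pi$ is a surjective submersion, $\pi^*$ is injective on forms, so $\d\Omega_M=0$; together with non-degeneracy this shows that $\Omega_M$ is symplectic.

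For the second assertion I would recall that, since $\omega$ is a $G$-structure connection with $G\subset\Sp(n,\Rr)$, the induced linear connection $\nabla$ on $TM$ preserves the $G$-invariant tensor $\Omega_{\can}$, hence $\nabla\Omega_M=0$; this is the general fact already noted for the almost-symplectic case. The only new content over that case is precisely the closedness just proved, which upgrades $\Omega_M$ to a genuine symplectic form, so that $\nabla$ is a symplectic connection.

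I expect no serious obstacle along this route; the real work is in setting up the identification $\pi^*\Omega_M=\Phi^*\Omega_A$ and invoking that $\Phi^*$ commutes with the differential on $A$-forms. Should one prefer a hands-on verification, one would instead substitute the structure equation $\d\theta=c(h)(\theta\wedge\theta)-\omega\wedge\theta$ into $\d\tilde\Omega(X,Y,Z)=\sum_{\mathrm{cyc}}\Omega_{\can}(\d\theta(X,Y),\theta(Z))$; then the term coming from $-\omega\wedge\theta$ cancels in cyclic pairs by the invariance identity $\Omega_{\can}(\xi\cdot u,v)+\Omega_{\can}(u,\xi\cdot v)=0$ for $\xi\in\gg\subset\sp(n,\Rr)$, while the term coming from $c(h)$ is exactly $\d_A\Omega_A$ by the preceding Lemma and hence vanishes. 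This explicit cancellation is the only place where the hypothesis $\gg\subset\sp(n,\Rr)$ enters, and it is the step one must track most carefully.
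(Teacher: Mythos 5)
Your proof is correct and follows essentially the same route as the paper: both pull back the canonical $A$-form $\Omega_A$ along the realization morphism $(\theta,\omega)\colon TP\to A$, use that a Lie algebroid morphism intertwines $\d_A$ with $\d$ together with $\d_A\Omega_A=0$ to get closedness, observe that the resulting form on $P$ is $G$-invariant with kernel the $G$-orbits so that it descends to a symplectic form on $M$, and invoke standard facts about $\Sp(n,\Rr)$-structures for the statement about the connection. You are merely more explicit about the verification than the paper's brief argument.
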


\begin{proof}
The Lie algebroid morphism 
\[
\xymatrix{
TP \ar[d]\ar[r]^{(\theta,\omega)} & A|_L\ar[d]\\
P \ar[r]_h & L
}
\]
allows us to pullback the $A$-form $\Omega_A$ to a 2-form $\Omega_P$ on $P$, which is $G$-invariant, closed and whose kernel coincides with the $G$-orbits. Hence, $\Omega_P$ descends to a symplectic form  $\Omega_M$ on $M=P/G$. By standard results for $\Sp(n,\Rr)$-structures, $\omega$ is a symplectic connection.
\end{proof}

\begin{remark}
In the case of a $G$-structure algebroid with connection of metric type we saw that the leaves $L$ and their quotients $L/G$ have natural metrics such that for any $G$-realization we obtain a commutative diagram of Riemannian submersions
\[
\xymatrix{
P \ar[d]\ar[r]^h & L\ar[d]\\
M \ar[r] & L/G
}
\]
It is natural to wonder what happens in the case of a $G$-structure algebroid with connection of symplectic type. Since 2-forms, in general, cannot be pushed forward, one must instead work with their graphs which are Dirac structures. One can then show that in the diagram above all manifolds are Dirac structures and all maps are forward Dirac maps (see e.g. \cite{Bursztyn13}). The Dirac structure on $P$ and $M$ are the graphs of the closed 2-forms $\Omega_P$ and $\Omega_M$ above, and since $\Omega_M$ is symplectic, the resulting Dirac structure on $L/G$ is actually a Poisson structure (which, in general, is stratified by orbits types).
\end{remark}


\subsection{Complex type}

\begin{definition}
A  $G$-structure algebroid with connection and fiber $\Cc^n\oplus\gg$ is said to be of {\bf almost-complex type} if $G\subset \GL(n,\Cc)$.
\end{definition}

For a $G$-structure algebroid $A\to X$ of almost-complex type $\gg\subset \gl(n,\Cc)$, so the curvature $R$ satisfies
\begin{equation}
\label{eq:symmetry:complex}
R(u,v)(iw)=i R(u,v)(w).
\end{equation}
We define an endomorphism $J_A:A\to A$ by
\begin{equation}
\label{eq:A:J}
J_A(u,\al)=(iu,0),\quad (u,\al)\in A.
\end{equation}
Notice that $J_A^3=-J_A$. As we saw above, for any $G$-realization $(P,(\theta,\omega),h)$ of the associated Cartan's realization problem, the corresponding solution $M=P/G$ inherits an almost complex structure $J_M$ and for the linear connection $\nabla$ on $TM$ we have $\nabla J_M=0$. Notice that $J_M$ in general will not be a complex structure.

Given any endomorphism $J:A\to A$ of a Lie algebroid $A\to X$ one defines its Nijenhuis torsion as usual by
\[ N_J(a,b):=[Ja, Jb]-J[Ja,b]-J[a,Jb]+J^2[a,b]. \]
In the case of a $G$-structure algebroid $A\to X$ of almost-complex type a straightforward computation shows that:

\begin{lemma}
Let $A\to X$ be an almost-complex $G$-structure algebroid. The endomorphism $J_A$ has Nijenhuis torsion
\[
N_{J_A}((u,\al),(v,\be))=(-c(iu,iv)+ic(iu,v)+ic(u,iv)+c(u,v),-R(iu,iv)).
\]
\end{lemma}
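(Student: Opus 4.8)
The plan is to compute the Nijenhuis torsion $N_{J_A}$ directly from its definition, exploiting the fact that $A$ is in canonical form so that we may work entirely with constant sections and the explicit bracket formula \eqref{eq:canonical:Lie:bracket}. Recall that $J_A(u,\al)=(iu,0)$, hence $J_A^2(u,\al)=(-u,0)$ and $J_A^2$ is \emph{not} $-\mathrm{id}$ on all of $A$ (it kills the $\gg$-component), which is why $J_A^3=-J_A$ rather than $J_A^2=-\mathrm{id}$. This is the one subtlety to keep track of: the terms $J_A^2[a,b]$ in the definition must be computed using this true $J_A^2$, not $-\mathrm{id}$.

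First I would evaluate each of the four bracket terms appearing in
\[ N_{J_A}(a,b)=[J_A a,J_A b]-J_A[J_A a,b]-J_A[a,J_A b]+J_A^2[a,b] \]
on a pair of constant sections $a=(u,\al)$, $b=(v,\be)$. Using \eqref{eq:canonical:Lie:bracket}, the term $[J_A a,J_A b]=[(iu,0),(iv,0)]=(-c(iu,iv),-R(iu,iv))$, since the $\gg$-components of $J_Aa$ and $J_Ab$ vanish. Next, $[J_A a,b]=[(iu,0),(v,\be)]=(-\be\cdot(iu)-c(iu,v),-R(iu,v))$, and applying $J_A$ to this and negating yields a contribution whose $\Cc$-component is $-i(-\be\cdot(iu)-c(iu,v))$ and whose $\gg$-component vanishes; by complex-linearity of the $\gg$-action ($\be\cdot(iu)=i(\be\cdot u)$ as $\be\in\gg\subset\gl(n,\Cc)$) the $\be\cdot u$ pieces will cancel against the symmetric term. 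Similarly for $-J_A[a,J_A b]$. Finally $J_A^2[a,b]=J_A^2(\al\cdot v-\be\cdot u-c(u,v),[\al,\be]_\gg-R(u,v))=(-(\al\cdot v-\be\cdot u-c(u,v)),0)$.

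The key step is then to collect the four $\Cc$-components and the four $\gg$-components separately. For the $\gg$-component, only the first and last terms contribute $\gg$-pieces, but $J_A^2[a,b]$ has vanishing $\gg$-component, so the total $\gg$-component is simply $-R(iu,iv)$, matching the claimed formula. For the $\Cc$-component, I would verify that all terms involving the infinitesimal $\gg$-actions $\al\cdot v$, $\be\cdot u$ cancel — this is exactly where the complex-linearity \eqref{eq:symmetry:complex} of $R$ and the relation $\be\cdot(iu)=i\be\cdot u$ are used — leaving only the torsion terms $-c(iu,iv)+ic(iu,v)+ic(u,iv)+c(u,v)$.

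The main obstacle, though entirely routine, is the careful bookkeeping of the $i$'s and signs in the $\Cc$-component so that the four $c$-terms survive with the correct coefficients while the $\al\cdot v$ and $\be\cdot u$ terms cancel in pairs. Once verified on constant sections, the identity extends to arbitrary sections because the Nijenhuis torsion $N_{J_A}$ is $C^\infty(X)$-bilinear (a standard fact, the Leibniz/lower-order terms cancelling), so its value is determined pointwise by its value on the constant sections that span each fiber. This completes the computation.
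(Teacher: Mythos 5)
Your computation is correct: expanding each of the four terms of $N_{J_A}$ on constant sections via the canonical bracket \eqref{eq:canonical:Lie:bracket}, using $\be\cdot(iu)=i(\be\cdot u)$ for $\be\in\gg\subset\gl(n,\Cc)$ to cancel the action terms, and invoking $C^\infty(X)$-bilinearity of the Nijenhuis torsion is exactly the ``straightforward computation'' the paper alludes to without writing out. (Only a cosmetic remark: the cancellation in the $\Cc^n$-component uses complex-linearity of $\al,\be\in\gg$, not of $R(u,v)$, so the citation of \eqref{eq:symmetry:complex} is superfluous.)
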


This lemma shows that we cannot expect the  Nijenhuis torsion to vanish. The best we can hope is for the $\Cc^n$-component of $N_J$ to vanish and this enough for our purposes.

\begin{definition}
An almost-complex $G$-structure algebroid $A\to X$ is said to be of {\bf complex type} if the Nijenhuis torsion of $J_A$ has vanishing $\Cc^n$-component.
\end{definition}

In particular, every $G$-structure algebroid $A\to X$ of almost-complex type which is torsionless is of complex type. 

\begin{prop}
\label{prop:complx:Lie:G:grpd}
Let $(P,(\theta,\omega),h)$ be a $G$-realization of a $G$-structure algebroid $A\to X$ of complex type. Then the induced 1-1-tensor $J_M$ on $M=P/G$  is a complex structure and $\omega$ is a complex connection.
\end{prop}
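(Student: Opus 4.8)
The plan is to deduce integrability of $J_M$ from the Newlander--Nirenberg theorem, by transporting the vanishing of the $\Cc^n$-component of $N_{J_A}$ through the $G$-realization to the vanishing of the Nijenhuis torsion of $J_M$. The assertions that the induced linear connection is complex (i.e.\ $\nabla J_M=0$) will follow, exactly as in the metric and symplectic cases, from the general discussion opening Section~\ref{sec:geom:structures} together with standard $\GL(n,\Cc)$-structure theory; so the one substantive point is integrability.

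First I would use the Lie algebroid morphism and fiberwise isomorphism $\Phi=(\theta,\omega)\colon TP\to A$ to pull back the endomorphism $J_A$ to an endomorphism $J_P:=\Phi^{-1}\circ J_A\circ\Phi$ of $TP$. Since $G\subset\GL(n,\Cc)$ acts $\Cc$-linearly on $\Cc^n$ it commutes with multiplication by $i$, so $J_A$ is $G$-equivariant; as $\Phi$ is $G$-equivariant, $J_P$ is $G$-invariant. From formula \eqref{eq:A:J} for $J_A$ and strong horizontality of $\theta$, the endomorphism $J_P$ has kernel the vertical distribution (the tangent spaces to the $G$-orbits) and restricts on the horizontal distribution $H=\Ker\omega$ to an honest almost complex structure, $J_P^2=-\mathrm{id}$ there. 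Hence $J_P$ descends to a $G$-invariant almost complex structure $J_M$ on $M=P/G$, characterized by $\d\pi\circ J_P=J_M\circ\d\pi$ where $\pi\colon P\to M$ is the quotient map; this is the tensor $J_M$ of the proposition.

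Next I would observe that, because $\Phi$ intertwines brackets and satisfies $\Phi J_P=J_A\Phi$, it intertwines the Nijenhuis torsions: $\Phi\circ N_{J_P}=N_{J_A}\circ(\Phi\wedge\Phi)$. Composing with the projection $\pr_{\Cc^n}\colon A\to\Cc^n$ and using $\pr_{\Cc^n}\circ\Phi=\theta$ together with the complex-type hypothesis (the $\Cc^n$-component of $N_{J_A}$ vanishes), I obtain $\theta(N_{J_P})=0$; by strong horizontality this means $N_{J_P}$ takes values in the vertical distribution. A direct computation with basic horizontal vector fields $X,Y$ on $P$, which are $\pi$-related to $\bar X,\bar Y$ on $M$ and for which $J_PX,J_PY$ are again basic horizontal, then shows $\d\pi\bigl(N_{J_P}(X,Y)\bigr)=N_{J_M}(\bar X,\bar Y)$. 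Since $N_{J_P}$ is vertical-valued the left-hand side vanishes, giving $N_{J_M}=0$, so by Newlander--Nirenberg $J_M$ is a complex structure and $\omega$ is a complex connection.

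The main obstacle I expect is the bookkeeping in this last descent step: $J_P$ is a \emph{degenerate} endomorphism (it annihilates the vertical directions), so I cannot simply quote that the Nijenhuis torsion of a projectable tensor projects to that of its projection. Instead I must verify term by term that, for basic horizontal $X,Y$, each of the four terms of $N_{J_P}(X,Y)$ projects correctly — using that $J_P$ preserves $H$ with $J_P^2=-\mathrm{id}$ there, annihilates vertical vectors, and that the (curvature-induced) vertical components of brackets such as $[J_PX,Y]$ are killed either by applying $J_P$ or by $\d\pi$. Once this verification is in place the vanishing $N_{J_M}=0$, and hence the conclusion, is immediate.
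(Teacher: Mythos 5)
Your proposal is correct and follows essentially the same route as the paper: pull back $J_A$ via the fiberwise isomorphism $(\theta,\omega)$ to a $G$-invariant endomorphism $J_P$ with vertical kernel, use the complex-type hypothesis to show $N_{J_P}$ is vertical-valued, descend to $M$ to get $N_{J_M}=0$, and invoke standard $\GL(n,\Cc)$-structure theory for the connection. You are somewhat more explicit than the paper about the final descent verification for the degenerate tensor $J_P$, but the argument is the same.
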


\begin{proof}
The Lie algebroid morphism 
\[
\xymatrix{
TP \ar[d]\ar[r]^{(\theta,\omega)} & A|_L\ar[d]\\
P \ar[r]_h & L
}
\]
allows us to pullback $J_A$ to a $G$-invariant endomorphism $J_P:TP\to TP$ satisfying $(J_P)^3=-J_P$. This endomorphism has kernel the tangent space to a $G$-orbit and its Nijenhuis torsion takes values in the tangent to the $G$-orbits:
\[ N_{J_P}(X,Y)\in T(p\cdot G),\quad \forall X,Y\in TP. \] 
Hence, it follows that $J_P$ induces an endomorphism $J_M:TM\to TM$, which is invertible, has vanishing Nijenhuis torsion and satisfies $J_M^3=-J_M$. Hence, $J_M$ defines a complex structure on $M$. By standard results for $\GL(n,\Cc)$-structures, $\omega$ is a complex connection for this complex structure.
\end{proof}


We can combine all the three types we studied before and introduce:

\begin{definition}
A  $G$-structure algebroid with connection and  fiber $\Cc^n\oplus\gg$ is said to be of {\bf almost-Hermitian type} if $G\subset U(n)$.  It is said to be of {\bf Hermitian type} if, additionally,  the Nijenhuis torsion of $J_A$ has vanishing $\Cc^n$-component. It is said to be of {\bf K\"ahler type} if, additionally, its torsion vanishes: $c=0$.
\end{definition}

For a $G$-structure algebroid $A\to X$ of almost-Hermitian type we have $\gg\subset\uu(n)$ and the curvature $R$ satisfies the symmetries \eqref{eq:symmetry:Riemannian}, \eqref{eq:symmetry:symplectic} and \eqref{eq:symmetry:complex}.  On the other hand, the fiberwise metric $\tilde K_A$ given by \eqref{eq:A:metric}, the 2-form $\Omega_A$ given by \eqref{eq:A:form} and the endomorphism $J_A:A\to A$ given by \eqref{eq:A:J}, are related by
\begin{equation}
\label{eq:A:compatibility}
\Omega_A((u,\al),(v,\be))=\tilde K_A(J_A(u,\al),(v,\be)).
\end{equation}
Hence, it follows from what we saw above that:

\begin{corol}
Let $(P,(\theta,\omega),h)$ be a $G$-realization of a $G$-structure algebroid $A\to X$ of K\"ahler type. Then the triple $(K_M,\Omega_M,J_M)$ is a K\"ahler structure on $M=P/G$.
\end{corol}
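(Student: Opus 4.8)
The plan is to reduce the statement to the three special cases already treated, exploiting the inclusions $U(n)\subset\OO(2n,\Rr)$, $U(n)\subset\Sp(n,\Rr)$ and $U(n)\subset\GL(n,\Cc)$ together with the vanishing of the torsion. First I would observe that a $G$-structure algebroid of K\"ahler type is simultaneously of metric, of symplectic and of complex type. Indeed, since $G\subset U(n)\subset\OO(2n,\Rr)$ and $c=0$ it is of metric type by definition; since $U(n)\subset\Sp(n,\Rr)$ and it is torsionless, the Lemma computing $\d_A\Omega_A$ gives $\d_A\Omega_A=0$, so it is of symplectic type; and since $U(n)\subset\GL(n,\Cc)$ and it is torsionless, the Lemma computing $N_{J_A}$ shows that the $\Cc^n$-component of $N_{J_A}$ vanishes, so it is of complex type.

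Next I would apply the three structure propositions to the single $G$-realization $(P,(\theta,\omega),h)$. Proposition \ref{prop:metric:Lie:G:grpd} yields a Riemannian metric $K_M$ on $M=P/G$ whose Levi-Civita connection is $\omega$; Proposition \ref{prop:symp:Lie:G:grpd} yields a symplectic form $\Omega_M$ for which $\omega$ is a symplectic connection; and Proposition \ref{prop:complx:Lie:G:grpd} yields an integrable complex structure $J_M$ for which $\omega$ is a complex connection. The crucial remark is that all three induced tensors arise by pushing forward, through one and the same Lie algebroid morphism $(\theta,\omega)\colon TP\to A|_L$ followed by the quotient by $G$, the fiberwise data $\tilde K_A$, $\Omega_A$ and $J_A$ on $A$. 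Because $J_A$ takes values in the horizontal component $\Rr^{2n}\oplus\{0\}$, on which $\tilde K_A$ and the $G$-structure metric agree, the pointwise compatibility \eqref{eq:A:compatibility} descends verbatim to $M$:
\[ \Omega_M(X,Y)=K_M(J_M X,Y),\qquad X,Y\in TM. \]

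Finally I would assemble these into a K\"ahler structure. From the displayed compatibility, the skew-symmetry of $\Omega_M$ and the symmetry of $K_M$ one obtains $K_M(J_M X,Y)+K_M(X,J_M Y)=0$, whence $K_M(J_M X,J_M Y)=K_M(X,Y)$ using $J_M^2=-\mathrm{id}$; thus $K_M$ is Hermitian with fundamental form $\Omega_M$. Since $\omega$ is at once the Levi-Civita connection of $K_M$ and a complex connection, we have $\nabla^{K_M}J_M=0$, which together with the integrability of $J_M$ is exactly the condition for $(K_M,\Omega_M,J_M)$ to be K\"ahler; equivalently, $\Omega_M$ is the closed, $J_M$-compatible fundamental $2$-form of the Hermitian metric $K_M$, so $(M,K_M,\Omega_M,J_M)$ is a K\"ahler manifold.

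The only genuine subtlety, deserving care rather than presenting real difficulty, is the compatibility step: one must check that the three induced structures are mutually compatible and not merely coexisting, i.e.\ that the same connection $\omega$ simultaneously plays all three roles and that \eqref{eq:A:compatibility} really passes to the quotient $M=P/G$. This is immediate once one notes that $K_M$, $\Omega_M$ and $J_M$ are all built from the horizontal $\Rr^{2n}$-component of the coframe $(\theta,\omega)$, on which the three algebroid tensors restrict to the standard flat K\"ahler data of $\Cc^n$, so no computation beyond \eqref{eq:A:compatibility} is required.
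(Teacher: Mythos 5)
Your proposal is correct and follows essentially the same route as the paper: the corollary is obtained by noting that a K\"ahler-type algebroid is simultaneously of metric, symplectic, and complex type, invoking Propositions \ref{prop:metric:Lie:G:grpd}, \ref{prop:symp:Lie:G:grpd} and \ref{prop:complx:Lie:G:grpd} for the single realization, and letting the pointwise compatibility \eqref{eq:A:compatibility} descend to $M=P/G$. The extra details you supply (the Hermitian identity from skew-symmetry and $\nabla^{K_M}J_M=0$) are exactly the standard facts the paper leaves implicit.
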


\section{Extremal K\"ahler metrics on surfaces}
\label{sec:example:EK:surfaces}

We will now illustrate our results with the problem of classification of extremal K\"ahler metrics on surfaces and the associated Lie $\mathrm{U}(1)$-algebroid with connection discussed in Section \ref{sec:example:EK:surfaces:1}. Since extremal K\"ahler metrics on surfaces are the same thing as Bochner- K\"ahler metrics on surfaces (see Remark \ref{rem:EK:metrics}), our results will reproduce and slightly improve the results of Bryant \cite{Bryant}. Note that this algebroid falls into the class of Lie algebroids with connection of K\"ahler type. 

\subsection{Existence of local solutions}

In order to find the obstructions to $G$-integrability of $A\to X$ we need to understand its orbit foliation. For that, it is convenient to write the anchor in real coordinates. If we let $\al=i\lambda$, $z=a+ib$ and $T=X+iY$, then the anchor becomes
{\small
\begin{align} 
\rho(z,\al)|_{(K,T,U)}&=-2(aX+bY)\frac{\partial}{\partial K}+(aU+\lambda Y)\frac{\partial}{\partial X}+(bU-\lambda X)\frac{\partial}{\partial Y} +-K(aX+bY)\frac{\partial}{\partial U}\notag\\
&=a\left(-2X\frac{\partial}{\partial K}+U\frac{\partial}{\partial X}-KX \frac{\partial}{\partial U}\right)+b\left(-2Y\frac{\partial}{\partial K}+U\frac{\partial}{\partial Y}-KY \frac{\partial}{\partial U}\right)+\notag\\
&\qquad \qquad \qquad \qquad +\lambda \left(Y\frac{\partial}{\partial X}-X\frac{\partial}{\partial Y}\right)\label{eq:anchor:EK}
\end{align}}
Also, the brackets of the constant sections $e_1=(1,0)$, $e_2=(i,0)$ and $e_3=(0,i)$ are
\begin{equation}
\label{eq:bracket:EK}
[e_1,e_2]=Ke_3, \quad [e_1,e_3]=-e_2,\quad [e_2,e_3]=e_1.
\end{equation}
Notice that the action morphism $\iota:X\rtimes i\mathfrak{u}(1)\to A$ is given by
\[ \iota(x,i\lambda)=\lambda e_3. \]

From this expression it is clear that the functions 
\[ I_1=\frac{K^2}{4}-U, \qquad I_2=X^2+Y^2+KU-\frac{1}{6}K^3, \]
are constant on the leaves of $A$. These two functions are independent everywhere except at $X=Y=U=0$ when the anchor vanishes. So this Lie $U(1)$-algebroid has the following leaves and isotropy Lie algebras:
\begin{itemize}
\item the points $(K,0,0,0)$ with isotropy Lie algebra $\so(3,\Rr)$ (if $K> 0$), $\sl(2,\Rr)$ (if $K< 0$) and  $\so(2,\Rr)\ltimes\Rr^2$ (if $K=0$); 
\item the 2-dimensional submanifolds of $\Rr^4$ obtained by $U(1)$-rotations of the curves in $\Rr^3$ which are the level sets of the functions
\[ 
U=\frac{K^2}{4}-c_1,\qquad |T|^2=-\frac{1}{12}K^3+c_1 K+c_2.
\]
\end{itemize}

The restriction $A$ to the family of  $0$-dimensional leaves $\{(K,0,0,0): K\in\Rr\}$ is the constant curvature Lie $U(1)$-algebroid (the case $n=2$ in Example \ref{ex:constant:curvature} and Section \ref{sec:constant:curvature}), hence these leaves are $U(1)$-integrable. 

For the 2-dimensional leaves, observe that the functions $I_1$ and $I_2$ only depend on the radius $|T|^2=X^2+Y^2$. Hence, these leaves are obtained as $U(1)$-rotations of the curves in $\Rr^3$ which are the level sets of these two functions:
\[ 
\left\{
\begin{array}{l}
I_1=c_1  \\
I_2=c_1
\end{array}
\right.
\qquad \Leftrightarrow \qquad
\left\{
\begin{array}{l}
U=\frac{K^2}{4}-c_1  \\
|T|^2=-\frac{1}{12}K^3+c_1 K+c_2
\end{array}
\right.
\]
Hence, we can use $K$ as a parameter. Depending on the values of $c_1$ and $c_2$, this curve is closed/open and bounded/unbounded. These determine if the leaves have topology and hence also monodromy and/or $G$-monodromy. 

The cubic 
\[ p(K)=-\frac{1}{12}K^3+c_1K+c_2\] 
has discriminant
\[ \Delta=\frac{1}{48}(16c_1^3-9c_2^2). \]
Also, note that a 0-dimensional leaf $(K,0,0,0)$ belongs to a common level set $I_1=c_1$, $I_2=c_2$, if and only if 
\[ 
\left\{
\begin{array}{l}
K^2=4c_1 \\
0=-\frac{1}{12}K^3+c_1K+c_2
\end{array}
\right.
\quad \Rightarrow\quad  16c_1^3-9c_2^2=0 \quad \Leftrightarrow\quad \Delta=0. \]

Now, the 2-dimensional leaves are as follows:
\medskip  

$\bullet$ $\Delta=0$, $c_1=c_2=0$: The cubic $p(K)$ has a triple root. The level set consists of one single leaf, namely the surface obtained by rotating the curve
\[ 
\left\{
\begin{array}{l}
U=K^2 \\
|T|^2=-\frac{1}{12}K^3
\end{array}
\right.\quad K\in ]-\infty,0[.\] 
The value $K=0$ is excluded since it gives the origin $(0,0,0,0)$, which is a 0-dimensional leaf. Therefore, this leaf is topological a cylinder, so it has
\[ \pi_1(L)=\Zz,\quad \pi_2(L)=1. \]
Hence, the extended monodromy is trivial and we have $\Sigma(A)_x^0\simeq \Rr$ and $\pi_0(\Sigma(A)_x)=\Zz$. The restricted $U(1)$-monodromy is also trivial, so $A|_L$ is $U(1)$-integrable.
\medskip

\begin{center}
\includegraphics[height=3cm]{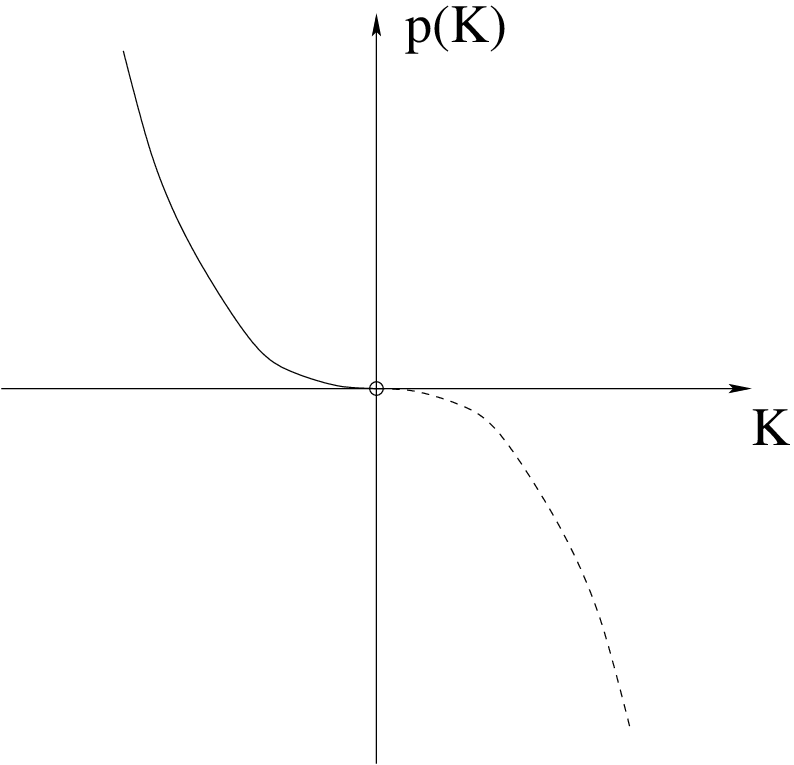}
\end{center}

\medskip
$\bullet$ $\Delta=0$, $c_2<0$: The cubic $p(K)$ has 1 single real root $-4\sqrt{c_1}$ and 1 double real root $2\sqrt{c_1}$. The level set consists of an isolated point $(2\sqrt{c_1},0,0,0)$, which is a 0-dimensional leaf, and a 2-dimensional leaf, namely the surface obtained by rotating the curve
\[ 
\left\{
\begin{array}{l}
U=\frac{1}{4}K^2-c_1 \\
 |T|^2=-\frac{1}{12} (K-2\sqrt{c_1})^2(K+4\sqrt{c_1})
\end{array}
\right.\quad K\in ]-\infty,-4\sqrt{c_1}].\] 
Topologically, this leaf is a plane, so we have
\[ \pi_1(L)=\pi_2(L)=1. \]
We conclude that $A|_L$ must also be $U(1)$-integrable in this case.
\medskip

\begin{center}
\includegraphics[height=3cm]{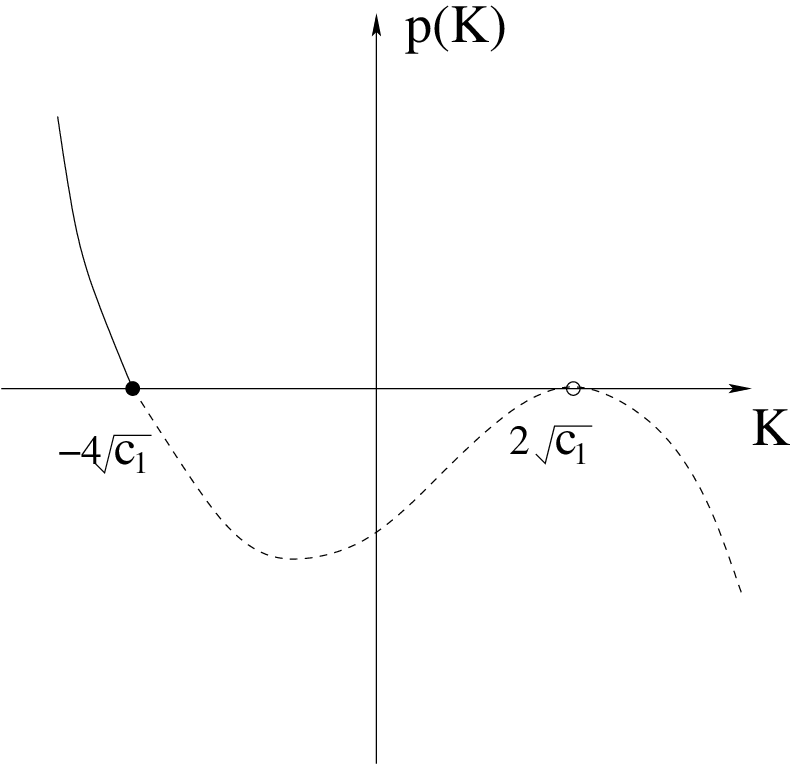}
\end{center}

\medskip  
$\bullet$ $\Delta=0$, $c_2>0$: The cubic $p(K)$ has 1 single real root $4\sqrt{c_1}$ and 1 double real root $-2\sqrt{c_1}$. The level set then consists of a 0-dimensional leaf $(-2\sqrt{c_1},0,0,0)$, and two 2-dimensional leaves obtained by rotating the curve
\[ 
\left\{
\begin{array}{l}
U=\frac{1}{4}K^2-c_1 \\
 |T|^2=-\frac{1}{12} (K+2\sqrt{c_1})^2(K-4\sqrt{c_1})
\end{array}
\right.\quad K\in ]-\infty,-2\sqrt{c_1}[\cup ]-2\sqrt{c_1},4\sqrt{c_1}].\] 
One of the leaves is a cylinder and the other is a plane. Again we conclude that $A|_L$ is $U(1)$-integrable.
\medskip

\begin{center}
\includegraphics[height=3cm]{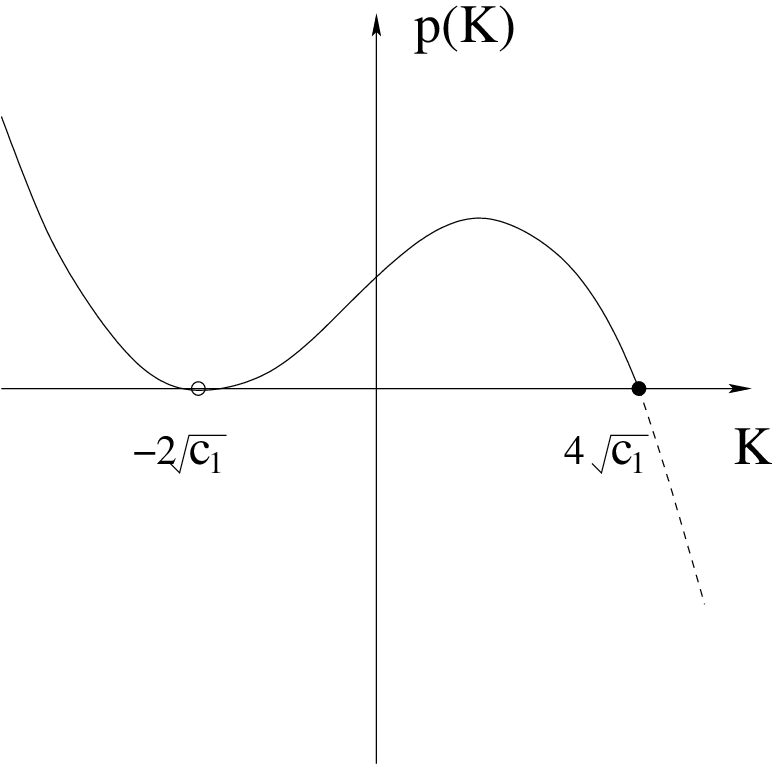}
\end{center}

\medskip  
$\bullet$  $\Delta<0$: The cubic $p(K)$ has 1 real root $r$ and two complex conjugate roots. The level set consists of a 2-dimensional leaf obtained by rotating the curve
\[ 
\left\{
\begin{array}{l}
U=\frac{1}{4}K^2-c_1 \\
 |T|^2=-\frac{1}{12} (K-r)(K^2+rK+r^2-12c_1)
\end{array}
\right.\quad K\in ]-\infty,r].\] 
This leaf is a plane, so it is $U(1)$-integrable.
\medskip

\begin{center}
\includegraphics[height=3cm]{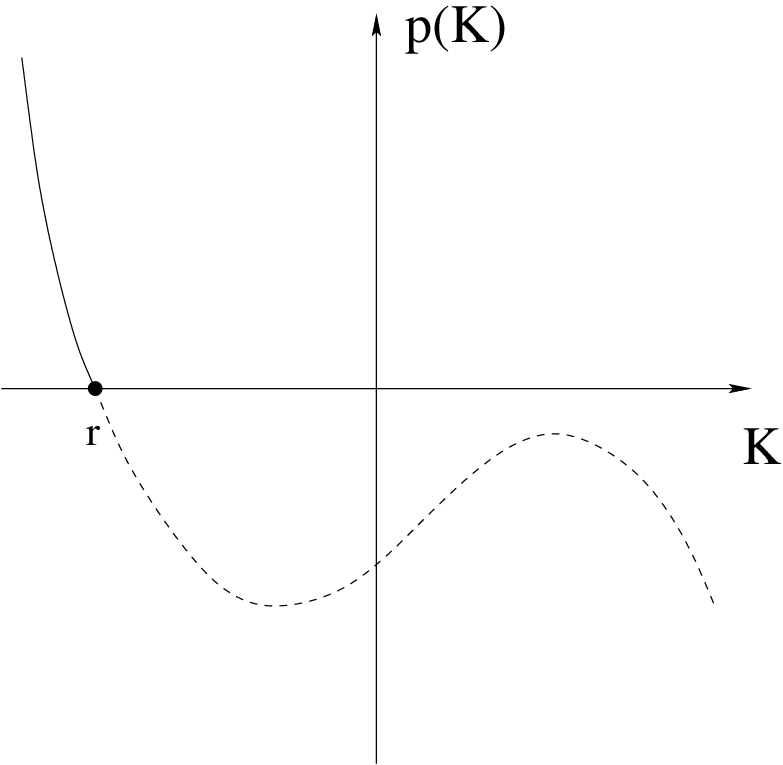}
\end{center}

\medskip  
$\bullet$  $\Delta>0$: The cubic $p(K)$ has 3 real roots $r_1<r_2<r_3$. The level set consists of two  2-dimensional leaves obtained by rotating the curve
\[ 
\left\{
\begin{array}{l}
U=\frac{1}{4}K^2-c_1 \\
 |T|^2=-\frac{1}{12} (K-r_1)(K-r_2)(K-r_3)
\end{array}
\right.\quad K\in ]-\infty,r_1]\cup [r_2,r_3].\] 
One leaf is a plane and the other leaf is a sphere. So the first one is $U(1)$-integrable, while the second one could fail to be $U(1)$-integrable. 
\medskip

\begin{center}
\includegraphics[height=3cm]{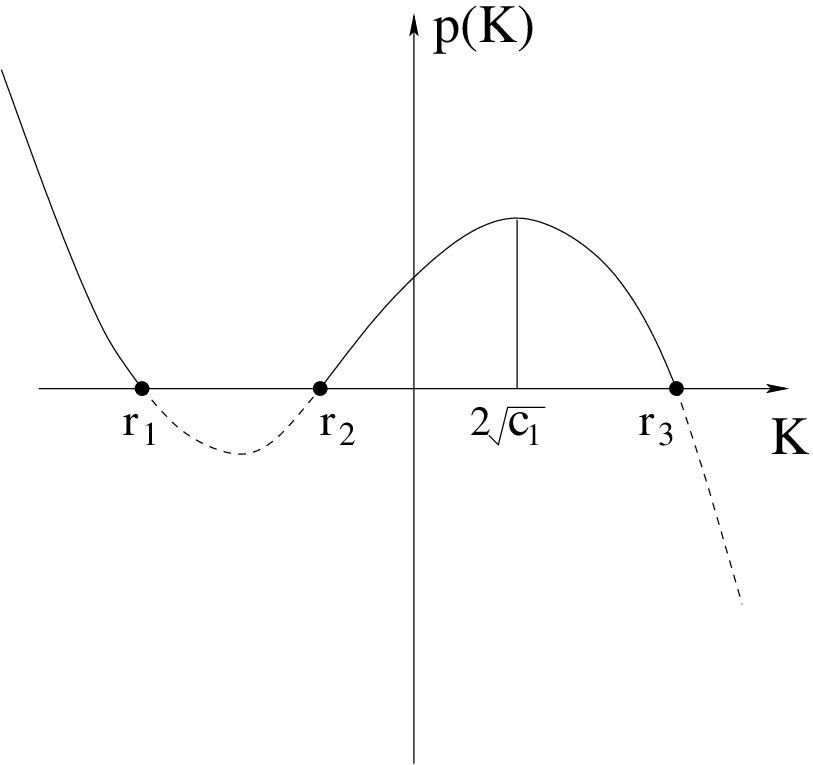}
\end{center}
\medskip

Let us compute the monodromy and the $U(1)$-monodromy of one such 2-sphere. The 2-sphere $L$ has a parameterization given by
\[ \gamma(K,\theta)=(K,p(K)^{\frac{1}{2}}e^{i\theta},K^2/4-c_1),\quad (K,\theta)\in[r_2,r_3]\times [0,2\pi], \]
and the isotropy Lie algebra bundle along the sphere is given by
\[ \ker\rho|_L=\Rr s_0, \]
where $s_0:L\to A|_L$ is the section given in terms of the parameterization by
\[ s_0(K,\theta):=(p(K)^{\frac{1}{2}} i e^{i\theta},(K^2/4-c_1)i). \]
Since the isotropy is abelian any splitting will have center-valued curvature. We can construct a splitting by choosing a fiberwise metric on $A$. The anchor restricts to an isomorphism 
$\rho:(\ker\rho|_L)^\perp \to TL$, and its inverse is a splitting $\sigma:TL\to (\ker\rho|_L)^\perp$. We will choose the fiberwise metric
\[ \langle (z,\al), (w,\be)\rangle:=1/2(z\bar{w}+\bar{z}w)-\al\be.\]
Using this splitting, we find that
\begin{align*}
\sigma\left(\frac{\partial \gamma}{\partial K}\right)&=(-\frac{1}{2}p(K)^{-\frac{1}{2}}e^{i\theta},0)\\
\sigma\left(\frac{\partial \gamma}{\partial \theta}\right)&=\frac{1}{p(K)+(K^2/4-c_1)^2}\left( p(K)^{\frac{1}{2}}(K^2/4-c_1)i e^{i\theta},-p(K) i\right)
\end{align*}
Using these expressions, we check that
\[ \nabla_{\frac{\partial \gamma}{\partial K}}s_0=\left[\sigma\left(\frac{\partial \gamma}{\partial K}\right),s_0\right]=0,\quad \nabla_{\frac{\partial \gamma}{\partial \theta}}s_0=\left[\sigma\left(\frac{\partial \gamma}{\partial \theta}\right),s_0\right]=0. \]
so $s_0$ is a flat section of the isotropy bundle. Moreover, we also compute the curvature of the splitting to be
\[ 
\Omega_\sigma\left(\frac{\partial \gamma}{\partial K}, \frac{\partial \gamma}{\partial \theta}\right)
=\left[\sigma\left(\frac{\partial \gamma}{\partial K}\right),\sigma\left(\frac{\partial \gamma}{\partial \theta}\right)\right]
=\frac{\partial }{\partial K}\left( \frac{K^2/4-c_1}{p(K)+(K^2/4-c_1)^2}\right) s_0
 \]

Now, Proposition \ref{prop:monodromy:comput} shows that a generator of the the monodromy group is obtained by integrating the curvature over the whole leaf. Hence,
\[ \cN=8\pi \Zz\left( \frac{1}{r_3^2-4c_1}+\frac{1}{4c_1-r_2^2}\right)s_0 ,\]
so $\cN$ is discrete and the $A|_L$ is integrable.

\begin{remark}
\label{rem:EK:integrability}
Combining the monodromy for all the cases, we see that that the Lie algebroid $A\to \Rr^4$ associated with extremal K\"ahler metrics on surfaces has uniformly discrete monodromy and hence, by \cite{CrainicFernandes}, it is an integrable Lie algebroid.
\end{remark}

On the other hand, observe that along the $U(1)$-orbit $K=2\sqrt{c_1}$ (the local maximum of $p(K)$) the flat section takes the values $s_0(2\sqrt{c_1},\theta)=(p(2\sqrt{c_1})^{\frac{1}{2}} i e^{i\theta},0)$. Then
\[ \uu(1)\times \{\gamma(2\sqrt{c_1},\theta)\} \subset (\ker\rho|_{\gamma(2\sqrt{c_1},\theta)})^\perp, \]
and $\sigma$ is a $U(1)$-splitting along this orbit. Hence, applying Proposition \ref{prop:G:monodromy:comput}, the $U(1)$-monodromy is obtained by adding to the monodromy the elements of the form $\Zz\int_{\gamma'}\Omega_\sigma~s_0$, 
with $\gamma'$ the disk in $L$ obtained by restricting $\gamma$ to $[2\sqrt{c_1},r_3]\times [0,2\pi]$. Hence, the $U(1)$-monodromy groups along $L$ are
\[ \cN^{U(1)}=\cN\cup 2\pi \Zz\left( \frac{1}{r_3^2/4-c_1}\right)s_0=\left\{ 8\pi\left( \frac{n_1}{r_3^2-4c_1}+\frac{n_2}{4c_1-r_2^2}\right)s_0: n_1,n_2\in\Zz\right\}. \]
It follows that, $A|_L$ is $U(1)$-integrable if and only if $\frac{4c_1-r_2^2}{r_3^2-4c_1}\in\Qq$. This ratio is always $<1$ and fixing $c_1$ it varies continuously with $c_2$, approaching $1$  when $\Delta\to 0$.

The following table summarizes the solutions that one obtains by integration by applying Corollary \ref{cor:local:existence}:
\medskip

\begin{table}[ht]
\caption{Extremal K\"ahler metrics on 1-connected surfaces} 
\centering
{\small
\begin{tabular}{|l|c|c|}
 \hline & &  \\ [1ex]
  {\bf Conditions} & {\bf $U(1)$-frame bundle}: $\s^{-1}(x)$ & {\bf Solutions}: $\s^{-1}(x)/U(1)$  \\[1ex] \hline & &  \\
 $K=0$ & $\SO(2)\ltimes\Rr^2$ & $\Rr^2$ \\ [1ex] \hline & &  \\
 $K=c>0$ & $\Ss^3$ & $\Ss^2$ \\ [1ex] \hline & &  \\
 $K=c<0$ & $\SO(2,1)$ & $\mathbb{H}^2$ \\ [1ex] \hline & &  \\
 $\Delta=0$, $c_1=c_2=0$ & $(\Rr^2\times\Rr)/\Zz$ & $\Rr^2$\\ [1ex] \hline & &  \\
 $\Delta=0$, $c_2<0$ & $\Rr^2\times\Ss^1$ & $\Rr^2$ \\ [1ex] \hline & &  \\
  \multirow{2}{*}{$\Delta=0$, $c_2>0$} & $(\Rr^2\times\Rr)/\Zz$ & \multirow{2}{*}{$\Rr^2$} \\ [1ex]
  					                   & $(\Rr^2\times\Ss^1)$ & \\ [1ex] \hline & &  \\
 $\Delta<0$ & $\Rr^2\times\Ss^1$ & $\Rr^2$ \\ [1ex] \hline
 & &  \\
  $\Delta>0$          & $\Rr^2\times\Ss^1$ & $\Rr^2$ \\ [1ex]
 (if $\frac{4c_1-r_2^2}{r_3^2-4c_1}=\frac{p}{q}$)    & $\Ss^3$ & $\mathbb{CP}^1_{p,q}$\\ [1ex] \hline 
					                   
\end{tabular}}
\label{table:solutions:EK} 
\end{table}

\bigskip

\subsection{Complete extremal K\"ahler metrics}
\label{sec:complete:Kahler}
Theorem \ref{thm:metric:complete} allows us to decide for which orbits there exist complete extremal K\"ahler metrics, and in particular which solutions in Table \ref{table:solutions:EK} are complete. For this, given a leaf $L$, we only need to decide if the metric $\tilde K_L$ (see the discussion following \eqref{eq:A:metric}) is complete. This can be done in a case by case analysis.

In all cases, we will use the parameterization
\[ \gamma(K,\theta)=(K,p(K)^{\frac{1}{2}}e^{i\theta},K^2/4-c_1),\quad \theta\in[0,2\pi],\ K\in I\]
for some interval $I$. This will parameterize smoothly the whole leaf $L$ with the possible exception of points where $p(K)=0$ which coincide with the extremes of the interval $I$. We also choose a section $s_0$ of the isotropy Lie algebra bundle $\ker\rho|_L$ given in terms of this parameterization by
\[ s_0(K,\theta):=(p(K)^{\frac{1}{2}} i e^{i\theta},(K^2/4-c_1)i). \]
This is non-zero with the possible exception of points in $L$ where $p(K)=0$ and $K=2\sqrt{c_1}$. Then using the fiberwise metric on $A\to\Rr^4$ given by
\[ \tilde K_A((z,\al), (w,\be))=1/2(z\bar{w}+\bar{z}w)-\al\be, \]
we can identify $(\ker\rho|_L)^\perp$ with $TL$ via a splitting $\sigma$ of the anchor. This yields the metric $K_L$. Explicitly, we find that
\begin{align*}
\sigma\left(\frac{\partial \gamma}{\partial K}\right)&=(-\frac{1}{2}p(K)^{-\frac{1}{2}}e^{i\theta},0)\\
\sigma\left(\frac{\partial \gamma}{\partial \theta}\right)&=\frac{1}{p(K)+(K^2/4-c_1)^2}\left( p(K)^{\frac{1}{2}}(K^2/4-c_1)i e^{i\theta},-p(K) i\right)
\end{align*}
so that if $e_1=\sigma\left(\frac{\partial \gamma}{\partial K}\right)$ and $e_2=\sigma\left(\frac{\partial \gamma}{\partial \theta}\right)$ then
\[ 
\tilde K_L\left(e_i,e_j\right)=\tilde K_A\left(\sigma\left(e_i\right),\sigma\left(e_j\right)\right).
\]
We obtain
\begin{equation}
\label{eq:metric:leaf} 
K_L=\frac{1}{4p(K)}\d K^2+ \frac{p(K)}{p(K)+(K^2/4-c_1)^2}\d\theta^2.
\end{equation}

Observe already that if the interval $I$ has left extreme $-\infty$ then this metric is not complete: we have that the integral
\[ \int^{K_0}_{-\infty} \frac{1}{2\sqrt{p(K)}}\d K<\infty, \]
so a geodesic with $\theta=$constant runs to $-\infty$ in finite time. Now we can look at each of different types of leaves:
\medskip

$\bullet$ $\Delta=0$, $c_1=c_2=0$:  In this case $p(K)=-\frac{1}{12}K^3$ and $I=]-\infty,0[$, so the metric is not complete.

\medskip
$\bullet$ $\Delta=0$, $c_2<0$: Now we have $p(K)=-\frac{1}{12} (K-2\sqrt{c_1})^2(K+4\sqrt{c_1})$ and $I=]-\infty,-4\sqrt{c_1}]$ so the metric is not complete.

\medskip
$\bullet$ $\Delta=0$, $c_2>0$: In this case we have  $p(K)=-\frac{1}{12} (K+2\sqrt{c_1})^2(K-4\sqrt{c_1})$ and we have two branches:
\begin{enumerate}
\item[-] In one branch we have $I=]-\infty,-2\sqrt{c_1}[$ so the metric is not complete.
\item[-] For the other branch we have $I=]-2\sqrt{c_1},4\sqrt{c_1}]$, so we need to look at what happens when $K\to -2\sqrt{c_1}$. In this case we have 
\[ \int^{K_0}_{-2\sqrt{c_1}} \frac{1}{2\sqrt{-p(K)}}\d K=\infty, \]
so a geodesic with $\theta=$constant exists for all time, and one obtains a complete metric on the disk $\mathbb{D}^2$.
\end{enumerate}

\medskip
$\bullet$  $\Delta<0$: We have the polynomial $p(K)=-\frac{1}{12} (K-r)(K^2+rK+r^2-12c_1)$ in the interval $I=]-\infty,r]$, so this metric is not complete.

\medskip
$\bullet$  $\Delta>0$: There are two branches. One corresponds to a infinite interval $I=]-\infty,r_1]$ and hence cannot be complete. The other branch is compact and if $\frac{4c_1-r_2^2}{r_3^2-4c_1}=\frac{p}{q}$ it is $U(1)$-integrable, so gives a complete metric on the orbifold $\mathbb{CP}^1_{p,q}$.

\begin{corol}
\label{cor:complete}
The 1-connected complete extremal K\"ahler metrics on a surface are the constant scalar curvature metrics $\Rr^2$, $\Ss^2$, $\mathbb{H}^2$, and two special metrics: one on a disk $\mathbb{D}^2$ and the other on $\mathbb{CP}^1_{p,q}$.
\end{corol}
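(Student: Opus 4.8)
The plan is to deduce the corollary directly from the metric completeness theorem (Theorem~\ref{thm:metric:complete}) together with the leaf-by-leaf analysis carried out above. Since $U(1)\subset\OO(2,\Rr)$ and the torsion vanishes, the Lie $U(1)$-algebroid $A\to\Rr^4$ is of K\"ahler type, in particular of metric type, so the theorem applies. It tells us that a $1$-connected complete solution covering a leaf $L$ exists precisely when the induced metric $\tilde K_L$ on $L$ is complete and $A|_L$ is $U(1)$-integrable, and that in this case the solution is $\s^{-1}(x)/U(1)$ for the canonical $U(1)$-integration of $A|_L$. Thus the problem reduces to: (i) decide for which leaves $L$ the metric $\tilde K_L$ is complete, and (ii) for those leaves, identify the orbifold $\s^{-1}(x)/U(1)$.

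First I would dispose of the $0$-dimensional leaves $(K,0,0,0)$. On these the restricted algebroid is the constant scalar curvature Lie $U(1)$-algebroid of Section~\ref{sec:constant:curvature}; the leaf is a point, hence trivially metric complete, and the resulting solutions are the space forms $\Ss^2$ ($K>0$), $\Rr^2$ ($K=0$) and $\mathbb{H}^2$ ($K<0$). For the $2$-dimensional leaves I would use the explicit metric \eqref{eq:metric:leaf},
\[ K_L=\frac{1}{4p(K)}\,\d K^2+\frac{p(K)}{p(K)+(K^2/4-c_1)^2}\,\d\theta^2, \]
together with the observation that completeness fails as soon as the admissible $K$-interval $I$ is unbounded below: since $p(K)\sim-\tfrac{1}{12}K^3$ as $K\to-\infty$, the radial integral $\int_{-\infty}^{K_0}\tfrac{1}{2\sqrt{p(K)}}\,\d K$ converges, so a $\theta=\text{const}$ geodesic escapes to $K=-\infty$ in finite arclength. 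Running through the cases of the discriminant $\Delta$ of $p$, every non-compact leaf (the cylinders and planes occurring for $\Delta=0$, for $\Delta<0$, and the non-compact branch when $\Delta>0$) has such an unbounded interval and is therefore incomplete.

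The only leaves carrying a complete metric are then the compact ones, namely the spheres arising for $\Delta>0$ on the branch $K\in[r_2,r_3]$, for which completeness is automatic. For such a leaf the earlier monodromy computation shows $A|_L$ is $U(1)$-integrable exactly when $\tfrac{4c_1-r_2^2}{r_3^2-4c_1}=\tfrac pq\in\Qq$, and in that case the source fiber of the canonical $U(1)$-integration is $\Ss^3$ with quotient the weighted projective orbifold $\mathbb{CP}^1_{p,q}$ (cf.~Table~\ref{table:solutions:EK}). Assembling the two sources of complete $1$-connected solutions --- the space forms from the $0$-dimensional leaves and the $\mathbb{CP}^1_{p,q}$ from the compact spheres --- gives exactly the list in the statement. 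The main point requiring care is the completeness analysis of \eqref{eq:metric:leaf} at the endpoints of $I$: the $K\to-\infty$ end always forces incompleteness, whereas the finite roots of $p$ are either genuine (smooth) poles of a compact leaf or, on the non-compact branches, removable parameterization artifacts, and one must check that they never restore completeness once $I$ reaches $-\infty$. Identifying the compact-leaf quotient as precisely the orbifold $\mathbb{CP}^1_{p,q}$ with the correct weights $(p,q)$ read off from the $U(1)$-monodromy ratio is the other step that demands attention.
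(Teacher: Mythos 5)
Your proposal follows essentially the same route as the paper: invoke Theorem~\ref{thm:metric:complete} to reduce the question to metric completeness of the leaves, observe that the $0$-dimensional leaves give the space forms, rule out the non-compact $2$-dimensional leaves via the explicit leaf metric \eqref{eq:metric:leaf}, and keep only the compact spheres with rational monodromy ratio, which yield $\mathbb{CP}^1_{p,q}$.

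One point in your case analysis is not quite right, however. You assert that every non-compact $2$-dimensional leaf has a $K$-interval unbounded below, and your closing remark about the finite roots of $p$ presupposes that $I$ always reaches $-\infty$ on the non-compact branches. This misses the cylinder branch in the case $\Delta=0$, $c_2>0$, where the admissible interval is $I=\left]-2\sqrt{c_1},\,4\sqrt{c_1}\right]$, bounded on both sides; the leaf is non-compact because the double root $K=-2\sqrt{c_1}$ of $p$ is excluded (it corresponds to a $0$-dimensional leaf), not because $K$ runs to $-\infty$. Incompleteness for this leaf must therefore be argued separately, by examining the behaviour of the metric \eqref{eq:metric:leaf} as $K\to -2\sqrt{c_1}^{+}$, which the paper does as its own sub-case. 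Your criterion as stated leaves this family of leaves unaccounted for, so the enumeration of complete leaves is not yet closed; once that branch is handled, the rest of the argument goes through as you describe.
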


\subsection{Explicit solutions}

One can try to find explicit $G$-integrations of the Lie $\mathrm{U}(1)$-algebroid with connection associated with extremal K\"ahler surfaces by finding a Lie $\mathrm{U}(1)$-groupoid integrating it, at least upon restriction to appropriate orbits. This is possible, with some work, using a certain identification of this Lie algebroid which was first found by Cahen \& Schwachh\"ofer \cite{CaSch2,CaSch}. In that work the authors claim that this Lie algebroid is not integrable (which is false as we observed in Remark \ref{rem:EK:integrability}). What they show, in fact, is that this Lie algebroid is not $G$-integrable. Still the restriction to an orbit, as we saw in the previous paragraph, is often $G$-integrable and we can find explicit  $G$-integrations, as we now explain.

As before, we denote by $A\to \Rr^4$ the Lie $\mathrm{U}(1)$-algebroid with connection associated with extremal K\"ahler surfaces. The starting point of a new description of this algebroid is the Lie algebra $\hh=\mathfrak{su}(2,1)$ which can be thought of as being made of $3\times 3$ matrices of the form
\[
\left[
\begin{array}{ccc}
i(a-b/2)  & u  & v  \\
-\bar{u}  &  ib & w  \\
\bar{v}  & \bar{w}  &  -i(a+b/2)
\end{array}
\right]
\]
where $a,b\in\Rr$ and $u,v,w\in\Cc$. The dual of any Lie algebra is a Poisson manifold and so $\hh^*=\mathfrak{su}(2,1)^*$ is a Poisson manifold. Using the symmetric, invariant, non-degenerate bilinear form (a multiple of the Killing form of $\mathfrak{su}(2,1)$),
\[ (x,y):=\tr(xy), \] 
we can identify $\mathfrak{su}(2,1)^*\simeq \mathfrak{su}(2,1)$, and henceforth we will use this identification. 

As for any Poisson manifold, the cotangent bundle $T^*\mathfrak{su}(2,1)$ is a Lie algebroid.  As a Lie algebroid, $T^*\mathfrak{su}(2,1)$ is isomorphic to the action Lie algebroid associated with the adjoint representation
\[ T^*\mathfrak{su}(2,1)\simeq \mathfrak{su}(2,1)\ltimes \mathfrak{su}(2,1)\to \mathfrak{su}(2,1). \]
This is not yet the algebroid we are interested in. For that, we consider the 4-dimensional affine subspace $X\subset  \mathfrak{su}(2,1)$ given by
\[
X=\{\left[
\begin{array}{ccc}
i(a-b/2)  & u  & 1-a  \\
-\bar{u}  &  ib & -i\bar{u}  \\
1-a  & iu &  -i(a+b/2)
\end{array}
\right]~:~ a,b\in\Rr, u\in \Cc\}
\]
The affine subspace $X$ is a Poisson transversal: it intersects the adjoint orbits (the symplectic leaves of $\mathfrak{su}(2,1)$) transversely in symplectic submanifolds. It follows that $X$ inherits a Poisson structure $\{\cdot,\cdot\}_X$ and a straightforward computation gives
\[ \{a,b\}_X=0, \quad \{a,u\}_X=\frac{3}{4}ibu, \quad \{b,u\}_X=-iu, \quad  \{u,\bar{u}\}_X=\frac{i}{8}(4-8a+9b^2).\]
Now observe that the Killing form yields the Casimir $C(x)=(x,x)$ for the Poisson manifold $\mathfrak{su}(2,1)$. Therefore, this function restricts to a Casimir on the Poisson transversal $X$, where it is given by the formula
\[ C(a,b,u)=2-4a-\frac{3}{2}b^2.\]
The Lie algebroid we are interested is the quotient Lie algebroid 
\[ A:=T^*X/\langle \d C\rangle\to X. \]
In fact, writing $u=u_1+iu_2$, we can take as a basis of sections for this Lie algebroid the ones defined by
\[ e_1:=\d u_1+\langle \d C\rangle,\quad e_2:=\d u_2+\langle \d C\rangle,\quad e_3:=\d b+\langle \d C\rangle, \]
and we can make the change of variables
\[ X=\frac{3}{4} u_2,\quad Y=-\frac{3}{4} u_1,\quad K=\frac{3}{2}b,\quad U=\frac{3}{64}(4-8a+9b^2), \]
which leads to the following expressions for the bracket of $A$:
\begin{align*}
[e_1,e_2]&=\d\{u_1,u_2\}_X+\langle \d C\rangle=-\frac{1}{2}\d a+\frac{9}{8} b\,\d b+\langle \d C\rangle=\frac{3}{2}b\,\d b+\langle \d C\rangle=Ke_3,\\
[e_1,e_3]&=\d\{u_1,b\}_X+\langle \d C\rangle=-\d u_2+\langle \d C\rangle=-e_2,\\
[e_2,e_3]&=\d\{u_2,b\}_X+\langle \d C\rangle=\d u_1+\langle \d C\rangle=e_1,
\end{align*}
and for the anchor of $A$
\begin{align*}
\rho(e_1)&=\{u_1,\cdot\}_X=-2X\frac{\partial}{\partial K}+U\frac{\partial}{\partial X}-KX \frac{\partial}{\partial U},\\
\rho(e_2)&=\{u_2,\cdot\}_X=-2Y\frac{\partial}{\partial K}+U\frac{\partial}{\partial Y}-KY \frac{\partial}{\partial U},\\
\rho(e_3)&=\{b,\cdot\}_X=Y\frac{\partial}{\partial X}-X\frac{\partial}{\partial Y}.
\end{align*}
Comparing with the expressions \eqref{eq:anchor:EK} and \eqref{eq:bracket:EK}, we conclude that $A=T^*X/\langle \d C\rangle$  is indeed the classifying $G$-structure algebroid of Bochner-K\"ahler surfaces. Note that under this change of variables the Casimir function $C$ becomes, up to a factor, the invariant function $I_1$
\[ C=-\frac{32}{3}(K^2/4-U)=-\frac{32}{3} I_1. \]
The other invariant arises from taking the determinant of an element $x\in X$
\[ \det x=-\frac{i}{4} (4b-8 ab + b^3 + 8 u \bar{u})= -\frac{32i}{9}(X^2+Y^2+UK-\frac{1}{6}K^3)=-\frac{32i}{9}I_2. \]

We can now proceed to integrate $A$ as follows. The cotangent Lie algebroid $T^* \mathfrak{su}(2,1)$, associated with the linear Poisson structure on $\mathfrak{su}(2,1)$, integrates to the action Lie groupoid associated with the adjoint action
\[ \mathfrak{su}(2,1)\rtimes \SU(2,1)\tto \mathfrak{su}(2,1). \]
We have the embedding of Lie algebroids
\[  \mathfrak{su}(2,1)\rtimes\mathfrak{u}(1)\to T^* \mathfrak{su}(2,1),\quad \lambda\mapsto \lambda \d b,\] 
which under the identification of $T^* \mathfrak{su}(2,1)$ with the adjoint action Lie algebroid $\mathfrak{su}(2,1)\rtimes \mathfrak{su}(2,1)$, arises from the embedding of Lie algebras
\[ \mathfrak{u}(1)\hookrightarrow \mathfrak{su}(2,1),\quad \lambda\mapsto \lambda\left[
\begin{array}{ccc}
-\frac{i}{2} & 0  & 0  \\
0 &  i& 0  \\
0 & 0  &  -\frac{i}{2}
\end{array}
\right]
\]
Therefore, this Lie algebroid morphism integrates to a Lie groupoid morphism
\[ \mathfrak{su}(2,1)\rtimes \UU(1)\hookrightarrow \mathfrak{su}(2,1)\rtimes \SU(2,1), \]
arising from the embedding of Lie groups
\[
\UU(1)\hookrightarrow \SU(2,1),\quad
\theta\mapsto 
\left[
\begin{array}{ccc}
e^{-i\frac{\theta}{2}}  & 0  & 0  \\
0 &  e^{i\theta}& 0  \\
0 & 0  &  e^{-i\frac{\theta}{2}}
\end{array}
\right]
\]
By restricting to the Poisson transversal $X\subset \mathfrak{su}(2,1)$ we obtain an integration of $T^*X$:
\[
\hat{\G}:=\{(g,\xi)\in \SU(2,1)\ltimes \mathfrak{su}(2,1): \xi\in X\text{ and } \Ad_g\xi\in X\}\tto X.
\]
Since the action of $U(1)$ by conjugation preserves $X$, this is still a $U(1)$-integration.

We can obtain a $G$-integration of the quotient $A=T^*X/\langle \d C\rangle$ by first find the Lie subgroupoid $\cK\subset \hat{\G}$ integrating the subalgebroid $\langle \d C\rangle\subset T^*X$, which is a normal, bundle of Lie groups inside the isotropy bundle, and then taking the quotient groupoid $\hat{\G}/\cK\tto X$. This yields a Lie groupoid provided $\cK\subset \hat{\G}$ is closed. Every orbit contains a point where $u_1=u_2=0$, and at these points we find
\begin{align*}
&\cK_{(a,b,0,0)}=\{\exp(t\d_{(a,b,0,0)} C):t\in \Rr\}=\{\exp(t 2x):t\in \Rr\}=\\
&=\{
\left[
\begin{array}{ccc}
\frac{\left(\mu+ i a\right) e^{+2\mu t}+\left(\mu- i a\right) e^{-2\mu t}}{2\mu} e^{-ibt} & 0 &  (a-1)\frac{e^{-2\mu t}-e^{+2\mu t}}{2\mu}e^{-ibt}  \\
 0 & e^{2i b t} & 0 \\
 (a-1)\frac{e^{-2\mu t}-e^{+2\mu t}}{2\mu}e^{-ibt}  & 0 &  \frac{\left(\mu+ i a\right) e^{-2\mu t}+\left(\mu- i a\right) e^{+2\mu t}}{2\mu} e^{-ibt}
\end{array}
\right]
:t\in \Rr\}
\end{align*}
where we have set $\mu=\sqrt{1-2 a}$. Now if $\mu$ is real, i.e., if $1-2a\ge 0$, then this subgroup is closed. On the other hand, if $\mu$ is purely imaginary, i.e., if $1-2a< 0$, this subgroup will be closed iff $b/\mu\in\Qq$. To compare this with the results of the previous section, we notice that a straightforward computation shows that the discriminant $\Delta$ at the point $(a,b,0,0)$ is given by
\[ \Delta=-\frac{27}{65536} \left(4-8 a+9 b^2\right)^2(1-2a)=-\frac{3}{16}U^2(1-2a). \]
We recover the result that $G$-integrability can only fail when $\Delta>0$ and a certain ratio is irrational. Note however, than this is not an if and only if result, since in principle one could have $G$-integrations which are not quotients of $\hat{\G}$. 

Hence, on the one hand, this approach can yield explicit solutions. On the other hand, the approach in the previous paragraph allows one to find the explicit orbits for which solutions exists.

\subsection{Symmetries and moduli space of extremal K\"ahler metrics} 

We will now use the results of Section \ref{sec:moduli} to describe the symmetries and the moduli space of extremal K\"ahler metrics. 

First, considering the moduli space of germs of solutions, according to Theorem \ref{thm:moduli:germs}, the action groupoid $X\times U(1)\tto X$ where $X=\Rr\times\Cc\times\Rr$, presents this stack. Recalling that the $U(1)$ action on $X$ is given by
\[(K,T,U)e^{i\theta} = (K,e^{i\theta}T,U). \]
we conclude that a germ of solution corresponding to the point $(K_0,T_0,U_0)$ has symmetry group:
\begin{itemize}
\item the \emph{trivial} group, if $T_0\not=0$;
\item the group $U(1)$, if $T_0=0$.
\end{itemize}
Note that this moduli space is not smooth since the action fails to be free. If we remove all solutions with $T_0=0$ we obtain a smooth moduli space. 

Note also that if $L$ is the sphere in the level set with $\Delta=0$ and $c_2>0$, and we consider the two marked points in this leaf with $T=0$, then there cannot exist any $G$-realization covering both marked points. In fact, if such a $G$-realization existed, then it would cover the whole sphere, contradicting the fact that $L$ is not $G$-integrable. This shows that the converse to Theorem \ref{thm:globalization} in general fails without the $G$-integrability assumption.

The moduli space of 1-connected solutions has a richer structure. According to Theorem \ref{thm:moduli:1-connected}, this moduli space is represented by the 
groupoid $\Sigma^\reg_G(A) \tto X_\reg$ where $X_\reg\subset X$ consists of the union of the leafs for which $A|_L$ is $G$-integrable. Hence, $X_\reg$ is obtained by removing from $X$ the compact leaves in the level sets $I_1=c_1$ and $I_2=c_2$ for which $\Delta>0$ and $\frac{4c_1-r_2^2}{r_3^2-4c_1}\not\in\Qq$.  Then we conclude that:
\begin{itemize}
\item the constant curvature 1-connected solutions $\Rr^2$, $\Ss^2$ and $\mathbf{H}$ are isolated (i.e., have no smooth deformations) and have symmetry groups  $\SO(2)\ltimes\Rr^2$, $\Ss^3$ and $\SO(2,1)$, respectively.
\item the orbifold solutions $\mathbb{CP}^1_{p,q}$ are also isolated and have symmetry group $\Ss^1$.
\item the remaining 1-connected solutions all admit smooth, non-trivial deformations, and the connected component of the identity of their symmetry group is either $\Rr$ or $\Ss^1$.
\end{itemize}
Notice that a 1-connected solution may have a richer symmetry structure than any of its germs, since a symmetry of a germ of solution necessarily fixes the marked point.


\bibliographystyle{abbrv}
\bibliography{bibliography2}

\end{document}